\documentclass[12pt,a4paper]{amsart}
\usepackage{amssymb,eucal}
\usepackage{mathrsfs}
\usepackage[cmtip,all]{xy}
\usepackage{hyperref}

\calclayout

\newcommand{\+}{\protect\nobreakdash-}
\renewcommand{\:}{\colon}

\newcommand{\lrarrow}{\mskip.5\thinmuskip\relbar\joinrel\relbar\joinrel
 \rightarrow\mskip.5\thinmuskip\relax}

\DeclareMathOperator{\Hom}{Hom}
\DeclareMathOperator{\Ext}{Ext}
\DeclareMathOperator{\Spec}{Spec}
\DeclareMathOperator{\Spf}{Spf}
\DeclareMathOperator{\im}{im}
\DeclareMathOperator{\coker}{coker}

\newcommand{\Modl}{{\operatorname{\mathsf{--Mod}}}}
\newcommand{\Modr}{{\operatorname{\mathsf{Mod--}}}}
\newcommand{\Contra}{{\operatorname{\mathsf{--Contra}}}}
\newcommand{\Qcoh}{{\operatorname{\mathsf{--Qcoh}}}}
\newcommand{\Tors}{{\operatorname{\mathsf{--Tors}}}}
\newcommand{\Ctrh}{{\operatorname{\mathsf{--Ctrh}}}}
\newcommand{\Discr}{{\operatorname{\mathsf{Discr--}}}}
\newcommand{\Discrinj}{{\operatorname{\mathsf{Discr_{inj}--}}}}
\newcommand{\Separ}{{\operatorname{\mathsf{--Separ}}}}
\newcommand{\Flat}{{\operatorname{\mathsf{--Flat}}}}
\newcommand{\Sys}{{\operatorname{\mathsf{--Sys}}}}
\newcommand{\Sysr}{{\operatorname{\mathsf{Sys--}}}}
\newcommand{\Sysrinj}{{\operatorname{\mathsf{Sys_{inj}--}}}}

\newcommand{\Ab}{\mathsf{Ab}}
\newcommand{\Sets}{\mathsf{Sets}}

\newcommand{\cta}{\mathsf{cta}}
\newcommand{\fl}{\mathsf{fl}}
\newcommand{\prj}{\mathsf{prj}}

\newcommand{\rop}{\mathrm{op}}
\newcommand{\Id}{\mathrm{Id}}

\newcommand{\rarrow}{\longrightarrow}

\newcommand{\ot}{\otimes}
\newcommand{\wot}{\mathbin{\widehat\otimes}}
\newcommand{\ocn}{\odot}
\newcommand{\tim}{\rightthreetimes}
\newcommand{\uds}{{\text{\normalfont\textexclamdown}}}

\newcommand{\cM}{\mathcal M}
\newcommand{\cN}{\mathcal N}

\newcommand{\rN}{\mathscr N}

\newcommand{\fA}{\mathfrak A}
\newcommand{\fC}{\mathfrak C}
\newcommand{\fD}{\mathfrak D}
\newcommand{\fF}{\mathfrak F}
\newcommand{\fG}{\mathfrak G}
\newcommand{\fH}{\mathfrak H}
\newcommand{\fI}{\mathfrak I}
\newcommand{\fJ}{\mathfrak J}
\newcommand{\fK}{\mathfrak K}
\newcommand{\fP}{\mathfrak P}
\newcommand{\fQ}{\mathfrak Q}
\newcommand{\fR}{\mathfrak R}
\newcommand{\fS}{\mathfrak S}
\newcommand{\fU}{\mathfrak U}
\newcommand{\fV}{\mathfrak V}

\newcommand{\ff}{\mathfrak f}

\newcommand{\sA}{\mathsf A}
\newcommand{\sB}{\mathsf B}
\newcommand{\sC}{\mathsf C}
\newcommand{\sD}{\mathsf D}
\newcommand{\sP}{\mathsf P}

\newcommand{\boZ}{\mathbb Z}

\newcommand{\boL}{\mathbb L}

\newcommand{\Section}[1]{\bigskip\section{#1}\medskip}
\theoremstyle{plain}
\newtheorem{thm}{Theorem}[section]
\newtheorem{prop}[thm]{Proposition}
\newtheorem{lem}[thm]{Lemma}
\newtheorem{cor}[thm]{Corollary}
\theoremstyle{definition}
\newtheorem{rem}[thm]{Remark}

\newtheorem{ex}[thm]{Example}
\newtheorem{exs}[thm]{Examples}

\begin{document}

\title{Homomorphisms of topological rings \\
and change-of-scalar functors}

\author{Leonid Positselski}

\address{Institute of Mathematics, Czech Academy of Sciences \\
\v Zitn\'a~25, 115~67 Prague~1 \\ Czech Republic} 

\email{positselski@math.cas.cz}

\begin{abstract}
 We consider homomorphisms of complete, separated right or two-sided
linear topological rings with countable bases of neighborhoods of
zero $\ff\:\fR\rarrow\fS$.
 Taut maps of right linear topological rings, strongly right taut
maps of two-sided linear topological rings, left proflat continuous
ring maps, and topological ring epimorphisms are discussed.
 For a left proflat topological ring epimorphism~$\ff$, we show
that the functor of restriction of scalars on the categories of
left contramodules $\ff_\sharp\:\fS\Contra\rarrow\fR\Contra$ is
fully faithful.
 Assuming that the contramodule-to-module forgetful functor
$\fR\Contra\rarrow\fR\Modl$ is fully faithful and the topological
ring map~$\ff$ is left proflat, we prove that the commutative square of
forgetful functors between the left contramodule and module categories
over $\fS$ and $\fR$ is a pseudopullback diagram.
 This provides a description of the essential image of~$\ff_\sharp$
under the conjunction of the respective assumptions.
 The left adjoint functor to~$\ff_\sharp$ always exists, but is not
exact even when $\ff$~is (pro)flat.
 A right adjont functor to~$\ff_\sharp$ does not always exist, but
for a left proflat map~$\ff$ we construct it explicitly and show that
it has good exactness properties.
 This work is motivated by the theory of contraherent cosheaves of
contramodules on formal schemes.
\end{abstract}

\maketitle

\tableofcontents

\section{Introduction}
\medskip

 We start with several sections explaining the geometric motivation
before passing to the more technical and algebraic parts of
the Introduction.

\subsection{Schemes and quasi-coherent sheaves}
 Schemes are glued from affine schemes.
 Affine schemes are described by their rings of functions.
 To any commutative ring $R$, such as the ring of polynomials
$R=k[x_1,\dotsc,x_m]$ in several variables $x_1$,~\dots, $x_m$
over a field~$k$, the affine scheme $\Spec R$ is assigned.
{\hbadness=1150\par}

 Quasi-coherent sheaves on schemes are glued from modules over
commutative rings using the localization functors.
 The abelian category of quasi-coherent sheaves on an affine scheme
$U=\Spec R$ is equivalent to the category of $R$\+modules.
 Let $V=\Spec S$ be an affine open subscheme in~$U$.
 Then the functor of restriction of quasi-coherent sheaves to
the open subscheme ${-}|_V\:U\Qcoh\rarrow V\Qcoh$ is described in
terms of modules as the functor of extension of scalars
$S\ot_R{-}\,\:R\Modl\rarrow S\Modl$ with respect to the ring
homomorphism $R\rarrow S$.

 The example of a \emph{principal affine open subscheme} is instructive.
 In this case, the commutative $R$\+algebra $S$ is obtained by
inverting an element $r\in R$, that is $S=R[r^{-1}]$.
 The restriction of quasi-coherent sheaves from $\Spec R$ to
$\Spec S$ takes an $R$\+module $M$ to the $S$\+module $M[r^{-1}]$.
 This is the localization functor mentioned in the previous paragraph.

\subsection{Formal schemes and quasi-coherent torsion sheaves}
 Formal schemes~~\cite[Section~II.9]{Har}, \cite[Sections~0.7
and~I.10]{EGAI}, \cite[Chapter Tag~0AHW]{SP} are glued from affine
formal schemes.
 Affine formal schemes are described by their topological rings of
functions, such as the ring of formal power series
$\fR=k[x_1,\dotsc,x_m][[y_1,\dotsc,y_n]]$ in several variables
$y_1$,~\dots, $y_n$ with coefficients in a ring of polynomials
$k[x_1,\dotsc,x_m]$.

 The relevant class of topological commutative rings $\fR$ is specified
by several kinds of assumptions.
 First of all, the topological ring $\fR$ must be separated and
complete, and have a \emph{linear topology}.
 The latter condition means that open ideals form a base of
neighborhoods of zero in~$\fR$.
 A further nilpotency condition is imposed~\cite[Sections~0.7.1
and~I.10.1]{EGAI}.
 To any suitable topological commutative ring $\fR$, an affine formal
scheme $\Spf\fR$ is assigned.

 \emph{Quasi-coherent torsion sheaves} on formal schemes~\cite{AJL,AJPV}
are glued from torsion modules over topological commutative rings.
 The abelian category $\fU\Tors$ of quasi-coherent torsion sheaves on
an affine formal scheme $\fU=\Spf\fR$ is equivalent to the full
subcategory $\fR\Tors\subset\fR\Modl$ of torsion modules (known also as
\emph{discrete modules}) in the category of $\fR$\+modules.
 An $\fR$\+module $\cM$ is said to be \emph{torsion}
(or \emph{discrete}) if the annihilator of any element of $\cM$ is
an open ideal in~$\fR$.

 What are the affine open formal subschemes of an affine formal
scheme $\fU=\Spf\fR$\,?
 Let $\ff\:\fR\rarrow\fS$ be a morphism of topological commutative
rings.
 Then the related morphism of affine formal schemes $\Spf\fS\rarrow
\Spf\fR$ is said to be a \emph{formal open immersion} if, for every
open ideal $\fI\subset\fR$, the closure $\fJ$ of the ideal
$\fS\ff(\fI)\subset\fS$ is an open ideal in $\fS$, the ideals $\fJ$
form a base of neighborhoods of zero in $\fS$, and the induced maps of
affine schemes $\Spec\fS/\fJ\rarrow\Spec\fR/\fI$ are open immersions.

 Once again, the example of a \emph{principal affine open formal
subscheme} is instructive.
 In this case, there is an element $r\in\fR$ such that $\fS$ is
the completion of the commutative ring $\fR[r^{-1}]$ with respect to
the topology induced by the topology on~$\fR$.

 Quasi-coherent torsion sheaves are glued from torsion modules using
the localization functors.
 Let $\fV=\Spf\fS$ be an affine open formal subscheme in $\fU=\Spf\fR$.
 Then the functor of restriction of quasi-coherent torsion sheaves to
the open formal subscheme ${-}|_\fV\:\fU\Tors\rarrow\fV\Tors$ is
described in terms of modules as the functor of extension of
scalars $\fS\wot_\fR{-}\,\:\fR\Tors\rarrow\fS\Tors$.
 Here the notation~$\wot_\fR$ stands for the topological tensor product
functor, which in this context (under suitable countability assumptions
on the ring topologies) can be also interprested as the 
\emph{contratensor product} functor~$\ocn_\fR$.

 The key observation is that, for every torsion $\fR$\+module $\cM$,
the $\fS$\+module $\fS\wot_\fR\cM$ is also torsion.
 This holds because an $\fS$\+module is torsion if and only if its
underlying $\fR$\+module is torsion.
 For example, in the case of a principal affine open subscheme $\fV
\subset\fU$ corresponding to an element $r\in\fR$, one has
$\fS\wot_\fR\cM=\cM[r^{-1}]$.

\subsection{Contramodules} \label{introd-contramodules-subsecn}
 Contramodules over topological rings~\cite[Section~1.2]{Pweak},
\cite[Section~2.1]{Prev}, \cite[Sections~1.2 and~5]{PR},
\cite[Section~2.7]{Pcoun}, \cite[Sections~2.6\+-2.7]{Pproperf},
\cite[Section~6]{PS1} are the dual-analogous concept to discrete
or torsion modules.

 In the context of a complete, separated topological commutative ring
$\fR$ with linear topology, an \emph{$\fR$\+contramodule} is
a set/abelian group/$\fR$\+module $\fP$ endowed with \emph{infinite
summation operations}
$$
 (p_x\in\fP)_{x\in X}\longmapsto\sum\nolimits_{x\in X}r_xp_x\,\in\,\fP,
$$
assigning an element denoted formally by $\sum_{x\in X}r_xp_x\in\fP$
to every family of elements $(r_x\in\fR)_{x\in X}$ converging to zero
in the topology of $\fR$ and an arbitrary family of elements
$(p_x\in\fP)_{x\in X}$.
 Suitable contraassociativity and contraunitality axioms are imposed.

 For any discrete $\fR$\+module $\cM$ and any $\fR$\+module $W$,
the Hom $\fR$\+module $\Hom_\fR(\cM,W)$ has a natural
$\fR$\+contramodule structure given by the simple formula
\begin{equation} \label{Hom-contraaction-formula}
 \left(\sum\nolimits_{x\in X}r_xh_x\right)(m)
 =\sum\nolimits_{x\in X}h_x(r_xm)\,\in\,W
\end{equation}
for all $h_x\in\Hom_\fR(\cM,W)$ and $m\in\cM$.
 Here the sum in the right-hand side is finite, because $r_xm=0$ for
all but a finite subset of indices $x\in X$ (as $r_x\to0$ in $\fR$
and the annihilator of~$m$ is open in~$\fR$).

 The category $\fR\Contra$ of $\fR$\+contramodules is abelian.
 It is endowed with an exact, faithful forgetful functor
$\fR\Contra\rarrow\fR\Modl$ (preserving the infinite products, but
\emph{not} the infinite direct sums).

\subsection{Contraherent cosheaves}
 Contraherent cosheaves~\cite{Pcosh} are the dual-analogous concept to
quasi-coherent sheaves.
 Contraherent cosheaves on schemes are glued from modules over
commutative rings using the colocalization functors.

 The exact category $U\Ctrh$ of contraherent cosheaves on an affine
scheme $U=\Spec R$ is equivalent to the full subcategory of
\emph{contraadjusted} $R$\+modules $R\Modl^\cta\subset R\Modl$ in
the category of $R$\+modules.
 Here an $R$\+module $P$ is said to be
contraadjusted~\cite[Section~1.1]{Pcosh}, \cite[Section~2]{Pcta}
if $\Ext_R^1(R[r^{-1}],P)=0$ for all $r\in R$.
 There is no need to impose the $\Ext^i$ vanishing condition for
$i\ge2$ here, as the projective dimension of the $R$\+module
$R[r^{-1}]$ never exceeds~$1$ \,\cite[proof of Lemma~2.1]{Pcta}.

 Let $V=\Spec S$ be an affine open subscheme in~$U$.
 Then the functor of restriction of contraherent cosheaves to
the open subscheme ${-}|_V\:U\Ctrh\rarrow V\Ctrh$ is described in terms
of modules as the functor of \emph{coextension of scalars}
$\Hom_R(S,{-})\:R\Modl^\cta\rarrow S\Modl^\cta$.

 In the case of a principal affine open subscheme $V=\Spec R[r^{-1}]$,
this means that the restriction of contraherent cosheaves from $U$
to $V$ takes a contraadjusted $R$\+module $P$ to the contraadjusted
$R[r^{-1}]$\+module $\Hom_R(R[r^{-1}],P)$.
 This is the colocalization functor mentioned in the first paragraph
of this section.

\subsection{Contraherent cosheaves of contramodules}
\label{introd-cosheaves-of-contramodules-subsecn}
 Dual-analogously to the quasi-coherent torsion sheaves, one would like
to define \emph{contraherent cosheaves of contramodules} on formal
schemes (cf.~\cite[Section~2]{Pphil}).
 But how does one construct the gluing functors?

 Let $\fU=\Spf\fR$ be an affine formal scheme.
 We skip the discussion of the definition of the full subcategory
of \emph{contraadjusted\/ $\fR$\+contramodules} $\fR\Contra^\cta
\subset\fR\Contra$.
 It has been worked out in~\cite[Section~E.4]{Pcosh}
and~\cite[Example~7.12(3)]{PR}, under mild assumptions on~$\fR$.
 We would like the category $\fU\Ctrh$ of contraherent cosheaves of
contramodules on $\fU$ to be equivalent to $\fR\Contra^\cta$.
 But what would the functor of restriction to an affine open formal
subscheme do?

 Let $\fV=\Spf\fS$ be an affine open formal subscheme in an affine
formal scheme $\fU=\Spf\fR$.
 The functor ${-}|_\fV\:\fU\Ctrh\rarrow\fV\Ctrh$ is supposed to be
described on the level of contramodules as a functor of coextension
of scalars $\Hom^\fR(\fS,{-})$ (where $\Hom^\fR$ is our notation for
the Hom groups in $\fR\Contra$).
 
 Now let $\fP$ be a (contraadjusted) $\fR$\+contramodule.
 How does one construct an $\fS$\+contramodule structure on
$\Hom^\fR(\fS,\fP)$\,?

 There is a natural explicit general formula defining a contramodule
structure on the Hom group \emph{from a torsion module},
see~\eqref{Hom-contraaction-formula}.
 But $\fS$ is \emph{not} a torsion $\fR$\+module; it is
an $\fR$\+contramodule.
 There does \emph{not} seem to be a comparable explicit formula
defining a contramodule structure on the group of Hom \emph{from
a contramodule}.

 The Hom \emph{into} a contramodule is a contramodule, with infinite
summation operations on functions defined elementwise.
 So it is easy to construct an \emph{$\fR$\+contramodule} structure
on $\Hom^\fR(\fS,\fP)$ in terms of the $\fR$\+contramodule structure
on~$\fP$.
 But how does an \emph{$\fS$\+contramodule} structure on
$\Hom^\fR(\fS,\fP)$ emerge?

 Notice that the functor $\Hom^\fR(\fS,{-})\:\fR\Contra\rarrow
\fS\Contra$ is supposed to be right adjoint to the forgetful functor
$\fS\Contra\rarrow\fR\Contra$.
 But a right adjoint functor to the forgetful functor on
the contramodule categories, for a continuous homomorphism of
topological rings $\ff\:\fR\rarrow\fS$, does \emph{not} exist
in general, if only because the forgetful functor need not preserve
infinite direct sums~\cite[Section~2.9]{Pproperf}.
 (A more detailed discussion of a specific counterexample of this kind
is presented in
Example~\ref{nontaut-adjoints-need-not-exist} in this paper.)
 Furthermore, a mere existence of such a right adjoint would not be
sufficient for the aims of the theory of contraherent cosheaves
of contramodules.
 An explicit construction, allowing to establish exactness and
other properties, is very desirable.

 The main aim of the present paper is to work out general techniques
for constructing the desired $\fS$\+contramodule structure on
$\Hom^\fR(\fS,\fP)$.
 The reader can find the definition of a contraherent cosheaf of
contramodules over a formal scheme, worked out with complete details
(even though in much more narrow generality than the techniques
developed in the present paper may allow), in the recent
preprint~\cite{Pform}.

\subsection{The full-and-faithfulness approach}
 Succinctly put, one approach that we suggest works as follows.
 There may be no simple formula defining an $\fS$\+con\-tra\-mod\-ule
structure on $\Hom^\fR(\fS,\fP)$, but there are such simple formulas
making $\Hom^\fR(\fS,\fP)$ an $\fR$\+contramodule and
an $\fS$\+module.

 The idea is to prove that one of the forgetful functors
$\fS\Contra\rarrow\fR\Contra$ or $\fS\Contra\rarrow\fS\Modl$ is
fully faithful.
 Specifically, the functor $\fS\Contra\rarrow\fR\Contra$ is considered
in detail in this paper.

 Under certain assumptions, we not only prove that the functor
$\fS\Contra\rarrow\fR\Contra$ is fully faithful, but also describe
its essential image.
 In fact, we show that an $\fR$\+contramodule arises from
an $\fS$\+contramodule if and only if its underlying $\fR$\+module
arises from an $\fS$\+module.
 Then it follows immediately that there is a naturally defined
$\fS$\+contramodule structure on $\Hom^\fR(\fS,\fP)$, even if
this definition is not quite explicit.

 Under less restrictive assumptions, we have no explicit description
of the essential image, but the functor $\fS\Contra\rarrow\fR\Contra$ is
still provable to be fully faithful.
 It follows that the essential image enjoys certain closure properties;
specifically, it is a full subcategory closed under all limits and
finite colimits in $\fR\Contra$.
 Using such properties, one can show that the $\fR$\+contramodule
$\Hom^\fR(\fS,\fP)$ belongs to the full subcategory of
$\fS$\+contramodules in $\fR\Contra$.

\subsection{Toy example: adic Noetherian rings}
\label{introd-adic-Noetherian-toy-example-subsecn}
 Let $\fR$ be a Noetherian commutative ring, separated and complete
in the adic topology of an ideal $\fI\subset\fR$.
 What the $\fR$\+contramodules are in this context?

 More generally, let $I$ be an ideal in a commutative ring~$R$.
 An $R$\+module $P$ is said to be
an \emph{$I$\+contramodule}~\cite[Section~2]{Pmgm},
\cite[Sections~2 and~7]{Pcta} if one has $\Hom_R(R[s^{-1}],P)=0=
\Ext_R^1(R[s^{-1}],P)$ for all $s\in I$.
 It suffices to check this condition for the element~$s$ ranging over
any chosen set of generators of the ideal $I\subset R$
\,\cite[Theorem~5.1]{Pcta}.
 In the case of a finitely generated ideal $I\subset R$,
the $I$\+contramodule $R$\+modules are also known
as ``derived $I$\+complete modules''~\cite[Section~3.4]{BS},
\cite[Section Tag~091N]{SP}.

 Let $R$ be a Noetherian commutative ring and $I\subset R$ be an ideal.
 Denote by $\fR=\varprojlim_{n\ge1}R/I^n$ the $I$\+adic completion of
the ring $R$, endowed with the $I$\+adic topology, or equivalently,
the topology of projective limit of discrete rings $R/I^n$.
 Then the composition of forgetful functors $\fR\Contra\rarrow
\fR\Modl\rarrow R\Modl$ is a fully faithful functor $\fR\Contra
\rarrow R\Modl$.
 The essential image of the latter functor consists precisely of all
the $I$\+contramodule $R$\+modules~\cite[Theorem~B.1.1]{Pweak}.

 Now let $\fR$ and $\fS$ be Noetherian commutative rings, both
separated and complete in their respective adic topologies of
ideals $\fI\subset\fR$ and $\fJ\subset\fS$.
 Let $\ff\:\fR\rarrow\fS$ be a ring homomorphism such that
the ideal $\fJ$ is the extension of the ideal $\fI$ in $\fS$,
that is, $\fJ=\fS\ff(\fI)$.
 Let $\fP$ be an $\fR$\+contramodule (or equivalently,
an $\fI$\+contramodule $\fR$\+module).
 Why is $\Hom^\fR(\fS,\fP)=\Hom_\fR(\fS,\fP)$ an $\fS$\+contramodule?

 This is easy to prove~\cite[Lemma~D.4.2(a)]{Pcosh}.
 Clearly, $\Hom^\fR(\fS,\fP)$ is an $\fS$\+module.
 We need to show that it is a $\fJ$\+contramodule $\fS$\+module.
 By the result of~\cite[Theorem~5.1]{Pcta} cited above, it suffices
to check that the underlying $\fR$\+module of $\Hom^\fR(\fS,\fP)$ is
an $\fI$\+contramodule (here we are using the assumption that
the ideal $\fJ\subset\fS$ is spanned by the image of~$\fI$).
 The rest is an easy lemma: for any $\fR$\+module $M$, the functor
$\Hom_\fR(M,{-})$ takes $\fI$\+contramodules to
$\fI$\+contramodules~\cite[Lemma~6.1(b)]{Pcta}.

 What happens when one drops the Noetherianity assumption?
 Let $R$ be a commutative ring, $I\subset R$ be a finitely generated
ideal, and $\fR=\varprojlim_{n\ge1}R/I^n$ be the $I$\+adic completion
of~$R$ (endowed with the $I$\+adic~$=$~projective limit topology).
 Then the forgetful functor $\fR\Contra\rarrow R\Modl$ is still
fully faithful (cf.\ the discussion in
Section~\ref{introd-contramodule-to-module-subsecn} below).
 But the essential image of that forgetful functor is now the full
subcategory of all \emph{quotseparated $I$\+contramodule
$R$\+modules}~\cite[Proposition~1.5]{Pdc}.
 Here an $I$\+contramodule $R$\+module is said to be
\emph{quotseparated} if it is a quotient $R$\+module of
an $I$\+adically separated and complete $R$\+module.

 All $I$\+contramodule $R$\+modules are quotseparated whenever
the ring $R$ is Noetherian, or more generally, whenever the ideal
$I\subset R$ is \emph{weakly proregular} (in the sense
of~\cite{Sch,PSY}) \cite[Corollary~3.7]{Pdc}, or even under a somewhat
weaker assumption on the ideal~$I$ \,\cite[Remark~3.8]{Pdc}.
 But generally speaking, \emph{not} every $I$\+contramodule $R$\+module
is quotseparated~\cite[Example~2.6]{Pmgm}, \cite[Examples~1.8]{Pdc}.

 Let $\fR$ and $\fS$ be (not necessarily Noetherian) commutative rings,
both separated and complete in their respective adic topologies of
finitely generated ideals $\fI\subset\fR$ and $\fJ\subset\fS$.
 As above, we consider a ring homomorphism $\ff\:\fR\rarrow\fS$ such
that $\fJ=\fS\ff(\fI)$.
 Let $\fP$ be an $\fR$\+contramodule, or equivalently,
a quotseparated $\fI$\+con\-tra\-mod\-ule $\fR$\+module.
 Is $\Hom^\fR(\fS,\fP)=\Hom_\fR(\fS,\fP)$ an $\fS$\+contramodule?
 In other words, is the $\fJ$\+contramodule $\fS$\+module
$\Hom^\fR(\fS,\fP)$ quotseparated?
 We are \emph{unable} to answer this question in general.
 Under a suitable (pro)flatness assumption on the map~$\ff$,
the answer is positive; this is a special case of the much more
general assertion~(c) from 
Section~\ref{introd-contramodule-structure-on-Hom-general-case} below.

\subsection{Ring epimorphisms}
 What is entailed in the forgetful functor $\fS\Contra\rarrow\fR\Contra$
being fully faithful, for a morphism of topological rings
$\fR\rarrow\fS$\,?
 The case of abstract (discrete, nontopological) rings $R$ and $S$ is
relevant.

 Let $f\:R\rarrow S$ be a homomorphism of associative rings.
 Then the following six conditions are
equivalent~\cite[Section~XI.1]{Sten}:
\begin{enumerate}
\item the forgetful functor $S\Modl\rarrow R\Modl$ for
the categories of left modules is fully faithful;
\item the forgetful functor $\Modr S\rarrow\Modr R$ for
the categories of right modules is fully faithful;
\item any one or both of the two maps $S\rightrightarrows S\ot_RS$
induced by~$f$ are isomorphisms (or equivalently, surjective);
\item the two maps $S\rightrightarrows S\ot_RS$ are equal to each
other;
\item the multiplication map $S\ot_RS\rarrow S$ is an isomorphism
(or equivalently, injective);
\item $f$~is an epimorphism in the category of rings.
\end{enumerate}
 If the rings $R$ and $S$ are commutative, then conditions~(1\+-6)
are also equivalent to the following condition:
\begin{enumerate}
\setcounter{enumi}{6}
\item $f$~is an epimorphism in the category of commutative rings.
\end{enumerate}
 Notice that all the three maps $S\rightrightarrows S\ot_RS\rarrow S$
are ring homomorphisms if the rings $R$ and $S$ are commutative (but
not in the general case of associative rings).

\subsection{Topological rings}
 The main results of this paper are stated in the context of
associative, noncommutative rings.

 The maximal natural generality for the definition of a contramodule
is that of complete, separated right linear topological rings~$\fR$.
 The words ``right linear'' mean that open right ideals form a base of
neighborhoods of zero.
 To such a topological ring $\fR$, one assigns the abelian categories
of \emph{discrete right\/ $\fR$\+modules} $\Discr\fR$ and
\emph{left\/ $\fR$\+contramodules} $\fR\Contra$.

 A right $\fR$\+module $\cN$ is said to be \emph{discrete} if
the annihilator of any element of $\cN$ is an open right ideal in~$\fR$.
 The left $\fR$\+contramodules are the sets/abelian groups/left
$\fR$\+modules with infinite summation operations, similarly to
the discussion in Section~\ref{introd-contramodules-subsecn} above.
 For every family of elements $(r_x\in\fR)_{x\in X}$ converging to
zero in the topology of $\fR$, an infinite summation operation with
this family of coefficients is defined in any $\fR$\+contramodule~$\fP$.

 The setting in this paper is a bit more restrictive.
 For the main results, we make two additional assumptions.
 Firstly, our topological rings need to have a \emph{countable} base
of neighborhoods of zero.
 Secondly, we assume a base of neighborhoods of zero consisting of
open \emph{two-sided} ideals.

\subsection{Change of scalars and gluing over a formal scheme}
 Let $f\:R\rarrow S$ be a continuous homomorphism of right linear
topological rings.
 Then the forgetful/restriction-of-scalars functor
$f_\diamond\:\Discr S\rarrow\Discr R$ is well-defined.
 The functor~$f_\diamond$ assigns to a discrete right $S$\+module
$\cN$ its underlying discrete right $R$\+module~$\cN$.
 The functor~$f_\diamond$ always has a right adjoint functor
$f^\diamond\:\Discr R\allowbreak\rarrow\Discr S$, called the functor
of \emph{coextension of scalars}~\cite[Section~2.9]{Pproperf}.

 Let $\ff\:\fR\rarrow\fS$ be a continuous homomorphism of complete,
separated right linear topological rings.
 Then the forgetful/restriction-of-scalars functor
$\ff_\sharp\:\fS\Contra\rarrow\fR\Contra$ is well-defined.
 The functor~$\ff_\sharp$ assigns to a left $\fS$\+contramodule
$\fQ$ its underlying left $\fR$\+contramodule~$\fQ$.
 The functor~$\ff_\sharp$ always has a left adjoint functor
$\ff^\sharp\:\fR\Contra\rarrow\fS\Contra$, called the functor
of \emph{contraextension of scalars}~\cite[Section~2.9]{Pproperf}.

 Why are we so interested in a right adjoint functor to~$\ff_\sharp$,
which we want to compute as $\Hom^\fR(\fS,{-})$, and which does not
even always exist?
 What's wrong with the functor~$\ff^\sharp$ left adjoint
to~$\ff_\sharp$, which exists for any morphism of (complete,
separated, right linear) topological rings~$\ff$\,?

 The discussion in
Section~\ref{introd-cosheaves-of-contramodules-subsecn} suggests
considering \emph{cosheaves of contramodules}.
 This involves gluing contramodules over nonaffine formal schemes
using colocalization functors, and $\Hom^\fR(\fS,{-})$ are
the required colocalization functors.

 Why not consider \emph{sheaves of contramodules}?
 This would mean gluing contramodules over nonaffine formal schemes
using localization functors, and the localization functors~$\ff^\sharp$
are always available.
 The answer is that the functors of contraextension of
scalars~$\ff^\sharp$ do always exist, but even the most stringent
of reasonable assumptions on the morphism~$\ff$ do \emph{not} make them
\emph{well-behaved enough}.

 For the glued category of sheaves or cosheaves to be well-behaved
from the homological algebra standpoint, the functors used for
the gluing must be exact, if only on the suitable exact subcategories
of ``adjusted'' modules or contramodules.
 The open immersion morphisms in the theory of schemes are flat, and so
the related localization/extension-of-scalars functors are exact.

 But the functors of contraextension of scalars~$\ff^\sharp$ are
\emph{not} exact for morphisms of topological commutative rings
$\ff\:\fR\rarrow\fS$ corresponding to open immersions of affine formal
schemes $\Spf\fS\rarrow\Spf\fR$.
 A simple counterexample to this effect, involving Noetherian
commutative rings with adic topologies, is presented in
Example~\ref{contraextension-not-exact-counterex} at the end of
this paper (see also the preceding
Remark~\ref{exten-coexten-contraexten-exactness-properties-rem}).

 The functor of contraextension of scalars~$\ff^\sharp$ only seems to
be exact on the exact category of \emph{flat} $\fR$\+contramodules (see
Lemma~\ref{flat-contramodules-contraextension-of-scalars}).
 Flat contramodules can indeed be glued over formal schemes using
the functors of contraextension of scalars.
 The resulting exact category is known as the category of
\emph{flat pro-quasi-coherent pro-sheaves} on a formal scheme, or
more generally, on an ind-scheme~\cite[Section~7.11.3]{BD2},
\cite[Chapter~3]{Psemten}.
 But flat (contra)modules are relatively rare.

 The functors $\Hom^\fR(\fS,{-})$, which we construct in this paper,
are \emph{not} exact on the whole abelian categories of contramodules,
either.
 But the functor $\Hom^\fR(\fS,{-})$ is exact on the exact category
of \emph{cotorsion left\/ $\fR$\+contramodules} whenever $\fS$ is
a flat left $\fR$\+contramodule.
 The latter condition is satisfied for \emph{left proflat} morphisms
of topological rings~$\ff$; see
Section~\ref{introd-morphisms-of-topol-rings-subsecn} below.
 It is satisfied, in particular, in the case of a morphism of
commutative topological rings $\ff\:\fR\rarrow\fS$ corresponding to
an open immersion of affine formal schemes $\Spf\fS\rarrow\Spf\fR$.
 In fact, the functor $\Hom^\fR(\fS,{-})$ is exact on the even wider
exact category of \emph{contraadjusted} $\fR$\+contramodules in
the latter case.
 Contraadjusted (contra)modules exist in abundance, and even cotorsion
(contra)modules are numerous enough for many purposes.

 See also the discussion in~\cite[Section~2]{Pphil}.

\subsection{Classes of morphisms of topological rings}
\label{introd-morphisms-of-topol-rings-subsecn}
 Several classes of morphisms of topological rings are discussed in
detail in this paper.
 Borrowing the terminology from~\cite[Section Tag~0GX1]{SP}, we
consider \emph{taut maps} of right linear topological rings.
 All rings in~\cite{SP} are generally assumed to be commutative;
in the context of not necessarily commutative two-sided linear
topological rings, we are interested in what we call \emph{strongly
right taut} continuous ring maps.
 We use the later concept in order to define a more narrow class of
\emph{left proflat} continuous ring maps.
 We also discuss \emph{proepimorphisms} of two-sided linear
topological rings.

 More explicitly, the left proflat maps of complete, separated
two-sided linear topological rings with countable bases of neighborhoods
of zero $\ff\:\fR\rarrow\fS$ are described by the following sets of
data.
 There is a morphism $f\:(R_n)_{n\ge1}\rarrow (S_n)_{n\ge1}$ of
projective systems of rings, indexed by the positive integers~$n$.
 The transition maps $R_{n+1}\rarrow R_n$ and $S_{n+1}\rarrow S_n$
in both the projective systems are surjective ring maps.
 The ring maps $f_n\:R_n\rarrow S_n$ have the property that they
make $S_n$ a flat left $R_n$\+module for every $n\ge1$.
 An important assumption is that the induced $R_n$\+$S_{n+1}$\+bimodule
maps $R_n\ot_{R_{n+1}}S_{n+1}\rarrow S_n$ must be isomorphisms.

 The map $\ff\:\fR\rarrow\fS$ is the projective limit
$\ff=\varprojlim_{n\ge1}f_n$.
 The topologies on $\fR$ and $\fS$ are the topologies of projective
limit of discrete rings $R_n$ and~$S_n$.
 Then the map~$\ff$ is left proflat; and all the left proflat maps
of complete, separated two-sided linear topological rings with
countable bases of neighborhoods of zero can be obtained in this way.
 The map~$\ff$ is a a proepimorphism of topological rings if and only
if the maps~$f_n$ are ring epimorphisms for all $n\ge1$.

 More generally, a continuous homomorphism of two-sided linear
topological rings $f\:R\rarrow S$ is called a \emph{topological ring
proepimorphism} if, for every pair of open two-sided ideals
$I\subset R$ and $J\subset S$ such that $f(I)\subset J$, the induced
homomorphism of discrete rings $R/I\rarrow S/J$ is a ring epimorphism.

 Here are our results in this context:
\begin{enumerate}
\renewcommand{\theenumi}{\alph{enumi}}
\item A strongly right taut continuous map of two-sided linear
topological rings $f\:R\rarrow S$ is a topological ring proepimorphism
if and only if the functor of restriction of scalars on the categories
of discrete right modules $f_\diamond\:\Discr S\rarrow\Discr R$
is fully faithful
(Proposition~\ref{full-and-faithfulness-equivalent-conditions-prop-I}).
\item A strongly right taut continuous map of complete, separated
two-sided linear topological rings with countable bases of neighborhoods
of zero $\ff\:\fR\rarrow\fS$ is a topological ring proepimorphism if
and only if the functor of restriction of scalars on the categories of
separated left contramodules $\ff_\sharp\:\fS\Separ\rarrow\fR\Separ$ is
fully faithful
(Proposition~\ref{full-and-faithfulness-equivalent-conditions-prop-II}).
\item For a morphism of complete, separated right linear topological
rings with countable bases of neighborhoods of zero $\ff\:\fR\rarrow\fS$
such that the functor of restriction of scalars on the categories of
left contramodules $\ff_\sharp\:\fS\Contra\rarrow\fR\Contra$ takes flat
$\fS$\+contramodules to flat $\fR$\+contramodules,
the functor~$\ff_\sharp$ is fully faithful if and only if the similar
functor on the categories of separated left contramodules
$\ff_\sharp\:\fS\Separ\rarrow\fR\Separ$ is fully faithful, and if and
only if  he similar functor on the categories of flat left contramodules
$\ff_\sharp\:\fS\Flat\rarrow\fR\Flat$ is fully faithful
(Theorem~\ref{full-and-faithfulness-equivalent-conditions-thm}).
\item Consequently, for a left proflat map of complete, separated
two-sided linear topological rings with countable bases of neighborhoods
of zero $\ff\:\fR\rarrow\fS$, the functor of restriction of scalars on
the categories of left contramodules $\ff_\sharp\:\fS\Contra\rarrow
\fR\Contra$ is fully faithful if and only if $\ff$~is a topological ring
proepimorphism
(Corollary~\ref{full-and-faithfulness-equivalent-conditions-cor}).
\end{enumerate}

 Let us make a small comment concerning the comparison between
assertions~(b) and~(d) above.
 All the main results of this paper require the \emph{strong right
tautness assumption} on a ring map~$\ff$ in the case of \emph{separated}
contramodules.
 This already presumes that the topological rings $\fR$ and $\fS$ are
two-sided linear.

 In the context of the whole abelian categories (of \emph{not}
necessarily separated) contramodules, the even stronger \emph{left
proflatness} assumption on~$\ff$ is needed to make our arguments work.
 For some of the results, we even present counterexamples showing that
the strong right tautness assumption in lieu of the left proflatness
is \emph{not} sufficient; see
Section~\ref{introd-characterizing-the-essential-image-subsecn} below
and Examples~\ref{nonflat-pseudopullback-not-true-counterex}.

\subsection{Contramodule-to-module forgetful functors}
\label{introd-contramodule-to-module-subsecn}
 Let $\fR$ be a complete, separated right linear topological ring,
$R$ be a ring, and $\rho\:R\rarrow\fR$ be a ring homomorphism
whose image is dense in~$\fR$.
 Endow $R$ with the induced topology (where a base of neighborhoods of
zero is formed by the open right ideals $I=\rho^{-1}(\fI)$, where
$\fI$ ranges over the open right ideals in~$\fR$).
 Consider the composition of forgetful functors $\fR\Contra\rarrow
\fR\Modl\rarrow R\Modl$.

 When is the functor $\fR\Contra\rarrow R\Modl$ fully faithful?
 This question has been studied in~\cite[Theorem~B.1.1]{Pweak},
\cite[Theorem~2.1]{Psm}, \cite[Section~3]{Pper}, and in the greatest
generality in~\cite[Section~6]{Pcoun} (for a survey,
see~\cite[Section~3.8]{Prev}).
 In the case of a complete, separated right linear topological ring
$\fR$ with a countable base of neighorhoods of zero, the two main
results are:
\begin{enumerate}
\renewcommand{\theenumi}{\alph{enumi}}
\item The forgetful functor $\fR\Contra\rarrow R\Modl$ is fully
faithful if and only if the forgetful functor $\fR\Separ\rarrow
R\Modl$ is fully faithful, and if and only if the natural surjective map
$$
 \cN\ot_R\fP\lrarrow\cN\ocn_\fR\fP
$$
from the tensor product to the \emph{contratensor product} of
a discrete right $\fR$\+mod\-ule $\cN$ and a left $\fR$\+contramodule
$\fP$ is an isomorphism for all $\cN$ and~$\fP$, or equivalently,
for all cyclic discrete right $\fR$\+modules $\cN=\fR/\fI$ and
all free left $\fR$\+contramodules $\fP=\fR[[X]]$
\,\cite[Lemma~7.11]{PS1}, \cite[Theorem~6.2 and its proof]{Pcoun},
and Lemma~\ref{contramodule-to-module-full-and-faithful-lemma} below;
\item The equivalent conditions of item~(a) hold whenever the topology
on $R$ is a right Gabriel topology (i.~e., the full subcategory
$\Discr R$ is closed under extensions in $\Modr R$) that has
a (countable) base of neighborhoods of zero consisting of finitely
generated open right ideals in~$R$ \,\cite[Theorem~6.6
and Corollary~6.7]{Pcoun}.
\end{enumerate}

\subsection{Characterizing the essential image}
\label{introd-characterizing-the-essential-image-subsecn}
 In the context of assertion~(d) from
Section~\ref{introd-morphisms-of-topol-rings-subsecn},
the forgetful functor $\ff_\sharp\:\fS\Contra\rarrow\fR\Contra$ is
fully faithful, so the question of describing its essential image
becomes relevant.
 We answer this question under the additional assumption that
the contramodule-to-module forgetful $\fR\Contra\rarrow\fR\Modl$
is fully faithful, too.

 Here is the formulation of our result.
 Let $\ff\:\fR\rarrow\fS$ be a left proflat proepimorphism of
complete, separated two-sided linear topological rings with countable
bases of neighborhoods of zero.
 Assume additionally that the forgetful functor $\fR\Contra\rarrow
\fR\Modl$ is fully faithful.
 Then a left $\fR$\+contramodule $\fQ$ arises from
an $\fS$\+contramodule if and only if the underlying left
$\fR$\+module of $\fQ$ arises from an $\fS$\+module.
 Moreover, the $\fS$\+module structure on $\fQ$ is unique if it exists.
 It follows that the forgetful functor $\fS\Contra\rarrow
\fS\Modl$ is fully faithful, too.

 Notice that we do \emph{not} claim here that an $\fS$\+module
structure extending the given $\fR$\+module structure on
a set/abelian group $Q$ is always unique if it exists.
 This is \emph{not} true, as a left proflat proepimorphism of
topological rings \emph{need not} be an epimorphism of the underlying
abstract rings (see
Example~\ref{proepimorphism-not-epimorphism-example}
and Remark~\ref{non-R-contramodule-S-module-structure-not-unique}).
 Rather, the claim is that there is at most one $\fS$\+module structure
extending the given $\fR$\+module structure \emph{if the given\/
$\fR$\+module structure arises from an\/ $\fR$\+contramodule structure}.

 More generally, the topological proepimorphism assumption on
the map~$\ff$ can be dropped.
 Then the functor of restriction of scalars
$\ff_\sharp\:\fS\Contra\rarrow\fR\Contra$ is no longer fully faithful.
 In this context, we prove the following results, stated in terms of
the commutative square diagrams of forgetful functors.
 In all the three assertions, the forgetful functor $\fR\Contra\rarrow
\fR\Modl$ is assumed to be fully faithful:
\begin{enumerate}
\renewcommand{\theenumi}{\alph{enumi}}
\item For a taut continuous map of complete, separated right linear
topological rings with countable bases of neighborhoods of zero
$\ff\:\fR\rarrow\fS$, a right $\fS$\+module is discrete if and only if
its underlying right $\fR$\+module is discrete
(Proposition~\ref{topol-ring-map-discreteness-characterized}).
\item For a strongly right taut continuous map of complete, separated
two-sided linear topological rings with countable bases of neighborhoods
of zero $\ff\:\fR\rarrow\fS$, the commutative diagram of forgetful
functors
$$
 \xymatrix{
  \fS\Separ \ar@{>->}[r] \ar[d]_-{\ff_\sharp}
  & \fS\Modl \ar[d]^-{\ff_*} \\
  \fR\Separ \ar@{>->}[r] & \fR\Modl
 }
$$
is a pseudopullback square (up to category equivalence).
 Here $\ff_*\:\fS\Modl\rarrow\fR\Modl$ is the functor of restriction
of scalars acting on the module categories.
 This is our Proposition~\ref{separated-pseudopullback-prop}.
\item For a left proflat map of complete, separated two-sided linear
topological rings with countable bases of neighborhoods of zero
$\ff\:\fR\rarrow\fS$, the commutative diagram of forgetful functors
$$
 \xymatrix{
  \fS\Contra \ar@{>->}[r] \ar[d]_-{\ff_\sharp}
  & \fS\Modl \ar[d]^-{\ff_*} \\
  \fR\Contra \ar@{>->}[r] & \fR\Modl
 }
$$
is a pseudopullback square (up to category equivalence).
 This is our Theorem~\ref{pseudopullback-thm}.
\end{enumerate}
 The horizontal arrows with tails on both the diagrams above show fully
faithful functors.

 Once again, similarly to the comments at the end of
Section~\ref{introd-morphisms-of-topol-rings-subsecn},
the assertion~(b) (concerning separated contramodules) is provable
under the strong right tautness assumption on~$\ff$, while our proof
of the assertion~(c) (concerning arbitrary contramodules) requires
the left proflatness assumption on~$\ff$.

 However, \emph{unlike} in the contexts of
Section~\ref{introd-morphisms-of-topol-rings-subsecn} above and
Section~\ref{introd-contramodule-structure-on-Hom-general-case} below,
we actually present a counterexample showing that the assertion~(c)
of the present section is \emph{not} true with the left proflatness
assumption on~$\ff$ replaced by the strong right tautness assumption.
 This counterexample arises in the setting of quotseparated
contramodules over a (non-Noetherian) commutative ring with a finitely
generated ideal, which was discussed in
Section~\ref{introd-adic-Noetherian-toy-example-subsecn}. 
 See Examples~\ref{nonflat-pseudopullback-not-true-counterex}.

\subsection{Constructing the $\fS$-contramodule structure on the $\Hom$}
\label{introd-contramodule-structure-on-Hom-general-case}
 In the final analysis, it turns out that one can construct
the left $\fS$\+contramodule structure on $\Hom^\fR(\fS,\fP)$ even when
\emph{no} relevant forgetful functors are fully faithful.
 In the case of a separated left $\fR$\+contramodule $\fP$, one
computes the abelian group $\Hom^\fR(\fS,\fP)$ by representing $\fP$
naturally as the projective limit of left modules over the discrete
quotient rings of~$\fR$.
 The computation shows that $\Hom^\fR(\fS,\fP)$ is a projective limit
of left modules over the discrete quotient rings of~$\fS$, hence
a separated left $\fS$\+contramodule.

 In the case of a nonseparated $\fR$\+contramodule $\fP$, we use
a \emph{flat special precover} short exact sequence for~$\fP$,
whose existence was established in~\cite[Section~7]{PR}.
 This means a presentation of $\fP$ as the quotient $\fR$\+contramodule
of a flat $\fR$\+contramodule by a cotorsion one.
 It is, of course, important here that all flat contramodules are
separated (and all subcontramodules of separated contramodules
are separated).
 The assumption that $\ff$~is a left proflat map guarantees that $\fS$
is flat left $\fR$\+contramodule, so the functor $\Hom^\fR(\fS,{-})$
takes flat special precover short exact sequences in $\fR\Contra$ to
short exact sequences of abelian groups.
 Our results are:
\begin{enumerate}
\renewcommand{\theenumi}{\alph{enumi}}
\item For a taut continuous map of complete, separated right linear
topological rings with countable bases of neighborhoods of zero
$\ff\:\fR\rarrow\fS$, the contratensor product functor
${-}\ocn_\fR\fS$ takes discrete right $\fR$\+modules to discrete
right $\fS$\+modules.
 The functor $\ff^\bullet={-}\ocn_\fR\fS\:\Discr\fR\rarrow\Discr\fS$
is left adjoint to the functor of restriction of scalars
$\ff_\diamond\:\Discr\fS\rarrow\Discr\fR$
(Proposition~\ref{discrete-module-extension-of-scalars-prop}).
\item Given a strongly right taut continuous map of complete, separated
two-sided linear topological rings with countable bases of neighborhoods
of zero $\ff\:\fR\rarrow\fS$, for any separated left $\fR$\+contramodule
$\fP$ there is a natural separated left $\fS$\+contramodule structure
on the set/abelian group $\Hom^\fR(\fS,\fP)$, extending the obvious
left $\fS$\+module structure on $\Hom^\fR(\fS,\fP)$. 
 The functor $\ff^\uds=\Hom^\fR(\fS,{-})\:\fR\Separ\rarrow\fS\Separ$
is right adjoint to the functor of restriction of scalars
$\ff_\sharp\:\fS\Separ\rarrow\fR\Separ$
(Proposition~\ref{separated-contramodule-coextension-of-scalars-prop}).
\item Given a left proflat map of complete, separated two-sided linear
topological rings with countable bases of neighborhoods of zero
$\ff\:\fR\rarrow\fS$, for any left $\fR$\+contramodule $\fP$ there is
a natural left $\fS$\+contramodule structure on the set/abelian group
$\Hom^\fR(\fS,\fP)$, extending the obvious left $\fS$\+module structure
on $\Hom^\fR(\fS,\fP)$. 
 The functor $\ff^\uds=\Hom^\fR(\fS,{-})\:\fR\Contra\rarrow\fS\Contra$
is right adjoint to the functor of restriction of scalars
$\ff_\sharp\:\fS\Contra\allowbreak\rarrow\fR\Contra$
(Theorem~\ref{contramodule-coextension-of-scalars-thm}).
\end{enumerate}

\subsection*{Acknowledgement}
 I~am grateful to Michal Hrbek for very helpful discussions.
 In particular, the idea of the argument in
Example~\ref{proepimorphism-not-epimorphism-example} was suggested
by Michal.
 This research is supported by the project LQ100192601 Lumina
quaeruntur, funded by the Czech Academy of Sciences (RVO~67985840).

\Section{Preliminaries} \label{preliminaries-secn}

 The definition of a contramodule over topological ring goes back
to~\cite[Remark~A.3]{Psemi}, \cite[Section~1.2]{Pweak},
\cite[Section~2.1]{Prev}, and~\cite[Sections~1.2 and~5]{PR}.
 More detailed expositions of the material sketched in this section
can be found in~\cite[Section~2]{Pcoun}, \cite[Section~2]{Pproperf},
or~\cite[Sections~6\+-7]{PS1}.

 We denote by $R\Modl$ and $\Modr R$ the abelian categories of left
and right $R$\+modules, by $\Ab=\boZ\Modl$ the abelian category of
abelian groups, and by $\Sets$ the category of sets.
 The groups of morphisms in $R\Modl$ are denoted by $\Hom_R({-},{-})$,
while the groups of morphisms in $\Modr R$ are denoted by
$\Hom_{R^\rop}({-},{-})$ (where $R^\rop$ is the opposite ring to~$R$).

 A topological abelian group $A$ is said to have a \emph{linear
topology} if open subgroups form a base of neighborhoods of zero in~$A$.
 All the topological abelian groups in this paper are presumed to have
linear topologies.

 The \emph{completion} of a topological abelian group $A$ is
the abelian group $\fA=\varprojlim_{U\subset A}A/U$, where
the projective limit is taken over the directed poset of all open
subgroups $U\subset A$.
 The abelian group $\fA$ is endowed with the topology of projective
limit of the discrete abelian groups~$A/U$.
 The natural \emph{completion map} $\lambda_A\:A\rarrow\fA$ is
a continuous homomorphism of topological abelian groups.

 A topological abelian group $A$ is said to be \emph{separated} if
the map~$\lambda_A$ is injective, and \emph{complete} if
the map~$\lambda_A$ is surjective.
 The completion $\fA$ of any topological abelian group $A$ is
a complete and separated topological abelian group.

 A topological ring $R$ is said to be \emph{right linear} (or to have
a \emph{right linear topology}) if open right ideals form a base of
neighborhoods of zero in~$R$.
 Similarly, the topological ring $R$ is said to be \emph{two-sided
linear} (or to have a \emph{two-sided linear topology}) if open
two-sided ideals form a base of neighborhoods of zero.
 In the case of a commutative ring $R$, we speak simply of
a \emph{linear ring topology} or a \emph{linear topological ring}.

 For any right linear topological ring $R$, there exists a unique
topological ring structure on the completion $\fR$ of the topological
abelian group $R$ such that the completion map $\lambda_R\:R\rarrow\fR$
is a ring homomorphism.
 This ring topology makes $\fR$ a right linear topological ring.
 If the topological ring $R$ is two-sided linear, then so is its
completion~$\fR$.

 Let $R$ be a right linear topological ring.
 A right $R$\+module $\cN$ is said to be \emph{discrete} if, endowed
with the discrete topology, $\cN$ becomes a topological $R$\+module.
 The latter condition means that the action map $\cN\times R\rarrow\cN$
is continuous in the given topology on $R$ and the discrete topology
on~$\cN$.
 Equivalently, a right $R$\+module $\cN$ is discrete if and only if
the annihilator of every element of $\cN$ is an open right ideal in~$R$.

 The full subcategory of discrete right $R$\+modules is denoted by
$\Discr R\subset\Modr R$.
 The full subcategory $\Discr R$ is closed under subobjects, quotients,
and infinite direct sums in $\Modr R$.
 In other words, one can say that $\Discr R$ is a \emph{hereditary
pretorsion class} in $\Modr R$ \,\cite[Sections~VI.1 and~VI.4]{Sten}.
 It follows that $\Discr R$ is a Grothendieck abelian category.
 The inclusion functor $\Discr R\rarrow\Modr R$ is exact and preserves
infinite direct sums (but \emph{not} infinite products).

 Let $\fR$ be the completion of~$R$.
 Then the $R$\+module structure on any discrete right $R$\+module $\cN$
can be extended to a discrete right $\fR$\+module structure on $\cN$ in
a unique way.
 Hence the functor of restriction of scalars $\Modr\fR\rarrow\Modr R$
induces an equivalence of categories $\Discr\fR\simeq\Discr R$.

 For any abelian group $A$ and any set $X$, we denote by $A[X]=A^{(X)}$
the direct sum of copies of $A$ indexed by the elements of~$X$.
 The elements of $A[X]$ are interpreted as finite formal linear
combinations $\sum_{x\in X}a_xx$ of elements of $X$ with
the coefficients $a_x\in A$.
 The word ``finite'' means that $a_x=0$ for all but a finite subset of
indices $x\in X$.

 Let $\fA$ be a complete, separated topological abelian group, and
let $X$ be a set.
 We put $\fA[[X]]=\varprojlim_{\fU\subset\fA}(\fA/\fU)[X]$, where
the projective limit is taken over the directed poset of all open
subgroups $\fU\subset\fA$.
 The elements of $\fA[[X]]$ are interpreted as infinite formal linear
combinations $\sum_{x\in X}a_xx$ of elements of $X$ whose families
of coefficients $a_x\in\fA$ converge to zero in the topology of~$\fA$.
 The latter condition means that, for every open subgroup
$\fU\subset\fA$, the set of indices $x\in X$ for which $a_x\notin\fU$
is finite.

 The assignment $X\longmapsto\fA[[X]]$ is a covariant functor
$\Sets\rarrow\Ab$, but we will mostly consider it as a functor
$\Sets\rarrow\Sets$.
 Specifically, any map of sets $f\:X\rarrow Y$ induces a map of
sets/abelian groups $\fA[[f]]\:\fA[[X]]\rarrow\fA[[Y]]$ given by
the rule $\sum_{x\in X}a_xx\longmapsto\sum_{y\in Y}b_yy$, where
$b_y=\sum_{x\in X}^{f(x)=y}a_x$.
 Here the infinite sum in the latter formula is computed as
the limit of finite partial sums in the topology of~$\fA$.
 The limit is well-defined and exists due to the completeness and
separatedness conditions on $\fA$ and the zero-convergence condition
on the family of elements $(a_x\in\fA)_{x\in X}$.

 Let $\fR$ be a complete, separated right linear topological ring.
 Then, for any set $X$, there is a natural map of sets
$\epsilon_X\:X\rarrow\fR[[X]]$ taking every element $x\in X$
to the formal linear combination $\sum_{y\in X}r_yy$ with
the coefficients $r_x=1$ and $r_y=0$ for $y\ne x$.
 Furthermore, for any set $X$, there is a natural map of sets/abelian
groups $\phi_X\:\fR[[\fR[[X]]]]\rarrow\fR[[X]]$, taking a formal
linear combination of formal linear combinations of elements of $X$
to a formal linear combination of elements of~$X$.
 The ``opening of parentheses'' map~$\phi_X$ is constructed in terms of
the operations of multiplication of pairs of elements in $\fR$ and
infinite sums of zero-convergent families of elements in~$\fR$.
 The conditions that the multiplication map $\fR\times\fR\rarrow\fR$
is continuous and open right ideals form a base of neighborhoods of zero
in $\fR$ guarantee the zero-convergence of the families of elements
whose sums one needs to compute when opening the parentheses.

 The natural transformations~$\epsilon_X$ and~$\phi_X$ define
the structure of a \emph{monad} on the category of sets on the functor
$\fR[[{-}]]$ (we suggest the discussion
in~\cite[Section~6]{PS1} for an introductory exposition).
 One usually speaks about ``algebras over a monad'', but in our context
we prefer to call them \emph{modules over a monad}.
 The category of left $\fR$\+contramodules $\fR\Contra$ is defined
as the category of modules over the monad $\fR[[{-}]]$ on $\Sets$.
 Explicitly, this means that a \emph{left\/ $\fR$\+contramodule} $\fP$
is a set endowed with a map of sets $\pi_\fP\:\fR[[\fP]]\rarrow\fP$,
called the \emph{left\/ $\fR$\+contraaction map}.
 The usual axioms of (contra)unitality and (contra)associativity,
involving the maps~$\epsilon_\fP$ and~$\phi_\fP$, are imposed
on the contraaction map~$\pi_\fP$.

 For any abstract (nontopological) ring $R$, there is a similar monad
structure on the functor $X\longmapsto R[X]$ on the category of sets,
and the category of modules over the monad $R[{-}]$ is naturally
equivalent (in fact, isomorphic) to the category of left
$R$\+modules~\cite[Section~6.1]{PS1}.
 For a complete, separated topological ring $\fR$, the set $\fR[X]$
is naturally embedded into the set $\fR[[X]]$ as the subset of finite
formal linear combinations in the set of zero-convergent infinite ones.
 The inclusion $\fR[X]\rarrow\fR[[X]]$ is a natural transformation that
is a morphism of monads on $\Sets$, and it induces a forgetful functor
$\fR\Contra\rarrow\fR\Modl$, which assigns to every $\fR$\+contramodule
$\fP$ its underlying $\fR$\+module~$\fP$.
 Essentially, the structure of an $\fR$\+contramodule is the structure
of infinite summation operations on $\fP$ with zero-convergent families
of coefficients from~$\fR$; the functor $\fR\Contra\rarrow\fR\Modl$
forgets the infinite summation operations and only keeps
the finite ones.

 The category of left $\fR$\+contramodules $\fR\Contra$ is a locally
presentable abelian category with enough projective objects.
 The forgetful functor $\fR\Contra\rarrow\fR\Modl$ is exact and
preserves infinite products (but \emph{not} infinite direct sums).
 The projective objects of $\fR\Contra$ are precisely the direct
summands of the \emph{free left\/ $\fR$\+contramodules} $\fR[[X]]$,
where $X\in\Sets$.
 Here the left $\fR$\+contramodule structure on $\fR[[X]]$ is given
by the contraaction map $\pi_{\fR[[X]]}=\phi_X$ (this is a special
case of the general construction of the ``free algebra/module over
a monad'').
 For any left $\fR$\+contramodule $\fQ$ and any set $X$, there is
a natural isomorphism of abelian groups
$$
 \Hom^\fR(\fR[[X]],\fQ)\simeq\fQ^X=\Hom_\Sets(X,\fQ),
$$
where $\Hom^\fR({-},{-})$ denotes the group of morphisms in the abelian
category $\fR\Contra$.

 Let $\cN$ be a discrete right $\fR$\+module and $W$ be an abelian
group.
 Then the group of abelian group maps $\fQ=\Hom_\boZ(\cN,W)$ has
a natural structure of left $\fR$\+contramodule given by the formula
similar to the one in Section~\ref{introd-contramodules-subsecn}:
$$
 \pi\left(\sum\nolimits_{q\in\fQ}r_qq\right)(y)=
 \sum\nolimits_{q\in\fQ}q(yr_q)\,\in\,W
$$
for all $y\in\cN$.
 Here $(r_q)_{q\in\fQ}$ is a zero-convergent family of elements
in $\fR$, and the sum in the right-hand side of the formula above
is finite because $yr_q=0$ in $\cN$ for all but a finite subset of
indices $q\in\fQ$.

 Let $\cN$ be a discrete right $\fR$\+module and $\fP$ be a left
$\fR$\+contramodule.
 Then the \emph{contratensor product} $\cN\ocn_\fR\fP$ is an abelian
group constructed as the cokernel of (the difference of) the natural
pair of abelian group maps
$$
 \cN\ot_\boZ\fR[[\fP]]\,\rightrightarrows\,\cN\ot_\boZ\fP.
$$
 Here the first map $\cN\ot_\boZ\fR[[\fP]]\rarrow\cN\ot_\boZ\fP$ is
induced by the contraaction map $\pi_\fP\:\fR[[\fP]]\rarrow\fP$.
 The second map $\cN\ot_\boZ\fR[[\fP]]\rarrow\cN\ot_\boZ\fP$,
constructed in terms of the discrete right action of $\fR$ in $\cN$,
is given by the formula
$$
 y\ot\sum\nolimits_{p\in\fP}r_pp\,\longmapsto\,
 \sum\nolimits_{p\in\fP}yr_p\ot p.
$$
 The sum in the right-hand side is finite because $yr_p=0$ in $\cN$
for all but a finite subset of indices $p\in\fP$.

 For any discrete right $\fR$\+module $\cN$, andy left
$\fR$\+contramodule $\fP$, and any abelian group $W$, there is
a natural adjunction isomorphism of abelian groups
$$
 \Hom_\boZ(\cN\ocn_\fR\fP,\>W)\simeq
 \Hom^\fR(\fP,\Hom_\boZ(\cN,W)).
$$
 For any discrete right $\fR$\+module $\cN$ and any set $X$, there
is a natural isomorphism of abelian groups
$$
 \cN\ocn_\fR\fR[[X]]\simeq\cN[X]=\cN^{(X)}.
$$
 The contratensor product functor
$$
 \ocn_\fR\:\Discr\fR\times\fR\Contra\lrarrow\Ab
$$
preserves all colimits in both of its arguments.

 Let $\fP$ be a left $\fR$\+contramodule and $\fA\subset\fR$ be
a closed subgroup in the topological abelian group~$\fR$.
 Then we denote by $\fA\tim\fP\subset\fP$ the image of the composition
$$
 \fA[[\fP]]\lrarrow\fR[[\fP]]\lrarrow\fP
$$
of the natural inclusion $\fA[[\fP]]\hookrightarrow\fR[[\fP]]$
with the contraaction map $\pi_\fP\:\fR[[\fP]]\rarrow\fP$.
 Denoting by $A\cdot P\subset P$ the subgroup spanned by the products
$ap\in P$ ($a\in A$, \,$p\in P$) for any given ring $R$,
left $R$\+module $P$, and additive subgroup $A\subset R$, one obviously
has $\fA\cdot\fP\subset\fA\tim\fP$.

 For any closed left ideal $\fJ\subset\fR$ and any left
$\fR$\+contramodule $\fP$, the subgroup $\fJ\tim\fP$ is
a left $\fR$\+subcontramodule in~$\fP$.
 For any closed right ideal $\fJ\subset\fR$ and any set $X$, one has
$$
 \fJ\tim\fR[[X]]=\fJ[[X]]\subset\fR[[X]].
$$
 For any open right ideal $\fI\subset\fR$, there is a natural
isomorphism of abelian groups
$$
 (\fR/\fI)\ocn_\fR\fP\simeq\fP/(\fI\tim\fP).
$$

 For any open two-sided ideal $\fI\subset\fR$, the left
$\fR$\+contramodule structure on the quotient contramodule
$\fP/(\fI\tim\fP)$ comes from a left module structure over
the discrete quotient ring $\fR/\fI$.
 In this context, for any right $\fR/\fI$\+module $N$, there is
a natural isomorphism of abelian groups
$$
 N\ocn_\fR\fP\simeq N\ot_{\fR/\fI}\bigl(\fP/(\fI\tim\fP)\bigr).
$$

\Section{Separated and Flat Contramodules}
\label{separated-and-flat-secn}

 This section is based on the results of~\cite[Section~E.1]{Pcosh}
and~\cite[Section~6]{PR}.
 Another exposition can be found in~\cite[Section~8]{Pflcc}.

 Let $\fR$ be a complete, separated right linear topological ring.
 A left $\fR$\+contramodule $\fF$ is said to be \emph{flat} if
the contratensor product functor
$$
 {-}\ocn_\fR\fF\:\Discr\fR\lrarrow\Ab
$$
is exact.
 Since the direct limit functors are exact in the abelian category
$\Ab$, the class of flat left $\fR$\+contramodules is closed under
direct limits in $\fR\Contra$.

 For a complete, separated two-sided linear topological ring $\fR$,
all discrete right $\fR$\+modules are direct limits of right modules
over the discrete quotient rings of $\fR$, and all short exact sequences
of discrete right $\fR$\+modules are direct limits of short exact
sequences of right modules over the discrete quotient rings of~$\fR$.
 It follows that a left $\fR$\+contramodule $\fF$ is flat if and only
if the left $\fR/\fI$\+module $\fF/(\fI\tim\fF)$ is flat for every open
two-sided ideal $\fI\subset\fR$.
 It suffices to check this condition for ideals $\fI$ belonging to
any chosen base of neighborhoods of zero in $\fR$ consisting of open
two-sided ideals.

 For any $\fR$\+contramodule $\fP$, consider the projective limit of
abelian groups
$$
 \Lambda_\fR(\fP)=\varprojlim\nolimits_{\fI\subset\fR}
 \fP/(\fI\tim\fP)
$$
taken over the directed poset of all open right ideals $\fI\subset\fR$.
 The abelian group $\Lambda_\fR(\fP)$ comes together with the natural
\emph{completion map} $\lambda_{\fR,\fP}\:\fP\rarrow\Lambda_\fR(\fP)$.
 In fact, there is a natural left $\fR$\+contramodule structure on
$\Lambda_\fR(\fP)$, making~$\lambda_{\fR,\fP}$ an $\fR$\+contramodule
morphism~\cite[Proposition~5.2]{Pcoun}.
 Put $\Omega_\fR(\fP)=\ker(\fP\to\Lambda_\fR(\fP))=
\bigcap_{\fI\subset\fR}(\fI\tim\fP)\subset\fP$.
 It follows that $\Omega_\fR(\fP)$ is an $\fR$\+subcontramodule
of~$\fP$.

 The left $\fR$\+contramodule $\fP$ is said to be \emph{separated} if
$\lambda_{\fR,\fP}$~is an injective map, and $\fP$ is said to be
\emph{complete} if the map~$\lambda_{\fR,\fP}$ is surjective.
 The left $\fR$\+contramodule $\Lambda_\fR(\fP)$ is always separated.
 The subcontramodule $\Omega_\fR(\fP)\subset\fP$ is called
the \emph{nonseparatedness kernel} of~$\fP$.

\begin{lem} \label{complete-separated-flat-contramodules-lemma}
 Let\/ $\fR$ be a complete, separated right linear topological ring
with a \emph{countable} base of neighborhoods of zero.
 Then \par
\textup{(a)} all left\/ $\fR$\+contramodules are complete; \par
\textup{(b)} for any nonzero left\/ $\fR$\+contramodule\/ $\fP$,
there exists an open right ideal\/ $\fI\subset\fR$ such that\/
$\fI\tim\fP\varsubsetneq\fP$; \par
\textup{(c)} all flat left\/ $\fR$\+contramodules are separated.
\end{lem}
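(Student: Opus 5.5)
Fix a countable base of neighborhoods of zero $\fR\supset\fI_1\supset\fI_2\supset\dotsb$ consisting of open right ideals. Parts~(a) and~(b) will be proved directly from the contraassociativity axiom, by building suitable zero-convergent formal linear combinations. Part~(c) will be deduced from (a) and~(b).

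For~(a), take an element of $\Lambda_\fR(\fP)$. It is a compatible sequence of classes $\bar p_n\in\fP/(\fI_n\tim\fP)$. I will choose lifts $p_n\in\fP$. Then $p_{n+1}-p_n\in\fI_n\tim\fP$, so $p_{n+1}-p_n=\pi_\fP(\sum_{x\in X_n}r_{n,x}x)$ for some zero-convergent family with $r_{n,x}\in\fI_n$ and $X_n\subset\fP$. Next I concatenate all these families into one formal linear combination indexed by $\coprod_n X_n$. Its coefficients still converge to zero: for every open right ideal $\fI$, only finitely many $\fI_n$ fail to lie in $\fI$, and each of those contributes only finitely many coefficients outside~$\fI$. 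Let $\pi_\fP$ of this combination be $s$, and put $p=p_1+s$. By contraassociativity, $\pi_\fP$ of a concatenation equals the (finite or telescoping) sum of $\pi_\fP$ of its pieces. Hence $p-p_m$ is the value of $\pi_\fP$ on the tail $\coprod_{n\ge m}X_n$, which has all coefficients in $\fI_m$. So $p-p_m\in\fI_m\tim\fP$, and $\lambda_{\fR,\fP}(p)$ is the given element.

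For~(b), assume $\fI\tim\fP=\fP$ for all open right ideals~$\fI$, and let $p_0\in\fP$. I build a rooted tree. The root is $p_0$. Every vertex $v$ at depth $n$ is expanded as $v=\pi_\fP(\sum_w r_w w)$ over its children $w$, with coefficients $r_w\in\fK_{n+1}$, where $\fK_{n+1}$ is an open right ideal chosen adaptively. For a vertex $v$, let $c_v$ be the product of the coefficients along the path from the root, ordered from the root. Right linearity gives no control over left multiples, so the adaptive choice is needed to make the family $c_v$ at depth $n{+}1$ lie in $\fI_{n+1}$. At depth $n$ the $c_v$ converge to zero, so only finitely many lie outside $\fI_{n+1}$. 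For those finitely many, continuity of multiplication gives an open right ideal $\fK_{n+1}$ with $c_v\fK_{n+1}\subset\fI_{n+1}$. Now let $X$ be the set of vertices and consider the free contramodule $\fR[[X]]$ with its map to $\fP$. The elements $d_v=v-\sum_w r_ww$ all map to zero. The formal sum $\sum_v c_vd_v$, taken over all vertices, converges in $\fR[[X]]$ and telescopes to the root. Applying contraassociativity to the contramodule morphism $\fR[[X]]\rarrow\fP$, the root $p_0$ equals $\sum c_v\cdot0=0$. Hence $\fP=0$.

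For~(c), let $\fF$ be flat and put $\Omega=\Omega_\fR(\fF)$. By~(a) there is a short exact sequence $0\rarrow\Omega\rarrow\fF\rarrow\Lambda_\fR(\fF)\rarrow0$. The goal is $\fI\tim\Omega=\Omega$ for every open right ideal $\fI$; then $\Omega=0$ by~(b). For $\fJ\subset\fI$, exactness of ${-}\ocn_\fR\fF$ on $0\rarrow\fI/\fJ\rarrow\fR/\fJ\rarrow\fR/\fI\rarrow0$ identifies $(\fI/\fJ)\ocn_\fR\fF$ with $(\fI\tim\fF)/(\fJ\tim\fF)$. Take $p\in\Omega$. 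Running the construction of~(a) along the ideals $\fJ=\fI_n\subset\fI$, this identification lets me write $p$ as $\pi_\fF$ of a zero-convergent combination with coefficients in $\fI$. The hard part is to arrange that the elements appearing in this combination themselves lie in $\Omega$, rather than merely in $\fF$. I expect this to be the main obstacle. I would try to handle it by passing to the projective limit over $n$ of the exact sequences above, using the countable base to get Mittag-Leffler surjectivity, and thereby show $\fI\tim\fF\cap\Omega=\fI\tim\Omega$.
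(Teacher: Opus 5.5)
Your proofs of (a) and (b) are correct. They are direct versions of the arguments behind \cite[Lemmas~6.3(b) and~6.14]{PR}, which the paper simply cites. In (b) you should add the short induction showing that, at each fixed depth, the family $(c_v)$ is zero-convergent. Fix an open right ideal $\fI$. Children of vertices with $c_v\in\fI$ stay in $\fI$, because $\fI$ is a right ideal. For the finitely many remaining $v$, use continuity of $r\mapsto c_vr$ together with zero-convergence of $(r_w)$. Combined with $c_v\in\fI_n$ at depth~$n$, this gives zero-convergence over the whole tree.

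Part (c) has a genuine gap, exactly at the step you flag, and the remedy you propose cannot close it. Every group in the sequences $0\to(\fI/\fJ)\ocn_\fR\fF\to(\fR/\fJ)\ocn_\fR\fF\to(\fR/\fI)\ocn_\fR\fF\to0$ has the form $\cN\ocn_\fR\fF\simeq\cN\ocn_\fR\Lambda_\fR(\fF)$, since $\cN\ocn_\fR\Omega\to\cN\ocn_\fR\fF$ is zero (cf.\ Corollary~\ref{separated-quotient-contratensor-flatness-cor}(a)). So these sequences and their projective limits only see $\Lambda_\fR(\fF)$ and carry no information about $\Omega$. Concretely, the $q_i$ in $p\equiv\sum_i r_iq_i \pmod{\fJ\tim\fF}$ are arbitrary elements of $\fF$. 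Moreover, an iteration carried out inside $\fF$ cannot be shown to converge to $p$, because that needs $\bigcap_n\fI_n\tim\fF=0$, which is the claim itself.

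The missing idea is a $\mathrm{Tor}_1$-vanishing (Artin--Rees type) statement, proved inside a \emph{separated} contramodule covering $\fF$.
Take a surjection $\fR[[X]]\to\fF$ with kernel $\fH$. For open right ideals $\fJ\subset\fI$, combine three facts: flatness of $\fF$ applied to $0\to\fI/\fJ\to\fR/\fJ\to\fR/\fI\to0$, right exactness of $\ocn_\fR$, and the isomorphism $\cN\ocn_\fR\fR[[X]]\simeq\cN[X]$. Together they give $\fH\cap\fI[[X]]\subset\fI\cdot\fH+(\fH\cap\fJ[[X]])$. Now your iteration along $\fJ=\fI_{n_1}\supset\fI_{n_2}\supset\dotsb$ does work: the remainders lie in $\fI_{n_j}[[X]]$ and $\bigcap_n\fI_n[[X]]=0$. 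It yields $\fH\cap\fI[[X]]=\fI\tim\fH$.

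To finish, let $p\in\fR[[X]]$ lift an element of $\Omega_\fR(\fF)$, and write $p=h_n+a_n$ with $h_n\in\fH$ and $a_n\in\fI_n[[X]]$. Then $h_{n+1}-h_n\in\fH\cap\fI_n[[X]]=\fI_n\tim\fH$. Part~(a) applied to $\fH$ gives $h\in\fH$ with $h-h_n\in\fI_n\tim\fH$ for all $n$. Hence $p-h\in\bigcap_n\fI_n[[X]]=0$, so $\Omega_\fR(\fF)=0$, and (b) is not even needed. If you prefer your intended endgame: the same vanishing gives exactness of $(\fR/\fI)\ocn_\fR{-}$ on $0\to\Omega\to\fF\to\Lambda_\fR(\fF)\to0$, because $\Lambda_\fR(\fF)$ is flat. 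That yields $\Omega=\fI\tim\Omega$ for all $\fI$, and you conclude by (b).
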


\begin{proof}
 Part~(a) is~\cite[Lemma~6.3(b)]{PR} or~\cite[Theorem~5.3]{Pcoun}.
 Part~(b) is~\cite[Lemma~6.14]{PR}.
 Part~(c) is~\cite[Corollary~6.15]{PR}.
 The case of a two-sided linear topological ring $\fR$ is covered
by~\cite[Lemmas~E.1.1 and~E.1.2, and Corollary~E.1.7]{Pcosh}.
 See~\cite[Lemmas~8.1, 8.2, and~8.3]{Pflcc} for further details
and references.
\end{proof}

 The now-classical counterexample of a nonseparated contramodule over
the topological ring $k[[x]]$ of formal power series in one variable~$x$
over a field~$k$ (with the $x$\+adic topology) or the topological ring
of $p$\+adic integers $\boZ_p$ (with the $p$\+adic topology) is
discussed, under various guises, in~\cite[Example~2.5]{Sim},
\cite[Example~3.20]{Yek0}, \cite[Example~4.33]{PSY},
\cite[Section~A.1.1]{Psemi}, \cite[Section~1.5]{Prev},
\cite[Example~2.7(1)]{Pcta}.

\begin{cor} \label{separated-quotient-contratensor-flatness-cor}
 Let\/ $\fR$ be a complete, separated right linear topological ring with
a \emph{countable} base of neighborhoods of zero.
 In this setting: \par
\textup{(a)} For any discrete right\/ $\fR$\+module\/ $\cN$ and any
left\/ $\fR$\+contramodule\/ $\fP$, the surjective\/
$\fR$\+contramodule morphism\/ $\fP\rarrow\Lambda_\fR(\fP)$ induces
an isomorphism
$$
 \cN\ocn_\fR\fP\simeq\cN\ocn_\fR\Lambda_\fR(\fP).
$$ \par
\textup{(b)} A left\/ $\fR$\+contramodule\/ $\fF$ is flat if and only
if the left\/ $\fR$\+contramodule\/ $\Lambda_\fR(\fF)$ is flat.
 If this is the case, then the morphism\/ $\fF\rarrow\Lambda_\fR(\fF)$
is an isomorphism and\/ $\Omega_\fR(\fF)=0$.
\end{cor}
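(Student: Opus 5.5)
Part~(a) reduces to the case of a cyclic discrete module. Since the contratensor product ${-}\ocn_\fR\fP$ preserves colimits in the first argument, and every discrete right $\fR$\+module $\cN$ is a cokernel of a morphism between direct sums of cyclic discrete modules $\fR/\fI$ (with $\fI$ open right ideals), it suffices to check that $(\fR/\fI)\ocn_\fR\fP\rarrow(\fR/\fI)\ocn_\fR\Lambda_\fR(\fP)$ is an isomorphism for every open right ideal $\fI\subset\fR$. Naturality of the comparison map then gives the general case by the five lemma, or directly by right exactness.

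By the isomorphism $(\fR/\fI)\ocn_\fR\fP\simeq\fP/(\fI\tim\fP)$ from Section~\ref{preliminaries-secn}, the claim becomes: the surjection $\fP\rarrow\Lambda_\fR(\fP)$, whose kernel is $\Omega_\fR(\fP)$, induces an isomorphism $\fP/(\fI\tim\fP)\simeq\Lambda_\fR(\fP)/(\fI\tim\Lambda_\fR(\fP))$. Surjectivity holds because $\fP\rarrow\Lambda_\fR(\fP)$ is surjective by Lemma~\ref{complete-separated-flat-contramodules-lemma}(a). For injectivity, note that the image of $\fI\tim\fP$ equals $\fI\tim\Lambda_\fR(\fP)$: the operation $\tim$ is defined via the contraaction, and $\fI[[\fP]]\rarrow\fI[[\Lambda_\fR(\fP)]]$ is surjective because $\fP\rarrow\Lambda_\fR(\fP)$ is. So the kernel of $\fP\rarrow\Lambda_\fR(\fP)/(\fI\tim\Lambda_\fR(\fP))$ is $\fI\tim\fP+\Omega_\fR(\fP)$. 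Finally, $\Omega_\fR(\fP)=\bigcap_{\fJ}(\fJ\tim\fP)\subset\fI\tim\fP$, so this kernel is just $\fI\tim\fP$, proving~(a).

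For~(b), the first assertion follows at once from~(a): the functors ${-}\ocn_\fR\fF$ and ${-}\ocn_\fR\Lambda_\fR(\fF)$ are naturally isomorphic on $\Discr\fR$, so one is exact if and only if the other is. If $\fF$ is flat, Lemma~\ref{complete-separated-flat-contramodules-lemma}(c) says $\fF$ is separated, so $\lambda_{\fR,\fF}$ is injective. It is also surjective by Lemma~\ref{complete-separated-flat-contramodules-lemma}(a), hence an isomorphism, and $\Omega_\fR(\fF)=\ker\lambda_{\fR,\fF}=0$.

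The only delicate point is the identification of the image of $\fI\tim\fP$ with $\fI\tim\Lambda_\fR(\fP)$. This uses that $\lambda_{\fR,\fP}$ is a contramodule morphism, which is \cite[Proposition~5.2]{Pcoun} as cited above, together with its surjectivity. Once that is in place, the rest is formal.
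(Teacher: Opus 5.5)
Your proof is correct and rests on the same two facts as the paper's: surjectivity of $\fP\rarrow\Lambda_\fR(\fP)$, from Lemma~\ref{complete-separated-flat-contramodules-lemma}(a), and the inclusion $\Omega_\fR(\fP)\subset\fI\tim\fP$ for every open right ideal~$\fI$. The paper skips your reduction to cyclic $\cN=\fR/\fI$: it observes elementwise that $\cN\ocn_\fR\Omega_\fR(\fP)\rarrow\cN\ocn_\fR\fP$ is zero, then applies right exactness of $\cN\ocn_\fR{-}$ to $\Omega_\fR(\fP)\rarrow\fP\rarrow\Lambda_\fR(\fP)\rarrow0$. Your step ``the image of $\fI\tim\fP$ is $\fI\tim\Lambda_\fR(\fP)$'' is exactly that right exactness in the cyclic case, and part~(b) is argued just as in the paper.
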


\begin{proof}
 Part~(a): it follows from the constructions that the inclusion of
$\fR$\+con\-tra\-mod\-ules $\Omega_\fR(\fP)\rarrow\fP$ induces a zero
map $\cN\ocn_\fR\Omega_\fR(\fP)\rarrow\cN\ocn_\fR\fP$.
 Indeed, if $y\in\cN$ and $p\in\fP$ are elements for which there
exists an open right ideal $\fI\subset\fR$ such that $y\fI=0$
and $p\in\fI\tim\fP$, then the element $y\ot p$ is annihilated
by the surjective map $\cN\ot_\boZ\fP\rarrow\cN\ocn_\fR\fP$.
 This observation does not depend on the assumption of a countable
base of neighborhoods of zero in $\fR$ yet.

 Now under the latter assumption, we have a right exact sequence of
$\fR$\+con\-tra\-mod\-ules $\Omega_\fR(\fP)\rarrow\fP\rarrow
\Lambda_\fR(\fP)\rarrow0$ by
Lemma~\ref{complete-separated-flat-contramodules-lemma}(a).
 Since the functor $\cN\ocn_\fR{-}\,\:\fR\Contra\rarrow\Ab$ is right
exact, the desired isomorphism of contratensor products follows.

 Part~(b): the first assertion follows immediately from part~(a).
 The second assertion is
Lemma~\ref{complete-separated-flat-contramodules-lemma}(c).
\end{proof}

 We denote the full subcategories of flat left $\fR$\+contramodules
and separated left $\fR$\+contramodules by $\fR\Flat\subset
\fR\Separ\subset\fR\Contra$.
 The category of separated left $\fR$\+contramodules $\fR\Separ$ is
convenient to work with for various technical purposes, but it is
\emph{not} well-behaved as a category or as a full subcategory in
$\fR\Contra$ (see~\cite[Example~2.5]{Sim}, \cite[Example~2.7(1)]{Pcta},
and~\cite[Remark~8.5]{Pflcc}).
 Perhaps the only good properties that the full subcategory $\fS\Separ
\subset\fS\Contra$ has are that it is closed under subobjects and
infinite products (hence under all limits).
 The full subcategory of flat left $\fR$\+contramodules $\fR\Flat
\subset\fR\Contra$ is well-behaved, on the other hand.

\begin{lem} \label{flat-contramodules-well-behaved-lemma}
 Let\/ $\fR$ be a complete, separated right linear topological ring with
a \emph{countable} base of neighborhoods of zero.
 In this context: \par
\textup{(a)} The full subcategory of flat\/ $\fR$\+contramodules\/
$\fR\Flat$ is closed under extensions and kernels of epimorphisms
in\/ $\fR\Contra$.
 All projective left\/ $\fR$\+contramodules are flat. \par
\textup{(b)} For any discrete right\/ $\fR$\+module\/ $\cN$,
the contratensor product functor\/ $\cN\ocn_\fR{-}$ takes short exact
sequences of flat left\/ $\fR$\+contramodules to short exact sequences
of abelian groups.
 Moreover, for any short exact sequence of left\/ $\fR$\+contramodules\/
$0\rarrow\fQ\rarrow\fP\rarrow\fF\rarrow0$ with a flat left\/
$\fR$\+contramodule\/ $\fF$, the short sequence of abelian groups\/
$0\rarrow\cN\ocn_\fR\fQ\rarrow\cN\ocn_\fR\fP\rarrow\cN\ocn_\fR\fF
\rarrow0$ is exact.
\end{lem}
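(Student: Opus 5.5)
The argument runs through the Tor-type sequence for contratensor products in $\fR\Contra$, built from projective resolutions. The key input is Lemma~\ref{complete-separated-flat-contramodules-lemma}, together with the known fact (\cite[Corollary~6.15 and Section~6]{PR}) that for a countable base of neighborhoods of zero, flatness of $\fF$ is equivalent to vanishing of the left derived functors $\mathrm{Ctrtor}_i^\fR(\cN,\fF)$ of ${-}\ocn_\fR{-}$ in the second argument, for $i\ge1$. In brief: take a projective presentation of $\fF$ and use the lemma to show its kernel is flat, reducing everything to $\mathrm{Ctrtor}_1$.

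Since $\fR\Contra$ has enough projectives, I define $\mathrm{Ctrtor}^\fR_i(\cN,{-})$ as the left derived functors of the right exact functor $\cN\ocn_\fR{-}$. The steps, in order:

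\begin{enumerate}
\item \emph{Projectives are flat.} By the isomorphism $\cN\ocn_\fR\fR[[X]]\simeq\cN^{(X)}$, the functor ${-}\ocn_\fR\fR[[X]]$ on $\Discr\fR$ is exact. Flatness passes to direct summands, so every projective contramodule is flat.
\item \emph{Long exact sequence.} For any short exact sequence $0\rarrow\fQ\rarrow\fP\rarrow\fF\rarrow0$ in $\fR\Contra$, there is a long exact sequence $\dotsb\rarrow\mathrm{Ctrtor}_1(\cN,\fF)\rarrow\cN\ocn_\fR\fQ\rarrow\cN\ocn_\fR\fP\rarrow\cN\ocn_\fR\fF\rarrow0$. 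So the second assertion of (b) follows once $\mathrm{Ctrtor}_1^\fR(\cN,\fF)=0$ for flat $\fF$. The first assertion of (b) is then a special case.
\item \emph{Vanishing of $\mathrm{Ctrtor}_1$ for flat $\fF$.} This is the main obstacle. Choose a surjection $\fP=\fR[[X]]\rarrow\fF$ with kernel $\fK$. Then $\mathrm{Ctrtor}_1(\cN,\fF)=\ker(\cN\ocn_\fR\fK\to\cN\ocn_\fR\fP)$, and I must show this map is injective. I would first treat $\cN=\fR/\fI$ for an open right ideal $\fI$, where the contratensor products are $\fK/(\fI\tim\fK)$ and $\fP/(\fI\tim\fP)$. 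Injectivity then means $\fK\cap(\fI\tim\fP)=\fI\tim\fK$. I would obtain this by applying flatness of $\fF$ to the discrete short exact sequences $0\rarrow\fI'/\fI\rarrow\fR/\fI\rarrow\fR/\fI'\rarrow0$, running over a countable chain of open ideals; countability is used here, as in \cite[Lemma~6.14 and Corollary~6.15]{PR}. I would then pass to general $\cN$ by writing $\cN$ as a direct limit of finitely generated discrete modules. Each of these is a quotient of a finite direct sum of modules $\fR/\fI$. A dimension-shifting argument in $\cN$ uses that $\mathrm{Ctrtor}_1({-},\fF)$ commutes with direct limits and is exact in the first argument, because the contratensor product preserves colimits and $\fP$ is flat. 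Failing a self-contained argument, I would cite \cite[Lemma~6.10 and Corollary~6.13]{PR} or \cite[Section~E.1]{Pcosh}, which prove exactly this vanishing.
\item \emph{Part (a).} Given $0\rarrow\fQ\rarrow\fP\rarrow\fF\rarrow0$ with $\fF$ flat, the long sequence together with $\mathrm{Ctrtor}_1(\cN,\fF)=0$ gives $\mathrm{Ctrtor}_1(\cN,\fQ)\rarrow\mathrm{Ctrtor}_1(\cN,\fP)\rarrow0$ and short exactness on ${-}\ocn_\fR$. Two cases follow from this:
\begin{itemize}
\item If $\fQ$ and $\fF$ are flat, then for any short exact sequence in $\Discr\fR$ the $3\times3$ diagram of contratensor products (using the second assertion of (b) for the rows) shows by the nine-lemma that ${-}\ocn_\fR\fP$ is exact. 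This gives closure under extensions.
\item If $\fP$ and $\fF$ are flat, the same diagram, with exact rows by (b) and exact columns for $\fP$ and $\fF$, forces exactness of the column for $\fQ$. This gives closure under kernels of epimorphisms.
\end{itemize}
\end{enumerate}
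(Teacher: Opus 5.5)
Your outline is correct, and it essentially reconstructs what the paper only imports. The paper's proof consists of citations to \cite[Corollary~6.8, Lemmas~6.9 and~6.10, Corollaries~6.13 and~6.15]{PR} and \cite[Section~E.1]{Pcosh}. Steps~1, 2 and~4 are fine; in the $3\times3$ argument the relevant column is a complex, so the long exact homology sequence applies. The passage in step~3 from cyclic to arbitrary discrete $\cN$, via the long exact sequence of $\mathrm{Ctrtor}_*({-},\fF)$ in the first argument, is also fine. Note, however, that the ``known fact'' in your first paragraph is just assertion~(b) itself. If you lean on it, your proof reduces to a citation, like the paper's.

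The cyclic case of step~3 is not yet an argument. Take open right ideals $\fI'\subset\fI$ and apply flatness of $\fF$ to $0\rarrow\fI/\fI'\rarrow\fR/\fI'\rarrow\fR/\fI\rarrow0$. This only gives surjectivity of $\mathrm{Ctrtor}_1(\fR/\fI',\fF)\rarrow\mathrm{Ctrtor}_1(\fR/\fI,\fF)$, i.e., $\fK\cap(\fI\tim\fP)=\fK\cap(\fI'\tim\fP)+\fI\tim\fK$. That is an approximation statement, and you need a convergence argument to reach $\fK\cap(\fI\tim\fP)=\fI\tim\fK$.

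Here is one way to supply it. Choose a base $\fI=\fJ_0\supset\fJ_1\supset\fJ_2\supset\dotsb$ of open right ideals, and let $k\in\fK\cap(\fI\tim\fP)$. Write inductively $k_0=k$ and $k_j=\pi_\fK(a_j)+k_{j+1}$, with $a_j\in\fJ_j[[\fK]]$ and $k_{j+1}\in\fK\cap(\fJ_{j+1}\tim\fP)$. The coefficients of all the $a_j$ together converge to zero, so $a=\sum_ja_j\in\fI[[\fK]]$ is defined. Additivity of $\pi_\fK$ gives $k-\pi_\fK(a)\in\fJ_m\tim\fP=\fJ_m[[X]]$ for every $m$. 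Since the free contramodule $\fP=\fR[[X]]$ is separated, $k=\pi_\fK(a)\in\fI\tim\fK$.

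Countability enters here through summation in $\fK$, not through the Nakayama lemma. In particular, do not use Lemma~\ref{complete-separated-flat-contramodules-lemma}(c) in this step. The standard proof that flat contramodules are separated goes through~(b): apply~(b) to $0\rarrow\Omega_\fR(\fF)\rarrow\fF\rarrow\Lambda_\fR(\fF)\rarrow0$, use Corollary~\ref{separated-quotient-contratensor-flatness-cor}(a), then Lemma~\ref{complete-separated-flat-contramodules-lemma}(b). Relying on it would therefore be circular.
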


\begin{proof}
 This is~\cite[Corollary~6.8, Lemmas~6.9 and~6.10, and
Corollaries~6.13 and~6.15]{PR}.
 The case of a two-sided linear topological ring $\fR$ is
partly covered by~\cite[Lemmas~E.1.4, E.1.5, E.1.6, and
Corollary~E.1.7]{Pcosh}.
 See also~\cite[Lemma~8.4]{Pflcc}.
\end{proof}

 The full subcategory of injective discrete right $\fR$\+modules will
be denoted by $\Discrinj\fR\subset\Discr\fR$, and the full subcategory
of projective left $\fR$\+contramodules by $\fR\Contra_\prj\subset
\fR\Contra$.

 Let $\fR$ be a complete, separated right linear topological ring
with a countable base of neighborhoods of zero.
 Then there exists a descending chain of open right ideals
$\fR\supset\fI_1\supset\fI_2\supset\fI_3\supset\dotsb$ in $\fR$,
indexed by the integers $n\ge1$, such that the ideals $\fI_n$ form
a base of neighborhoods of zero in~$\fR$.
 So we have $\fR=\varprojlim_{n\ge1}\fR/\fI_n$, and the topology on
$\fR$ is the topology of projective limit of discrete abelian
groups $\fR/\fI_n$.

 Now let $\fR$ be a complete, separated two-sided linear topological
ring.
 Then one can choose $\fI_n$ to be open two-sided ideals in~$\fR$.
 Denote the related discrete quotient rings by $R_n=\fR/\fI_n$.
 Then $\fR=\varprojlim_{n\ge1}R_n$ is the projective limit of
the projective system of rings $(R_n)_{n\ge1}$, and the topology on
$\fR$ is the topology of projective limit of discrete rings~$R_n$.

 By a \emph{right $(R_n)$\+system} we will mean a collection of right
$R_n$\+modules $N_n$ endowed with isomorphisms of right $R_n$\+modules
$N_n\simeq\Hom_{R_{n+1}^\rop}(R_n,N_{n+1})$ for all $n\ge1$.
 A right $(R_n)$\+system $(K_n)_{n\ge1}$ is said to be \emph{injective}
if the $R_n$\+module $K_n$ is injective for every $n\ge1$ (or
equivalently, for infinitely many values of~$n$).
 We will denote the category of right $(R_n)$\+systems by
$\Sysr(R_n)$ and the full subcategory of injective right
$(R_n)$\+systems by $\Sysrinj(R_n)\subset\Sysr(R_n)$.

 Dual-analogously, a \emph{left $(R_n)$\+system} is a collection of
left $R_n$\+modules $P_n$ endowed with isomorphisms of left
$R_n$\+modules $P_n\simeq R_n\ot_{R_{n+1}}P_{n+1}$ for all $n\ge1$.
 A left $(R_n)$\+system $(P_n)_{n\ge1}$ is said to be \emph{projective}
if the $R_n$\+module $P_n$ is projective for every $n\ge1$ (or
equivalently, for infinitely many values of~$n$).
 Similarly, a left $(R_n)$\+system $(F_n)_{n\ge1}$ is said to be
\emph{flat} if the $R_n$\+module $F_n$ is flat for every $n\ge1$
(or equivalently, for infinitely many values of~$n$).
 We will denote the category of left $(R_n)$\+systems by
$(R_n)\Sys$ and the full subcategories of projective and flat
left $(R_n)$\+systems by $(R_n)\Sys_\prj\subset(R_n)\Sys_\fl
\subset\Sysr(R_n)$.

\begin{lem} \label{left-right-systems-discr-contra-category-equivs}
 Let $(R_n)_{n\ge1}$ be a projective system of surjective morphisms of
rings, indexed by the integers $n\ge1$, and let\/
$\fR=\varprojlim_{n\ge1}R_n$ be its projective limit, endowed with
the topology of projective limit of discrete rings~$R_n$.
 Denote by\/ $\fI_n\subset\fR$ the kernel of the natural surjective
ring homomorphism\/ $\fR\rarrow R_n$.
 In this context: \par
\textup{(a)} The category of discrete right\/ $\fR$\+modules is
naturally equivalent to the category of right $(R_n)$\+systems,
$$
 \Discr\fR\simeq\Sysr(R_n).
$$
 To every discrete right\/ $\fR$\+module\/ $\cN$, the right
$(R_n)$\+system $(N_n)_{n\ge1}$ with $N_n=\Hom_\fR(R_n,\cN)$
is assigned.
 To every right $(R_n)$\+system $(N_n)_{n\ge1}$, the discrete right\/
$\fR$\+module\/ $\cN=\varinjlim_{n\ge1}N_n$ is assigned.
 This equivalence of abelian categories identifies the full subcategory
of injective objects in\/ $\Discr\fR$ with the full subcategory of
injective right $(R_n)$\+systems in\/ $\Sysr(R_n)$,
$$
 \Discrinj\fR\simeq\Sysrinj(R_n).
$$ \par
\textup{(b)} The category of separated left\/ $\fR$\+contramodules is
naturally equivalent to the category of left $(R_n)$\+systems,
$$
 \fR\Separ\simeq(R_n)\Sys.
$$
 To every separated left\/ $\fR$\+contramodule\/ $\fP$, the left
$(R_n)$\+system $(P_n)_{n\ge1}$ with $P_n=\fP/(\fI_n\tim\fP)$
is assigned.
 To every left $(R_n)$\+system $(P_n)_{n\ge1}$, the separated left\/
$\fR$\+contramodule\/ $\fP=\varprojlim_{n\ge1}P_n$ is assigned.
 This equivalence of categories identifies the full subcategory of flat
left\/ $\fR$\+contramodules with the full subcategory of flat left
$(R_n)$\+systems in $(R_n)\Sys$,
$$
 \fR\Flat\simeq(R_n)\Sys_\fl.
$$
 It also identifies the full subcategory of projective objects in\/
$\fR\Contra$ with the full subcategory of projective left
$(R_n)$\+systems in $(R_n)\Sys$,
$$
 \fR\Contra_\prj\simeq(R_n)\Sys_\prj.
$$
\end{lem}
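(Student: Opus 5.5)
We will prove the two parts separately, using in each case the transition isomorphisms of the systems and the identification of the relevant quotients of free or cofree objects.

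\emph{Part~(a).} We first check that both assignments are well defined.
\begin{enumerate}
\item For a discrete right $\fR$\+module $\cN$, the group $N_n=\Hom_\fR(R_n,\cN)$ is the submodule of elements annihilated by $\fI_n$. Since $\fI_n\subset\fI_{n+1}$ fails but $\fI_{n+1}\subset\fI_n$ holds, the elements of $N_{n+1}$ annihilated by $\fI_n$ form $\Hom_{R_{n+1}^\rop}(R_n,N_{n+1})$, and this is exactly $N_n$. So $(N_n)$ is a right $(R_n)$\+system.
\item Since $\cN$ is discrete and the $\fI_n$ form a base, every element of $\cN$ lies in some $N_n$. Hence $\cN=\varinjlim N_n=\bigcup N_n$.
\item Conversely, given a system $(N_n)$, the transition maps $N_n\simeq\Hom(R_n,N_{n+1})\hookrightarrow N_{n+1}$ are injective. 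The union $\cN=\varinjlim N_n$ is a discrete $\fR$\+module.
\item To recover $N_m$ as the $\fI_m$\+torsion of $\cN$, note that an element of $N_n$, $n\ge m$, killed by $\fI_m$ lies in $N_m$. This follows by iterating the identification $\Hom_{R_{k+1}^\rop}(R_k,N_{k+1})=N_k$ downward from $k=n-1$: an element killed by $\fI_m\supset\fI_k$ is killed by $\fI_k$.
\end{enumerate}
Functoriality in both directions is evident, so the two assignments are mutually inverse equivalences. For injectives, we use that $\Hom_\fR(R_n,{-})\:\Discr\fR\rarrow\Modr R_n$ is right adjoint to the exact restriction functor, so it preserves injectives. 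Conversely, if all $K_n$ are injective, Baer's criterion shows that $\varinjlim K_n$ is injective in $\Discr\fR$. Indeed, a map from an open right ideal $\fJ$ to $\varinjlim K_n$ reduces to a finitely supported problem: for $\fJ\supset\fI_n$ and a map $\fJ/\fI_m\to K_m$ with $m\ge n$ chosen large, we extend along $\fJ/\fI_m\subset R_m$ by injectivity of $K_m$. Here we use that cyclic discrete modules are quotients $\fR/\fJ$ and that Baer's criterion holds in the Grothendieck category $\Discr\fR$ with generators $\fR/\fJ$. The parenthetical remark that ``infinitely many $n$'' suffices follows because $\Hom_{R_{n+1}^\rop}(R_n,{-})$ preserves injectives.

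\emph{Part~(b).} For a separated contramodule $\fP$, the identification $R_n\ot_{R_{n+1}}P_{n+1}\simeq P_n$ follows from the isomorphism $N\ocn_\fR\fP\simeq N\ot_{\fR/\fI}(\fP/\fI\tim\fP)$ in the preliminaries, applied with $N=R_n$ and $\fI=\fI_{n+1}$. Both sides then compute $R_n\ocn_\fR\fP=\fP/(\fI_n\tim\fP)$. Since $\fP$ is complete by Lemma~\ref{complete-separated-flat-contramodules-lemma}(a), separatedness gives $\fP\simeq\varprojlim P_n$.

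The main obstacle is the converse: showing that for an arbitrary system $(P_n)$ the limit $\fP=\varprojlim P_n$, with its natural contramodule structure (a limit of $\fR$\+contramodules $P_n$), satisfies $\fP/(\fI_n\tim\fP)\simeq P_n$. The plan is as follows.
\begin{enumerate}
\item Choose free $R_1$\+modules mapping onto $P_1$, and lift them compatibly, using that the transition maps $P_{n+1}\to P_n$ are surjective (they are $P_{n+1}\to R_n\ot P_{n+1}$) and that countability allows inductive lifting.
\item This produces a surjection $\fR[[X]]\to\fP$ whose reductions are surjections $R_n[X]\to P_n$.
\item Use the identity $\fI_n\tim\fR[[X]]=\fI_n[[X]]$ to compute $\fI_n\tim\fP$ as the image of $\fI_n[[X]]$.
\item This image is exactly $\ker(\fP\to P_n)$. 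This follows from a Mittag-Leffler argument: $\varprojlim_m$ of the kernels $\fI_n/\fI_m\cdot$ terms, with surjective transitions, shows that every compatible sequence vanishing in $P_n$ lifts to $\fI_n[[X]]$.
\end{enumerate}
Since $\fP=\varprojlim P_n$, the contramodule $\fP$ is automatically separated, and the two functors are mutually inverse.

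The flat case then follows from the criterion that $\fF$ is flat if and only if each $\fF/(\fI_n\tim\fF)$ is flat, together with Lemma~\ref{complete-separated-flat-contramodules-lemma}(c). For projectives, $\fR[[X]]$ corresponds to the system $(R_n[X])$, and direct summands correspond to direct summands. Conversely, a projective system can be realized as a summand of a free one by lifting idempotents, or by noting that $\Hom$ out of the limit into any contramodule is computed through the system. Projectivity of each $P_n$ makes the lifting of $\fR[[X]]\to\fP$ split compatibly, by inductive construction of splittings.
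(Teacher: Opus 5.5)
Your part~(a), the passage from a separated contramodule to a system, and the flat case follow the paper. For the rest of~(b) the paper just cites \cite[Lemma~E.1.3 and Corollary~E.1.10(a)]{Pcosh}. You instead argue directly through a free presentation $p\colon\fR[[X]]\rarrow\fP=\varprojlim_n P_n$, and that route can be made to work. As written, however, the converse in~(b) has two concrete holes.

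\emph{Step~1 fails as stated.} Lifting generators of $P_1$ need not give generators of $P_n$, because $\fI_1/\fI_n\subset R_n$ need not be nilpotent, so there is no Nakayama argument. For $\fR=\prod_{i\ge1}k$ with $R_n=k^n$, a left $(R_n)$-system is just a sequence of vector spaces $(V_i)_{i\ge1}$, and $V_1=0$ says nothing about $V_2$. Take instead $X$ to be the underlying set of $\fP$, using that the projections $\fP\rarrow P_n$ are surjective.

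\emph{The Mittag-Leffler input is neither stated nor justified.} What is needed concerns $K_m=\ker(p_m\colon R_m[X]\to P_m)$, not $\ker(P_m\to P_n)$. Apply the right exact functor $R_m\ot_{R_{m+1}}{-}$ to $0\rarrow K_{m+1}\rarrow R_{m+1}[X]\rarrow P_{m+1}\rarrow0$ and use $R_m\ot_{R_{m+1}}P_{m+1}\simeq P_m$; this shows that $K_{m+1}\rarrow K_m$ is surjective. That fact gives the surjectivity of~$p$, which your Step~2 asserts without proof. Step~4 then follows at once: if $q\in\fP$ maps to $0$ in $P_n$, write $q=p(a)$. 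The image of $a$ in $R_n[X]$ lies in $K_n$ and lifts to some $b\in\varprojlim_m K_m=\ker p$. Hence $a-b\in\fI_n[[X]]$ and $q=p(a-b)\in p(\fI_n[[X]])=\fI_n\tim\fP$.

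In the projective case, your ``inductive construction of splittings'' needs a correction step that you should spell out. Given a section $\sigma_n$ of $p_n$, use projectivity of $P_{n+1}$ to lift $P_{n+1}\rarrow P_n\rarrow R_n[X]$ to some $t\colon P_{n+1}\rarrow R_{n+1}[X]$. Then $\mathrm{id}-p_{n+1}t$ takes values in $(\fI_n/\fI_{n+1})P_{n+1}$, onto which $(\fI_n/\fI_{n+1})[X]$ surjects. So it lifts to $u\colon P_{n+1}\rarrow(\fI_n/\fI_{n+1})[X]$, and $\sigma_{n+1}=t+u$ is a section of $p_{n+1}$ reducing to~$\sigma_n$. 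The limit of the $\sigma_n$ then splits~$p$.

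Drop the alternative you mention, that Hom out of $\varprojlim P_n$ into any contramodule is ``computed through the system''. It fails for nonseparated targets: already $\Hom^\fR(\fR,\fQ)=\fQ$, which is not recovered from the system of $\fQ$.
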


\begin{proof}
 The first assertion of part~(a) is essentially obvious.
 For the assertion concerning injectivity, one can use the suitable
version of Baer criterion of injectivity for discrete right
$\fR$\+modules.
 See the discussion in~\cite[Section~E.2]{Pcosh}.
 A generalization to right linear topological rings $\fR$ can be found
in~\cite[Proposition~5.1]{Pcoun}.

 The first assertion of part~(b) is~\cite[Lemma~E.1.3]{Pcosh}.
 The assertion concerning flatness follows from the first one in view
of Lemma~\ref{complete-separated-flat-contramodules-lemma}(c) above.
 The assertion about projectivity is~\cite[Corollary~E.1.10(a)]{Pcosh}.
 A generalization to right linear topological rings $\fR$ can be found
in~\cite[Corollary~6.4]{PR} or~\cite[Theorem~5.3]{Pcoun}.
\end{proof}

\Section{Taut and Strongly Right Taut Maps of Topological Rings}
\label{taut-maps-secn}

 We start with the general setting of right linear topological rings.

\begin{lem} \label{continuous-ring-maps-lemma}
 Let $R$ and $S$ be right linear topological rings, and let
$f\:R\rarrow S$ be a ring map.
 Then the following three conditions are equivalent:
\begin{enumerate}
\item $f$~is continuous;
\item the open right ideals $J\subset S$ for which the right ideal $I=f^{-1}(J)\subset R$ is open in $R$ form a base of neighborhoods
of zero in~$S$;
\item any discrete right $S$\+module is also discrete as a right
$R$\+module.
\end{enumerate}
\end{lem}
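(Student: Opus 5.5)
I would prove the cycle of implications (1)$\Rightarrow$(2)$\Rightarrow$(3)$\Rightarrow$(1). All three steps only use the definitions and the description of discrete modules as those whose elements have open annihilators.

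For (1)$\Rightarrow$(2), let $J\subset S$ be any open right ideal. By continuity of~$f$, the preimage $I=f^{-1}(J)$ is an open subgroup of~$R$. It is also a right ideal, since $f$ is a ring map and $J$ is a right ideal. So every open right ideal of $S$ has the required property. Since $S$ is right linear, the open right ideals already form a base of neighborhoods of zero, and (2) follows.

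For (2)$\Rightarrow$(3), let $\cN$ be a discrete right $S$\+module and $y\in\cN$. The annihilator $\mathrm{Ann}_S(y)$ is an open right ideal of~$S$. By (2), it contains some open right ideal $J$ such that $f^{-1}(J)$ is open in~$R$. The annihilator of $y$ in~$R$ is $f^{-1}(\mathrm{Ann}_S(y))$, which contains $f^{-1}(J)$. A subgroup of a topological group containing an open subgroup is open. Hence $\mathrm{Ann}_R(y)$ is an open right ideal, and $\cN$ is discrete as a right $R$\+module.

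For (3)$\Rightarrow$(1), it suffices to check continuity at zero, because $f$ is additive and the topologies are linear. So I need that $f^{-1}(J)$ is open in $R$ for every open right ideal $J\subset S$; these ideals form a base at zero in~$S$.
\begin{itemize}
\item Consider the cyclic right $S$\+module $S/J$. Every element $\bar s$ has annihilator $\{t: st\in J\}$. This set is open, since it is the preimage of the open set $J$ under the continuous map $t\mapsto st$, and it is a right ideal.
\item Hence $S/J$ is discrete over~$S$, and by (3) it is discrete over~$R$.
\item The annihilator in $R$ of the coset $\bar 1\in S/J$ is exactly $\{r: f(r)\in J\}=f^{-1}(J)$, which is therefore open.
\end{itemize}

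No step is a real obstacle. The only care needed is in (3)$\Rightarrow$(1): one must use left multiplication by $s$ (not right multiplication) to see that $S/J$ is discrete, and one must note that continuity of an additive map between linearly topologized groups reduces to openness of preimages of basic open subgroups.
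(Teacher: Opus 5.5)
Your proof is correct and follows essentially the same route as the paper: the key step (3)$\Rightarrow$(1) uses the cyclic module $S/J$ and the fact that $f^{-1}(J)$ is the annihilator in $R$ of the coset of $1$, exactly as the paper does. Your (2)$\Rightarrow$(3) is the paper's (1)$\Rightarrow$(3) argument (the annihilator in $R$ is the preimage of the annihilator in $S$), routed through the base from (2), and you also check explicitly that $S/J$ is discrete over $S$, which the paper leaves implicit.
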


\begin{proof}
 (1)~$\Longleftrightarrow$~(2) Obvious.

 (1)~$\Longrightarrow$~(3) Let $\cN$ be a discrete right $S$\+module
and $y\in\cN$ be an element.
 Let $J$ be the annihilator of~$y$ in~$S$; then $J$ is an open
right ideal in~$S$.
 The annihilator of~$y$ in $R$ is the right ideal $f^{-1}(J)\subset R$.
 Since $f$~is continuous, $f^{-1}(J)$ is an open right ideal in~$R$.

 (3)~$\Longrightarrow$~(1) Let $J\subset S$ be an open right ideal.
 Then the right $R$\+module $\cN=R/J$ is discrete.
 The ideal $J\subset S$ is the annihilator of the coset $y=1+J\in\cN$
in~$S$.
 Following the previous paragraph, $f^{-1}(J)$ is the annihilator of~$y$
in~$R$.
 By assumption, $\cN$ is a discrete right $R$\+module; so $f^{-1}(J)$
is an open right ideal in~$R$.
 As open right ideals form a base of neighborhoods of zero in $S$
and the preimage of any such ideal is open in $R$, it follows that
$f$~is a continuous map.
\end{proof}

 In the following two lemmas, topological right modules over right
linear topological rings are considered.
 For this reason, and in order to avoid any ambiguity, we refrain from
relying on our usual terminology of ``discrete modules'' over
topological rings, and utilize the longer expression ``endowed with
the discrete topology, remains/becomes a topological module'' instead.

\begin{lem} \label{taut-ring-maps-lemma}
 Let $R$ and $S$ be right linear topological rings, and let
$f\:R\rarrow S$ be a ring map.
 Consider the following three conditions:
\begin{enumerate}
\item for any open right ideal $I\subset R$, the closure $J$ of
the right ideal $f(I)S\subset S$ is an open right ideal in~$S$;
\item the open right ideals $I\subset R$ for which the closure $J$ of
the right ideal $f(I)S\subset S$ is an open right ideal in $S$ form
a base of neighborhoods of zero in~$R$;
\item a separated topological right $S$\+module remains a topological
right $S$\+module when endowed with the discrete topology whenever,
viewed as right $R$\+module and endowed with a discrete topology,
it becomes a topological right $R$\+module.
\end{enumerate}
 The equivalence and implication \textup{(1)~$\Longleftrightarrow$~(2)}
\textup{$\Longrightarrow$~(3)} always hold.
 When the ring $S$ is commutative, one has
\textup{(1)~$\Longleftrightarrow$~(2)}
\textup{$\Longleftrightarrow$~(3)}.
\end{lem}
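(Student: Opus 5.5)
The plan is to prove \textup{(1)}~$\Longrightarrow$~\textup{(2)}, \textup{(2)}~$\Longrightarrow$~\textup{(1)}, \textup{(2)}~$\Longrightarrow$~\textup{(3)}, and then \textup{(3)}~$\Longrightarrow$~\textup{(1)} under commutativity of~$S$. The implication \textup{(1)}~$\Longrightarrow$~\textup{(2)} is trivial, since all open right ideals of $R$ form a base. For \textup{(2)}~$\Longrightarrow$~\textup{(1)}, take an arbitrary open right ideal $I\subset R$ and choose a smaller ideal $I'\subset I$ from the base in~\textup{(2)}. Then the closure $J'$ of $f(I')S$ is open and is contained in the closure $J$ of $f(I)S$. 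A closed additive subgroup containing an open subgroup is itself open, so $J$ is open. (In fact $J$ is a union of cosets of $J'$; it is also a right ideal, because the closure of a right ideal in a right linear topological ring is a right ideal.)

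For \textup{(2)}~$\Longrightarrow$~\textup{(3)}, let $\cN$ be a separated topological right $S$\+module which becomes a topological $R$\+module in the discrete topology, i.~e., every $y\in\cN$ has an open annihilator $\mathrm{Ann}_R(y)\supset I$ for some open right ideal $I\subset R$. By \textup{(1)} (already established), the closure $J$ of $f(I)S$ is open in~$S$. We then show $yJ=0$. First, $y f(I)S=(yf(I))S=0$. Next, the map $S\rarrow\cN$, $s\mapsto ys$, is continuous, because $\cN$ is a topological $S$\+module. Its kernel $\mathrm{Ann}_S(y)$ is therefore the preimage of the closed subset $\{0\}$, which is closed because $\cN$ is separated. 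A closed set containing $f(I)S$ contains its closure $J$, so $\mathrm{Ann}_S(y)\supset J$ is open. Hence the action map $\cN\times S\rarrow\cN$ is continuous for the discrete topology on~$\cN$, because $y(s+J)=ys$.

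For \textup{(3)}~$\Longrightarrow$~\textup{(1)} with $S$ commutative, take an open ideal $I\subset R$ and let $J$ be the closure of the ideal $f(I)S$. Consider the module $\cN=S/J$ with the quotient topology. This is a topological $S$\+module, and it is separated because $J$ is closed. It is commutativity that makes $J$ a two-sided ideal, so that $S/J$ is a ring and every element $s+J$ is annihilated by $J$. Viewed as an $R$\+module, each element $s+J$ is annihilated by~$I$, since $sf(I)\subset f(I)S\subset J$ (again using commutativity). Hence $\cN$ becomes a topological right $R$\+module in the discrete topology. By~\textup{(3)}, $\cN$ is then also a topological $S$\+module in the discrete topology, so the annihilator of $1+J$, which is exactly~$J$, is open. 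This gives~\textup{(1)}.

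The step needing most care is \textup{(2)}~$\Longrightarrow$~\textup{(3)}. There we must use the separatedness of $\cN$ and the continuity of the orbit map to pass from $f(I)S$ to its closure. We must also be careful with the side of the action: annihilators of elements of a right module are right ideals, and $f(I)S$ is the right ideal generated, which matches.
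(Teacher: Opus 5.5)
Your proof is correct and follows essentially the same route as the paper: for the implication to (3) you use that $\mathrm{Ann}_S(y)$ is closed (continuous orbit map, $0$ a closed point) and contains $f(I)S$, hence contains its open closure $J$; for (3)$\Longrightarrow$(1) with $S$ commutative you use the separated cyclic module $S/J$ and the annihilator of $1+J$. The differences are cosmetic. You spell out (2)$\Longrightarrow$(1), which the paper calls obvious. You also pick some open $I$ inside $\mathrm{Ann}_R(y)$, whereas the paper takes $I=f^{-1}(\mathrm{Ann}_S(y))$ itself; and in the last step $I$ should be an open right ideal, since $R$ need not be commutative.
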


\begin{proof}
 (1)~$\Longleftrightarrow$~(2) Obvious.

 (1)~$\Longrightarrow$~(3) Let $\rN$ be a separated topological
right $S$\+module and $y\in\rN$ be an element.
 Since the action map $\rN\times S\rarrow\rN$ is continuous,
the embedding $S=\{y\}\times S\rarrow\rN\times S$ is continuous,
and $0\in\rN$ is a closed point, it follows that the annihilator
$J'\subset S$ of~$y$ in $S$ is a closed right ideal in~$S$.
 Now $I=f^{-1}(J')$ is the annihilator of~$y$ in~$R$.
 By assumption, $I$ is an open right ideal in~$R$.
 We have $f(I)S\subset J'$ and $J'$ is a closed right ideal in $S$,
hence the closure $J$ of $f(I)S$ in $S$ is contained in $J'$, that is
$J\subset J'$.
 By~(1), \,$J$ is an open right ideal in~$S$; hence $J'$ is an open
right ideal in $S$, too.

 (3)~$\Longrightarrow$~(1) Consider the cyclic $S$\+module $\rN=S/J$,
and endow it with the quotient topology.
 Then $\rN$ is separated topological $S$\+module (since
$J\subset S$ is a closed ideal).
 The annihilator of any element $y\in\rN$ in $S$ contains the ideal
$J\subset S$, hence the annihilator of~$y$ in $R$ contains
the open ideal $I\subset R$.
 Thus, endowed with the discrete topology, $\rN$ becomes a topological
$R$\+module.
 By~(3), it follows that, endowed with the discrete topology, $\rN$ is
also a topological $S$\+module.
 Hence the annihilator $J$ of the coset $1+J\in\rN$ is an open ideal
in~$S$.
\end{proof}

 We will say that a ring map $f\:R\rarrow S$ is \emph{taut}
(cf.~\cite[Section Tag~0GX1]{SP}) if it satisfies any one of
the equivalent conditions of Lemma~\ref{taut-ring-maps-lemma}(1) or~(2).
 Let us emphasize that, in our terminology, a taut ring map
\emph{need not be continuous}.

\begin{lem} \label{continuous-taut-ring-maps-lemma}
 Let $R$ and $S$ be right linear topological rings, and
let $f\:R\rarrow S$ be a ring map.
 Consider the following four conditions:
\begin{enumerate}
\item $f$~is continuous and taut;
\item for any open right ideal $I\subset R$, the closure $J$ of
the right ideal $f(I)S\subset S$ is an open right ideal in $S$, and
such open right ideals $J\subset S$ form a base of neighborhoods
of zero in~$S$;
\item open right ideals $I\subset R$ for which the closure $J$ of
the right ideal $f(I)S\subset S$ is an open right ideal in $S$ form
a base of neighborhoods of zero in $R$, and such open right ideals
$J\subset S$ form a base of neighborhoods of zero in~$S$;
\item a separated topological right $S$\+module remains a topological
right $S$\+module when endowed with the discrete topology if and only
if, viewed as right $R$\+module and endowed with a discrete topology,
it becomes a topological right $R$\+module.
\end{enumerate}
 The equivalences and implication \textup{(1)~$\Longleftrightarrow$~(2)}
\textup{$\Longleftrightarrow$~(3)} \textup{$\Longrightarrow$~(4)} always
hold.
 When the ring $S$ is commutative, one has
\textup{(1)~$\Longleftrightarrow$~(2)}
\textup{$\Longleftrightarrow$~(3)}
\textup{$\Longleftrightarrow$~(4)}.
\end{lem}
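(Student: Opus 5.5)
The plan is to get the first three equivalences by combining Lemmas~\ref{continuous-ring-maps-lemma} and~\ref{taut-ring-maps-lemma}, then obtain~(4) from the separate implications each lemma supplies, and finally treat the converse in the commutative case by cyclic modules.

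\textbf{(1)~$\Rightarrow$~(2).} Tautness in the form of Lemma~\ref{taut-ring-maps-lemma}(1) says that for every open right ideal $I\subset R$ the closure $J$ of $f(I)S$ is open. It remains to see that these $J$ form a base of neighborhoods of zero in~$S$.
\begin{itemize}
\item Given any open right ideal $J'\subset S$, continuity makes $I=f^{-1}(J')$ open in~$R$.
\item We have $f(I)S\subset J'$.
\item An open subgroup is closed, so the closure $J$ of $f(I)S$ satisfies $J\subset J'$.
\end{itemize}

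\textbf{(2)~$\Rightarrow$~(3).} This is immediate, since all open right ideals of $R$ form a base.

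\textbf{(3)~$\Rightarrow$~(1).} Lemma~\ref{taut-ring-maps-lemma}((2)$\Rightarrow$(1)) gives tautness. For continuity, use Lemma~\ref{continuous-ring-maps-lemma}(2). Every $J$ appearing in~(3) contains $f(I)$ for an open $I$, so $f^{-1}(J)\supset I$ is open. Since such $J$ form a base in $S$ by~(3), the map $f$ is continuous.

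\textbf{(1)~$\Rightarrow$~(4).} The ``only if'' direction: a separated topological right $S$\+module that stays topological with the discrete topology has open annihilators in $S$. The preimages of these annihilators under $f$ are open by continuity, exactly as in the proof of Lemma~\ref{continuous-ring-maps-lemma}((1)$\Rightarrow$(3)). The ``if'' direction is Lemma~\ref{taut-ring-maps-lemma}((1)$\Rightarrow$(3)).

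\textbf{(4)~$\Rightarrow$~(1) for commutative $S$.} Condition~(4) contains Lemma~\ref{taut-ring-maps-lemma}(3), which for commutative $S$ yields tautness. For continuity, take an open ideal $J\subset S$ and consider $S/J$ with the discrete topology.
\begin{itemize}
\item $S/J$ is a separated topological $S$\+module, and it remains topological when discrete, since $J$ is open.
\item By the ``only if'' half of~(4), $S/J$ is discrete as an $R$\+module.
\item The annihilator of the coset $1+J$ in $R$ is $f^{-1}(J)$, so $f^{-1}(J)$ is open.
\end{itemize}
By Lemma~\ref{continuous-ring-maps-lemma}, $f$ is continuous.

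The main care point is the direction (1)$\Rightarrow$(2), specifically the observation that open subgroups are closed, so that $J\subset J'$. The remaining steps are bookkeeping over the earlier lemmas.
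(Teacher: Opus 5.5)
Your proposal is correct and follows the paper's proof essentially step by step. For (1)$\Rightarrow$(2) you pass to $f^{-1}(J')$ and use that open subgroups are closed; for the equivalences with (4) you combine Lemma~\ref{continuous-ring-maps-lemma} (via the cyclic discrete module $S/J$) with Lemma~\ref{taut-ring-maps-lemma}, which is exactly the paper's argument, the only cosmetic differences being that you arrange (1)--(3) as a cycle and write out the argument of Lemma~\ref{continuous-ring-maps-lemma}(3)$\Rightarrow$(1) instead of citing it.
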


\begin{proof}
 (1)~$\Longrightarrow$~(2) Let $J'\subset S$ be an open right ideal
in~$S$.
 Then $I=f^{-1}(J')\subset R$ is an open right ideal in $R$
(since $f$~is continuous).
 Hence $J'$ contains the closure $J$ of the right ideal
$f(I)S\subset S$.
 The right ideal $J\subset S$ is open (since $f$~is taut).
 Thus the open right ideals $J$ form a base of neighborhoods of zero
in~$S$.

 (2)~$\Longrightarrow$~(1) Obviously, (2)~implies that $f$~is taut.
 Now let $J'\subset S$ be an open right ideal.
 By~(2), there exists an open right ideal $I\subset R$ such that $J'$
contains the closure $J$ of the right ideal $f(I)S\subset S$.
 Hence $I$ is contained in the right ideal $I'=f^{-1}(J')\subset R$.
 Thus the right ideal $I'\subset R$ is open, too; so $f$~is continuous.

 (2)~$\Longleftrightarrow$~(3) Obvious.

 (1)~$\Longrightarrow$~(4) The condition that $f$~is continuous
implies the ``only if'' implication in~(4) by
Lemma~\ref{continuous-ring-maps-lemma}\,(1)\,$\Rightarrow$\,(3).
 The condition that $f$~is taut implies the ``if'' implication in~(4)
by Lemma~\ref{taut-ring-maps-lemma}\,(1)\,$\Rightarrow$\,(3).

 (4)~$\Longrightarrow$~(1): the condition that $f$~is continuous
follows from the ``only if'' implication in~(4) by
Lemma~\ref{continuous-ring-maps-lemma}\,(3)\,$\Rightarrow$\,(1)
(apply the ``only if'' implication in~(4) to a topological right
$S$\+module with discrete topology).
 The condition that $f$~is taut follows from the ``if'' implication
in~(4) by Lemma~\ref{taut-ring-maps-lemma}\,(3)\,$\Rightarrow$\,(1).
\end{proof}

 Now we pass to the more restrictive setting of \emph{two-sided
linear} topological rings $R$ and~$S$.

\begin{lem} \label{reductions-tensor-product-formula}
 Let $R$ and $S$ be right linear topological rings, and
let $f\:R\rarrow S$ be a ring map.
 Let $I''\subset I'\subset R$ be two open two-sided ideals in $R$ such
that the closure $J'$ of the right ideal $f(I')S\subset S$ is an open
two-sided ideal in $S$ and the closure $J''$ of the right ideal
$f(I'')S\subset S$ is also an open two-sided ideal in~$S$.
 Denote the related discrete quotient rings by $R'=R/I'$, \,$R''=R/I''$,
\,$S'=S/J'$, and $S''=S/J''$.
 Then the induced $R'$\+$S''$\+bimodule map $R'\ot_{R''}S''\rarrow S'$
is an isomorphism.
\end{lem}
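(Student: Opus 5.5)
Since $R''\rarrow R'$ is surjective with kernel $I'/I''$, the tensor product $R'\ot_{R''}S''$ is the quotient $S''/(I'/I'')S''$. Here $(I'/I'')S''$ denotes the image of $f(I')$ in $S''$ times $S''$, that is, the right ideal $\bigl(f(I')S+J''\bigr)/J''\subset S''$. Indeed, $R'\ot_{R''}M\simeq M/(I'/I'')M$ for any left $R''$\+module $M$. The plan is to compute this quotient explicitly. We will have shown the claim once we prove the equality $f(I')S+J''=J'$ of subsets in $S$, since then the quotient is $S/J'=S'$ and the map in question is visibly the identity. The left $R''$\+module structure on $S''$ comes from $f$, which is well defined because $f(I'')\subset J''$. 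Before computing, one checks that $f$ induces ring maps $R''\rarrow S''$ and $R'\rarrow S'$ and that $S''\rarrow S'$ is defined. These hold because $f(I'')\subset f(I'')S\subset J''$, and because $I''\subset I'$ gives $J''\subset J'$ (closures preserve inclusions). Thus the bimodule map $R'\ot_{R''}S''\rarrow S'$, $r'\ot s''\mapsto f(r')\bar s''$, is well defined and surjective.

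For the key equality, the inclusion $f(I')S+J''\subset J'$ is clear from $J''\subset J'$ and $f(I')S\subset J'$. For the converse, note that $J''$ is open, so $f(I')S+J''$ is a union of cosets of the open subgroup $J''$, hence an open and therefore closed subgroup of $S$. It contains $f(I')S$, hence contains the closure $J'$. So $J'=f(I')S+J''$.

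Consequently $(I'/I'')S''=J'/J''$, and $R'\ot_{R''}S''\simeq S''/(J'/J'')\simeq S/J'=S'$, compatibly with the given map. The only point needing care, and the main obstacle, is the topological step: the sum of the possibly nonclosed right ideal $f(I')S$ with an open subgroup is automatically closed. This is exactly why openness of $J''$ is assumed. The two-sidedness of $J'$ and $J''$ is used only so that $S'$ and $S''$ are rings, and so that $S''$ is an $R''$\+$S''$\+bimodule whose quotient by the left $R''$\+submodule $(I'/I'')S''$ is a right $S''$\+module. That quotient is automatically a right module because $f(I')S+J''$ is a right ideal.
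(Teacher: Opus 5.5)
Your proof is correct and follows the paper's argument. Both identify $R'\otimes_{R''}S''$ with $S''/f''(I'/I'')S''$ and then use $J'=f(I')S+J''$, which holds because $J''$ is open; you spell out that the open subgroup $f(I')S+J''$ is closed and hence contains $J'$, a step the paper leaves implicit.
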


\begin{proof}
 Denote by $f'\:R'\rarrow S'$ and $f''\:R''\rarrow S''$ the maps of
quotient rings induced by~$f$.
 As $J''$ is an open two-sided ideal in $S$, we have
$J'/J''=(f(I')S+J'')/J''=f''(I'/I'')S''\subset S''$.
 Hence $S'=S''/(f''(I'/I'')S'')$.
 This means precisely that the map $R'\ot_{R''}S''\rarrow S'$ is
an isomorphism.
\end{proof}

\begin{lem} \label{continuous-strongly-taut-ring-map}
 Let $R$ and $S$ be two-sided linear topological rings, and
let $f\:R\rarrow S$ be a ring map.
 Consider the following three conditions:
\begin{enumerate}
\item $f$~is continuous and taut;
\item $f$~is continuous, and open two-sided ideals $I\subset R$ for
which the closure $J$ of the right ideal $f(I)S\subset S$ is an open
two-sided ideal in $S$ form a base of neighborhoods of zero in~$R$;
\item open two-sided ideals $I\subset R$ for which the closure $J$ of
the right ideal $f(I)S\subset S$ is an open two-sided ideal in $S$ form
a base of neighborhoods of zero in $R$, and such open two-sided ideals
$J\subset S$ form a base of neighborhoods of zero in~$S$.
\end{enumerate}
 The implication and equivalence \textup{(1)~$\Longleftarrow$~(2)}
\textup{$\Longleftrightarrow$~(3)}  always hold.
 When the image of the map~$f$ is contained in the center of
the ring $S$, one has \textup{(1)~$\Longleftrightarrow$~(2)}
\textup{$\Longleftrightarrow$~(3)}.
\end{lem}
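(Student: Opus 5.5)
We will prove the implications (2)~$\Rightarrow$~(1), (2)~$\Leftrightarrow$~(3), and then (1)~$\Rightarrow$~(2) under the centrality assumption, reducing everything to Lemma~\ref{continuous-taut-ring-maps-lemma}.

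For (2)~$\Rightarrow$~(3), assume $f$ is continuous and let $\fI$ denote the class of open two-sided ideals $I\subset R$ for which the closure $J_I$ of $f(I)S$ is an open two-sided ideal. Given an open two-sided ideal $J'\subset S$ (these form a base in $S$), continuity makes $f^{-1}(J')$ open in $R$. By (2) there is $I\in\fI$ with $I\subset f^{-1}(J')$. Then $f(I)S\subset J'$, and since $J'$ is open, hence closed, we get $J_I\subset J'$. So the ideals $J_I$ form a base in $S$.

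For (3)~$\Rightarrow$~(2), only continuity needs checking, and we argue exactly as in Lemma~\ref{continuous-taut-ring-maps-lemma}\,(2)\,$\Rightarrow$\,(1). Any open right ideal $J'\subset S$ contains some $J_I$ with $I\in\fI$. Hence $I\subset f^{-1}(J_I)\subset f^{-1}(J')$, so $f^{-1}(J')$ is open.

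For (2)~$\Rightarrow$~(1), continuity is given. Tautness means that the open right ideals $I$ with $J_I$ open form a base in $R$, which is Lemma~\ref{taut-ring-maps-lemma}(2). Since the two-sided ideals of $\fI$ already form such a base, this holds.

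For (1)~$\Rightarrow$~(2) under the assumption $f(R)\subset Z(S)$, which is the substantive direction, let $I$ range over the open two-sided ideals of $R$; these form a base since $R$ is two-sided linear. By tautness, condition (1) of Lemma~\ref{taut-ring-maps-lemma}, the closure $J_I$ of $f(I)S$ is an open right ideal. It remains to show $J_I$ is two-sided.
\begin{itemize}
\item By centrality, $s f(i) s' = f(i) s s'$, so $S f(I) S = f(I) S$. Hence $f(I)S$ is a two-sided ideal.
\item The closure of a two-sided ideal in a topological ring is two-sided, because left and right multiplication by a fixed element are continuous.
\end{itemize}
So $J_I$ is an open two-sided ideal for every open two-sided $I$, which gives (2).

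The only real obstacle is this two-sidedness of $J_I$. This is exactly where centrality is needed: without it $f(I)S$ can fail to be a left ideal, and (1) need not imply (2). That failure explains why the lemma claims only (1)~$\Leftarrow$~(2) in general.
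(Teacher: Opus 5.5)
Your proof is correct and follows essentially the same route as the paper: (2)$\Leftrightarrow$(3) uses the same preimage/closure arguments, and the remaining implications reduce to Lemma~\ref{continuous-taut-ring-maps-lemma}. The paper routes the general direction as (3)$\Rightarrow$(1) rather than (2)$\Rightarrow$(1), which is immaterial. Your explicit check that centrality makes $f(I)S$, and hence its closure, a two-sided ideal is exactly the step the paper leaves implicit when it derives (1)$\Rightarrow$(2) from Lemma~\ref{continuous-taut-ring-maps-lemma}\,(1)$\Rightarrow$(2).
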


\begin{proof}
 (2)~$\Longrightarrow$~(3) Let $J'\subset S$ be an open two-sided ideal
in~$S$.
 Then $I'=f^{-1}(J')\subset R$ is an open two-sided ideal in $R$
(since $f$~is continuous).
 By~(2), there exists an open two-sided ideal $I\subset I'$ for which
the closure $J$ of the right ideal $f(I)S\subset S$ is an open
two-sided ideal in~$S$.
 Since $J'\subset S$ is an open two-sided ideal and $f(I)S\subset
f(I')S\subset J'$, we have $J\subset J'$, as desired.

 (3)~$\Longrightarrow$~(2) We only need to prove that $f$~is continuous.
 Let $J'\subset S$ be an open two-sided ideal.
 By~(3), there exists an open two-sided ideal $I\subset R$ such that
the closure $J$ of the right ideal $f(I)S\subset S$ is an open
two-sided ideal in $S$ and $J\subset J'$.
 Hence $I$ is contained in the two-sided ideal $I'=f^{-1}(J')\subset R$.
 Thus the two-sided ideal $I'\subset R$ is open, too; so $f$~is
continuous.

 (3)~$\Longrightarrow$~(1) Follows from
Lemma~\ref{continuous-taut-ring-maps-lemma}\,(3)\,$\Rightarrow$\,(1).

 (1)~$\Longrightarrow$~(2) Follows from
Lemma~\ref{continuous-taut-ring-maps-lemma}\,(1)\,$\Rightarrow$\,(2).
\end{proof}

 Let $R$ and $S$ be two-sided linear topological rings.
 We will say that a ring map $f\:R\rarrow S$ is \emph{strongly right
taut} if open two-sided ideals $I\subset R$ for which the closure $J$
of the right ideal $f(I)S\subset S$ is an open two-sided ideal in $S$
form a base of neighborhoods of zero in~$R$.
 A ring map $f\:R\rarrow S$ is strongly right taut \emph{and}
continuous if and only if it satisfies any one of the equivalent
conditions of Lemma~\ref{continuous-strongly-taut-ring-map}(2) or~(3).

 Now we impose the additional assumption of a \emph{countable} base of
neighborhoods of zero in~$R$.

\begin{lem} \label{continuous-strongly-taut-ring-map-countable-base}
 Let $R$ and $S$ be two-sided linear topological rings, and
let $f\:R\rarrow S$ be a ring map.
 Assume that the topological ring $R$ has a countable base of
neighborhoods of zero.
 Then the following two conditions are equivalent:
\begin{enumerate}
\item $f$~is continuous and strongly right taut;
\item there exist descending chains of open two-sided ideals
$R\supset I_1\supset I_2\supset I_3\supset\dotsb$ and
$S\supset J_1\supset J_2\supset J_3\supset\dotsb$ in $R$ and $S$,
indexed by the positive integers, such that the open two-sided ideal
$I_n$ form a base of neighborhoods of zero in $R$, the open two-sided
ideals $J_n$ form a base of neighborhoods of zero in $S$, one has
$f(I_n)\subset J_n$, and, using the notation $R_n=R/I_n$ and $S_n=S/J_n$
for the discrete quotient rings, the induced $R_n$\+$S_{n+1}$-bimodule
map $R_n\ot_{R_{n+1}}S_{n+1}\rarrow S_n$ is an isomorphism
for all $n\ge1$. 
\end{enumerate}
\end{lem}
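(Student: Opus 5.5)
For (1)~$\Rightarrow$~(2) the plan is to build the two chains inductively. By Lemma~\ref{continuous-strongly-taut-ring-map}(2)$\Leftrightarrow$(3), the open two-sided ideals $I\subset R$ for which the closure $J(I)$ of $f(I)S$ is an open two-sided ideal in $S$ form a base of neighborhoods of zero in $R$. Call such ideals \emph{good}. The corresponding ideals $J(I)$ form a base of neighborhoods of zero in~$S$.

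Fix a countable base $U_1\supset U_2\supset\dotsb$ of open two-sided ideals in $R$. Construct good ideals $I_1\supset I_2\supset\dotsb$ inductively, with $I_n\subset U_n$ and $I_n\subset I_{n-1}$; this is possible because the good ideals form a base. Put $J_n=J(I_n)$. Then $I_n\supset I_{n+1}$ gives $f(I_{n+1})S\subset f(I_n)S$, hence $J_{n+1}\subset J_n$ after taking closures. We also have $f(I_n)\subset J_n$, and the $I_n$ form a base in~$R$.

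The point needing care is that the $J_n$ form a base in~$S$, which requires $S$ to have a countable base. Given an open two-sided ideal $J'\subset S$, the ideal $f^{-1}(J')$ is open by continuity, so it contains some $U_n\supset I_n$. Then $f(I_n)S\subset J'$, and since $J'$ is open, hence closed, $J_n\subset J'$. So the $J_n$ form a base even though no countability of $S$ was assumed a priori. Finally, the isomorphism $R_n\ot_{R_{n+1}}S_{n+1}\simeq S_n$ is exactly Lemma~\ref{reductions-tensor-product-formula} applied to $I''=I_{n+1}\subset I'=I_n$.

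For (2)~$\Rightarrow$~(1), continuity is immediate. For any open two-sided $J'\subset S$ there is $J_n\subset J'$, so $f^{-1}(J')\supset f^{-1}(J_n)\supset I_n$ is open. For strong right tautness, I will show that the closure of $f(I_n)S$ equals $J_n$; since the $I_n$ form a base, this suffices.

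First, $f(I_n)S\subset J_n$ and $J_n$ is closed, so the closure is contained in $J_n$. For the reverse inclusion, iterating the tensor isomorphisms gives $R_n\ot_{R_m}S_m\simeq S_n$ for all $m\ge n$. Equivalently, $J_n/J_m=f_m(I_n/I_m)S_m$, that is, $J_n=f(I_n)S+J_m$. So $f(I_n)S$ is dense in $J_n$, because the $J_m$ form a base. This gives $J_n$ as the closure, and $J_n$ is open two-sided, as required.

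The main obstacle I anticipate is verifying the identity $J_n=f(I_n)S+J_m$ from the iterated isomorphisms. It is the reverse of the computation in Lemma~\ref{reductions-tensor-product-formula}: the isomorphism $R_n\ot_{R_m}S_m\to S_n$ says that the kernel of $S_m\to S_n$, namely $J_n/J_m$, equals $(I_n/I_m)S_m$. This is the step to write out carefully.
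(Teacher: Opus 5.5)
Your proposal is correct and follows the paper's proof. For (1)$\Rightarrow$(2) you extract a descending chain from the base of good ideals and invoke Lemma~\ref{reductions-tensor-product-formula}; for (2)$\Rightarrow$(1) you derive $J_n=f(I_n)S+J_m$ for all $m>n$ and conclude that $J_n$ is the closure of $f(I_n)S$ (the paper iterates this identity at the level of ideals rather than tensor products), and you also spell out two points the paper leaves implicit: why the chosen $J_n$ form a base in $S$, and why $f$ is continuous in (2)$\Rightarrow$(1).
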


\begin{proof}
 (1)~$\Longrightarrow$~(2) The condition of
Lemma~\ref{continuous-strongly-taut-ring-map}(3) defines a certain
base of neighborhoods of zero in~$R$ (consisting of open two-sided
ideals).
 Since $R$ has a countable base of neighborhoods of zero,
any base of neighborhoods of zero in $R$ contains a descending chain
$I_1\supset I_2\supset I_3\supset\dotsb$ indexed by the positive
integers and also forming a base of neighborhoods of zero in~$R$.
 It follows that the corresponding open two-sided ideals
$J_n\subset S$ produced by the construction spelled out in
Lemma~\ref{continuous-strongly-taut-ring-map}(3) also form a chain
$J_1\supset J_2\supset J_3\supset\dotsb$ indexed by the positive
integers that is a base of neighborhoods of zero in~$S$.
 So $J_n$ is the closure of the right ideal $f(I_n)S$ in $S$ for
every $n\ge1$.
 Now it remains to refer to
Lemma~\ref{reductions-tensor-product-formula} for the assertion
that the map $R_n\ot_{R_{n+1}}S_{n+1}\rarrow S_n$ is an isomorphism.

 (2)~$\Longrightarrow$~(1) We need to prove that $J_n$ is the closure
of the right ideal $f(I_n)S$ in~$S$.
 Indeed, denote by $f_n\:R_n\rarrow S_n$ the maps of the quotient rings
induced by~$f$.
 The condition that the map $R_n\ot_{R_{n+1}}S_{n+1}\rarrow S_n$ is
an isomorphism means that $S_n=S_{n+1}/(f_{n+1}(I_n/I_{n+1})S_{n+1})$,
or equivalently, $J_n/J_{n+1}=f_{n+1}(I_n/I_{n+1})S_{n+1}\subset
S_{n+1}$ for every $n\ge1$.
 It follows that
\begin{equation*}
\begin{split}
 J_n &= J_{n+1}+f(I_n)S \\
       &= J_{n+2}+f(I_{n+1})S+f(I_n)S \\
       &= J_{n+2}+f(I_n)S \\
       &= J_{n+3}+f(I_{n+2})S+f(I_n)S \\
       &= J_{n+3}+f(I_n)S \\
       &= \dotsb \\
       &= J_m+f(I_n)S
\end{split}
\end{equation*}
for every $m>n$.
 As the open two-sided ideals $J_m$ form a base of neighborhoods
of zero in $S$, the desired conclusion follows.
\end{proof}

\Section{Left Proflat Morphisms and Proepimorphisms
of~Topological~Rings} \label{proflat-proepi-secn}

 All left proflat maps of topological rings are continuous and
strongly right taut by definition.
 Let us spell out that definition.

\begin{lem} \label{left-proflat-strongly-taut-ring-map}
 Let $R$ and $S$ be two-sided linear topological rings, and
let $f\:R\rarrow S$ be a strongly right taut continuous ring map.
 Then the following two conditions are equivalent:
\begin{enumerate}
\item for every open two-sided ideal $I\subset R$ such that the closure
$J$ of the right ideal $f(I)S\subset S$ is an open two-sided ideal
in $S$, the discrete quotient ring $S/J$ is flat as a left module over
the discrete quotient ring~$R/I$;
\item open two-sided ideals $I\subset R$ for which the closure $J$ of
the right ideal $f(I)S\subset S$ is an open two-sided ideal in $S$
\emph{and} the quotient ring $S/J$ is flat as a left module over
the quotient ring $R/I$ form a base of neighborhoods of zero in~$R$.
\end{enumerate}
\end{lem}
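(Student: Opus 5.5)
The implication (1)~$\Longrightarrow$~(2) is immediate. Since $f$~is strongly right taut, the open two-sided ideals $I\subset R$ for which the closure $J$ of $f(I)S$ is an open two-sided ideal in~$S$ form a base of neighborhoods of zero in~$R$. By~(1), every such $I$ also satisfies the flatness condition, so the base described in~(2) coincides with this one.

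For (2)~$\Longrightarrow$~(1), I will use Lemma~\ref{reductions-tensor-product-formula} together with base change of flatness. Let $I'\subset R$ be an open two-sided ideal such that the closure $J'$ of $f(I')S$ is an open two-sided ideal in~$S$. By~(2), there exists an open two-sided ideal $I''\subset I'$ with the following properties:
\begin{itemize}
\item the closure $J''$ of $f(I'')S$ is an open two-sided ideal in~$S$;
\item $S''=S/J''$ is a flat left module over $R''=R/I''$.
\end{itemize}
Here we use that the ideals in~(2) form a base, and that an intersection of two open two-sided ideals is again an open two-sided ideal, so a base element can be found inside~$I'$.

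Lemma~\ref{reductions-tensor-product-formula} now gives an isomorphism of $R'$\+$S''$\+bimodules $R'\ot_{R''}S''\simeq S'$, where $R'=R/I'$ and $S'=S/J'$. It remains to observe that extension of scalars preserves flatness. For any right $R'$\+module $N$ we have
$$
 N\ot_{R'}(R'\ot_{R''}S'')\simeq N\ot_{R''}S'',
$$
where $N$ is viewed as an $R''$\+module via the surjection $R''\rarrow R'$. The functor $N\longmapsto N\ot_{R''}S''$ is exact on right $R''$\+modules, and restriction of scalars is exact. Hence ${-}\ot_{R'}S'$ is exact on right $R'$\+modules, that is, $S'$ is a flat left $R'$\+module.

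There is no real obstacle here. The only point needing care is to match the left-module structures: the isomorphism $R'\ot_{R''}S''\simeq S'$ is compatible with the left $R'$\+module structures, which holds because it is a bimodule map.
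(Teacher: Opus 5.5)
Your proposal is correct and takes essentially the same approach as the paper: (1)~$\Longrightarrow$~(2) is immediate from strong right tautness, and (2)~$\Longrightarrow$~(1) uses the bimodule isomorphism $S'\simeq R'\ot_{R''}S''$ from Lemma~\ref{reductions-tensor-product-formula} together with the fact that extension of scalars preserves flatness. One minor remark: the intersection argument is unnecessary, because $I'$ is itself an open neighborhood of zero, so the base property in~(2) directly gives a base element $I''\subset I'$.
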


\begin{proof}
 In the notation of Lemma~\ref{reductions-tensor-product-formula},
if $S''$ is flat as a left $R''$\+module, then it follows from
the isomorphism $S'\simeq R'\ot_{R''}S''$ that $S'$ is flat as
a left $R'$\+module.
\end{proof}

 We will say that a strongly right taut continuous ring map
$f\:R\rarrow S$ is \emph{left proflat} if it satisfies the equivalent
conditions of Lemma~\ref{left-proflat-strongly-taut-ring-map}.

 Our next topic is proepimorphisms of topological rings.

\begin{lem} \label{topological-ring-proepimorphism-lemma}
 Let $R$ and $S$ be two-sided linear topological rings, and
let $f\:R\rarrow S$ be a continuous ring map.
 Consider the following two conditions:
\begin{enumerate}
\item for any open two-sided ideals $I\subset R$ and $J\subset S$
such that $f(I)\subset J$, the induced map of the discrete quotient
rings $R/I\rarrow S/J$ is a ring epimorphism;
\item there exist a base of neighborhoods of zero in $R$ consisting
of open two-sided ideals $I_\alpha\subset R$ and a base of neighborhoods
of zero in $S$ consisting of open two-sided ideals $J_\alpha\subset S$,
indexed by the same directed poset of indices\/~$\{\alpha\}$, such that
$f(I_\alpha)\subset J_\alpha$ for every\/~$\alpha$, the induced maps of
the quotient rings $R/I_\alpha\rarrow S/J_\alpha$ are ring epimorphisms,
and $I_{\alpha''}\subset I_{\alpha'}$, \ $J_{\alpha''}\subset
J_{\alpha'}$ for all pairs of indices\/ $\alpha'<\alpha''$.
\end{enumerate}
 The implication \textup{(2)~$\Longrightarrow$~(1)} always holds.
 When the topological rings $R$ and $S$ have countable bases of
neighborhoods of zero, one has \textup{(1)~$\Longleftrightarrow$~(2)}.
 Moreover, one can choose the directed poset\/~$\{\alpha\}$ to be
the linearly ordered set of positive integers in the latter case.
\end{lem}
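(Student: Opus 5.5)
For the implication (2)~$\Longrightarrow$~(1), I will use the basic fact that if $A\rarrow B\rarrow C$ are ring maps with $A\rarrow C$ an epimorphism and $B\rarrow C$ surjective, then the composition behaves well. More precisely, I will use two standard facts. First, a composition of a ring epimorphism with a surjective ring map is a ring epimorphism. Second, if $g\circ h$ is an epimorphism, then $g$ is an epimorphism.

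Let $I\subset R$ and $J\subset S$ be open two-sided ideals with $f(I)\subset J$. Since the $J_\alpha$ form a base of neighborhoods of zero in $S$, there is an index $\alpha'$ with $J_{\alpha'}\subset J$. Since $I$ is open, there is an index $\alpha''$ with $I_{\alpha''}\subset I$. By directedness, pick $\alpha\ge\alpha',\alpha''$; then $I_\alpha\subset I$ and $J_\alpha\subset J$.

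Now consider the commutative square formed by $R/I_\alpha\rarrow S/J_\alpha$, the surjections $R/I_\alpha\rarrow R/I$ and $S/J_\alpha\rarrow S/J$, and $R/I\rarrow S/J$. The composition $R/I_\alpha\rarrow S/J_\alpha\rarrow S/J$ is an epimorphism, being an epimorphism followed by a surjection. This composition equals $R/I_\alpha\rarrow R/I\rarrow S/J$. Hence the second factor $R/I\rarrow S/J$ is an epimorphism, since if $g\circ h$ is an epimorphism then $g$ is.

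For (1)~$\Longrightarrow$~(2) in the countable case, I choose descending chains of open two-sided ideals $I'_1\supset I'_2\supset\dotsb$ in $R$ and $J_1\supset J_2\supset\dotsb$ in $S$, each forming a base of neighborhoods of zero. Continuity makes $f^{-1}(J_n)$ an open two-sided ideal in $R$. Put $I_n=I'_n\cap f^{-1}(J_n)$. This is again a descending chain of open two-sided ideals forming a base, and $f(I_n)\subset J_n$. By (1), each map $R/I_n\rarrow S/J_n$ is a ring epimorphism. So (2) holds with the poset of positive integers, which also proves the final assertion.

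The only delicate point is the small lemma on epimorphisms, which is elementary.
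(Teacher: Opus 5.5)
Your proposal is correct and follows essentially the same route as the paper: for (2)~$\Longrightarrow$~(1) you use the same commutative square with a common index $\alpha$ above $\alpha'$ and $\alpha''$, together with the same two facts about ring epimorphisms. For (1)~$\Longrightarrow$~(2) you use the same idea of shrinking the ideals of $R$ into the open two-sided ideals $f^{-1}(J_n)$. The only cosmetic difference is that you first fix descending chains and then set $I_n=I'_n\cap f^{-1}(J_n)$, whereas the paper builds the two chains by a simultaneous induction.
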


\begin{proof}
 (2)~$\Longrightarrow$~(1) Pick two indices~$\alpha'$ and~$\alpha''$
such that $I_{\alpha'}\subset I$ and $J_{\alpha''}\subset J$.
 Pick an index~$\alpha$ such that $\alpha\ge\alpha'$ and
$\alpha\ge\alpha''$.
 Then we have $I_\alpha\subset I$, \ $J_\alpha\subset J$, \
$f(I_\alpha)\subset J_\alpha$, and the map $R/I_\alpha\rarrow
S/J_\alpha$ is a ring epimorphism.
 Consider the commutative diagram in the category of rings
$$
 \xymatrix{
  R/I_\alpha \ar[r] \ar@{->>}[d] & S/J_\alpha \ar@{->>}[d] \\
  R/I \ar[r] & S/J
 }
$$
where the vertical maps are the natural surjections and the horizontal
maps are induced by~$f$.
 All surjective maps of rings are, of course, ring epimorphisms.
 Now the maps $R/I_\alpha\rarrow S/J_\alpha\rarrow S/J$ are ring
epimorphisms, hence so is their composition $R/I_\alpha\rarrow S/J$.
 Hence the composition $R/I_\alpha\rarrow R/I\rarrow S/J$ is
a ring epimorphism, and it follows that so is the  map $R/I\rarrow S/J$.

 (1)~$\Longrightarrow$~(2) Let $\{I'_n\subset R\}_{n=1}^\infty$ be
a countable base of open neighborhoods of zero in $R$ and
$\{J'_n\subset S\}_{n=1}^\infty$ be a countable base of open
neighborhoods of zero in~$S$.
 Put $I_0=R$ and $J_0=S$.
 Proceeding by induction on $n\ge1$, pick an open two-sided ideal
$J_n\subset S$ such that $J_n\subset J'_n\cap J_{n-1}$.
 Then pick an open two-sided ideal $I_n\subset R$ such that
$I_n\subset I'_n\cap I_{n-1}\cap f^{-1}(J_n)$.
\end{proof}

 We will say that a continuous homomorphism of two-sided linear
topological rings $f\:R\rarrow S$ is a \emph{topological ring
proepimorphism} if it satisfies the condition of
Lemma~\ref{topological-ring-proepimorphism-lemma}(1).

\begin{ex} \label{proepimorphism-not-epimorphism-example}
 A topological ring proepimorphism \emph{need not} be an epimorphism
of abstract rings (with the topologies disregarded), not even in
the case of Noetherian commutative rings with adic topologies.
 Indeed, let $R=k[x][[y]]$ be the ring of formal power series in
one variable~$y$ with the coefficients in the ring of polynomials
in one variable~$x$ over a field~$k$.
 Let $S=k[x,x^{-1}][[y]]$ be the ring of formal power series in~$y$
with the coefficients in the ring of Laurent polynomials $k[x,x^{-1}]$.
 Endow both the rings $R$ and $S$ with the $y$\+adic topologies, and
consider the natural continuous embedding of adic topological rings
$f\:R\rarrow S$.
 Put $I_n=Ry^n\subset R$ and $J_n=Sy^n\subset S$, and consider
the quotient rings $R_n=R/I_n=k[x,y]/(y^n)$ and
$S_n=S/J_n=k[x,x^{-1},y]/(y^n)$.
 Then the induced ring map $f_n\:R_n\rarrow S_n$ is a flat ring
epimorphism, since $S_n=R_n[x^{-1}]$ is the localization of $R_n$ at
the element $x\in R_n$.
 Furthermore, we have $J_n=f(I_n)S\subset S$ for every $n\ge1$.
 So $f\:R\rarrow S$ is a proflat proepimorphism of topological rings
by Lemmas~\ref{left-proflat-strongly-taut-ring-map}(2)
and~\ref{topological-ring-proepimorphism-lemma}(2).

 We claim that the map~$f$ is \emph{not} an epimorphism in the category
of abstract (nontopological) rings.
 Indeed, denote by $Q=(R\setminus\{0\})^{-1}R$ the field of fractions
of the commutative integral domain~$R$.
 If $f$~were a ring epimorphism, then so would be the pushout
$Q\ot_Rf\:Q\rarrow Q\ot_RS$ of the map~$f$ in the category of
commutative rings.
 Any ring epimorphism whose domain is a field is either an isomorphism,
or the map to the zero ring.
 We have $Q\ot_RS=(R\setminus\{0\})^{-1}S\ne0$, since $S$ is also
a commutative integral domain and $f$~is an injective map.
 Thus, if $f$~were a ring epimorphism, the map $Q\ot_Rf$ would be
an isomorphism.
 This would mean that, for every element $s\in S$, there exist two
elements $p$, $q\in R$ such that $q\ne0$ and $f(q)s=f(p)$ in~$S$.

 Consider the formal power series $s=\sum_{n=0}^\infty x^{-2^n}y^n
\in k[x,x^{-1}][[y]]=S$.
 For the sake of contradiction, assume that there exist two formal
power series $p=\sum_{n=0}^\infty p_n(x)y^n$ and
$q=\sum_{n=0}^\infty q_n(x)y^n$ with polynomial coefficients
$p_n$, $q_n\in k[x]$ such that $q\ne0$ and $qs=p$ in $k[x,x^{-1}][[y]]$.
 Without loss of generality we can assume that $q_0\ne0$.
 Now we have the system of equations in~$k[x]$
$$
 p_n(x)=\sum\nolimits_{m=0}^n x^{-2^m}q_{n-m}(x), \qquad n\ge0,
$$
expressing the equation $qs=p$ in $k[x,x^{-1}][[y]]$.
 Equivalently,
$$
 x^{2^n}p_n(x)=\sum\nolimits_{m=0}^n x^{2^n-2^m}q_{n-m}(x),
 \qquad n\ge0.
$$
 Let $l\ge1$ be an integer such that $q_0(x)$ is not divisible
by~$x^{2^l}$ in~$k[x]$.
 Take $n=l+1$; so we have
$$
 q_0(x)=x^{2^{l+1}}p_{l+1}(x)-\sum\nolimits_{m=0}^l
 x^{2^{l+1}-2^m}q_{l+1-m}(x).
$$
 Then $x^{2^{l+1}}p_{l+1}(x)$ is divisible by~$x^{2^l}$
and $x^{2^{l+1}-2^m}q_{l+1-m}(x)$ is divisible by~$x^{2^l}$
in $k[x]$ for all $0\le m\le l$, but $q_0(x)$ is not divisible
by~$x^{2^l}$; a contradiction.
\end{ex}

 Let $f\:R\rarrow S$ be a continuous homomorphism of right linear
topological rings.
 Then, by Lemma~\ref{continuous-ring-maps-lemma}(3), any discrete right
$S$\+module is also discrete as a right $R$\+module.
 So we have the functor of restriction of scalars $f_\diamond\:
\Discr S\rarrow\Discr R$.

 For the rest of this section, we restrict ourselves to \emph{complete
and separated} topological rings.
 Let $\ff\:\fR\rarrow\fS$ be a continuous homomorphism of complete,
separated right linear topological rings.
 Then, for any set $X$, there is the induced map of sets
$\ff[[X]]\:\fR[[X]]\rarrow\fS[[X]]$.
 The collection of maps $\ff[[X]]$ is a morphism of functors
$\Sets\rarrow\Sets$, and in fact, a morphism $\fR[[{-}]]\rarrow
\fS[[{-}]]$ of monads on the category of sets.
 Consequently, every module over the monad $\fS[[{-}]]$ acquires
the underlying structure of a module over the monad $\fR[[{-}]]$.
 So we have the functor of restriction of scalars $\ff_\sharp\:
\fS\Contra\rarrow\fR\Contra$.

 In particular, (the underlying set of) the topological ring $\fS$
itself has a natural structure of left $\fR$\+contramodule.

\begin{lem} \label{continuous-and-taut-contramodule-tim-lemma}
 Let\/ $\fR$ and\/ $\fS$ be complete, separated right linear topological
rings, and let\/ $\ff\:\fR\rarrow\fS$ be a taut continuous ring map.
 Assume that the topological ring\/ $\fR$ has a countable base of
neighborhoods of zero.
 Let\/ $\fI\subset\fR$ be an open right ideal and\/ $\fJ\subset\fS$
be the closure of the right ideal\/ $\ff(\fI)\fS\subset\fS$.
 Then one has\/ $\fI\tim\fS=\fJ\subset\fS$.
\end{lem}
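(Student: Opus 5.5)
We prove the two inclusions $\fI\tim\fS\subset\fJ$ and $\fJ\subset\fI\tim\fS$ separately.

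\emph{The inclusion $\fI\tim\fS\subset\fJ$.} Here $\fS$ carries its left $\fR$\+contramodule structure through~$\ff$. The contraaction sends a zero-convergent family $(r_s\in\fR)_{s\in\fS}$ to the limit in $\fS$ of the finite partial sums $\sum_s\ff(r_s)s$. If all $r_s$ lie in $\fI$, every partial sum lies in $\ff(\fI)\fS$. The sum converges in $\fS$, since $\ff$ is continuous and $\ff(r_s)\to0$. The closed subset $\fJ$ contains all the partial sums, hence it contains the limit. So $\fI\tim\fS\subset\fJ$.

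\emph{The inclusion $\fJ\subset\fI\tim\fS$.} Clearly $\ff(\fI)\fS\subset\fI\tim\fS$, using finite families. So it suffices to show that $\fI\tim\fS$ is closed in $\fS$, or at least contains the closure $\fJ$. Tautness says $\fJ$ is open in~$\fS$, so $\fJ=\ff(\fI)\fS+\fJ'$ for every open right ideal $\fJ'\subset\fJ$. The plan is an approximation argument that uses the countable base of $\fR$. Choose a descending chain of open right ideals $\fI\supset\fI_1\supset\fI_2\supset\dotsb$ forming a base of neighborhoods of zero in $\fR$. Let $\fJ_n$ be the closure of $\ff(\fI_n)\fS$. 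By tautness each $\fJ_n$ is an open right ideal, and $\fJ_n\subset\fJ$. For $s\in\fJ$, write
\[
 s=\sum\nolimits_k\ff(a_{0,k})t_{0,k}+s_1,
\]
a finite sum with $a_{0,k}\in\fI$, $t_{0,k}\in\fS$, and $s_1\in\fJ_1$. This is possible because $\fJ=\ff(\fI)\fS+\fJ_1$, as $\fJ_1$ is open and contained in~$\fJ$. Then write $s_1=\sum_k\ff(a_{1,k})t_{1,k}+s_2$ with $a_{1,k}\in\fI_1$ and $s_2\in\fJ_2$, and iterate. The resulting coefficient family $(a_{n,k})_{n,k}$ converges to zero in $\fR$, since $a_{n,k}\in\fI_n$ and each level is finite. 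Its contramodule sum $\sum_{n,k}a_{n,k}t_{n,k}$ is therefore an element of $\fI\tim\fS$. This element is computed in $\fS$ as the limit of the partial sums, which equals $s-\lim_n s_n$.

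\emph{Main obstacle.} We need $s_n\to0$ in $\fS$, that is, the $\fJ_n$ must form a base of neighborhoods of zero in~$\fS$. Continuity plus tautness gives this: by Lemma~\ref{continuous-taut-ring-maps-lemma}(1)$\Rightarrow$(2), every open right ideal $\fJ'$ of $\fS$ contains $\fJ_n$ for $n$ large, since $\ff^{-1}(\fJ')$ is open in $\fR$ and so contains some~$\fI_n$. Completeness and separatedness of $\fS$ then make the infinite sum equal to~$s$. One point needs care: the contraaction on $\fS$ over a zero-convergent family indexed by a set with repetitions of the elements $t_{n,k}$. This is handled by the functoriality of $\fR[[{-}]]$, which collapses repetitions by summing coefficients, and by the fact that the contraaction on $\fS$ is given by limits of finite sums. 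Hence $s\in\fI\tim\fS$.
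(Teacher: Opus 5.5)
Your proof is correct and follows the paper's argument essentially step for step. The inclusion $\fI\tim\fS\subset\fJ$ comes from $\fJ$ being closed. The reverse inclusion uses the iterated decompositions $\fJ_n=\ff(\fI_n)\fS+\fJ_{n+1}$, with Lemma~\ref{continuous-taut-ring-maps-lemma} ensuring that the $\fJ_n$ form a base of neighborhoods of zero, so the resulting contramodule sum recovers $s$. The only cosmetic difference is that the paper takes $\fI_1=\fI$ instead of adding an extra level indexed by~$0$.
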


\begin{proof}
 Let $\fI=\fI_1\supset\fI_2\supset\fI_3\supset\dotsb$ be a descending
chain of open right ideals in $\fR$, indexed by the positive integers,
that is a base of neighborhoods of zero in~$\fR$.
 For every $n\ge1$, denote by $\fJ_n\subset\fS$ the closure of
the right ideal $\ff(\fI_n)\fS\subset\fS$.
 By Lemma~\ref{continuous-taut-ring-maps-lemma}(2), the right ideals
$\fJ=\fJ_1\supset\fJ_2\supset\fJ_3\supset\dotsb$ are open in $\fS$
and form a base of neighborhoods of zero in~$\fS$.
 Similarly to the discussion in the proof of
Lemma~\ref{reductions-tensor-product-formula}, we have $\fJ_n/\fJ_{n+1}
=(\ff(\fI_n)\fS+\fJ_{n+1})/\fJ_{n+1}\subset\fS/\fJ_{n+1}$ for every
$n\ge1$ (since $\fJ_{n+1}$ is an open right ideal in~$\fS$).

 The inclusion $\fI\tim\fS\subset\fJ$ follows immediately from
the inclusion $\ff(\fI)\subset\fJ$ and the fact that $\fJ$ is
a closed (in fact, open) right ideal in~$\fS$.
 To prove the converse inclusion, let $s=s_1\in\fJ$ be an element.
 Then we have $\fJ=\ff(\fI)\fS+\fJ_2\subset\fS$, hence
$s_1=\sum_{i=1}^{m_1}\ff(r_i)t_i+s_2$ for some $m_1\ge1$,
\ $r_1$,~\dots, $r_{m_1}\in\fI$, \ $t_1$,~\dots, $t_{m_1}\in\fS$,
and $s_2\in\fJ_2$.
 Similarly, $\fJ_2=\ff(\fI_2)\fS+\fJ_3\subset\fS$, hence
$s_2=\sum_{i=m_1+1}^{m_1+m_2}\ff(r_i)t_i+s_3$ for some $m_2\ge1$,
\ $r_{m_1+1}$,~\dots, $r_{m_1+m_2}\in\fI_2$, \
$t_{m_1+1}$,~\dots, $t_{m_1+m_2}\in\fS$, and $s_2\in\fJ_3$.

 Proceeding in this way, we construct a sequence of integers $m_n\ge1$,
\,$n\ge1$, a sequence of elements $r_i\in\fI$, \,$i\ge1$, such that
$r_i\in\fI_n$ for all $m_1+\dotsb+m_{n-1}+1\le i\le m_1+\dotsb+m_n$,
a sequence of elements $t_i\in\fS$, and a sequence of elements
$s_n\in\fJ_n$, such that
$s_n=\sum_{i=m_1+\dotsb+m_{n-1}+1}^{m_1+\dotsb+m_n}\ff(r_i)t_i+s_{n+1}$
for every $n\ge1$.
 Now we have $s_1=\sum_{i=1}^\infty\ff(r_i)t_i$ in the sense of
the limit of finite partial sums in the topology of $\fS$, since
the open right ideals $\fJ_n$ form a base of neighborhoods of zero
in~$\fS$.
 As the sequence of elements $r_i\in\fR$ converges to zero in
the topology of $\fR$ (since the open right ideals $\fI_n$ form a base
of neighborhoods of zero in~$\fR$), it follows that the similar equation
$s=\sum_{i=1}^\infty r_it_i$ holds in the sense of the contramodule
infinite summation operation in the left $\fR$\+contramodule~$\fS$.
 Thus $s\in\fI\tim\fS$, as desired.
\end{proof}

\begin{cor} \label{left-flat-and-contramodule-flat-cor}
 Let\/ $\fR$ and\/ $\fS$ be complete, separated two-sided linear
topological rings, and let\/ $\ff\:\fR\rarrow\fS$ be a strongly right
taut continuous ring map.
 Assume that the topological ring\/ $\fR$ has a countable base of
neighborhoods of zero.
 Then the map\/~$\ff$ is left proflat if and only if the left\/
$\fR$\+contramodule\/ $\fS$ is flat.
\end{cor}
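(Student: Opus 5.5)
We reduce flatness of the left $\fR$\+contramodule $\fS$ to flatness of its reductions modulo open two-sided ideals. By the discussion at the beginning of Section~\ref{separated-and-flat-secn}, a left $\fR$\+contramodule $\fF$ over a two-sided linear $\fR$ is flat if and only if the left $\fR/\fI$\+module $\fF/(\fI\tim\fF)$ is flat for every $\fI$ in a chosen base of neighborhoods of zero consisting of open two-sided ideals. So we take $\fF=\fS$ and fix the base from the definition of strong right tautness. Since $\ff$ is strongly right taut and continuous, the open two-sided ideals $\fI\subset\fR$ for which the closure $\fJ$ of $\ff(\fI)\fS$ is an open two-sided ideal in $\fS$ form a base of neighborhoods of zero in $\fR$. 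We use this base.

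For each such $\fI$, we identify $\fS/(\fI\tim\fS)$. By Lemma~\ref{continuous-strongly-taut-ring-map}, the map $\ff$ is continuous and taut, and $\fR$ has a countable base of neighborhoods of zero. Hence Lemma~\ref{continuous-and-taut-contramodule-tim-lemma} applies and gives $\fI\tim\fS=\fJ$. Therefore $\fS/(\fI\tim\fS)=\fS/\fJ$. The left $\fR/\fI$\+module structure on this quotient is the one induced by the ring map $\fR/\fI\rarrow\fS/\fJ$, because the contramodule structure on $\fS/(\fI\tim\fS)$ comes from the $\fR/\fI$\+module structure, which is induced from the underlying $\fR$\+module structure of $\fS$ via $\ff$. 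It follows that $\fS$ is a flat left $\fR$\+contramodule if and only if $\fS/\fJ$ is a flat left $\fR/\fI$\+module for every $\fI$ in this base.

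It remains to match this with the definition of left proflatness. Condition~(1) of Lemma~\ref{left-proflat-strongly-taut-ring-map} requires flatness of $\fS/\fJ$ over $\fR/\fI$ for \emph{every} $\fI$ in the base above; this is exactly the criterion obtained in the previous paragraph. For the converse direction we must check that the flatness criterion of Section~\ref{separated-and-flat-secn} is valid for an arbitrary base of open two-sided ideals, not only for all open two-sided ideals. If $\fI'\subset\fI$ are both in the base, then $\fS/\fJ\simeq\fR/\fI\ot_{\fR/\fI'}\fS/\fJ'$ by Lemma~\ref{reductions-tensor-product-formula}, so flatness descends from smaller ideals to larger ones. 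The same descent holds for general open two-sided ideals, since $\fF/(\fI\tim\fF)\simeq\fR/\fI\ot_{\fR/\fI'}\fF/(\fI'\tim\fF)$. Hence both conditions say the same thing, and the equivalence follows.

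The main obstacle is the identification $\fI\tim\fS=\fJ$, which needs the countable base and completeness. It is handled entirely by Lemma~\ref{continuous-and-taut-contramodule-tim-lemma}. The only other point requiring care is that the flatness criterion may be tested on any base of open two-sided ideals, which the paper asserts.
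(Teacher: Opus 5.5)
Your proposal is correct and follows the paper's own proof: restrict to the base of open two-sided ideals $\fI\subset\fR$ for which the closure $\fJ$ of $\ff(\fI)\fS$ is an open two-sided ideal, apply the reduction criterion for flatness of contramodules from the beginning of Section~\ref{separated-and-flat-secn}, identify $\fI\tim\fS=\fJ$ by Lemma~\ref{continuous-and-taut-contramodule-tim-lemma}, and compare with Lemma~\ref{left-proflat-strongly-taut-ring-map}(1). Your extra verification, via $\fF/(\fI\tim\fF)\simeq\fR/\fI\otimes_{\fR/\fI'}\fF/(\fI'\tim\fF)$, that the flatness criterion may be tested on any base of open two-sided ideals is correct; it justifies a point the paper states without proof.
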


\begin{proof}
 Let $\fI$ range over all the open two-sided ideals in $\fR$ such that
the closure $\fJ$ of the right ideal $\ff(\fI)\fS\subset\fS$ is an open
two-sided ideal in~$\fS$.
 By Lemma~\ref{continuous-strongly-taut-ring-map}(3), the open two-sided
ideals $\fI$ form a base of neighborhoods of zero in $\fR$, and the open
two-sided ideals $\fJ$ form a base of neighborhoods of zero in~$\fS$.
 By Lemma~\ref{left-proflat-strongly-taut-ring-map}(1), the map~$\ff$
is left proflat if and only if the ring $\fS/\fJ$ is a flat left module
over the ring $\fR/\fI$ for every~$\fI$.
 Following the discussion in the beginning of
Section~\ref{separated-and-flat-secn}, the left $\fR$\+contramodule
$\fS$ is flat if and only if the left $\fR/\fI$\+module
$\fS/(\fI\tim\fS)$ is flat for every $\fI$ as above.
 It remains to point out that $\fJ=\fI\tim\fS$ by
Lemma~\ref{continuous-and-taut-contramodule-tim-lemma}.
\end{proof}

\Section{Change-of-Scalar Functors~I}  \label{change-of-scalars-I-secn}

 Let $f\:R\rarrow S$ be a homomorphism of (abstract, nontopological)
rings.
 Then every $S$\+module has an underlying $R$\+module structure, so
there are exact, faithful functors of restriction of scalars
$f_*\:S\Modl\rarrow R\Modl$ and $f_*\:\Modr S\rarrow\Modr R$.

 The functors~$f_*$ have adjoint functors on both sides.
 In particular, for left modules, the functor of \emph{extension of
scalars} $f^*\:R\Modl\rarrow S\Modl$ left adjoint to~$f_*$ is given
by the formula $f^*(M)=S\ot_RM$ for any left $R$\+module~$M$.
 The functor of \emph{coextension of scalars} $f^!\:R\Modl\rarrow
S\Modl$ right adjoint to~$f_*$ is given by the formula
$f^!(M)=\Hom_R(S,M)$ for all $M\in R\Modl$.

 Let $f\:R\rarrow S$ be a continuous homomorphism of right linear
topological rings.
 Then, as explained in Section~\ref{proflat-proepi-secn}
(based on Lemma~\ref{continuous-ring-maps-lemma}(3)), the functor
$f_*\:\Modr S\rarrow\Modr R$ applied to discrete modules provides
a functor of restriction of scalars $f_\diamond\:\Discr S\rarrow
\Discr R$, taking every discrete right $S$\+module $\cN$ to its
underlying discrete right $R$\+module~$\cN$.

 The functor~$f_\diamond$ is always exact and faithful, and preserves
infinite direct sums.
 So, as any colimit-preserving functor between Grothendieck categories,
the functor~$f_\diamond$ has a right adjoint functor of
\emph{coextension of scalars} $f^\diamond\:\Discr R\rarrow\Discr S$
\,\cite[Section~2.9]{Pproperf}.
 The functor~$f^\diamond$ is left exact, preserves infinite products,
and takes injective objects to injective objects.
 For any discrete right $R$\+module $M$, the discrete right
$S$\+module $f^\diamond(M)$ can be computed as the maximal discrete
$S$\+submodule of the right $S$\+module $f^!(M)=\Hom_{R^\rop}(S,M)$.

 Let $\ff\:\fR\rarrow\fS$ be a continuous homomorphism of complete,
separated right linear topological rings.
 Then, as explained in Section~\ref{proflat-proepi-secn}, there is
a functor of restriction of scalars $\ff_\sharp\:\fS\Contra\rarrow
\fR\Contra$ taking every left $\fS$\+contramodule $\fP$ to its
underlying left $\fR$\+contramodule~$\fP$.
 The functor~$\ff_\sharp$ is always exact and faithful, and preserves
infinite products.

 The functor~$\ff_\sharp$ has a left adjoint functor of
\emph{contraextension of scalars} $\ff^\sharp\:\fR\Contra
\allowbreak\rarrow\fS\Contra$.
 The functor~$\ff^\sharp$ is right exact, preserves infinite direct
sums, and takes projective objects to projective object.
 One can construct this functor using the rules that the free
$\fR$\+contramodule spanned by any set $X$ goes to the free
$\fS$\+contramodule spanned by $X$, that is $\ff^\sharp(\fR[[X]])=
\fS[[X]]$, and the cokernels are preserved by~$\ff^\sharp$.

 For any discrete right $\fS$\+module $\cN$ and any left
$\fR$\+contramodule $\fP$, there is a natural isomorphism of
abelian groups~\cite[Section~2.9]{Pproperf}
\begin{equation} \label{extension-of-scalars-contratensor-formula}
 \ff_\diamond(\cN)\ocn_\fR\fP\simeq\cN\ocn_\fS\ff^\sharp(\fP).
\end{equation}
 As the functor~$\ff_\diamond$ is exact, it follows immediately that
the functor~$\ff^\sharp$ takes flat left $\fR$\+contramodules to flat
left $\fS$\+contramodules.

\begin{lem} \label{separated-restriction-of-scalars}
 Let\/ $\ff\:\fR\rarrow\fS$ be a continuous homomorphism of complete,
separated right linear topological rings.
 Then the functor of restriction of scalars~$f_\sharp$ takes separated
left\/ $\fS$\+contramodules to separated left\/ $\fR$\+contramodules.
\end{lem}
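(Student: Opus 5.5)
The plan is to compare the nonseparatedness kernels: for a left $\fS$\+contramodule $\fQ$ we show $\Omega_\fR(\ff_\sharp\fQ)\subset\Omega_\fS(\fQ)$. If $\fQ$ is separated, then $\Omega_\fS(\fQ)=0$, so $\Omega_\fR(\fQ)=0$ and $\ff_\sharp\fQ$ is separated. Recall that $\Omega_\fR(\fQ)=\bigcap_{\fI}(\fI\tim\fQ)$, where $\fI$ ranges over open right ideals of $\fR$, and $\Omega_\fS(\fQ)=\bigcap_{\fJ}(\fJ\tim\fQ)$, where $\fJ$ ranges over open right ideals of~$\fS$. It therefore suffices to find, for every open right ideal $\fJ\subset\fS$, an open right ideal $\fI\subset\fR$ with $\fI\tim\fQ\subset\fJ\tim\fQ$. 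Here the left-hand side is computed with the $\fR$\+contramodule structure on $\fQ$, and the right-hand side with the $\fS$\+contramodule structure.

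By continuity of $\ff$ (Lemma~\ref{continuous-ring-maps-lemma}), for a given open right ideal $\fJ\subset\fS$ the preimage $\fI=\ff^{-1}(\fJ)$ is an open right ideal in~$\fR$. Now take an element of $\fI\tim\fQ$; it has the form $\pi^\fR_\fQ\bigl(\sum_{q}r_qq\bigr)$ with $(r_q)$ a family in $\fI$ converging to zero in~$\fR$. The $\fR$\+contraaction on $\ff_\sharp\fQ$ is, by construction, the composition $\fR[[\fQ]]\rarrow\fS[[\fQ]]\rarrow\fQ$ of the monad morphism $\ff[[\fQ]]$ with $\pi^\fS_\fQ$. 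Hence the element equals $\pi^\fS_\fQ\bigl(\sum_q\ff(r_q)q\bigr)$. Continuity of $\ff$ ensures that $(\ff(r_q))$ converges to zero in $\fS$, and all of its entries lie in $\fJ$. So it is an element of $\fJ[[\fQ]]$, and its image lies in $\fJ\tim\fQ$. Since $\fJ$ is open, it is in particular closed, so $\fJ\tim\fQ$ is well-defined as in the preliminaries.

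Taking intersections gives $\Omega_\fR(\fQ)\subset\bigcap_\fJ(\ff^{-1}(\fJ)\tim\fQ)\subset\bigcap_\fJ(\fJ\tim\fQ)=\Omega_\fS(\fQ)$. The first inclusion holds because the $\ff^{-1}(\fJ)$ are among the open right ideals of~$\fR$. Thus separatedness is inherited.

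The argument is essentially formal; no countability assumption is needed. The only point requiring care is the identification of the restricted contraaction through the monad morphism $\ff[[{-}]]$. This comes straight from the definition of $\ff_\sharp$ in Section~\ref{proflat-proepi-secn}, so I expect no real obstacle.
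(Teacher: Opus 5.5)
Your proposal is correct and takes essentially the same route as the paper: for each open right ideal $\fJ\subset\fS$, continuity of $\ff$ gives an open right ideal $\fI\subset\fR$ with $\ff(\fI)\subset\fJ$, and then $\fI\tim\fQ\subset\fJ\tim\fQ$. The only differences are presentational. You phrase the conclusion as the inclusion $\Omega_\fR(\fQ)\subset\Omega_\fS(\fQ)$ rather than element by element. You also spell out, via the monad morphism $\ff[[{-}]]$, why $\fI\tim\fQ\subset\fJ\tim\fQ$ holds, a step the paper leaves implicit.
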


\begin{proof}
 Let $\fP$ be a separated left $\fS$\+contramodule.
 The condition that $\fP$ is separated means that, for every nonzero
element $p\in\fP$, there exists an open right ideal $\fJ\subset\fS$
such that $p\notin\fJ\tim\fP$.
 Since $\ff$~is continuous, there exists an open right ideal
$\fI\subset\fR$ such that $\ff(\fI)\subset\fJ$.
 Now we have $p\notin\fI\tim\ff_\sharp(\fP)$.
\end{proof}

\begin{lem} \label{flat-contramodules-contraextension-of-scalars}
 Let\/ $\ff\:\fR\rarrow\fS$ be a continuous homomorphism of complete,
separated right linear topological rings with \emph{countable} bases
of neighborhoods of zero.
 Then the functor of contraextension of scalars\/ $\ff^\sharp\:
\fR\Contra\rarrow\fS\Contra$ takes short exact sequences of flat
left\/ $\fR$\+contramodules to short exact sequences of flat left\/
$\fS$\+contramodules.
 Moreover, for any short exact sequence of left\/ $\fR$\+contramodules\/
$0\rarrow\fQ\rarrow\fP\rarrow\fF\rarrow0$ with a flat left\/
$\fR$\+contramodule\/ $\fF$, the short sequence of left\/
$\fS$\+contramodules\/ $0\rarrow\ff^\sharp(\fQ)\rarrow\ff^\sharp(\fP)
\rarrow\ff^\sharp(\fF)\rarrow0$ is exact.
\end{lem}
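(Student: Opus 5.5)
We plan to check exactness of $\ff^\sharp$ on such a sequence after applying ${-}\ocn_\fS\fS/\fJ$, and to rebuild it from the quotients using separatedness. Let $0\rarrow\fQ\rarrow\fP\rarrow\fF\rarrow0$ be a short exact sequence in $\fR\Contra$ with $\fF$ flat. Since $\ff^\sharp$ is a left adjoint, it is right exact, so $\ff^\sharp(\fQ)\rarrow\ff^\sharp(\fP)\rarrow\ff^\sharp(\fF)\rarrow0$ is exact. Only injectivity of $\ff^\sharp(\fQ)\rarrow\ff^\sharp(\fP)$ remains to be proved. The first assertion of the lemma is a special case of the second, once we recall that $\ff^\sharp$ takes flat $\fR$\+contramodules to flat $\fS$\+contramodules, as noted after~\eqref{extension-of-scalars-contratensor-formula}.

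For every discrete right $\fS$\+module $\cN$, formula~\eqref{extension-of-scalars-contratensor-formula} gives $\cN\ocn_\fS\ff^\sharp(-)\simeq\ff_\diamond(\cN)\ocn_\fR(-)$. By Lemma~\ref{flat-contramodules-well-behaved-lemma}(b), applied over $\fR$ with the discrete right $\fR$\+module $\ff_\diamond(\cN)$, the sequence
$$
 0\rarrow\cN\ocn_\fS\ff^\sharp(\fQ)\rarrow\cN\ocn_\fS\ff^\sharp(\fP)
 \rarrow\cN\ocn_\fS\ff^\sharp(\fF)\rarrow0
$$
is therefore exact for all $\cN$.

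Next we take $\fK=\ker(\ff^\sharp(\fQ)\to\ff^\sharp(\fP))$ and $\fP'=\im(\ff^\sharp(\fQ)\to\ff^\sharp(\fP))$, and we want to show $\fK=0$. Since ${-}$ and $\cN\ocn_\fS{-}$ are right exact, the sequence $\cN\ocn_\fS\fK\rarrow\cN\ocn_\fS\ff^\sharp(\fQ)\rarrow\cN\ocn_\fS\fP'\rarrow0$ is exact. The composition $\cN\ocn_\fS\ff^\sharp(\fQ)\rarrow\cN\ocn_\fS\fP'\rarrow\cN\ocn_\fS\ff^\sharp(\fP)$ is injective, so $\cN\ocn_\fS\ff^\sharp(\fQ)\simeq\cN\ocn_\fS\fP'$. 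The delicate point is that this does not by itself force $\fK=0$, since $\fK\ocn$ could map to zero without vanishing. This is the main obstacle, and we would handle it with a Tor-type argument using flatness of $\ff^\sharp(\fF)$.

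The short exact sequence $0\rarrow\fP'\rarrow\ff^\sharp(\fP)\rarrow\ff^\sharp(\fF)\rarrow0$ has flat cokernel. Choose a projective $\fS$\+contramodule $\fG$ with an epimorphism $\fG\rarrow\ff^\sharp(\fQ)$, and let $\fQ''$ be the kernel of the composite $\fG\rarrow\fP'$.

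Rather than continue along that route, we would argue more directly with free presentations. Choose an epimorphism $\fR[[X]]\rarrow\fF$ with kernel $\fF'$. By Lemma~\ref{flat-contramodules-well-behaved-lemma}(a), $\fF'$ is flat. Pull back to get $0\rarrow\fQ\rarrow\fP\times_\fF\fR[[X]]\rarrow\fR[[X]]\rarrow0$, which splits because $\fR[[X]]$ is projective. Since $\ff^\sharp$ is additive, applying it keeps this sequence split exact. Applying $\ff^\sharp$ to the pullback diagram, the map $\ff^\sharp(\fQ)\rarrow\ff^\sharp(\fP)$ factors as $\ff^\sharp(\fQ)\rarrow\ff^\sharp(\fP\times_\fF\fR[[X]])\rarrow\ff^\sharp(\fP)$. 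The first arrow is a split mono, so $\fK$ is contained in the kernel $\fK'$ of the second arrow restricted to $\ff^\sharp(\fQ)$.

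The sequence $0\rarrow\fF'\rarrow\fP\times_\fF\fR[[X]]\rarrow\fP\rarrow0$ consists of flat-kernel data. We reduce the claim to the case where the \emph{flat} object $\fF'$ is the sub: we need $\ff^\sharp(\fF')\rarrow\ff^\sharp(\fP\times_\fF\fR[[X]])$ to have image meeting the summand $\ff^\sharp(\fQ)$ trivially. That image is the image of $\ff^\sharp(\fF')\rarrow\ff^\sharp(\fR[[X]])$-compatible elements, and intersecting with $\ff^\sharp(\fQ)$ lands in $\ker(\ff^\sharp(\fF')\to\ff^\sharp(\fR[[X]]))$. The claim therefore reduces to the case $\fQ$ flat, that is, to an exact sequence of flat contramodules $0\rarrow\fF'\rarrow\fR[[X]]\rarrow\fF\rarrow0$.

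In the all-flat case, every term and its $\ff^\sharp$\+image is flat, hence separated by Lemma~\ref{complete-separated-flat-contramodules-lemma}(c). By Lemma~\ref{left-right-systems-discr-contra-category-equivs}(b), injectivity can be tested on the reductions $\fS/\fJ\ocn_\fS(-)$, where $\fJ$ ranges over a base of open right ideals of $\fS$. An element of $\fK$ maps to zero in every $\fS/\fJ\ocn_\fS\ff^\sharp(\fF')\simeq\ff^\sharp(\fF')/(\fJ\tim\ff^\sharp(\fF'))$, because the reduced maps are injective by the exact sequence displayed above. So the element lies in $\bigcap_\fJ\fJ\tim\ff^\sharp(\fF')=0$.
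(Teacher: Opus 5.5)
Your proof is correct and is essentially the paper's. The all-flat case is handled exactly as there: injectivity modulo $\fJ\tim{-}$ via formula~\eqref{extension-of-scalars-contratensor-formula} and Lemma~\ref{flat-contramodules-well-behaved-lemma}(b), then separatedness of the flat $\fS$\+contramodule $\ff^\sharp(\fF')$. Your pullback along $\fR[[X]]\rarrow\fF$ is a hands-on version of the paper's argument that $\boL_1\ff^\sharp(\fF)=\ker(\ff^\sharp(\fF')\to\ff^\sharp(\fR[[X]]))$ vanishes, followed by the long exact sequence.

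The appeal to Lemma~\ref{left-right-systems-discr-contra-category-equivs}(b) is unnecessary, and that lemma only covers two-sided linear rings; your direct argument via $\bigcap_\fJ\fJ\tim\ff^\sharp(\fF')=0$ and Lemma~\ref{complete-separated-flat-contramodules-lemma}(c) already suffices. The abandoned Tor-type detour in the middle can simply be deleted.
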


\begin{proof}
 Firstly, let $0\rarrow\fH\rarrow\fG\rarrow\fF\rarrow0$ be a short exact
sequence of flat left $\fR$\+contramodules.
 By Lemma~\ref{flat-contramodules-well-behaved-lemma}(b)
and formula~\eqref{extension-of-scalars-contratensor-formula}, for any
discrete right $\fS$\+module $\cN$, we have a short exact sequence of
abelian groups $0\rarrow\cN\ocn_\fS\ff^\sharp(\fH)\rarrow
\cN\ocn_\fS\ff^\sharp(\fG)\rarrow\cN\ocn_\fS\ff^\sharp(\fF)\rarrow0$.
 Taking $\cN=\fS/\fJ$, where $\fJ$ is an open right ideal in $\fS$,
we see that the map $\ff^\sharp(\fH)/(\fJ\tim\ff^\sharp(\fH))\rarrow
\ff^\sharp(\fG)/(\fJ\tim\ff^\sharp(\fG))$ is injective.
 Therefore, the kernel of the map $\ff^\sharp(\fH)\rarrow
\ff^\sharp(\fG)$ is contained in $\fJ\tim\ff^\sharp(\fH)$.
 The left $\fS$\+contramodule $\ff^\sharp(\fH)$ is flat,
hence by Lemma~\ref{complete-separated-flat-contramodules-lemma}(c)
it is separated; so we have
$\bigcap_{\fJ\subset\fS}(\fJ\tim\ff^\sharp(\fH))=0$.
 Thus the map $\ff^\sharp(\fH)\rarrow\ff^\sharp(\fG)$ is injective,
as desired.

 Now let $\boL_*\ff^\sharp$ denote the left derived functor of
the right exact functor $\ff^\sharp$, constructed as usual in terms
of projective resolutions.
 From the first assertion of the lemma (which we have proved already)
together with Lemma~\ref{flat-contramodules-well-behaved-lemma}(a)
we see that $\boL_i\ff^\sharp(\fF)=0$ for any flat left
$\fR$\+contramodule $\fF$ and all $i>0$.
 In view of the long exact sequence of derived functor
$\boL_*\ff^\sharp$ associated with the short exact sequence of
left $\fR$\+contramodules $0\rarrow\fQ\rarrow\fP\rarrow\fF\rarrow0$,
the second assertion of the lemma follows.
\end{proof}

 We denote by $\Ext^{\fR,*}({-},{-})$ the Ext groups computed in
the abelian category of left $\fR$\+contramodules $\fR\Contra$.
 A left $\fR$\+contramodule $\fC$ is said to be \emph{cotorsion} if
$\Ext^{\fR,1}(\fF,\fC)=0$ for all flat left $\fR$\+contramodules~$\fF$.
 Clearly, the full subcategory of cotorsion $\fR$\+contramodules is
closed under extensions and infinite products in $\fR\Contra$.

\begin{cor} \label{cotorsion-restriction-of-scalars-cor}
 Let\/ $\ff\:\fR\rarrow\fS$ be a continuous homomorphism of complete,
separated right linear topological rings with \emph{countable} bases
of neighborhoods of zero.
 Then the functor of restriction of scalars\/~$\ff_\sharp$ takes
cotorsion left\/ $\fS$\+contramodules to cotorsion left\/
$\fR$\+contramodules.
\end{cor}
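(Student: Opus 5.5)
The natural approach is adjunction plus exactness, via the contraextension functor $\ff^\sharp$ and Lemma~\ref{flat-contramodules-contraextension-of-scalars}. Let $\fC$ be a cotorsion left $\fS$\+contramodule and $\fF$ a flat left $\fR$\+contramodule. We need to show that $\Ext^{\fR,1}(\fF,\ff_\sharp\fC)=0$, that is, every short exact sequence $0\rarrow\ff_\sharp\fC\rarrow\fP\rarrow\fF\rarrow0$ in $\fR\Contra$ splits.

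The plan is to represent this class in terms of a projective presentation and transport it along the adjunction $\ff^\sharp\dashv\ff_\sharp$. First choose a short exact sequence $0\rarrow\fH\rarrow\fG\rarrow\fF\rarrow0$ with $\fG=\fR[[X]]$ free. By Lemma~\ref{flat-contramodules-well-behaved-lemma}(a), $\fG$ is flat and $\fH$ is flat, being the kernel of an epimorphism between flat contramodules. Since $\fG$ is projective, $\Ext^{\fR,1}(\fF,\ff_\sharp\fC)$ is the cokernel of the restriction map $\Hom^\fR(\fG,\ff_\sharp\fC)\rarrow\Hom^\fR(\fH,\ff_\sharp\fC)$.

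Next apply the adjunction. It identifies this restriction map with $\Hom^\fS(\ff^\sharp\fG,\fC)\rarrow\Hom^\fS(\ff^\sharp\fH,\fC)$. By Lemma~\ref{flat-contramodules-contraextension-of-scalars}, the sequence $0\rarrow\ff^\sharp\fH\rarrow\ff^\sharp\fG\rarrow\ff^\sharp\fF\rarrow0$ is exact and consists of flat left $\fS$\+contramodules. Here $\ff^\sharp\fG=\fS[[X]]$ is projective, and $\ff^\sharp\fF$ is flat by formula~\eqref{extension-of-scalars-contratensor-formula}. Hence the cokernel of the map of Hom groups equals $\Ext^{\fS,1}(\ff^\sharp\fF,\fC)$, which vanishes because $\fC$ is cotorsion. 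It follows that the restriction map is surjective, and so $\Ext^{\fR,1}(\fF,\ff_\sharp\fC)=0$.

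The only substantive input is the injectivity of $\ff^\sharp\fH\rarrow\ff^\sharp\fG$, which is exactly where the countability of the bases enters. That is already provided by Lemma~\ref{flat-contramodules-contraextension-of-scalars}, so no step should be an obstacle. The remaining care is to check that the adjunction isomorphisms are natural in the first argument, so that they commute with the restriction maps; this is formal.
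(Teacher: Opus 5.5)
Your proof is correct and is essentially the paper's argument. The paper obtains the isomorphism $\Ext^{\fR,1}(\fF,\ff_\sharp(\fQ))\simeq\Ext^{\fS,1}(\ff^\sharp(\fF),\fQ)$ for flat $\fF$ by citing \cite[Lemma~1.7(e)]{Pal} together with Lemma~\ref{flat-contramodules-contraextension-of-scalars}; you verify the same isomorphism directly from a free presentation of $\fF$ with flat kernel, so you need only the first assertion of that lemma (all three terms flat) rather than the second.
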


\begin{proof}
 By~\cite[Lemma~1.7(e)]{Pal}, it follows from the second assertion of
Lemma~\ref{flat-contramodules-contraextension-of-scalars} that, for
any flat left $\fR$\+contramodule $\fF$ and any left $\fS$\+contramodule
$\fQ$, there is a natural isomorphism of abelian groups
$$
 \Ext^{\fR,1}(\fF,\ff_\sharp(\fQ))\simeq
 \Ext^{\fS,1}(\ff^\sharp(\fF),\fQ).
$$
 For a cotorsion left $\fS$\+contramodule $\fQ$, the right-hand side
of this isomorphism vanishes, hence so does the left-hand side.
\end{proof}

 Let $(R_n)_{n\ge1}$ and $(S_n)_{n\ge1}$ be two projective systems of
surjective morphisms of rings, indexed by the integers $n\ge1$.
 Then $\fR=\varprojlim_{n\ge1}R_n$ and $\fS=\varprojlim_{n\ge1}S_n$
are two complete, separated two-sided linear topological rings with
countable bases of neighborhoods of zero.

 Let $(f_n\:R_n\to S_n)$ be a morphism of projective systems of rings. 
 Then $\ff=\varprojlim_{n\ge1}f_n\:\fR\rarrow\fS$ is a continuous
homomorphism of topological rings.

 We will say that the morphism of projective systems (of surjective
homomorphisms) of rings $(f_n)_{n\ge1}$ is \emph{strongly right taut}
if the induced $R_n$\+$S_{n+1}$\+bimodule map $R_n\ot_{R_{n+1}}S_{n+1}
\rarrow S_n$ is an isomorphism for every $n\ge1$.
 By Lemma~\ref{continuous-strongly-taut-ring-map-countable-base},
a map of complete, separated two-sided linear topological rings with
countable bases of neighborhoods of zero $\ff\:\fR\rarrow\fS$ is
continuous \emph{and} strongly right taut if and only if it arises
from a strongly right taut morphism of projective systems
$(f_n\:R_n\to S_n)$.

 The definitions of \emph{right} and \emph{left $(R_n)$\+systems} of
modules were given in Section~\ref{separated-and-flat-secn}.

\begin{lem} \label{strongly-right-taut-direct-image-right-left-systems}
 Let $(f_n\:R_n\to S_n)_{n\ge1}$ be a strongly right taut morphism of
projective systems of rings.
 Then \par
\textup{(a)} for any right $(S_n)$\+system $(N_n)_{n\ge1}$,
the underlying right $R_n$\+modules of the right $S_n$\+modules $N_n$
form a right $(R_n)$\+system $(N_n)_{n\ge1}$; \par
\textup{(b)} for any left $(S_n)$\+system $(Q_n)_{n\ge1}$,
the underlying right $R_n$\+modules of the left $S_n$\+modules $Q_n$
form a left $(R_n)$\+system $(Q_n)_{n\ge1}$.
\end{lem}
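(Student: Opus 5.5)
The plan is to reduce both parts to simple tensor-product manipulations, using the strong right tautness isomorphism $R_n\ot_{R_{n+1}}S_{n+1}\simeq S_n$ of $R_n$\+$S_{n+1}$\+bimodules. (In part~(b), ``right $R_n$\+modules'' should read ``left $R_n$\+modules''; that is what we prove.)

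Part~(b) is the easier one. Let $(Q_n)$ be a left $(S_n)$\+system, so $Q_n\simeq S_n\ot_{S_{n+1}}Q_{n+1}$. First, each $Q_n$ is a left $R_n$\+module via $f_n$. Then, using associativity of tensor products and the isomorphism above, we compute
$$
 R_n\ot_{R_{n+1}}Q_{n+1}\simeq R_n\ot_{R_{n+1}}S_{n+1}\ot_{S_{n+1}}Q_{n+1}
 \simeq S_n\ot_{S_{n+1}}Q_{n+1}\simeq Q_n.
$$
Finally, we check that this composite is the natural map $r\ot q\mapsto r\cdot q$, i.e., that it is compatible with the structure maps $Q_{n+1}\to Q_n$. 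This holds because all the maps involved are induced by the ring maps $R_{n+1}\to R_n\to S_n$ and $R_{n+1}\to S_{n+1}\to S_n$, and these agree since $(f_n)$ is a morphism of projective systems.

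Part~(a) is the dual argument, using Hom and tensor-Hom adjunction. Let $(N_n)$ be a right $(S_n)$\+system, so $N_n\simeq\Hom_{S_{n+1}^\rop}(S_n,N_{n+1})$. Then
$$
 \Hom_{R_{n+1}^\rop}(R_n,N_{n+1})\simeq
 \Hom_{S_{n+1}^\rop}(R_n\ot_{R_{n+1}}S_{n+1},\,N_{n+1})
 \simeq\Hom_{S_{n+1}^\rop}(S_n,N_{n+1})\simeq N_n.
$$
Here the first isomorphism is the adjunction between extension and restriction of scalars along $f_{n+1}$, applied to the right $R_{n+1}$\+module $R_n$. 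It is right $R_n$\+linear because $R_n\ot_{R_{n+1}}S_{n+1}$ is an $R_n$\+$S_{n+1}$\+bimodule. The second isomorphism uses the bimodule isomorphism $R_n\ot_{R_{n+1}}S_{n+1}\simeq S_n$.

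It remains to identify the composite with the natural map, namely evaluation at $1$ into the $R_n$\+annihilated part of $N_{n+1}$. Equivalently: the elements of $N_{n+1}$ annihilated by $I_n/I_{n+1}$ coincide with those annihilated by $J_n/J_{n+1}=f_{n+1}(I_n/I_{n+1})S_{n+1}$ (the latter equality is the tautness reformulation from the proof of Lemma~\ref{continuous-strongly-taut-ring-map-countable-base}). One inclusion is immediate. For the other, if $y\,f_{n+1}(i)=0$ for all $i\in I_n$, then $y\,f_{n+1}(i)s=0$ for all $s\in S_{n+1}$.

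I expect no real obstacle. The only point needing care is this naturality/compatibility check, namely that the abstract isomorphisms are the canonical ones. The annihilator description makes it transparent, and the same description handles part~(b) via $Q_n=Q_{n+1}/J_n Q_{n+1}=Q_{n+1}/I_nQ_{n+1}$.
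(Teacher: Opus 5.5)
Your proof is correct and follows the paper's argument: the same two chains of isomorphisms, using tensor associativity with $R_n\ot_{R_{n+1}}S_{n+1}\simeq S_n$ for part~(b) and the Hom--tensor adjunction for part~(a). Two of your additions go beyond what the paper writes, and both are accurate: the annihilator check that the resulting isomorphisms are the canonical ones, and the correction of ``right'' to ``left'' $R_n$-modules in part~(b).
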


\begin{proof}
 Part~(a): we have
$$
 N_n\simeq\Hom_{S_{n+1}^\rop}(S_n,N_{n+1})\simeq
 \Hom_{S_{n+1}^\rop}(R_n\ot_{R_{n+1}}S_{n+1},\>N_{n+1})
 \simeq\Hom_{R_{n+1}^\rop}(R_n,N_{n+1}).
$$

 Part~(b): we have
$$
 Q_n\simeq S_n\ot_{S_{n+1}}Q_{n+1}\simeq
 (R_n\ot_{R_{n+1}}S_{n+1})\ot_{S_{n+1}}Q_{n+1}\simeq
 R_n\ot_{R_{n+1}}Q_{n+1}.
$$
\end{proof}

 The result/construction of
Lemma~\ref{strongly-right-taut-direct-image-right-left-systems}(a)
provides a functor of restriction of scalars for right systems,
which we will denote by $(f_n)_*\:\Sysr(S_n)\rarrow\Sysr(R_n)$.
 The result/construction of
Lemma~\ref{strongly-right-taut-direct-image-right-left-systems}(b)
provides a functor of restriction of scalars for left systems, which
we will similarly denote by $(f_n)_*\:(S_n)\Sys\rarrow(R_n)\Sys$.

\begin{lem} \label{right-left-systems-discrete-contra-direct-image}
 Let $(f_n\:R_n\to S_n)$ be a strongly right taut morphism of projective
systems of rings, and let\/ $\ff\:\fR\rarrow\fS$ be the related strongly
right taut continuous map of topological rings (the projective limit).
 Then \par
\textup{(a)} the functors of restriction of scalars\/
$\ff_\diamond\:\Discr\fS\rarrow\Discr\fR$ and
$(f_n)_*\:\allowbreak\Sysr(S_n)\rarrow\Sysr(R_n)$ form a commutative
square diagram with the equivalences of categories\/ $\Discr\fR\simeq
\Sysr(R_n)$ and\/ $\Discr\fS\simeq\Sysr(S_n)$ from
Lemma~\ref{left-right-systems-discr-contra-category-equivs}(a),
$$
 \xymatrix{
  \Discr\fS \ar@{=}[r] \ar[d]_{\ff_\diamond}
  & \Sysr(S_n) \ar[d]^{(f_n)_*} \\
  \Discr\fR \ar@{=}[r] & \Sysr(R_n)
 }
$$ \par
\textup{(b)} the functors of restriction of scalars\/
$\ff_\sharp\:\fS\Separ\rarrow\fR\Separ$ from
Lemma~\ref{separated-restriction-of-scalars} and
$(f_n)_*\:(S_n)\Sys\rarrow(R_n)\Sys$ form a commutative square
diagram with the equivalences of categories\/ $\fR\Separ\simeq
(R_n)\Sys$ and\/ $\fS\Separ\simeq(S_n)\Sys$ from
Lemma~\ref{left-right-systems-discr-contra-category-equivs}(b),
$$
 \xymatrix{
  \fS\Separ \ar@{=}[r] \ar[d]_{\ff_\sharp}
  & (S_n)\Sys \ar[d]^{(f_n)_*} \\
  \fR\Separ \ar@{=}[r] & (R_n)\Sys
 }
$$
\end{lem}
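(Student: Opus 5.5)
The plan is to verify directly that the two compositions agree on objects, up to natural isomorphism. Both squares are compositions of forgetful functors with the explicit equivalences of Lemma~\ref{left-right-systems-discr-contra-category-equivs}. Throughout we work with $\fI_n=\ker(\fR\to R_n)$ and $\fJ_n=\ker(\fS\to S_n)$. By Lemma~\ref{continuous-strongly-taut-ring-map-countable-base} and its proof, $\fJ_n$ is the closure of $\ff(\fI_n)\fS$; moreover $\fJ_n=\fJ_m+\ff(\fI_n)\fS$ for all $m>n$.

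For part~(a), start with a discrete right $\fS$\+module $\cN$. Going through $\Sysr(S_n)$ and then applying $(f_n)_*$ gives the system with terms $N_n=\Hom_\fS(S_n,\cN)$, i.e., the elements of $\cN$ annihilated by $\fJ_n$. The transition isomorphisms are those of Lemma~\ref{strongly-right-taut-direct-image-right-left-systems}(a). Going the other way gives $\Hom_\fR(R_n,\cN)$, i.e., the elements annihilated by $\fI_n$. The key claim is that these two subsets of $\cN$ coincide. An element $y$ with $y\fJ_n=0$ is killed by $\ff(\fI_n)$. Conversely, suppose $y\ff(\fI_n)=0$. Since $\cN$ is discrete, the annihilator of $y$ in $\fS$ is an open, hence closed, right ideal. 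It contains $\ff(\fI_n)\fS$, so it contains the closure $\fJ_n$. Hence both subsets agree, and the agreement is compatible with the transition maps, since both sets of transition maps are inclusions inside $\cN$. The remaining check, that the inverse equivalences $\varinjlim N_n$ also match, is automatic: both sides recover the same union inside $\cN$.

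For part~(b), start with a separated left $\fS$\+contramodule $\fQ$. One route gives the $(R_n)$\+system $Q_n=\fQ/(\fJ_n\tim\fQ)$, regarded as left $R_n$\+modules. The other gives $\fQ/(\fI_n\tim\ff_\sharp\fQ)$. The main step is therefore to prove $\fI_n\tim\ff_\sharp(\fQ)=\fJ_n\tim\fQ$. The inclusion $\subset$ is clear, because $\ff(\fI_n)\subset\fJ_n$. For $\supset$, I would first reduce to free contramodules. Since $\fQ$ is a quotient of $\fS[[X]]$ and both $\tim$-constructions commute with surjections (each is the image of a contraaction), it suffices to treat $\fQ=\fS[[X]]$. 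There $\fJ_n\tim\fS[[X]]=\fJ_n[[X]]$ by the formula in Section~\ref{preliminaries-secn}. An element of $\fJ_n[[X]]$ is a zero-convergent family $(s_x)$ with $s_x\in\fJ_n$. Applying the argument of Lemma~\ref{continuous-and-taut-contramodule-tim-lemma} uniformly in $x$, we write each $s_x$ as $\sum_i\ff(r_{x,i})t_{x,i}$, with the $r$'s converging to zero jointly in $x$ and $i$. This exhibits the element in $\fI_n\tim\fS[[X]]$.

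The hard step is the joint convergence in this last argument. At each stage $k$ only finitely many $x$ have $s_x\notin\fJ_k$, and for $s_x\in\fJ_k$ one should begin the expansion at level $k$. If this bookkeeping is awkward, a fallback is available. Both $\fQ/(\fI_n\tim\fQ)$ and $\fQ/(\fJ_n\tim\fQ)$ are the contratensor products $R_n\ocn_\fR\fQ$ and $S_n\ocn_\fS\fQ$. By Lemma~\ref{strongly-right-taut-direct-image-right-left-systems}, $R_n\ocn_\fR\ff_\sharp\fQ$ should agree with $(R_n\ot_{R_{n+1}}S_{n+1})\ocn_\fS\fQ\simeq S_n\ocn_\fS\fQ$, but this needs a compatibility statement that I would prove for free $\fQ$ and extend by right exactness. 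Finally, compatibility with the limits $\varprojlim Q_n$ is immediate, since both compositions return the same underlying set $\fQ$.
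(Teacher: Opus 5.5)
Your proposal is correct, but it runs in the opposite direction from the paper.

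\textbf{The paper's route.} The paper never checks the rightward (reduction) functors. It notes that the quasi-inverse equivalences, $(N_n)\mapsto\varinjlim N_n$ and $(Q_n)\mapsto\varprojlim Q_n$, obviously commute with restriction of scalars. The reason is that an $S_n$\+module, viewed as an $\fS$\+contramodule, restricts to the same group viewed as an $R_n$\+module. In addition, $\ff_\diamond$ and $\ff_\sharp$ commute with these colimits and limits, which are computed on underlying groups. Since the horizontal arrows are equivalences, commutativity of the square of quasi-inverses gives commutativity of the original square up to natural isomorphism.

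\textbf{Your route.} You do the ``some work'' that the paper sidesteps.
\begin{itemize}
\item For (a), this is just Lemma~\ref{R-S-annihilator-easy-lemma}.
\item For (b), you prove $\fI_n\tim\fQ=\fJ_n\tim\fQ$ directly, by extending the series expansion of Lemma~\ref{continuous-and-taut-contramodule-tim-lemma} from $\fS$ to $\fS[[X]]$. Your bookkeeping does work. Start the expansion of $s_x$ at the largest level $k(x)\ge n$ with $s_x\in\fJ_{k(x)}$. Then, for each $m$, only finitely many $x$ have $k(x)<m$, and each such $x$ contributes finitely many terms before reaching level $m$. So the coefficients $r_{x,i}$ form a zero-convergent family in $\fI_n$, and the main argument closes.
\end{itemize}

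\textbf{The fallback.} Your contratensor fallback is therefore unnecessary, and it would not have helped anyway. For $\fQ=\fS[[X]]$, the compatibility it requires, $R_n\ocn_\fR\fS[[X]]\simeq S_n[X]$ via the natural map, is literally the identity $\fI_n\tim\fS[[X]]=\fJ_n[[X]]$.

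\textbf{What each route buys.} The paper's argument takes two lines with no convergence estimates, at the cost of leaning on the horizontal functors being genuine equivalences. Yours reverses the paper's logical order. It establishes directly, for arbitrary (not necessarily separated) $\fS$\+contramodules, the identity $\fI_n\tim\fQ=\fJ_n\tim\fQ$. The paper only derives this afterwards, as Corollary~\ref{tim-Omega-and-Lambda-over-R-and-S-compatible}, from this very lemma.
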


\begin{proof}
 In both parts~(a) and~(b), it may take some work to check that
the horizontal functors pointing rightwards (constructed as
the collections of reduction functors) commute with the vertical
functors of restriction of scalars.
 However, it is immediately clear that the horizontal functors
pointing leftwards (constructed as the inductive or projective
limit functors) commute with the vertical functors.
\end{proof}

\begin{cor} \label{tim-Omega-and-Lambda-over-R-and-S-compatible}
 Let\/ $\ff\:\fR\rarrow\fS$ be a strongly right taut continuous map
of complete, separated two-sided linear topological rings with
countable bases of neighborhoods of zero.
 Let\/ $\fI\subset\fS$ be an open two-sided ideal such that
the closure\/ $\fJ$ of the right ideal\/ $\ff(\fI)\fS\subset\fS$
is an open two-sided ideal in\/~$\fS$.
 Then, for any left\/ $\fS$\+contramodule\/ $\fQ$, one has
$\fI\tim\fQ=\fJ\tim\fQ\subset\fQ$.
 Consequently, the nonseparatedness kernels of
the\/ $\fS$\+contramodule\/ $\fQ$ and of its underlying\/
$\fR$\+contramodule\/ $\fQ$ agree,
$\Omega_\fR(\fQ)=\Omega_\fS(\fQ)\subset\fQ$, and the\/
$\fR$\+contramodule map\/ $\Lambda_\fR(\fQ)\rarrow\Lambda_\fS(\fQ)$
induced by\/~$\ff$ is an isomorphism.
 A left\/ $\fS$\+contramodule is separated if and only if its
underlying left\/ $\fR$\+contramodule is separated.
\end{cor}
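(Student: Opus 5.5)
Note first a typo in the statement: $\fI$ should be an open two-sided ideal in $\fR$, not in $\fS$. The plan is to prove the equality $\fI\tim\fQ=\fJ\tim\fQ$ directly from the definitions. The consequences then follow formally, using Lemma~\ref{continuous-strongly-taut-ring-map}(3): the relevant ideals $\fI$ form a base of neighborhoods of zero in $\fR$, and the corresponding $\fJ$ form a base in $\fS$.

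\emph{The inclusion $\fI\tim\fQ\subset\fJ\tim\fQ$.} This is immediate. An element of $\fI\tim\fQ$ has the form $\sum_x r_xq_x$ with $r_x\in\fI$ converging to zero in $\fR$. Its image under the restricted contraaction is $\sum_x\ff(r_x)q_x$, computed in the $\fS$\+contramodule $\fQ$. Here $\ff(r_x)\in\fJ$, and the family still converges to zero because $\ff$ is continuous.

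\emph{The inclusion $\fJ\tim\fQ\subset\fI\tim\fQ$.} Take an element $\sum_y s_yq_y$ with $s_y\in\fJ$ and $s_y\to0$ in $\fS$. The natural first step is to apply Lemma~\ref{continuous-and-taut-contramodule-tim-lemma}, which gives $\fJ=\fI\tim\fS$. So each $s_y$ is a contramodule sum $s_y=\sum_i r_{y,i}t_{y,i}$ with $r_{y,i}\in\fI$ converging to zero and $t_{y,i}\in\fS$.

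Contraassociativity of the $\fS$\+contraaction on $\fQ$ (the "opening of parentheses" in $\fS[[\fS[[\fQ]]]]$) would then rewrite the element as $\sum_{y,i}\ff(r_{y,i})(t_{y,i}q_y)$. This is in turn $\sum_{y,i}r_{y,i}\,q'_{y,i}\in\fI\tim\fQ$, where $q'_{y,i}=t_{y,i}q_y$.

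The main obstacle is zero-convergence: the doubly indexed family $(r_{y,i})$ must converge to zero in $\fR$, uniformly over $y$. Treating each $s_y$ separately does not give this. To handle it, I would fix a chain $\fI=\fI_1\supset\fI_2\supset\dotsb$ with closures $\fJ_n$ forming a base in $\fS$. Since $s_y\to0$, all but finitely many $s_y$ lie in $\fJ_n$. I would then decompose each $s_y$ via the inductive procedure in the proof of Lemma~\ref{continuous-and-taut-contramodule-tim-lemma}, starting at the deepest level $n(y)$ with $s_y\in\fJ_{n(y)}$. This puts all coefficients of $s_y$ in $\fI_{n(y)}$. Hence for each $n$ only finitely many of the $r_{y,i}$ lie outside $\fI_n$, as required. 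The same construction also makes the family $(\ff(r_{y,i})t_{y,i})$ zero-convergent in $\fS$, so the reassociation is legitimate.

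\emph{Consequences.} Intersecting over the cofinal families of $\fI$ and $\fJ$ gives $\Omega_\fR(\fQ)=\Omega_\fS(\fQ)$. The quotients $\fQ/(\fI\tim\fQ)=\fQ/(\fJ\tim\fQ)$ agree, and the two projective limits are taken over cofinal index systems, so $\Lambda_\fR(\fQ)\rarrow\Lambda_\fS(\fQ)$ is an isomorphism. Finally, since $\fQ$ is separated exactly when $\Omega(\fQ)=0$, a left $\fS$\+contramodule is separated if and only if its underlying $\fR$\+contramodule is.
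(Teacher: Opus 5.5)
Your proof is correct, and you are right that $\fI$ should be an open two-sided ideal in $\fR$. It follows a genuinely different route from the paper's.

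\textbf{The paper's route.} It first treats a separated $\fS$\+contramodule $\fQ$. It chooses chains $(\fI_n)$, $(\fJ_n)$ as in Lemma~\ref{continuous-strongly-taut-ring-map-countable-base}(2) with $\fI_1=\fI$ and $\fJ_1=\fJ$. It then uses the equivalence $\fS\Separ\simeq(S_n)\Sys$. Since $R_n\ot_{R_{n+1}}S_{n+1}\simeq S_n$, the underlying $R_n$\+modules of an $(S_n)$\+system form an $(R_n)$\+system describing the underlying $\fR$\+contramodule. This gives $\fQ/(\fJ_n\tim\fQ)\simeq Q_n\simeq\fQ/(\fI_n\tim\fQ)$ compatibly with the projections. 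The general case is reduced to this one by writing $\fQ$ as a quotient of a free $\fS$\+contramodule $\fP$. Then $\fI\tim\fQ$ and $\fJ\tim\fQ$ are the images of $\fI\tim\fP$ and $\fJ\tim\fP$.

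\textbf{Your route.} You work elementwise for arbitrary $\fQ$. You make the successive-approximation argument of Lemma~\ref{continuous-and-taut-contramodule-tim-lemma} uniform by starting each $s_y$ at its deepest level $n(y)$, using the empty decomposition when $s_y=0$. This produces a zero-convergent doubly indexed family $(r_{y,i})$. Contraassociativity, applied to $\sum_{y,i}\ff(r_{y,i})\,[t_{y,i}q_y]\in\fS[[\fS[[\fQ]]]]$, then finishes the argument. The rearrangement needed in $\fS$ is automatic: $\fS$ is right linear, so $\ff(r_{y,i})t_{y,i}\to0$ follows from $\ff(r_{y,i})\to0$.

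\textbf{Comparison.} Your argument is self-contained and avoids both the reduction to separated contramodules and the system machinery. It also uses only that $\ff$ is continuous and taut and that $\fR$ has a countable base. So the equality $\fI\tim\fQ=\fJ\tim\fQ$ follows for any open right ideal $\fI$ over right linear rings. The paper's argument is shorter given the $(R_n)$\+system apparatus it has already built and reuses in later proofs. The price is that it needs strong right tautness and two-sided linearity.
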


\begin{proof}
 Let us first consider the case of a separated
$\fS$\+contramodule~$\fQ$.
 Following the proof of
Lemma~\ref{continuous-strongly-taut-ring-map-countable-base}, we can
pick two descending chains $(\fI_n)_{n\ge1}$ and $(\fJ_n)_{n\ge1}$
of open two-sided ideals in $\fR$ and $\fS$ satisfying the conditions
of Lemma~\ref{continuous-strongly-taut-ring-map-countable-base}(2)
in such a way that $\fI_1=\fI$ and $\fJ_1=\fJ$.
 Denote by $R_n=\fR/\fI_n$ and $S_n=\fS/\fJ_n$ the related discrete
quotient rings.
 Then we have a strongly right taut morphism of projective systems
of rings $(f_n\:R_n\to S_n)$ such that $\ff=\varprojlim_{n\ge1}f_n$.

 Let $(Q_n)_{n\ge1}$ be the left $(S_n)$\+system corresponding to
the separated left $\fS$\+con\-tra\-mod\-ule $\fQ$ under
the equivalence of categories from
Lemma~\ref{left-right-systems-discr-contra-category-equivs}(b).
 By Lemma~\ref{strongly-right-taut-direct-image-right-left-systems}(b),
the underlying $R_n$\+modules of the $S_n$\+modules $Q_n$ form
a left $(R_n)$\+system $(Q_n)_{n\ge1}$.
 By Lemma~\ref{right-left-systems-discrete-contra-direct-image}(b),
the $(R_n)$\+system $(Q_n$) corresponds to the underlying separated
$\fR$\+contramodule of $\fQ$ under the same equivalence of
categories from
Lemma~\ref{left-right-systems-discr-contra-category-equivs}(b).
 Looking into the construction of the later equivalence of
categories, we have isomorphisms (of abelian groups)
$\fQ/(\fJ_n\tim\fQ)\simeq Q_n\simeq\fQ/(\fI_n\tim\fQ)$ forming
a commutative diagram with the projections
$\fQ\rarrow\fQ/(\fJ_n\tim\fQ)$ and $\fQ\rarrow\fQ/(\fI_n\tim\fQ)$.
 Thus $\fJ_n\tim\fQ=\fI_n\tim\fQ$ for all $n\ge1$.
 In particular, $\fJ\tim\fQ=\fI\tim\fQ$, as desired.

 In the general case of a not necessarily separated
$\fS$\+contramodule $\fQ$, is suffices to represent $\fQ$ as
the quotient $\fS$\+contramodule of a separated
$\fS$\+contramodule~$\fP$ (such as, e.~g., a free left
$\fS$\+contramodule $\fP=\fS[[X]]$ for a suitable set~$X$).
 Then $\fJ\tim\fQ$ is the image of $\fJ\tim\fP$ under the map
$\fP\rarrow\fQ$, while $\fI\tim\fQ$ is the image of $\fI\tim\fQ$
under the same surjective map $\fP\rarrow\fQ$.
 (This observation holds quite generally for any surjective morphism
of contramodules $\fP\rarrow\fQ$ over a complete, separated right
linear topological ring.)
 The proof the first assertion of the corollary is finished, and 
the remaining assertions follow.
\end{proof}

\begin{cor} \label{proflat-contramodule-direct-image-cor}
 Let\/ $\ff\:\fR\rarrow\fS$ be a strongly right taut continuous map
of complete, separated two-sided linear topological rings with
countable bases of neighborhoods of zero.
 Then the map\/~$\ff$ is left proflat if and only if the functor
of restriction of scalars\/~$\ff_\sharp$ takes flat left\/
$\fS$\+contramodules to flat left\/ $\fR$\+contramodules.
\end{cor}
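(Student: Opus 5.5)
The plan is to reduce both directions to statements about the flat left $(R_n)$- and $(S_n)$-systems, using the equivalences of Lemma~\ref{left-right-systems-discr-contra-category-equivs}(b). By Lemma~\ref{continuous-strongly-taut-ring-map-countable-base}, I present $\ff$ as $\varprojlim_n f_n$ for a strongly right taut morphism of projective systems $(f_n\:R_n\to S_n)$, with $R_n=\fR/\fI_n$ and $S_n=\fS/\fJ_n$. Here $\fJ_n$ is the closure of $\ff(\fI_n)\fS$, so Lemma~\ref{continuous-and-taut-contramodule-tim-lemma} gives $\fJ_n=\fI_n\tim\fS$.

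\emph{Only if.} Suppose $\ff$ is left proflat, and let $\fF$ be a flat left $\fS$-contramodule. By Lemma~\ref{complete-separated-flat-contramodules-lemma}(c), $\fF$ is separated. By Lemma~\ref{left-right-systems-discr-contra-category-equivs}(b), it corresponds to a flat left $(S_n)$-system $(F_n)$ with $F_n=\fF/(\fJ_n\tim\fF)$ a flat $S_n$-module.

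By Lemma~\ref{right-left-systems-discrete-contra-direct-image}(b) together with Lemma~\ref{strongly-right-taut-direct-image-right-left-systems}(b), the underlying $\fR$-contramodule of $\fF$ corresponds to the $(R_n)$-system formed by the same modules $F_n$, now viewed as $R_n$-modules. This agrees with Corollary~\ref{tim-Omega-and-Lambda-over-R-and-S-compatible}, which identifies $\fF/(\fI_n\tim\fF)$ with $F_n$.

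Left proflatness means that $S_n$ is a flat left $R_n$-module, by Lemma~\ref{left-proflat-strongly-taut-ring-map}(1) applied to the ideals $\fI_n$. A flat $S_n$-module is then flat over $R_n$, since $F\ot_{R_n}{-}$ factors as $F\ot_{S_n}(S_n\ot_{R_n}{-})$, which is a composition of exact functors. So $(F_n)$ is a flat $(R_n)$-system, and by Lemma~\ref{left-right-systems-discr-contra-category-equivs}(b) the underlying $\fR$-contramodule is flat.

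Alternatively, one can argue directly from the criterion at the beginning of Section~\ref{separated-and-flat-secn}: flatness is checked on the quotients $\fF/(\fI_n\tim\fF)$, which coincide with $\fF/(\fJ_n\tim\fF)$ by Corollary~\ref{tim-Omega-and-Lambda-over-R-and-S-compatible}, together with the flat base-change observation above.

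\emph{If.} Apply $\ff_\sharp$ to the free $\fS$-contramodule $\fS=\fS[[\{*\}]]$. This contramodule is projective, hence flat by Lemma~\ref{flat-contramodules-well-behaved-lemma}(a). By hypothesis its underlying left $\fR$-contramodule $\fS$ is flat, and Corollary~\ref{left-flat-and-contramodule-flat-cor} then says precisely that $\ff$ is left proflat.

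The main obstacle is the identification, in the "only if" direction, of the quotients $\fF/(\fI_n\tim\fF)$ with $F_n$ as $R_n$-modules. This is handled by Corollary~\ref{tim-Omega-and-Lambda-over-R-and-S-compatible} applied with $\fI=\fI_n$. Everything else is formal.
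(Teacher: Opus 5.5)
Your proof is correct and is essentially the paper's: the ``if'' direction is the paper's appeal to Corollary~\ref{left-flat-and-contramodule-flat-cor}, made explicit by applying the hypothesis to the free (hence flat) $\fS$\+contramodule $\fS$. The ``only if'' direction is the same reduction to flat $(S_n)$\+ and $(R_n)$\+systems via Lemmas~\ref{right-left-systems-discrete-contra-direct-image}(b) and~\ref{left-right-systems-discr-contra-category-equivs}(b), using that $S_n$ is a flat left $R_n$\+module.

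One small slip: since the $F_n$ are \emph{left} modules, the base-change identity should read $N\ot_{R_n}F_n\simeq(N\ot_{R_n}S_n)\ot_{S_n}F_n$ for right $R_n$\+modules $N$. This is exactly where \emph{left} flatness of $S_n$ over $R_n$, as supplied by left proflatness, enters; your formula as written would instead require right flatness.
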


\begin{proof}
 The ``if'' assertion follows from
Corollary~\ref{left-flat-and-contramodule-flat-cor}.
 To prove the ``only if'', let $(\fI_n)_{n\ge1}$ and $(\fJ_n)_{n\ge1}$
be two descending chains of open two-sided ideals in $\fR$ and $\fS$
provided by
Lemma~\ref{continuous-strongly-taut-ring-map-countable-base}(2).
 So we have a strongly right taut morphism of projective systems
of rings $(f_n\:R_n\to S_n)$ such that $\ff=\varprojlim_{n\ge1}f_n$
(where $R_n=\fR/\fI_n$ and $S_n=\fS/\fJ_n$).
 By Lemma~\ref{left-proflat-strongly-taut-ring-map}(1), the ring $S_n$
is a flat left module over the ring $R_n$ for every $n\ge1$.
 Now it is clear that the functor of restriction of scalars
$(f_n)_*\:(S_n)\Sys\rarrow(R_n)\Sys$ takes flat left
$(S_n)$\+systems to flat left $(R_n)$\+systems.
 It remains to refer to
Lemma~\ref{right-left-systems-discrete-contra-direct-image}(b)
and the assertion concerning flatness in
Lemma~\ref{left-right-systems-discr-contra-category-equivs}(b).
\end{proof}

\Section{The Full-and-Faithfulness Theorem}

 We start with a simple lemma and an important proposition going
back to the paper~\cite{PR}.
 These should be understood in the context of \emph{hereditary
complete cotorsion pairs} in abelian categories;
Lemma~\ref{cotorsion-contramodules-well-behaved}
and Proposition~\ref{flat-cotorsion-pair-complete-prop} below can
be restated by saying that the pair of classes (flat left
$\fR$\+contramodules, cotorsion left $\fR$\+contramodules) form
such a cotorsion pair in $\fR\Contra$.

 The notion of a complete cotorsion pair was introduced by
Salce~\cite{Sal} in the context of infinitely generated abelian groups.
 The main result is the Eklof--Trlifaj theorem, which was proved
originally for modules over rings~\cite[Theorems~2 and~10]{ET}.
 We refer to the book~\cite[Chapters~5--6]{GT} for an in-depth
discussion of cotorsion pairs in module categories, and to
the paper~\cite[Definition~2.3]{Hov} for the generalization to
abelian categories.
 An exposition in the even more general context of exact categories
(in the sense of Quillen) can be found
in~\cite[Section~1]{Pal} or~\cite[Appendix~B]{Pcosh}.

\begin{lem} \label{cotorsion-contramodules-well-behaved}
 Let\/ $\fR$ be a complete, separated right linear topological ring with
a \emph{countable} base of neighborhoods of zero.
 The the full subcategory of cotorsion $\fR$\+contramodules is closed
under cokernels of monomorphisms in\/ $\fR\Contra$.
 One has\/ $\Ext^{\fR,i}(\fF,\fC)=0$ for all flat left\/
$\fR$\+contramodules $\fF$, all cotorsion left\/
$\fR$\+con\-tra\-mod\-ules\/ $\fC$, and all $i\ge1$.
\end{lem}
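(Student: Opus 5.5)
The plan is to use only Lemma~\ref{flat-contramodules-well-behaved-lemma}(a), namely that $\fR\Flat$ is closed under kernels of epimorphisms and contains all projective objects of $\fR\Contra$. This is the standard argument that a resolving class on the left yields a coresolving right orthogonal class. Since $\fR\Contra$ has enough projectives, all $\Ext^{\fR,*}$ are computed by projective resolutions of the first argument, and dimension shifting works as for modules.

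First I would prove the vanishing of $\Ext^{\fR,i}(\fF,\fC)$ for $i\ge1$ by induction on~$i$. The case $i=1$ is the definition of a cotorsion contramodule. For $i\ge2$, pick a projective left $\fR$\+contramodule $\fP$ with an epimorphism $\fP\rarrow\fF$ (e.g.\ $\fP=\fR[[\fF]]$), and let $\fH$ be its kernel. By Lemma~\ref{flat-contramodules-well-behaved-lemma}(a), $\fP$ is flat, and therefore $\fH$ is flat as the kernel of an epimorphism of flat contramodules. The long exact sequence of $\Ext^{\fR,*}({-},\fC)$ for $0\rarrow\fH\rarrow\fP\rarrow\fF\rarrow0$, together with $\Ext^{\fR,j}(\fP,\fC)=0$ for $j\ge1$, gives $\Ext^{\fR,i}(\fF,\fC)\simeq\Ext^{\fR,i-1}(\fH,\fC)$. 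The right-hand side vanishes by the induction hypothesis applied to the flat contramodule~$\fH$.

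Second, for closure under cokernels of monomorphisms, let $0\rarrow\fC'\rarrow\fC\rarrow\fC''\rarrow0$ be a short exact sequence with $\fC'$ and $\fC$ cotorsion, and let $\fF$ be flat. The long exact sequence of $\Ext^{\fR,*}(\fF,{-})$ contains the segment $\Ext^{\fR,1}(\fF,\fC)\rarrow\Ext^{\fR,1}(\fF,\fC'')\rarrow\Ext^{\fR,2}(\fF,\fC')$. The left term vanishes because $\fC$ is cotorsion, and the right term vanishes by the first step applied to the cotorsion contramodule~$\fC'$. Hence $\Ext^{\fR,1}(\fF,\fC'')=0$, so $\fC''$ is cotorsion.

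I expect no real obstacle here. The only nontrivial input is the closure of flat contramodules under kernels of epimorphisms, which is where the countable base hypothesis enters, and this is supplied by Lemma~\ref{flat-contramodules-well-behaved-lemma}(a). One must also use that Yoneda Ext in $\fR\Contra$ agrees with the derived functors computed via projective resolutions, which holds because $\fR\Contra$ has enough projectives.
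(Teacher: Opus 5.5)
Your proposal is correct and follows essentially the same route as the paper: the paper's proof simply cites the closure of flat contramodules under kernels of epimorphisms together with the existence of enough flat (indeed projective) objects, and your dimension-shifting argument via a projective cover $\fR[[\fF]]\rarrow\fF$ is the standard argument that this citation implicitly invokes. Both the induction on~$i$ and the long exact sequence step for closure under cokernels of monomorphisms are sound as written.
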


\begin{proof}
 The assertions follow from the facts that there are enough flat left
$\fR$\+con\-tra\-mod\-ules in $\fR\Contra$, and the full subcategory of
flat left $\fR$\+contramodules is closed under kernels of
epimorphisms~\cite[Section~7]{PR}.
\end{proof}

\begin{prop} \label{flat-cotorsion-pair-complete-prop}
 Let\/ $\fR$ be a complete, separated right linear topological ring with
a \emph{countable} base of neighborhoods of zero, and let\/ $\fP$ be
a left\/ $\fR$\+contramodule.
 Then \par
\textup{(a)} there exists a short exact sequence\/ $0\rarrow\fC\rarrow
\fF\rarrow\fP\rarrow0$ in\/ $\fR\Contra$ with a flat left\/
$\fR$\+contramodule\/ $\fF$ and a cotorsion left\/
$\fR$\+contramodule\/~$\fC$; \par
\textup{(b)} there exists a short exact sequence\/ $0\rarrow\fP\rarrow
\fC\rarrow\fF\rarrow0$ in\/ $\fR\Contra$ with a cotorsion left\/
$\fR$\+contramodule\/ $\fC$ and a flat left\/
$\fR$\+contramodule\/~$\fF$.
\end{prop}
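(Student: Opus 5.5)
The plan is to prove this via the standard Eklof--Trlifaj / Salce machinery, adapted to the abelian category $\fR\Contra$. The strategy is to prove part~(b) directly first and then deduce part~(a) by Salce's trick.

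\textbf{Step 1: the set of flat generators.} First I will produce a \emph{set} $\mathcal S$ of flat left $\fR$\+contramodules such that every flat contramodule is a transfinitely iterated extension (a filtered colimit of a smooth chain) of contramodules from $\mathcal S$. The set should also contain a projective generator such as $\fR[[\{*\}]]=\fR$. To obtain $\mathcal S$, I would use Lemma~\ref{left-right-systems-discr-contra-category-equivs}(b), which identifies flat contramodules with flat left $(R_n)$\+systems $(F_n)$. I would then run a Hill-lemma / deconstructibility argument for flat modules simultaneously over the countably many rings $R_n$. Concretely, given an element, I build a pure submodule of bounded cardinality that is compatible across all~$n$; the compatibility uses the isomorphisms $F_n\simeq R_n\ot_{R_{n+1}}F_{n+1}$. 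This produces a subsystem whose corresponding quotient is again flat, by Lemma~\ref{flat-contramodules-well-behaved-lemma}(a) together with closure of flatness under the relevant quotients by pure subsystems. This deconstructibility step is the main obstacle. Alternatively, I could cite \cite[Section~7]{PR}, where exactly this is established, as the source for it.

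\textbf{Step 2: the small object argument, giving (b).} Because $\fR\Contra$ is locally presentable, I can run the Eklof--Trlifaj construction. Starting from $\fP$, I form a transfinite chain indexed by a sufficiently large regular cardinal $\kappa$. At each successor step, I take the universal extension killing all classes in $\Ext^{\fR,1}(\fG,-)$ for $\fG\in\mathcal S$. These classes can be computed using projective resolutions of the $\fG$'s in $\fR\Contra$, which has enough projectives. At limit steps, I take colimits.

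The resulting object $\fC$ is cotorsion in two stages. First, $\Ext^{\fR,1}(\fG,\fC)=0$ for $\fG\in\mathcal S$, because a map from the kernel of a presentation of $\fG$ (a $\kappa$\+presentable object) factors through some stage of the chain. Second, Eklof's lemma extends this vanishing to all transfinite extensions of objects of $\mathcal S$, hence to all flat contramodules. The cokernel $\fC/\fP$ is a transfinite extension of direct sums of objects of $\mathcal S$. Since flatness is closed under direct limits and extensions (Lemma~\ref{flat-contramodules-well-behaved-lemma}(a)), $\fC/\fP$ is flat. This gives the sequence in~(b).

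\textbf{Step 3: Salce's trick, giving (a).} For part~(a), I choose an epimorphism from a projective, and hence flat, contramodule $\fR[[X]]\rarrow\fP$, with kernel $\fK$. Applying (b) to $\fK$ gives $0\rarrow\fK\rarrow\fC\rarrow\fF'\rarrow0$. I then form the pushout of $\fK\rarrow\fR[[X]]$ along $\fK\rarrow\fC$. This yields an object $\fF$ that sits in two short exact sequences, $0\rarrow\fR[[X]]\rarrow\fF\rarrow\fF'\rarrow0$ and $0\rarrow\fC\rarrow\fF\rarrow\fP\rarrow0$. The first sequence exhibits $\fF$ as an extension of flat contramodules, so $\fF$ is flat by Lemma~\ref{flat-contramodules-well-behaved-lemma}(a). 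The second sequence is then the desired presentation in~(a).
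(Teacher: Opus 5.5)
The paper has no argument of its own here; it just cites \cite[Corollary~7.8]{PR}. Your outline (deconstructibility of flats, an Eklof--Trlifaj construction in the locally presentable category $\fR\Contra$, then Salce's trick) is the same kind of machinery, and your Step~3 is fine. Your own version of Step~1, however, only covers two-sided linear $\fR$. Lemma~\ref{left-right-systems-discr-contra-category-equivs}(b) needs $\fR=\varprojlim_n R_n$ with discrete quotient \emph{rings}, but the proposition is stated for right linear topologies. There $\fP/(\fI_n\tim\fP)=(\fR/\fI_n)\ocn\fP$ is merely an abelian group, so there are no modules $F_n$ in which to take pure submodules. In that generality you must either run the closure argument directly in $\fR\Contra$ or fall back on the citation, as you suggest. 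For the direct route, the useful criterion is: for $\fG\subset\fF$ with $\fF$ flat, $\fF/\fG$ is flat iff $\cN\ocn\fG\rarrow\cN\ocn\fF$ is injective for all discrete $\cN$.

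The more substantive gap is in Step~2. You tacitly treat $\fP\rarrow\fC=\varinjlim_\alpha\fC_\alpha$ as a monomorphism with cokernel $\varinjlim_\alpha\fC_\alpha/\fP$. Directed colimits in $\fR\Contra$ are not exact: over $\boZ_p$, the colimit of $\boZ_p\xrightarrow{p}\boZ_p\xrightarrow{p}\dotsb$ in $\boZ_p\Contra$ is $0$. So a transfinite composition of monomorphisms need not be a monomorphism.

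Flatness of the cokernels helps only partly. Apply Lemma~\ref{flat-contramodules-well-behaved-lemma}(b) at the stages $\alpha<\beta$ and pass to the colimit; this uses that $\cN\ocn{-}$ preserves colimits and that $\varinjlim$ is exact in $\Ab$. The result is that $\fP/(\fI\tim\fP)\rarrow\fC_\beta/(\fI\tim\fC_\beta)$ is injective for every open right ideal $\fI$, i.e., $\ker(\fP\to\fC_\beta)\subset\Omega_\fR(\fP)$. So Step~2 is justified only for separated $\fP$. The same issue arises for the coproduct of monomorphisms $\bigoplus\fK_\fG\rarrow\bigoplus\fP_\fG$ in your successor step. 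It is resolved by the same argument, since the flat contramodule $\bigoplus\fK_\fG$ is separated; alternatively, attach cells one at a time.

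The repair is to reorder the steps:
\begin{enumerate}
\item Step~3 needs (b) only for $\fK\subset\fR[[X]]$. This $\fK$ is separated, being a subcontramodule of a free contramodule, so (a) follows exactly as you describe.
\item For arbitrary $\fP$, take $0\rarrow\fC\rarrow\fF\rarrow\fP\rarrow0$ from (a). Apply the separated case of (b) to the flat $\fF$ to get $0\rarrow\fF\rarrow\fC'\rarrow\fF'\rarrow0$.
\item Push out along $\fF\rarrow\fP$ to obtain $0\rarrow\fP\rarrow\fC'/\fC\rarrow\fF'\rarrow0$.
\item $\fC'/\fC$ is cotorsion because $\Ext^{\fR,2}(\fF'',\fC)=0$ for every flat $\fF''$. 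Indeed, syzygies of flat contramodules are flat by Lemma~\ref{flat-contramodules-well-behaved-lemma}(a); cf.\ Lemma~\ref{cotorsion-contramodules-well-behaved}.
\end{enumerate}
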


\begin{proof}
 This is~\cite[Corollary~7.8]{PR}.
\end{proof}

\begin{lem} \label{flat-cotorsion-presentations-Hom-computation}
 Let\/ $\fR$ be a complete, separated right linear topological ring with
a \emph{countable} base of neighborhoods of zero.
 In this context: \par
\textup{(a)} For any left\/ $\fR$\+contramodule\/ $\fP$, there exists
a morphism of flat left\/ $\fR$\+con\-tra\-mod\-ules\/ $t\:\fG\rarrow\fF$
such that the image and the kernel of~$t$ are cotorsion left\/
$\fR$\+contramodules, while the cokernel of~$t$ is isomorphic
to\/~$\fP$. \par
\textup{(b)} Let\/ $\fP'$ and\/ $\fP''$ be two left\/
$\fR$\+contramodules represented as the cokernels of morphisms of
flat left\/ $\fR$\+contramodules $t'\:\fG'\rarrow\fF'$ and
$t''\:\fG''\rarrow\fF''$ with cotorsion images and kernels, as in
part~\textup{(a)}.
 Then there is a natural surjective map onto the abelian group\/
$\Hom^\fR(\fP',\fP'')$ from the kernel of the abelian group map
\begin{equation} \label{Hom-computed-by-flat-and-cotorsion}
 \Hom^\fR(\fF',\fF'')\oplus\Hom^\fR(\fG',\fG'')\lrarrow
 \Hom^\fR(\fG',\fF'').
\end{equation}
 A pair of morphisms $f\:\fF'\rarrow\fF''$ and $g\:\fG'\rarrow\fG''$
belonging to the kernel of~\eqref{Hom-computed-by-flat-and-cotorsion}
induces a zero map $p\:\fP'\rarrow\fP''$ if and only if
the map~$f$ arises from an\/ $\fR$\+con\-tra\-mod\-ule morphism
$h\:\fF'\rarrow\fG''$.
\end{lem}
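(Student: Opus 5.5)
For part~(a) I will apply Proposition~\ref{flat-cotorsion-pair-complete-prop}(a) twice. First take a flat special precover $0\rarrow\fC\rarrow\fF\rarrow\fP\rarrow0$, with $\fF$ flat and $\fC$ cotorsion. Then apply part~(a) again to $\fC$, producing $0\rarrow\fC'\rarrow\fG\rarrow\fC\rarrow0$ with $\fG$ flat and $\fC'$ cotorsion. Let $t$ be the composition $\fG\rarrow\fC\rarrow\fF$. Then $\im t=\fC$ and $\ker t=\fC'$ are both cotorsion, and $\coker t\simeq\fP$, as required.

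For part~(b), a pair $(f,g)$ lies in the kernel of~\eqref{Hom-computed-by-flat-and-cotorsion} precisely when $t''g=ft'$; I will take this as the sign convention for the map. Such a pair maps $\im t'$ into $\im t''$, so it induces a morphism $p$ on cokernels, and this gives a natural additive map onto the candidate target $\Hom^\fR(\fP',\fP'')$.

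For surjectivity, take $p\:\fP'\rarrow\fP''$. Put $\fC''=\im t''$ and consider the composition $\fF'\rarrow\fP'\rarrow\fP''$. Since $\Ext^{\fR,1}(\fF',\fC'')=0$, this lifts along the surjection $\fF''\rarrow\fP''$ to some $f\:\fF'\rarrow\fF''$. Then $f$ maps $\fC'=\im t'$ into $\fC''$. Next consider the composition $\fG'\rarrow\fC'\rarrow\fC''$. Since $\fG'$ is flat and $\ker t''$ is cotorsion, this lifts along $\fG''\rarrow\fC''$ to some $g\:\fG'\rarrow\fG''$. The resulting pair satisfies $t''g=ft'$ and induces~$p$.

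For the characterization of the kernel, first suppose $f=t''h$ for some $h\:\fF'\rarrow\fG''$. Then $f$ lands in $\im t''$, so the induced map on cokernels is $p=0$. Conversely, suppose $p=0$. Then $f$ factors through $\fC''=\im t''$. The cokernel of $\fG''\rarrow\fC''$ is zero, so $\fG''\rarrow\fC''$ is an epimorphism with cotorsion kernel $\ker t''$. Since $\Ext^{\fR,1}(\fF',\ker t'')=0$ by Lemma~\ref{cotorsion-contramodules-well-behaved}, the map $\fF'\rarrow\fC''$ lifts to $h\:\fF'\rarrow\fG''$ with $t''h=f$. Note that the condition is imposed on $f$ alone; $g$ is not involved.

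I expect every step to be routine diagram chasing. The only delicate point is that the vanishing of $\Ext^1$ must be applied between the right pairs: flat $\fF'$ against cotorsion $\im t''$ and cotorsion $\ker t''$, and flat $\fG'$ against cotorsion $\ker t''$. These vanishings are exactly what the hypotheses of part~(a) guarantee.
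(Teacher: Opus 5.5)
Your proposal is correct and follows the paper's proof essentially step by step. In part~(a) you apply the flat precover sequence twice. In part~(b) you lift along $\fF''\to\fP''$ and $\fG''\to\im t''$, and for the kernel again along $\fG''\to\im t''$, using exactly the $\Ext^1$-vanishings of $\fF'$ and $\fG'$ against $\im t''$ and $\ker t''$. The one step you leave implicit is that $f$ carries $\im t'=\ker(\fF'\to\fP')$ into $\im t''=\ker(\fF''\to\fP'')$; this is immediate because $f$ lifts $\fF'\to\fP'\to\fP''$, which is how the paper phrases it.
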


\begin{proof}
 Part~(a): apply Proposition~\ref{flat-cotorsion-pair-complete-prop}(a)
twice.
 Firstly, there is a short exact sequence $0\rarrow\fC\rarrow\fF
\rarrow\fP\rarrow0$ with a flat $\fR$\+contramodule $\fF$ and
a cotorsion $\fR$\+contramodule~$\fC$.
 Secondly, there is a short exact sequence $0\rarrow\fD\rarrow\fG
\rarrow\fC\rarrow0$ with a flat $\fR$\+contramodule $\fG$ and
a cotorsion $\fR$\+contramodule~$\fD$.

 Part~(b): the kernel of~\eqref{Hom-computed-by-flat-and-cotorsion} is
the group of all commutative square diagrams of $\fR$\+contramodule
morphisms
\begin{equation} \label{morphism-of-flat-presentations-diagram}
\begin{gathered}
 \xymatrix{
  \fG' \ar[r]^{t'} \ar[d]_-g & \fF' \ar[d]^-f \\
  \fG'' \ar[r]^{t''} & \fF''
 }
\end{gathered}
\end{equation}
where the horizontal morphisms are fixed as $t'$ and~$t''$, while
the vertical morphisms $f$ and~$g$ vary.
 Every commutative
diagram~\eqref{morphism-of-flat-presentations-diagram}
induces an $\fR$\+contramodule morphism $p\:\fP'\rarrow\fP''$,
by passing to the cokernels of the horizontal maps.
 The map~$p$ vanishes whenever $f=t''h$ for some $\fR$\+contramodule
morphism $h\:\fF'\rarrow\fG''$.
 Hence we have a natural map into $\Hom^\fR(\fP',\fP'')$ from
the quotient group of the kernel
of~\eqref{Hom-computed-by-flat-and-cotorsion} by the subgroup formed
by the pairs $(f,g)$ for which $f=t''h$.
 It remains to show that this map is surjective and injective.
 This is where the flatness and cotorsion condition on the contramodules
involved will be used.

 Let $\fC''$ and $\fD''$ denote the image and the kernel
of the morphism~$t''$, respectively.
 So we have short exact sequences of $\fR$\+contramodules
$0\rarrow\fC''\rarrow\fF''\rarrow\fP''\rarrow0$ and $0\rarrow\fD''
\rarrow\fG''\rarrow\fC''\rarrow0$.
 Given a morphism of $\fR$\+contramodules $p\:\fP'\rarrow\fP''$,
consider the composition $\fF'\rarrow\fP'\rarrow\fP''$.
 Since $\Ext^{\fR,1}(\fF',\fC'')=0$, the morphism $\fF'\rarrow\fP''$
can be lifted to an $\fR$\+contramodule morphism $f\:\fF'\rarrow\fF''$.
 Now the composition $\fG'\rarrow\fF'\rarrow\fF''$ is annihilated
by the composition with the morphism $\fF''\rarrow\fP''$, hence
the morphism $\fG'\rarrow\fF''$ factorizes uniquely as
$\fG'\rarrow\fC''\rarrow\fF''$.
 Since $\Ext^{\fR,1}(\fG',\fD'')=0$, the morphism $\fG'\rarrow\fC''$
can be lifted to an $\fR$\+contramodule morphism $g\:\fG'\rarrow\fG''$.
 We arrive at a commutative
diagram~\eqref{morphism-of-flat-presentations-diagram} from which
the original map~$p$ can be obtained by passing to the cokernels.

 Now suppose given a commutative
diagram~\eqref{morphism-of-flat-presentations-diagram} such that
the related morphism of the cokernels vanishes, $p=0$.
 Then the morphism $f\:\fF'\rarrow\fF''$ factorizes uniquely as
$\fF'\rarrow\fC''\rarrow\fF''$.
 Since $\Ext^{\fR,1}(\fF',\fD'')=0$, the morphism $\fF'\rarrow\fC''$
can be lifted to an $\fR$\+contramodule morphism $h\:\fF'\rarrow\fG''$.
\end{proof}

\begin{thm} \label{full-and-faithfulness-equivalent-conditions-thm}
 Let\/ $\ff\:\fR\rarrow\fS$ be a continuous homomorphism of complete,
separated right linear topological rings with \emph{countable} bases
of neighborhoods of zero.
 Assume that the functor of restriction of scalars\/~$\ff_\sharp$
takes flat left\/ $\fS$\+contramodules to flat left\/
$\fR$\+contramodules.
 Then the following three conditions are equivalent:
\begin{enumerate}
\item the functor\/ $\ff_\sharp\:\fS\Flat\rarrow\fR\Flat$ is fully
faithful;
\item the functor\/ $\ff_\sharp\:\fS\Separ\rarrow\fR\Separ$ is fully
faithful;
\item the functor\/ $\ff_\sharp\:\fS\Contra\rarrow\fR\Contra$ is fully
faithful.
\end{enumerate}
\end{thm}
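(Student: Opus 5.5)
We prove the cycle of implications (3)$\Rightarrow$(2)$\Rightarrow$(1)$\Rightarrow$(3). The implications (3)$\Rightarrow$(2) and (2)$\Rightarrow$(1) are formal. By Lemma~\ref{complete-separated-flat-contramodules-lemma}(c), flat contramodules are separated, so $\fS\Flat\subset\fS\Separ\subset\fS\Contra$ and $\fR\Flat\subset\fR\Separ\subset\fR\Contra$ are chains of full subcategories. By Lemma~\ref{separated-restriction-of-scalars} and the standing assumption, $\ff_\sharp$ restricts to functors between the corresponding full subcategories. A restriction of a fully faithful functor to full subcategories (landing in full subcategories) remains fully faithful. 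So the content lies in (1)$\Rightarrow$(3).

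For (1)$\Rightarrow$(3), we use the flat/cotorsion presentations of Lemma~\ref{flat-cotorsion-presentations-Hom-computation}. Faithfulness of $\ff_\sharp$ is automatic, since it is the identity on underlying sets. For fullness, let $\fQ'$, $\fQ''$ be left $\fS$\+contramodules. Applying Lemma~\ref{flat-cotorsion-presentations-Hom-computation}(a) over $\fS$, we present them as cokernels of morphisms $t'\:\fG'\rarrow\fF'$ and $t''\:\fG''\rarrow\fF''$ of flat $\fS$\+contramodules with cotorsion images and kernels. Applying $\ff_\sharp$, which is exact, we get presentations of $\ff_\sharp\fQ'$ and $\ff_\sharp\fQ''$ by morphisms of flat $\fR$\+contramodules (by hypothesis), whose images and kernels are cotorsion over $\fR$ by Corollary~\ref{cotorsion-restriction-of-scalars-cor}. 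So Lemma~\ref{flat-cotorsion-presentations-Hom-computation}(b) applies on both sides.

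Now take an $\fR$\+contramodule morphism $p\:\ff_\sharp\fQ'\rarrow\ff_\sharp\fQ''$. By Lemma~\ref{flat-cotorsion-presentations-Hom-computation}(b) over $\fR$, it lifts to a commutative square $(f,g)$ of $\fR$\+contramodule maps between the flat $\fR$\+contramodules $\ff_\sharp\fF'$, $\ff_\sharp\fF''$, $\ff_\sharp\fG'$, $\ff_\sharp\fG''$. By (1), $f$ and $g$ are $\fS$\+contramodule morphisms, and commutativity of the square holds on underlying maps. Passing to cokernels in $\fS\Contra$, we obtain an $\fS$\+morphism $q\:\fQ'\rarrow\fQ''$. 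Since $\ff_\sharp$ is exact, it preserves cokernels, so $\ff_\sharp(q)$ is the map induced on cokernels in $\fR\Contra$, namely $p$. Thus $\ff_\sharp$ is full.

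The expected delicate point is only the bookkeeping verifying that the $\fR$\+side presentations meet the hypotheses of Lemma~\ref{flat-cotorsion-presentations-Hom-computation}: flatness comes from the standing assumption and cotorsionness from Corollary~\ref{cotorsion-restriction-of-scalars-cor}. The second half of Lemma~\ref{flat-cotorsion-presentations-Hom-computation}(b), describing zero maps, is not even needed, since faithfulness is trivial.
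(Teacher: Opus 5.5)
Your proposal is correct and follows the paper's proof. The easy implications use the same two lemmas, Lemma~\ref{separated-restriction-of-scalars} and Lemma~\ref{complete-separated-flat-contramodules-lemma}(c). For (1)$\Rightarrow$(3) you make the same move: push the flat presentations with cotorsion images and kernels forward from $\fS$ to $\fR$, with flatness from the hypothesis and cotorsion-ness from Corollary~\ref{cotorsion-restriction-of-scalars-cor}, and then apply Lemma~\ref{flat-cotorsion-presentations-Hom-computation}(b). The only difference is cosmetic: you use just the lifting half of that lemma and note that faithfulness is automatic, while the paper compares the two Hom descriptions as a whole.
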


\begin{proof}
 (3)~$\Longrightarrow$~(2) This implication holds for any continuous
homomorphism of complete, separated right linear topological rings.
 One only needs to refer to
Lemma~\ref{separated-restriction-of-scalars} for the assertion that
the functor in~(2) is well-defined.

 (2)~$\Longrightarrow$~(1) Follows immediately from
Lemma~\ref{complete-separated-flat-contramodules-lemma}(c).

 (1)~$\Longrightarrow$~(3)
 Let $\fP'$ and $\fP''$ be two left $\fS$\+contramodules, and let
$\fG'\rarrow\fF'\rarrow\fP'\rarrow0$ and $\fG''\rarrow\fF''\rarrow
\fP''\rarrow0$ be their flat presentations with cotorsion images
and kernels, as in
Lemma~\ref{flat-cotorsion-presentations-Hom-computation}(a)
(applied to the topological ring~$\fS$).
 By assumption, the functor of restriction of scalars~$\ff_\sharp$ takes
flat left $\fS$\+contramodules to flat left $\fR$\+contramodules.
 By Corollary~\ref{cotorsion-restriction-of-scalars-cor}, the same
functor also takes cotorsion left $\fS$\+contramodules to cotorsion
left $\fR$\+contramodules.
 Hence $\ff_\sharp(\fG')\rarrow\ff_\sharp(\fF')\rarrow\ff_\sharp(\fP')
\rarrow0$ and $\ff_\sharp(\fG'')\rarrow\ff_\sharp(\fF'')\rarrow
\ff_\sharp(\fP'')\rarrow0$ are flat presentations of
the $\fR$\+contramodules $\ff_\sharp(\fP')$ and $\ff_\sharp(\fP'')$
that also satisfy the conditions of
Lemma~\ref{flat-cotorsion-presentations-Hom-computation}(a), i.~e.,
have cotorsion images and kernels.
 Computing $\Hom^\fS(\fP'',\fP'')$ and
$\Hom^\fR(\ff_\sharp(\fP'),\ff_\sharp(\fP''))$ as described in
Lemma~\ref{flat-cotorsion-presentations-Hom-computation}(b),
we can conclude that the map $\Hom^\fS(\fP'',\fP'')\rarrow
\Hom^\fR(\ff_\sharp(\fP'),\ff_\sharp(\fP''))$ is an isomorphism.
\end{proof}

\begin{lem} \label{R-S-annihilator-easy-lemma}
 Let $f\:R\rarrow S$ be a continuous map of right linear
topological rings, and let\/ $\cM$ be a discrete right $S$\+module.
 Let $I\subset R$ be an open right ideal such that the closure $J$ of
the right ideal $f(I)S\subset S$ is an open right ideal in~$S$.
 Then an element $y\in\cN$ is annihilated by $I$ if and only if it is
annihilated by~$J$.
\end{lem}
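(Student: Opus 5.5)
The plan is a direct two-sided annihilator argument. Throughout, $y$ is an element of the discrete right $S$\+module $\cM$ (the statement's $\cN$ is a typo for $\cM$). Let $A=\{s\in S: ys=0\}$ be the annihilator of $y$ in~$S$. Since $\cM$ is discrete, $A$ is an open right ideal in~$S$, and in particular it is closed: open subgroups of a topological group are closed, because their complement is a union of cosets, each of which is open.

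For the ``if'' direction, note that $f(I)\subset f(I)S\subset J$, as $1\in S$. If $yJ=0$, then for every $r\in I$ we get $y\cdot r=yf(r)=0$, where $y\cdot r$ is the right $R$\+action on the underlying $R$\+module. So $y$ is annihilated by~$I$.

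For the ``only if'' direction, suppose $yI=0$, that is, $yf(r)=0$ for all $r\in I$; equivalently, $f(I)\subset A$. Since $A$ is a right ideal, $f(I)S\subset A$. Since $A$ is closed, the closure $J$ of $f(I)S$ is also contained in~$A$. Hence $yJ=0$.

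There is no real obstacle here. The only points that need attention are that the annihilator of an element of a discrete module is a right ideal, which makes it stable under right multiplication by~$S$, and that it is closed, which follows from being open. The hypotheses that $J$ is open and that $f$ is continuous are not actually needed for this equivalence; they only guarantee that the setting makes sense.
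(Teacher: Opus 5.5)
Your proof is correct and follows the paper's argument: the annihilator of $y$ in $S$ is an open, hence closed, right ideal containing $f(I)S$, so it contains $J$. You also spell out the trivial converse, which the paper leaves implicit, and you rightly note that the continuity of $f$ and the openness of $J$ are not used.
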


\begin{proof}
 Suppose $yf(I)=0$ in~$\cN$.
 Then also $yf(I)S=0$.
 The annihilator of~$y$ in $\cN$ is an open right ideal containing
$f(I)S$, so it also contains~$J$.
\end{proof}

\begin{prop} \label{full-and-faithfulness-equivalent-conditions-prop-I}
 Let $f\:R\rarrow S$ be a strongly right taut continuous map of
two-sided linear topological rings.
 Then the following two conditions are equivalent:
\begin{enumerate}
\item the functor of restriction of scalars $f_\diamond\:\Discr S
\rarrow\Discr R$ is fully faithful;
\item $f$~is a topological ring proepimorphism.
\end{enumerate}
\end{prop}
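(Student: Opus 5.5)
We reduce both conditions to statements about the discrete quotient rings, using the base of neighborhoods of zero provided by strong right tautness. By Lemma~\ref{continuous-strongly-taut-ring-map}(3), the open two-sided ideals $I\subset R$ for which the closure $J$ of $f(I)S$ is an open two-sided ideal in $S$ form a base of neighborhoods of zero in $R$, and the corresponding $J$ form a base in $S$. Let $\mathcal B$ denote this set of pairs $(I,J)$. Every discrete right $S$\+module is the union of its submodules annihilated by the $J$ with $(I,J)\in\mathcal B$, that is, the union of right $S/J$\+modules. Similarly, every discrete right $R$\+module is the union of right $R/I$\+modules.

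\emph{(2)$\Rightarrow$(1).} Let $\cN'$, $\cN''$ be discrete right $S$\+modules and let $h\:\cN'\rarrow\cN''$ be an $R$\+linear map. It suffices to show that $h$ is $S$\+linear on each element $y\in\cN'$. Choose $(I,J)\in\mathcal B$ such that $yJ=0$ and $h(y)J=0$. By Lemma~\ref{R-S-annihilator-easy-lemma}, the submodules of $\cN'$ and $\cN''$ annihilated by $J$ coincide with those annihilated by $I$. Hence $h$ restricts to an $R/I$\+linear map between these right $S/J$\+modules. Since $f$ is a topological ring proepimorphism, $R/I\rarrow S/J$ is a ring epimorphism. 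By Stenstr\"om's equivalence (condition~(2) in Section~1), this map is then $S/J$\+linear, and so $h(ys)=h(y)s$.

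\emph{(1)$\Rightarrow$(2).} By Lemma~\ref{topological-ring-proepimorphism-lemma}, it suffices to produce a base of pairs with epimorphic maps. We first check that $R/I\rarrow S/J$ is an epimorphism for every $(I,J)\in\mathcal B$; the argument in the proof of Lemma~\ref{topological-ring-proepimorphism-lemma}, (2)$\Rightarrow$(1), then handles an arbitrary pair $I'\subset R$, $J'\subset S$ with $f(I')\subset J'$. Concretely, we take $(I_0,J_0)\in\mathcal B$ with $I_0\subset I'$ and $J_0\subset J'$ (possible, since the $J$ form a base and the set of $I$ is downward closed in the appropriate sense). Then $R/I_0\rarrow S/J_0\rarrow S/J'$ is an epimorphism, and therefore so is $R/I'\rarrow S/J'$. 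To verify the claim for $(I,J)\in\mathcal B$, we use the criterion that the two maps $S/J\rightrightarrows S/J\ot_{R/I}S/J$ coincide. Set $M=S/J\ot_{R/I}S/J$, viewed as a right $S/J$\+module through the right factor; it is a discrete right $S$\+module. Consider the morphism of right $R$\+modules $S/J\rarrow M$ given by $s\mapsto s\ot1$. This map is right $R$\+linear: for $r\in R$ we have $sf(r)\ot1=s\ot f(r)$, since $f(r)$ passes through the tensor product over $R/I$. Its source is a discrete right $S$\+module, and so is its target. By~(1), this map is right $S$\+linear. Hence $s\ot1=(1\ot1)s=1\ot s$ for all $s$, and the two maps coincide.

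The main subtlety is the bookkeeping in the reduction from arbitrary pairs $(I',J')$ to pairs in $\mathcal B$. One must make sure that a pair in $\mathcal B$ can be found below any given $(I',J')$; this uses that if $J\subset J'$, then shrinking $I$ keeps us inside $\mathcal B$ while shrinking $J$ below $J'$. That in turn relies on the base property from Lemma~\ref{continuous-strongly-taut-ring-map}(3), together with continuity to ensure $f(I)\subset J'$.
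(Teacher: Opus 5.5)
Your proof is correct and is essentially the paper's. In (2)$\Rightarrow$(1) you use the same restriction of $h$ to the $I$\+annihilated (equivalently, $J$\+annihilated) submodules via Lemma~\ref{R-S-annihilator-easy-lemma}, and in (1)$\Rightarrow$(2) your test map $S/J\rarrow S/J\ot_{R/I}S/J$ is a concrete instance of the paper's observation that (1) forces $\Modr S/J\rarrow\Modr R/I$ to be fully faithful.

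The one difference is that your detour through the pairs in $\mathcal B$ is unnecessary, and its justification (``shrinking $I$ keeps us inside $\mathcal B$'') is loosely phrased. Your test-module argument never uses that $J$ is the closure of $f(I)S$, so it applies directly to any open two-sided ideals $I'$, $J'$ with $f(I')\subset J'$. This is why the paper remarks that this direction needs no tautness at all.
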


\begin{proof}
 (1)~$\Longrightarrow$~(2) This implication does not need the strong
right tautness assumption, and holds for any continuous homomorphism
of two-sided linear topological rings.
 Let $I\subset R$ and $J\subset S$ be two open two-sided ideals
such that $f(I)\subset J$.
 In order to prove that $R/I\rarrow S/J$ is a ring epimorphism, let
us show that the functor of restriction of scalars
$\Modr S/J\rarrow\Modr R/I$ is fully faithful.

 Indeed, any right $R/I$\+module can be viewed as a discrete right
$R$\+module, and any right $S/J$\+module can be viewed as a discrete
right $S$\+module.
 The functors $\Modr R/I\rarrow\Discr R$ and $\Modr S/J\rarrow
\Discr S$ are obviously fully faithful (just because the ring
homomorphisms $R\rarrow R/I$ and $S\rarrow S/J$ are surjective).
 If the functor $\Discr S\rarrow\Discr R$ is fully faithful, then it
follows that the functor $\Modr S/J\rarrow\Modr R/I$ is fully faithful,
too.

 (2)~$\Longrightarrow$~(1)
 Let $\cM$ and $\cN$ be two discrete right $S$\+modules, and let
$h\:\cM\rarrow\cN$ be an $R$\+module morphism.
 Let $x\in\cM$ and $s\in S$ be two elements.
 We need to prove that $h(xs)=h(x)s$ in~$\cN$.
 By the definition of strong right tautness (or by
Lemma~\ref{continuous-strongly-taut-ring-map}(2)), there exist
open two-sided ideals $I\subset R$ and $J\subset S$ satisfying
the following conditions:
\begin{itemize}
\item $J$~is the closure of the right ideal $f(I)S\subset S$;
\item $xI=0$ in~$\cM$.
\end{itemize}

 Let $\cM_I\subset\cM$ and $\cN_I\subset\cN$ be the $R$\+submodule
of all elements annihilated by $I$ in $\cM$ and the $R$\+submodule of
all elements annihilated by $I$ in~$\cN$.
 By Lemma~\ref{R-S-annihilator-easy-lemma}, $\cM_I$ and $\cN_I$ are
also annihilated by~$J$; so $\cM_I$ and $\cN_I$ are $S/J$\+modules.
 Since $h\:\cM\rarrow\cN$ is an $R$\+module map, it takes $\cM_I$ into
$\cN_I$; hence we obtain an $R/I$\+module map $h_I\:\cM_I\rarrow\cN_I$.
 By assumption, the induced ring map $R/I\rarrow S/J$ is a ring
epimorphism; so any $R/I$\+module map of $S/J$\+modules is
an $S/J$\+module map.
 Thus $h_I$~is an $S/J$\+module map.

 By construction, we have $x\in\cM_I$; hence $xs\in\cM_I$.
 Finally, $h(xs)=h_I(xs)=h_I(x)s=h(x)s$ in $\cN_I\subset\cN$,
as desired.
\end{proof}

\begin{prop} \label{full-and-faithfulness-equivalent-conditions-prop-II}
 Let\/ $\ff\:\fR\rarrow\fS$ be a strongly right taut continuous map of
complete, separated two-sided linear topological rings with 
\emph{countable} bases of neighborhoods of zero.
 Then the following two conditions are equivalent:
\begin{enumerate}
\item the functor of restriction of scalars\/ $\ff_\sharp\:\fS\Separ
\rarrow\fR\Separ$ from Lemma~\ref{separated-restriction-of-scalars}
is fully faithful;
\item $\ff$~is a topological ring proepimorphism.
\end{enumerate}
\end{prop}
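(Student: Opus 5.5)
By Lemma~\ref{continuous-strongly-taut-ring-map-countable-base}, we may fix descending chains of open two-sided ideals $\fI_n\subset\fR$ and $\fJ_n\subset\fS$ such that $\ff=\varprojlim_n f_n$. Here $f_n\:R_n=\fR/\fI_n\rarrow S_n=\fS/\fJ_n$ is a strongly right taut morphism of projective systems. By Lemmas~\ref{left-right-systems-discr-contra-category-equivs}(b) and~\ref{right-left-systems-discrete-contra-direct-image}(b), the functor $\ff_\sharp\:\fS\Separ\rarrow\fR\Separ$ is then identified with $(f_n)_*\:(S_n)\Sys\rarrow(R_n)\Sys$. So the whole argument will take place on the level of systems.

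For (2)~$\Rightarrow$~(1), each $f_n\:R_n\rarrow S_n$ is a ring epimorphism by Lemma~\ref{topological-ring-proepimorphism-lemma}(1). Given left $(S_n)$\+systems $(Q'_n)$ and $(Q''_n)$ and a morphism of the underlying $(R_n)$\+systems, its components $h_n\:Q'_n\rarrow Q''_n$ are $R_n$\+linear maps between $S_n$\+modules. Since $f_n$ is an epimorphism, the restriction of scalars $S_n\Modl\rarrow R_n\Modl$ is fully faithful, so each $h_n$ is $S_n$\+linear. The compatibility with the structure isomorphisms $Q_n\simeq S_n\ot_{S_{n+1}}Q_{n+1}\simeq R_n\ot_{R_{n+1}}Q_{n+1}$ is the same condition on both sides: both are the map induced by reduction $Q_{n+1}\rarrow Q_n$, as in Lemma~\ref{strongly-right-taut-direct-image-right-left-systems}(b). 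Hence $(h_n)$ is a morphism of $(S_n)$\+systems. Faithfulness is clear, so fullness is all that is needed. Alternatively one can argue directly: an $\fR$\+contramodule map of separated $\fS$\+contramodules induces $R_n$\+maps on the quotients $\fQ/(\fJ_n\tim\fQ)=\fQ/(\fI_n\tim\fQ)$ (Corollary~\ref{tim-Omega-and-Lambda-over-R-and-S-compatible}), which are therefore $S_n$\+linear, and one passes to the limit.

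For (1)~$\Rightarrow$~(2), I plan to show first that every $f_n$ is a ring epimorphism, and then that this suffices via Lemma~\ref{topological-ring-proepimorphism-lemma}(2), with $I_\alpha=\fI_n$ and $J_\alpha=\fJ_n$. To see that $f_n$ is an epimorphism, I will show that $S_n\Modl\rarrow R_n\Modl$ is fully faithful. Any left $S_n$\+module $M$ is a separated $\fS$\+contramodule, since it is annihilated by $\fJ_n$ and is a module over the discrete quotient. Its underlying $\fR$\+contramodule is the $R_n$\+module $M$. Moreover, the full subcategory of $\fS\Separ$ formed by $S_n$\+modules is exactly $S_n\Modl$: a contramodule map between contramodules annihilated by $\fJ_n$ is the same as an $S_n$\+module map, because the contraaction on such modules factors through $\fS/\fJ_n$ and reduces to finite sums. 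The analogous statement holds over $R_n$. Therefore (1) restricted to these objects says precisely that every $R_n$\+linear map between $S_n$\+modules is $S_n$\+linear, which is condition~(1) in Stenström's list, so $f_n$ is an epimorphism.

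The step needing most care is the identification of contramodule morphisms between modules over a discrete quotient ring with module morphisms. In a contramodule over a discrete quotient, infinite sums $\sum r_xp_x$ with $r_x\to0$ have only finitely many terms that are nonzero modulo $\fJ_n$, so contraaction is determined by the module structure. I will check this using the description of $\fP/(\fI\tim\fP)$ as an $\fR/\fI$\+module from Section~\ref{preliminaries-secn}. The remaining bookkeeping is the system-level compatibility in the first direction.
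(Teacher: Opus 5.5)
Your proposal is correct and follows essentially the paper's argument: (2)$\Rightarrow$(1) identifies $\ff_\sharp$ with $(f_n)_*\colon(S_n)\Sys\rarrow(R_n)\Sys$ and uses that each $S_n\Modl\rarrow R_n\Modl$ is fully faithful, and (1)$\Rightarrow$(2) uses the fully faithful embedding of modules over the discrete quotient rings into separated contramodules. The only difference is that for (1)$\Rightarrow$(2) the paper argues directly for an arbitrary pair of open two-sided ideals $\fI$, $\fJ$ with $\ff(\fI)\subset\fJ$, so it needs neither strong right tautness nor countability there, whereas you restrict to the chain $(\fI_n,\fJ_n)$ and conclude via Lemma~\ref{topological-ring-proepimorphism-lemma}\,(2)$\Rightarrow$(1), which is equally valid.
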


\begin{proof}
 (1)~$\Longrightarrow$~(2) This implication does not need the strong
right tautness and countable topology base assumptions, and holds for
any continuous homomorphism of complete, separated two-sided linear
topological rings.
 Let $\fI\subset\fR$ and $\fJ\subset\fS$ be two open two-sided ideals
such that $\ff(\fI)\subset\fJ$.
 In order to prove that $\fR/\fI\rarrow\fS/\fJ$ is a ring epimorphism,
let us show that the functor of restriction of scalars
$\fS/\fJ\Modl\rarrow\fR/\fI\Modl$ is fully faithful.

 Indeed, any left $\fR/\fI$\+module can be viewed as a separated left
$\fR$\+contramodule, and any left $\fS/\fJ$\+module can be viewed as
a separated left $\fS$\+contramodule.
 The functors $\fR/\fI\Modl\rarrow\fR\Separ$ and $\fS/\fJ\Modl\rarrow
\fS\Separ$ are obviously fully faithful.
 If the functor $\fS\Separ\rarrow\fR\Separ$ is fully faithful, then
it follows that the functor $\fS/\fJ\Modl\rarrow\fR/\fI\Modl$ is fully
faithful, too.

 (2)~$\Longrightarrow$~(1) As explained in
Section~\ref{change-of-scalars-I-secn}, the ring map~$\ff$ arises
from a strongly right taut morphism of projective systems of rings
$(f_n\:R_n\to S_n)$.
 By Lemma~\ref{right-left-systems-discrete-contra-direct-image}(b),
it suffices to check that the functor of restriction of scalars
$(f_n)_*\:(S_n)\Sys\rarrow(R_n)\Sys$ is fully faithful.
 This follows immediately from the assumption that the functors
$S_n\Modl\rarrow R_n\Modl$ are fully faithful for all $n\ge1$.
 (It is helpful to notice that $(S_n)\Sys$ is a full subcategory in
the category of projective systems of left modules over the projective
system of rings $(S_n)$, and similarly for $(R_n)\Sys$.)
\end{proof}

\begin{cor} \label{full-and-faithfulness-equivalent-conditions-cor}
 Let\/ $\ff\:\fR\rarrow\fS$ be a left proflat continuous map of
complete, separated two-sided linear topological rings with
\emph{countable} bases of neighborhoods of zero.
 Then the following five conditions are equivalent:
\begin{enumerate}
\item the functor of restriction of scalars\/ $\ff_\diamond\:\Discr\fS
\rarrow\Discr\fR$ is fully faithful;
\item the functor of restriction of scalars\/ $\ff_\sharp\:\fS\Flat
\rarrow\fR\Flat$ is fully faithful;
\item the functor of restriction of scalars\/ $\ff_\sharp\:\fS\Separ
\rarrow\fR\Separ$ is fully faithful;
\item the functor of restriction of scalars\/ $\ff_\sharp\:\fS\Contra
\rarrow\fR\Contra$ is fully faithful;
\item $\ff$~is a topological ring proepimorphism.
\end{enumerate}
\end{cor}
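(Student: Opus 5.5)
The plan is to assemble the corollary from the results already proved, chaining the implications through condition~(5). First, a left proflat map is by definition continuous and strongly right taut. So Proposition~\ref{full-and-faithfulness-equivalent-conditions-prop-I}, applied to the two-sided linear topological rings $\fR$ and~$\fS$, gives (1)~$\Longleftrightarrow$~(5). Proposition~\ref{full-and-faithfulness-equivalent-conditions-prop-II} applies because $\fR$ and $\fS$ are complete and separated with countable bases of neighborhoods of zero; it gives (3)~$\Longleftrightarrow$~(5).

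Next I will connect (2), (3), and~(4) via Theorem~\ref{full-and-faithfulness-equivalent-conditions-thm}. Its hypothesis is that $\ff_\sharp$ takes flat left $\fS$\+contramodules to flat left $\fR$\+contramodules. This is exactly the ``only if'' direction of Corollary~\ref{proflat-contramodule-direct-image-cor}, valid since $\ff$ is a strongly right taut continuous left proflat map between rings of the stated kind. The theorem then yields (2)~$\Longleftrightarrow$~(3)~$\Longleftrightarrow$~(4). Combined with the previous paragraph, all five conditions are equivalent.

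No genuine obstacle is expected. The only point to check is that every hypothesis of the invoked results is met. Proposition~\ref{full-and-faithfulness-equivalent-conditions-prop-I} needs only two-sided linear topologies. Proposition~\ref{full-and-faithfulness-equivalent-conditions-prop-II} needs completeness, separatedness, and countable bases. The flatness-preservation hypothesis of Theorem~\ref{full-and-faithfulness-equivalent-conditions-thm} is the one place where left proflatness, rather than mere strong right tautness, is actually used.
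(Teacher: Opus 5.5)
Your proposal is correct and follows the paper's proof exactly. The paper derives (1)$\Leftrightarrow$(5) and (3)$\Leftrightarrow$(5) from the two propositions, and derives (2)$\Leftrightarrow$(3)$\Leftrightarrow$(4) from Theorem~\ref{full-and-faithfulness-equivalent-conditions-thm}, with its flatness-preservation hypothesis supplied by Corollary~\ref{proflat-contramodule-direct-image-cor}.
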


\begin{proof}
 (1)~$\Longleftrightarrow$~(5) This is
Proposition~\ref{full-and-faithfulness-equivalent-conditions-prop-I}.

 (3)~$\Longleftrightarrow$~(5) This is
Proposition~\ref{full-and-faithfulness-equivalent-conditions-prop-II}.

 (2)~$\Longleftrightarrow$~(3)~$\Longleftrightarrow$~(4)
 The functor of restriction of scalars~$\ff_\sharp$ takes flat left
$\fS$\+contramodules to flat left $\fR$\+contramodules by
Corollary~\ref{proflat-contramodule-direct-image-cor}.
 So Theorem~\ref{full-and-faithfulness-equivalent-conditions-thm}
is applicable.
\end{proof}

\Section{Contramodule-to-Module Full-and-Faithfulness Condition}

 We start with a recollection of results from~\cite[Section~7.3]{PS1}
and~\cite[Section~6]{Pcoun}.

\begin{lem} \label{contramodule-to-module-full-and-faithful-lemma}
 Let\/ $\fR$ be a complete, separated right linear topological ring,
$R$ be a ring, and\/ $\rho\:R\rarrow\fR$ be a ring homomorphism with
dense image.
 Consider the following five conditions:
\begin{enumerate}
\item the composition of forgetful functors\/ $\fR\Contra\rarrow
\fR\Modl\rarrow R\Modl$ is fully faithful;
\item the composition of forgetful functors\/ $\fR\Separ\rarrow
\fR\Modl\rarrow R\Modl$ is fully faithful;
\item for any discrete right\/ $\fR$\+module\/ $\cN$ and any left\/
$\fR$\+contramodule\/ $\fP$, the natural surjective map from
the contratensor product to the tensor product
$$
 \cN\ot_R\fP\lrarrow\cN\ocn_\fR\fP
$$
is an isomorphism;
\item for any finitely generated discrete right\/ $\fR$\+module\/ $\cN$
and any free left\/ $\fR$\+con\-tra\-mod\-ule\/ $\fP=\fR[[X]]$ (where
$X$ is a set), the natural surjective map from the contratensor product
to the tensor product
$$
 \cN\ot_R\fP\lrarrow\cN\ocn_\fR\fP
$$
is an isomorphism;
\item for any cyclic discrete right\/ $\fR$\+module\/ $\cN=\fR/\fI$
(where\/ $\fI\subset\fR$ is an open right ideal) and any free
left\/ $\fR$\+contramodule\/ $\fP=\fR[[X]]$, the natural surjective map
from the contratensor product to the tensor product
$$
 \cN\ot_R\fP\lrarrow\cN\ocn_\fR\fP
$$
is an isomorphism.
\end{enumerate}
 Then the implications and equivalence
\textup{(1)~$\Longrightarrow$~(2)}
\textup{$\Longrightarrow$~(3)}
\textup{$\Longleftrightarrow$~(4)}
\textup{$\Longrightarrow$~(5)} hold.
 When the topological ring\/ $\fR$ has a countable base of neighborhoods
of zero, all the five conditions are equivalent,
\textup{(1)~$\Longleftrightarrow$~(2)}
\textup{$\Longleftrightarrow$~(3)} \textup{$\Longleftrightarrow$~(4)}
\textup{$\Longleftrightarrow$~(5)}.
\end{lem}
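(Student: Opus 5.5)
The lemma is essentially a recollection from \cite[Section~7.3]{PS1} and \cite[Section~6]{Pcoun}. Still, I would prove the implications in the stated order and then close the cycle using the countability assumption.

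(1)~$\Longrightarrow$~(2) is immediate, since $\fR\Separ\subset\fR\Contra$ is a full subcategory.

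For (2)~$\Longrightarrow$~(3), fix $\cN$ and an abelian group $W$. Then $\Hom_\boZ(\cN\ocn_\fR\fP,W)\simeq\Hom^\fR(\fP,\Hom_\boZ(\cN,W))$, while $\Hom_\boZ(\cN\ot_R\fP,W)\simeq\Hom_R(\fP,\Hom_\boZ(\cN,W))$. The contramodule $\Hom_\boZ(\cN,W)$ is separated: it is the projective limit of the groups $\Hom_\boZ(\cN_\fI,W)$, where $\cN_\fI$ denotes the $\fI$-annihilated parts. However, $\fP$ need not be separated, so (2) does not directly give equality of these Hom groups. I would first replace $\fP$ by a free contramodule $\fR[[X]]$, which is separated, and then pass to general $\fP$ by a presentation $\fR[[Y]]\to\fR[[X]]\to\fP\to0$. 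Both $\cN\ot_R{-}$ and $\cN\ocn_\fR{-}$ are right exact in $\fP$, since the forgetful functor to $R\Modl$ is exact. The five lemma then transfers the isomorphism. With $W=\mathbb Q/\boZ$ an injective cogenerator, the map $\cN\ot_R\fP\to\cN\ocn_\fR\fP$ is therefore an isomorphism.

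For (3)~$\Longleftrightarrow$~(4), both functors commute with direct limits in $\cN$. Every discrete module is the direct limit of its finitely generated submodules, and every $\fP$ is a cokernel of a map of free contramodules. So (4) gives (3) by right exactness in $\fP$ and by passing to the colimit in $\cN$. The implication (4)~$\Longrightarrow$~(5) is trivial.

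Now assume a countable base. For (5)~$\Longrightarrow$~(4), I would use that a finitely generated discrete module is an iterated extension of cyclic ones. The tricky point is that $\cN\ot_R\fR[[X]]$ is not left exact, so a snake/five-lemma argument on $0\to\cN'\to\cN\to\fR/\fI\to0$ requires care. What makes this work is that $\fR[[X]]$ is a flat contramodule, so $\cN\ocn_\fR\fR[[X]]=\cN[X]$ is exact in $\cN$. Right exactness of $\ot_R$ together with surjectivity of the comparison map then gives injectivity by a diagram chase. Concretely, a finitely generated $\cN$ is a quotient of $\bigoplus_i\fR/\fI_i$, with kernel $\cK$. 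Both sides commute with finite direct sums, so the map is an isomorphism on $\bigoplus_i\fR/\fI_i$. Surjectivity on $\cK$ suffices for the chase, so induction on the number of generators is not needed at all. (This is essentially the argument of \cite[Theorem~6.2]{Pcoun}.)

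The main step, and the one I expect to be the obstacle, is (3)~$\Longrightarrow$~(1), where countability enters. Given an $R$-linear map $h\colon\fP\to\fQ$ between contramodules, I need to show that it respects the infinite summation operations. It suffices to check this after composing with all maps $\fQ\to\Hom_\boZ(\cN,\mathbb Q/\boZ)$. The reason is that, with a countable base, every contramodule embeds into a product of such duals: a contramodule $\fQ$ has enough such maps because $\fQ$ is a quotient of $\fR[[X]]$ and $\Omega$ issues must be handled. This is exactly where I would invoke Lemma~\ref{complete-separated-flat-contramodules-lemma}(b) and the argument of \cite[Theorem~6.2]{Pcoun}. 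Via (3), $R$-linear maps $\fP\to\Hom_\boZ(\cN,W)$ correspond to maps out of $\cN\ot_R\fP=\cN\ocn_\fR\fP$, hence are contramodule maps. For non-separated $\fQ$ this reduction fails, and I would instead follow \cite{Pcoun} or \cite[Section~7.3]{PS1}: write $\fQ$ as a cokernel of free contramodules, and reduce to the case where $\fP=\fR[[X]]$ is free. In that case it is enough to show that an $R$-linear map $\fR[[X]]\to\fQ$ is determined by its values on $X$. This is proved using Lemma~\ref{complete-separated-flat-contramodules-lemma}(b) applied to the contramodule of differences, which is nonzero modulo some $\fI\tim$, and this is detected by (5).
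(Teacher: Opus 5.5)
Most of your outline is sound and runs parallel to the paper:
\begin{itemize}
\item (1)$\Rightarrow$(2);
\item your (2)$\Rightarrow$(3), which is the paper's (2)$\Rightarrow$(4) (free contramodules and $\Hom_\boZ(\cN,W)$ are separated) followed by passage to cokernels;
\item (3)$\Leftrightarrow$(4)$\Rightarrow$(5).
\end{itemize}
Your diagram chase for (5)$\Rightarrow$(4) is correct and uses no countability. Your duality argument also shows, without countability, that (3) gives $\Hom_R(\fP,\fQ)=\Hom^\fR(\fP,\fQ)$ for every separated $\fQ$.

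The gap is exactly where the countable base matters, namely nonseparated targets $\fQ$. Your claim that, with a countable base, every contramodule embeds into a product of duals $\Hom_\boZ(\cN,\mathbb{Q}/\boZ)$ is false. These duals are separated, and any contramodule morphism into a separated contramodule annihilates $\Omega_\fR(\fQ)$. Nonseparated contramodules exist already over $\boZ_p$.

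Your fallback is only a hint, and its central object is not what you say. Take $\fP=\fR[[X]]$, an $R$\+linear $h\:\fP\rarrow\fQ$, and the contramodule morphism $g$ agreeing with $h$ on $X$. The image $D$ of $d=h-g$ is merely an $R$\+submodule of $\fQ$, not a contramodule. So Lemma~\ref{complete-separated-flat-contramodules-lemma}(b) does not apply to it, and you never say how (5) enters. Also, the reduction to free $\fP$ goes through a surjection $\fR[[X]]\rarrow\fP$ and set-theoretic lifting; writing $\fQ$ as a cokernel plays no role.

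\textbf{The missing argument.}
\begin{enumerate}
\item Put $I=\rho^{-1}(\fI)$. By density, $\fR/\fI\simeq R/I$ as right $R$\+modules, so (5) says exactly that $\fI[[X]]=\rho(I)\cdot\fR[[X]]$.
\item Hence every $p\in\fR[[X]]$ has the form $\sum_x\rho(a_x)x+\sum_j\rho(b_j)q_j$, with finite sums, $b_j\in I$ and $q_j\in\fR[[X]]$. Since $d$ vanishes on $X$, this gives $d(p)\in\rho(I)\cdot D$. So $D\subset\fI\cdot D$ for every open right ideal $\fI$.
\item Let $\fD\subset\fQ$ be the subcontramodule generated by $D$, i.e., the image of $\fR[[D]]\rarrow\fQ$. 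To get $\fD=\fI\tim\fD$, write an element of $\fD$ as $\pi_\fQ(\sum_{y\in D}r_yy)$. The part with $r_y\in\fI$ lies in $\fI\tim\fD$. For each of the finitely many other $y$, pick an open right ideal $\fI'$ with $r_y\fI'\subset\fI$ (continuity of multiplication), and use $y\in\fI'\cdot D$ to get $r_yy\in\fI\cdot D$.
\item This extra step is needed because $\fI\tim\fD$ is not a subcontramodule when $\fI$ is only a right ideal.
\item Now Lemma~\ref{complete-separated-flat-contramodules-lemma}(b) yields $\fD=0$, so $h=g$.
\end{enumerate}
This is a direct proof of (5)$\Rightarrow$(1), the implication the paper takes from \cite[Theorem~6.2]{Pcoun}. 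Once you have it, the separated-target detour in your last paragraph is unnecessary.
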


\begin{proof}
 (1)~$\Longrightarrow$~(2) Obvious.
 
 (1)~$\Longrightarrow$~(3) This is, essentially, \cite[Lemma~7.11]{PS1}.
 
 (2)~$\Longrightarrow$~(4) The point is that the free left
$\fR$\+contramodules are separated.
 Furthermore, for any discrete right $\fR$\+module $\cN$ and any abelian
group $W$, the left $\fR$\+contramodule $\Hom_\boZ(\cN,W)$ is
separated~\cite[Lemma~5.5]{Pcoun}.
 So the argument from~\cite[Lemma~7.11]{PS1} is applicable.

 (3)~$\Longrightarrow$~(4)~$\Longrightarrow$~(5) Obvious.
 
 (4)~$\Longrightarrow$~(3) Holds because all discrete right
$\fR$\+modules are direct unions of finitely generated ones, all
left $\fR$\+contramodules are cokernels of morphisms of free ones,
and both the functors~$\ot_R$ and~$\ocn_\fR$ preserve colimits in
the first argument and cokernels in the second one.
 (Notice that $\cN\ot_R{-}$ actually preserves all colimits in $R\Modl$
and $\cN\ocn_\fR{-}$ preserves all colimits in $\fR\Contra$; but
the forgetful functor $\fR\Contra\rarrow\fR\Modl$ preserves finite
colimits only.)

 (5)~$\Longrightarrow$~(1) This is, essentially,
\cite[Theorem~6.2\,(i)\,$\Rightarrow$\,(iii)]{Pcoun}.
\end{proof}

\begin{cor} \label{S-contra-to-S-mod-fully-faithful-cor}
 Let\/ $\fR$ and\/ $\fS$ be complete, separated right linear
topological rings with countable bases of neighborhoods of zero, and
let\/ $\ff\:\fR\rarrow\fS$ be a continuous ring map.
 Assume that the functor $\ff_\sharp\:\fS\Separ\rarrow\fR\Separ$ from
Lemma~\ref{separated-restriction-of-scalars} and the forgetful
functor\/ $\fR\Contra\rarrow\fR\Modl$ are fully faithful.
 Then the forgetful functor\/ $\fS\Contra\rarrow\fS\Modl$ is fully
faithful, too.
\end{cor}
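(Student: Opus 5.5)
We use the criterion of Lemma~\ref{contramodule-to-module-full-and-faithful-lemma} for the topological ring $\fS$, with $R=\fS$ and $\rho=\Id$. Since $\fS$ has a countable base of neighborhoods of zero, it suffices to verify condition~(2) of that lemma: the forgetful functor $\fS\Separ\rarrow\fS\Modl$ is fully faithful. It is faithful in any case, so the task is to show that every $\fS$\+module map $h\:\fP'\rarrow\fP''$ between separated left $\fS$\+contramodules is an $\fS$\+contramodule morphism.

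Consider the underlying objects $\ff_\sharp(\fP')$ and $\ff_\sharp(\fP'')$. By Lemma~\ref{separated-restriction-of-scalars}, they are separated left $\fR$\+contramodules. The map $h$ is in particular an $\fR$\+module map between them. By hypothesis the forgetful functor $\fR\Contra\rarrow\fR\Modl$ is fully faithful, so $h$ is a morphism of $\fR$\+contramodules, that is, a morphism in $\fR\Separ$ between $\ff_\sharp(\fP')$ and $\ff_\sharp(\fP'')$. By the other hypothesis, $\ff_\sharp\:\fS\Separ\rarrow\fR\Separ$ is fully faithful. Hence $h=\ff_\sharp(g)$ for a unique $\fS$\+contramodule morphism $g\:\fP'\rarrow\fP''$. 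Because $\ff_\sharp$ does not change underlying sets, $g$ and $h$ coincide as maps of sets. Therefore $h$ is itself an $\fS$\+contramodule morphism, as required.

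This proves condition~(2) for $\fS$, and Lemma~\ref{contramodule-to-module-full-and-faithful-lemma} (implication (2)~$\Rightarrow$~(1), valid under the countable base assumption) then gives full faithfulness of $\fS\Contra\rarrow\fS\Modl$.

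The only substantive step is this passage from separated contramodules to all contramodules, which is exactly where the countability assumption on $\fS$ enters through the cited lemma (it rests on \cite{Pcoun}). The remaining steps are formal. Note that the hypothesis is used only for separated contramodules, matching the form of $\ff_\sharp$ assumed in the statement.
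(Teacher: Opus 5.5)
Your proposal is correct and follows essentially the same route as the paper's proof: reduce via Lemma~\ref{contramodule-to-module-full-and-faithful-lemma}\,(2)\,$\Rightarrow$\,(1) for $\fS$ to separated $\fS$\+contramodules. You then upgrade an $\fS$\+module map first to an $\fR$\+contramodule morphism and then, by full faithfulness of $\ff_\sharp$ on separated contramodules, to an $\fS$\+contramodule morphism.
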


\begin{proof}
 According to
Lemma~\ref{contramodule-to-module-full-and-faithful-lemma}%
\,(2)\,$\Rightarrow$\,(1), it suffices to show that the forgetful
functor $\fS\Separ\rarrow\fS\Modl$ is fully faithful.
 Indeed, let $\fP$ and $\fQ$ be separated left $\fS$\+contramodules,
and let $g\:\fP\rarrow\fQ$ be an $\fS$\+module map.
 Then $g$~is also an $\fR$\+module map.
 By assumption, it follows that $g$~is an $\fR$\+contramodule morphism.
 By another assumption, we can conclude that $g$~is
an $\fS$\+contramodule morphism.
\end{proof}

\begin{lem} \label{discrete-quotients-tensor-lemma}
 Let\/ $\fR$ and\/ $\fS$ be complete, separated right linear topological
rings with countable bases of neighborhoods of zero, and let\/
$\ff\:\fR\rarrow\fS$ be a taut continuous ring map.
 Assume that the forgetful functor\/ $\fR\Contra\rarrow\fR\Modl$ is
fully faithful.
 Let\/ $\fI\subset\fR$ be an open right ideal.
 Then the right ideal\/ $\fJ=\ff(\fI)\fS\subset\fS$ is open.
 Denote the related discrete cyclic modules by\/ $\cM=\fR/\fI$ and\/
$\cN=\fS/\fJ$.
 Then the induced right\/ $\fS$\+module map\/ $\cM\ot_\fR\fS\rarrow\cN$
is an isomorphism.
\end{lem}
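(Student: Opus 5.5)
The plan is to compare the algebraic tensor product $\cM\ot_\fR\fS$ with the contratensor product $\cM\ocn_\fR\fS$, using Lemma~\ref{contramodule-to-module-full-and-faithful-lemma} to identify the two. Lemma~\ref{continuous-and-taut-contramodule-tim-lemma} then identifies the contratensor product with $\fS/(\text{closure of }\ff(\fI)\fS)$. The openness claim comes out of the same comparison.

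First, consider the left $\fR$\+contramodule $\fP=\fS$, i.~e., $\ff_\sharp$ applied to the free left $\fS$\+contramodule $\fS=\fS[[\{*\}]]$. Take the ring $R=\fR$ and $\rho=\Id$. By assumption the forgetful functor $\fR\Contra\rarrow\fR\Modl$ is fully faithful, and $\fR$ has a countable base of neighborhoods of zero. So Lemma~\ref{contramodule-to-module-full-and-faithful-lemma}\,(1)\,$\Rightarrow$\,(3) applies: for the discrete right $\fR$\+module $\cM=\fR/\fI$, the natural surjection $\cM\ot_\fR\fS\rarrow\cM\ocn_\fR\fS$ is an isomorphism.

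Next I compute both sides. On the algebraic side, $\cM\ot_\fR\fS=(\fR/\fI)\ot_\fR\fS=\fS/\ff(\fI)\fS$, where $\ff(\fI)\fS$ is the right ideal generated by $\ff(\fI)$ (the left $\fR$\+module structure on $\fS$ is via~$\ff$). On the contramodule side, the isomorphism $(\fR/\fI)\ocn_\fR\fP\simeq\fP/(\fI\tim\fP)$ from Section~\ref{preliminaries-secn} gives $\cM\ocn_\fR\fS\simeq\fS/(\fI\tim\fS)$. Since $\ff$~is taut and continuous and $\fR$ has a countable base, Lemma~\ref{continuous-and-taut-contramodule-tim-lemma} gives $\fI\tim\fS=\overline{\ff(\fI)\fS}$, the closure. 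By tautness this closure is an open right ideal.

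The surjection in question is the obvious quotient map $\fS/\ff(\fI)\fS\rarrow\fS/\overline{\ff(\fI)\fS}$. Its being injective means $\ff(\fI)\fS=\overline{\ff(\fI)\fS}$. Hence $\fJ=\ff(\fI)\fS$ is closed and equals the open right ideal above, so it is open. The map $\cM\ot_\fR\fS\rarrow\cN=\fS/\fJ$ is then just the identification $\fS/\ff(\fI)\fS=\fS/\fJ$. It is a right $\fS$\+module map by construction, and it is an isomorphism.

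The main step requiring care is checking that the natural surjection $\cN\ot_R\fP\rarrow\cN\ocn_\fR\fP$ of Lemma~\ref{contramodule-to-module-full-and-faithful-lemma}, in this instance, agrees with the quotient map $\fS/\ff(\fI)\fS\rarrow\fS/(\fI\tim\fS)$. This should be immediate: both are induced by $\fS\rarrow\fS$, since $\cM\ot_\fZ\fS$ surjects onto both, with $1\ot s\mapsto$ the class of $s$. Everything else is a direct citation.
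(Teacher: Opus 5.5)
Your proposal is correct and follows the paper's proof essentially verbatim. The paper likewise uses Lemma~\ref{contramodule-to-module-full-and-faithful-lemma}\,(1)\,$\Rightarrow$\,(3) to get $\fI\cdot\fS=\fI\tim\fS$, then uses tautness and Lemma~\ref{continuous-and-taut-contramodule-tim-lemma} to identify $\fI\tim\fS$ with the open closure of $\ff(\fI)\fS$; it concludes that $\ff(\fI)\fS$ is itself that open ideal and that $\cN\simeq\cM\ot_\fR\fS$.
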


\begin{proof}
 We have $\cM\ot_\fR\fS\simeq\fS/(\fI\cdot\fS)$ and
$\cM\ocn_\fR\fS\simeq\fS/(\fI\tim\fS)$ (see the notation and discussion
in Section~\ref{preliminaries-secn}).
 By Lemma~\ref{contramodule-to-module-full-and-faithful-lemma}%
\,(1)\,$\Rightarrow$\,(3), the map $\cM\ot_\fR\fS\rarrow\cM\ocn_\fR\fS$
is an isomorphism; so $\fI\cdot\fS=\fI\tim\fS$.
 Denote by $\fJ'$ the closure of the right ideal
$\ff(\fI)\fS\subset\fS$.
 By Lemma~\ref{taut-ring-maps-lemma}(1), \,$\fJ'$~is an open right
ideal in~$\fS$.
 By Lemma~\ref{continuous-and-taut-contramodule-tim-lemma},
we have $\fJ'=\fI\tim\fS$.
 Thus $\fJ'=\fI\tim\fS=\fI\cdot\fS=\ff(\fI)\fS=\fJ\subset\fS$ and
$\cN=\fS/\fJ=\fS/(\fI\cdot\fS)\simeq\cM\ot_\fR\fS$.
\end{proof}

\begin{cor} \label{reduction-S-module-cor}
 Let\/ $\fR$ and\/ $\fS$ be complete, separated right linear topological
rings with countable bases of neighborhoods of zero, and let\/
$\ff\:\fR\rarrow\fS$ be a taut continuous ring map.
 Assume that the forgetful functor\/ $\fR\Contra\rarrow\fR\Modl$ is
fully faithful.
 Let\/ $\fI\subset\fR$ be an open two-sided ideal such that\/
$\ff(\fI)\fS\subset\fS$ is an (open, by
Lemma~\ref{discrete-quotients-tensor-lemma}) two-sided ideal in\/~$\fS$.
 Consider the discrete quotient rings $R=\fR/\fI$ and $S=\fS/\fJ$.
 Let\/ $\fQ$ be a left\/ $\fR$\+contramodule whose underlying left\/
$\fR$\+module structure arises from a left\/ $\fS$\+module structure.
 Then the natural left $R$\+module structure on\/ $\fQ/(\fI\tim\fQ)$
arises from a left $S$\+module structure such that the projection\/
$\fQ\rarrow\fQ/(\fI\tim\fQ)$ is an\/ $\fS$\+module map.
\end{cor}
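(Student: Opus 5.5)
The plan is to reduce everything to the identity $\fI\tim\fQ=\fI\cdot\fQ$, using the hypothesis that $\fR\Contra\rarrow\fR\Modl$ is fully faithful. By Lemma~\ref{contramodule-to-module-full-and-faithful-lemma}\,(1)$\Rightarrow$(3), applied with $\rho=\mathrm{id}_\fR$ and the discrete cyclic right $\fR$\+module $\cM=\fR/\fI$, the natural map $\cM\ot_\fR\fQ\rarrow\cM\ocn_\fR\fQ$ is an isomorphism. The left-hand side is $\fQ/(\fI\cdot\fQ)$ and the right-hand side is $\fQ/(\fI\tim\fQ)$, as recalled in Section~\ref{preliminaries-secn}. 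Hence $\fI\tim\fQ=\fI\cdot\fQ$, the subgroup spanned by the products $rq$ with $r\in\fI$ and $q\in\fQ$. This is the same argument that gave $\fI\cdot\fS=\fI\tim\fS$ in the proof of Lemma~\ref{discrete-quotients-tensor-lemma}.

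Next, I use the given $\fS$\+module structure on $\fQ$ extending its $\fR$\+module structure, so that $rq=\ff(r)q$. Then $\fI\cdot\fQ=\ff(\fI)\cdot\fQ$. I claim this equals $\fJ\cdot\fQ$ with $\fJ=\ff(\fI)\fS$, which is a two-sided ideal by assumption. The inclusion $\ff(\fI)\cdot\fQ\subset\fJ\cdot\fQ$ is clear. Conversely, an element of $\fJ\cdot\fQ$ is a finite sum of terms $\ff(r)s\,q=\ff(r)(sq)$, which lie in $\ff(\fI)\cdot\fQ$. So $\fI\tim\fQ=\fJ\cdot\fQ$, and in particular it is an $\fS$\+submodule of $\fQ$.

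It follows that $\fQ/(\fI\tim\fQ)=\fQ/\fJ\fQ$ is a left $\fS$\+module annihilated by the two-sided ideal $\fJ$, i.e., a left module over $S=\fS/\fJ$. By construction the projection from $\fQ$ is an $\fS$\+module map. The induced $R$\+module structure, pulled back along $R\rarrow S$, agrees with the natural one, since both come from the action of $\fR$ on $\fQ$ through $\ff$. The only genuine input is the first step, identifying $\fI\tim\fQ$ with the algebraic product $\fI\cdot\fQ$. A naive attempt would fail there because $\tim$ involves infinite sums, over which the abstract $\fS$\+module structure has no control; the full faithfulness hypothesis is exactly what removes that difficulty.
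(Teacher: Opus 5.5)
Your proposal is correct and follows the paper's own proof. The paper also obtains $\fQ/(\fI\tim\fQ)\simeq R\ot_\fR\fQ$ from Lemma~\ref{contramodule-to-module-full-and-faithful-lemma}\,(1)$\Rightarrow$(3), then writes $R\ot_\fR\fQ\simeq(R\ot_\fR\fS)\ot_\fS\fQ\simeq S\ot_\fS\fQ$, which is your identity $\fI\cdot\fQ=\ff(\fI)\fS\cdot\fQ=\fJ\cdot\fQ$ in tensor-product form (with $\fJ=\ff(\fI)\fS$, the isomorphism $R\ot_\fR\fS\simeq S$ cited there from Lemma~\ref{discrete-quotients-tensor-lemma} is immediate, so the real input is $\fI\tim\fQ=\fI\cdot\fQ$, as you say).
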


\begin{proof}
 By Lemma~\ref{contramodule-to-module-full-and-faithful-lemma}%
\,(1)\,$\Rightarrow$\,(3), we have $\fQ/(\fI\tim\fQ)\simeq
R\ocn_\fR\fQ=R\ot_\fR\fQ$.
 By Lemma~\ref{discrete-quotients-tensor-lemma}, the natural
$R$\+$\fS$\+bimodule map $R\ot_\fR\fS\rarrow S$ is an isomorphism,
hence $R\ot_\fR\fQ\simeq(R\ot_\fR\fS)\ot_\fS\fQ\simeq S\ot_\fS\fQ$
is an $S$\+module.
\end{proof}

\Section{Pseudopullback Diagram and Characterization of
the~Essential~Image}

 We start with the easier case of discrete modules before passing
to contramodules.

\begin{prop} \label{topol-ring-map-discreteness-characterized}
 Let\/ $\fR$ and\/ $\fS$ be complete, separated right linear
topological rings with \emph{countable} bases of neighborhoods of zero, 
and let\/ $\ff\:\fR\rarrow\fS$ be a taut continuous ring map.
 Assume that the forgetful functor\/ $\fR\Contra\rarrow\fR\Modl$ is
fully faithful.
 Then a right\/ $\fS$\+module is discrete if and only if its underlying
right\/ $\fR$\+module is discrete.
\end{prop}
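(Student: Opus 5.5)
The ``only if'' direction holds without any extra hypotheses: $\ff$ is continuous, so Lemma~\ref{continuous-ring-maps-lemma}\,(1)\,$\Rightarrow$\,(3) shows that every discrete right $\fS$\+module is discrete as a right $\fR$\+module. The real content is the ``if'' direction. My plan is to make the tautness of $\ff$ concrete through Lemma~\ref{discrete-quotients-tensor-lemma}. That lemma uses the full-and-faithfulness of $\fR\Contra\rarrow\fR\Modl$ to remove the closure: for every open right ideal $\fI\subset\fR$, the right ideal $\fJ=\ff(\fI)\fS$ itself, with no closure taken, is open in~$\fS$.

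So let $\cN$ be a right $\fS$\+module whose underlying right $\fR$\+module is discrete, and let $y\in\cN$. By discreteness over $\fR$, the annihilator $\fI$ of $y$ in $\fR$ is an open right ideal, so $y\ff(\fI)=0$. Since the annihilator of $y$ in $\fS$ is a right ideal, it follows that $y\,\ff(\fI)\fS=0$. Hence the annihilator of $y$ in $\fS$ contains $\ff(\fI)\fS$. By Lemma~\ref{discrete-quotients-tensor-lemma}, applied to this $\fI$, the ideal $\ff(\fI)\fS$ is open in $\fS$. The annihilator of $y$ therefore contains an open right ideal, so it is open. As $y$ was arbitrary, $\cN$ is a discrete right $\fS$\+module.

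The main obstacle is the step that removes the closure, and it is handled entirely by Lemma~\ref{discrete-quotients-tensor-lemma}. I should explain why that lemma is needed rather than tautness alone. The annihilator of an element of an arbitrary (non-topological) $\fS$\+module need not be closed. So it is not enough to know that the closure of $\ff(\fI)\fS$ is open, as taut maps guarantee in Lemma~\ref{taut-ring-maps-lemma}: that only handles separated topological modules, via Lemma~\ref{taut-ring-maps-lemma}\,(1)\,$\Rightarrow$\,(3). The identification
$$\fI\cdot\fS=\fI\tim\fS=\overline{\ff(\fI)\fS},$$
coming from Lemma~\ref{contramodule-to-module-full-and-faithful-lemma} and Lemma~\ref{continuous-and-taut-contramodule-tim-lemma}, is exactly what lets the argument work for arbitrary $\fS$\+modules. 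Once that is granted, the rest is the routine annihilator argument above.
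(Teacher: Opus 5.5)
Your proposal is correct and follows the paper's own proof essentially verbatim. The ``only if'' direction comes from Lemma~\ref{continuous-ring-maps-lemma}, and for the ``if'' direction Lemma~\ref{discrete-quotients-tensor-lemma} shows that the non-closed right ideal $\ff(\fI)\fS$ is already open and lies in the annihilator of~$y$.
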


\begin{proof}
 The ``only if'' assertion holds by
Lemma~\ref{continuous-ring-maps-lemma}\,(1)\,$\Rightarrow$\,(3).
 To prove the ``if'', let $N$ be a right $\fS$\+module, $y\in N$ be
an element, and $\fI\subset\fR$ be an open right ideal annihilating~$y$.
 By Lemma~\ref{discrete-quotients-tensor-lemma}, the right ideal
$\fJ=\ff(\fI)\fS$ is open in~$\fS$.
 Clearly, $y\fI=0$ implies $y\fJ=0$.
 Thus the annihilator of~$y$ is open in~$\fS$.
\end{proof}

 Given three categories $\sA$, $\sB$, and $\sC$, and two functors
$F\:\sA\rarrow\sC$ and $G\:\sB\rarrow\sC$, the \emph{pseudopullback}
$\sP$ of the pair of functors $F$ and $G$ is defined as follows.
 The objects of $\sP$ are triples $(A,B,\phi)$, where $A\in\sA$
and $B\in\sB$ are objects, and $\phi\:F(A)\simeq G(B)$ is
an isomorphism in~$\sC$.
 We skip the obvious definition of morphisms in~$\sP$.

 In this paper, we speak about \emph{commutative square diagrams of
functors} presuming that the compositions along the two sides of
the square are isomorphic (rather than just ``equal'') functors, and
such a natural isomorphism is given or fixed.
 Any commutative square of functors
\begin{equation} \label{commutative-square-of-functors}
\begin{gathered}
 \xymatrix{
  \sD \ar[r] \ar[d] & \sA \ar[d]^F \\
  \sB \ar[r]_G & \sC
 }
\end{gathered}
\end{equation}
induces a comparison functor $\Theta\:\sD\rarrow\sP$ from $\sD$
to the pseudopullback $\sP$ of the pair of functors $F$ and~$G$.
 We will say that~\eqref{commutative-square-of-functors} is
a \emph{pseudopullback diagram} if the functor $\Theta$ is a category
equivalence.

 We refer to the papers and book~\cite{JS}, \cite[Section~2.H and
Exercise~2.n]{AR}, \cite[Section~3.1]{CR}, \cite[Section~2]{RR},
and/or~\cite[Section~5]{Pacc} for a further discussion of
the construction of pseudopullbacks of categories.

\begin{prop} \label{separated-pseudopullback-prop}
 Let\/ $\fR$ and\/ $\fS$ be complete, separated two-sided linear
topological rings with countable bases of neighborhoods of zero, and
let\/ $\ff\:\fR\rarrow\fS$ be a strongly right taut continuous ring map.
 Assume that the forgetful functor\/ $\fR\Contra\rarrow\fR\Modl$ is
fully faithful.
 Then the commutative square diagram of forgetful functors
\begin{equation} \label{separated-pseudopullback-diagram}
\begin{gathered}
 \xymatrix{
  \fS\Separ \ar@{>->}[r] \ar[d]_-{\ff_\sharp}
  & \fS\Modl \ar[d]^-{\ff_*} \\
  \fR\Separ \ar@{>->}[r] & \fR\Modl
 }
\end{gathered}
\end{equation}
is a pseudopullback diagram.
 Here the two horizontal arrows with tails show fully faithful functors.
 So the forgetful functor\/ $\fS\Contra\rarrow\fS\Modl$ is fully
faithful, too.
\end{prop}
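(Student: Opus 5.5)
The plan is to show that the comparison functor $\Theta$ from $\fS\Separ$ to the pseudopullback $\sP$ of $\ff_*$ and the forgetful functor $\fR\Separ\rarrow\fR\Modl$ is fully faithful and essentially surjective. The bottom horizontal functor is fully faithful by Lemma~\ref{contramodule-to-module-full-and-faithful-lemma}\,(1)\,$\Rightarrow$\,(2), since $\fR\Contra\rarrow\fR\Modl$ is fully faithful by assumption. Hence the objects of $\sP$ are, up to isomorphism, separated left $\fR$\+contramodules $\fQ$ whose underlying $\fR$\+module structure extends to an $\fS$\+module structure. Morphisms in $\sP$ are maps that are simultaneously $\fS$\+module maps and (automatically) $\fR$\+contramodule maps.

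Full faithfulness of $\Theta$ reduces to showing that an $\fS$\+module map $g\:\fP\rarrow\fQ$ between separated $\fS$\+contramodules is an $\fS$\+contramodule morphism. To see this, pass to the systems description. Using Lemma~\ref{continuous-strongly-taut-ring-map-countable-base}, choose chains $\fI_n$, $\fJ_n$ with $\fJ_n$ the closure of $\ff(\fI_n)\fS$. By Lemma~\ref{discrete-quotients-tensor-lemma}, in fact $\fJ_n=\ff(\fI_n)\fS$ with no closure needed. By Corollary~\ref{tim-Omega-and-Lambda-over-R-and-S-compatible}, $\fJ_n\tim\fP=\fI_n\tim\fP$. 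Since $\fR\Contra\rarrow\fR\Modl$ is fully faithful, Lemma~\ref{contramodule-to-module-full-and-faithful-lemma}(3) gives $\fI_n\tim\fP=\fI_n\cdot\fP$, which equals $\fJ_n\cdot\fP$ as $\fJ_n=\ff(\fI_n)\fS$. So $g$ preserves these subgroups, and induces $S_n$\+module maps $P_n\rarrow Q_n$ compatible in $n$. Thus $g$ is a morphism of $(S_n)$\+systems, hence an $\fS$\+contramodule morphism by Lemma~\ref{left-right-systems-discr-contra-category-equivs}(b).

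For essential surjectivity, take a separated $\fR$\+contramodule $\fQ$ with an $\fS$\+module structure extending its $\fR$\+module structure. By Corollary~\ref{reduction-S-module-cor}, each $Q_n=\fQ/(\fI_n\tim\fQ)$ is an $S_n$\+module and the projection $\fQ\rarrow Q_n$ is $\fS$\+linear. The transition maps are then $\fS$\+linear surjections. I also need $Q_n\simeq S_n\ot_{S_{n+1}}Q_{n+1}$. This follows because $S_n\ot_{S_{n+1}}Q_{n+1}=R_n\ot_{R_{n+1}}Q_{n+1}$ (strong right tautness), which is $Q_n$ because $(Q_n)$ is an $(R_n)$\+system. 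So $(Q_n)$ is an $(S_n)$\+system, and $\fQ'=\varprojlim Q_n$ is a separated $\fS$\+contramodule. As $\fQ$ is separated and complete (Lemma~\ref{complete-separated-flat-contramodules-lemma}(a)), the map $\fQ\rarrow\fQ'$ is bijective. It is $\fS$\+linear, since every projection is, and it is $\fR$\+contramodule compatible by Lemma~\ref{right-left-systems-discrete-contra-direct-image}(b). This gives an object of $\fS\Separ$ mapping to $(\fQ,\ldots)$ up to isomorphism.

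The main obstacle is the identification $\fI_n\tim\fQ=\fJ_n\cdot\fQ$ for an $\fS$\+module that is not yet known to be an $\fS$\+contramodule, which the $\fS$\+linearity argument needs. It is handled as above via $\fI_n\tim\fQ=\fI_n\cdot\fQ$ and $\fJ_n=\ff(\fI_n)\fS$, so that $\fI_n\cdot\fQ$ is an $\fS$\+submodule. The final claim, that $\fS\Contra\rarrow\fS\Modl$ is fully faithful, follows from Lemma~\ref{contramodule-to-module-full-and-faithful-lemma}\,(2)\,$\Rightarrow$\,(1) applied to $\fS$ and the full faithfulness just proven.
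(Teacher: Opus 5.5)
Your proof is correct and follows essentially the same route as the paper. You pass to $(R_n)$- and $(S_n)$-systems via the chains from Lemma~\ref{continuous-strongly-taut-ring-map-countable-base}, use Corollary~\ref{reduction-S-module-cor} to upgrade $(Q_n)$ to an $(S_n)$-system for essential surjectivity, check $S_n$-linearity of the induced maps $g_n$ for full faithfulness, and conclude with Lemma~\ref{contramodule-to-module-full-and-faithful-lemma}\,(2)\,$\Rightarrow$\,(1). The only differences are cosmetic: you obtain $g(\fI_n\tim\fP)\subset\fI_n\tim\fQ$ from $\fI_n\tim\fP=\fJ_n\cdot\fP$ rather than from $g$ being an $\fR$-contramodule map, and you spell out the isomorphisms $Q_n\simeq S_n\ot_{S_{n+1}}Q_{n+1}$ and $\fQ\simeq\varprojlim Q_n$ that the paper leaves implicit.
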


\begin{proof}
 Let $\fQ$ be a set/abelian group endowed with a separated left
$\fR$\+contramodule and a left $\fS$\+module structures whose
underlying $\fR$\+module structures agree.
 We need to construct a separated left $\fS$\+contramodule structure
on $\fQ$ with the given underlying $\fR$\+contramodule and
$\fS$\+module structures.

 Let $(\fI_n)_{n\ge1}$ and $(\fJ_n)_{n\ge1}$ be two descending chains
of open two-sided ideals in $\fR$ and $\fS$ provided by
Lemma~\ref{continuous-strongly-taut-ring-map-countable-base}(2).
 So we have a strongly right taut morphism of projective systems
of rings $(f_n\:R_n\to S_n)$ such that $\ff=\varprojlim_{n\ge1}f_n$
(where $R_n=\fR/\fI_n$ and $S_n=\fS/\fJ_n$).
 Furthermore, according to the proof of
Lemma~\ref{continuous-strongly-taut-ring-map-countable-base},
\,$\fJ_n$ is the closure of the right ideal $\ff(\fI_n)\fS\subset\fS$,
and by Lemma~\ref{discrete-quotients-tensor-lemma} it actually follows
that $\fJ_n=\ff(\fI_n)\fS$.

 Put $Q_n=\fQ/(\fI_n\tim\fQ)$; so $(Q_n)_{n\ge1}$ is the left
$(R_n)$\+system of modules corrresponding to the separated
left $\fR$\+contramodule $\fQ$ under the equivalence of categories
from Lemma~\ref{left-right-systems-discr-contra-category-equivs}(b).
 By Corollary~\ref{reduction-S-module-cor}, the left $R_n$\+module
structures on $Q_n$ can be extended to $S_n$\+module structures in
a compatible way; so the $(R_n)$\+system structure on $(Q_n)_{n\ge1}$
gets extended to an $(S_n)$\+system structure.
 By Lemma~\ref{right-left-systems-discrete-contra-direct-image}(b),
it follows that the separated left $\fR$\+contramodule structure on
$\fQ$ can be extended to a separated left $\fS$\+contramodule
structure.
 It is clear from the construction that the latter structure is
compatible with the given $\fS$\+module structure.

 Furthermore, let $\fQ'$ and $\fQ''$ be two separated left
$\fS$\+contramodules, and let $g\:\fQ'\rarrow\fQ''$ be
an $\fS$\+module map.
 Then $g$~is also an $\fR$\+module map, and by assumption it follows
that $g$~is an $\fR$\+contramodule map.
 We need to show that $g$~is an $\fS$\+contramodule map.

 Put $Q'_n=\fQ'/(\fI_n\tim\fQ')$ and $Q''_n=\fQ''/(\fI_n\tim\fQ'')$.
 Then we have a map of left $(R_n)$\+systems
$(g_n)\:(Q'_n)\rarrow(Q''_n)$ coresponding to the map of separated
left $\fR$\+con\-tra\-mod\-ules $g\:\fQ'\rarrow\fQ''$.
 Since $g$~is an $\fS$\+module map, it follows that $g_n$ is
an $S_n$\+module map for every $n\ge1$.
 Thus $g=\varprojlim_{n\ge1}g_n$ is an $\fS$\+contramodule map.
 
 The final assertion of the proposition follows by virtue of
Lemma~\ref{contramodule-to-module-full-and-faithful-lemma}%
\,(2)\,$\Rightarrow$\,(1).
\end{proof}

\begin{cor} \label{separated-essential-image-characterized-cor}
 Let\/ $\fR$ and\/ $\fS$ be complete, separated two-sided linear
topological rings with countable bases of neighborhoods of zero, and
let\/ $\ff\:\fR\rarrow\fS$ be a strongly right taut continuous ring map.
 Assume further that\/ $\ff$~is a topological ring proepimorphism and
the forgetful functor\/ $\fR\Contra\rarrow\fR\Modl$ is fully faithful.
 Then a separated left\/ $\fR$\+contramodule\/ $\fQ$ belongs to
the essential image of the fully faithful functor\/
$\ff_\sharp\:\fS\Separ\rarrow\fR\Separ$ from
Proposition~\ref{full-and-faithfulness-equivalent-conditions-prop-II}
if and only if the underlying left\/ $\fR$\+module structure of\/ $\fQ$
arises from some (necessarily unique) left\/ $\fS$\+module structure.
 The pseudopullback diagram of forgetful
functors~\eqref{separated-pseudopullback-diagram} takes the form
\begin{equation} \label{proepimorphism-separated-pseudopullback-diagram}
\begin{gathered}
 \xymatrix{
  \fS\Separ \ar@{>->}[r] \ar@{>->}[d]_-{\ff_\sharp}
  & \fS\Modl \ar[d]^-{\ff_*} \\
  \fR\Separ \ar@{>->}[r] & \fR\Modl
 }
\end{gathered}
\end{equation}
where the three arrows with tails show fully faithful functors.
\end{cor}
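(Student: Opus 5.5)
This corollary should follow by combining Proposition~\ref{separated-pseudopullback-prop} with Proposition~\ref{full-and-faithfulness-equivalent-conditions-prop-II}.

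First, since $\ff$ is a strongly right taut continuous topological ring proepimorphism, Proposition~\ref{full-and-faithfulness-equivalent-conditions-prop-II}\,(2)\,$\Rightarrow$\,(1) shows that $\ff_\sharp\:\fS\Separ\rarrow\fR\Separ$ is fully faithful. This supplies the new tailed vertical arrow in~\eqref{proepimorphism-separated-pseudopullback-diagram}. The two horizontal arrows are fully faithful by Proposition~\ref{separated-pseudopullback-prop}: the lower one by assumption, the upper one as its final assertion (or via Corollary~\ref{S-contra-to-S-mod-fully-faithful-cor}).

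Second, the characterization of the essential image. The ``only if'' direction is trivial: if $\fQ\simeq\ff_\sharp(\fQ')$, then the underlying $\fR$\+module of $\fQ$ is the restriction of the $\fS$\+module structure of~$\fQ'$. For ``if'', take a separated $\fR$\+contramodule $\fQ$ together with an $\fS$\+module structure extending its $\fR$\+module structure, with the identity as the isomorphism in $\fR\Modl$. This is an object of the pseudopullback of $\fR\Separ\rarrow\fR\Modl$ and $\ff_*$. By Proposition~\ref{separated-pseudopullback-prop}, the comparison functor is an equivalence. Hence there exists a separated $\fS$\+contramodule $\fQ'$ together with isomorphisms compatible with both structures; in particular $\ff_\sharp(\fQ')\simeq\fQ$ in $\fR\Separ$. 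In fact the proof of that proposition constructs the $\fS$\+contramodule structure directly on the same underlying set.

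The main subtlety is the uniqueness of the $\fS$\+module structure. Suppose $\fQ$ carries two $\fS$\+module structures extending its $\fR$\+module structure. The plan is as follows.
\begin{itemize}
\item Via the pseudopullback equivalence, each structure yields a separated $\fS$\+contramodule on the same underlying set, $\fQ_1$ and $\fQ_2$, with $\ff_\sharp(\fQ_1)=\fQ=\ff_\sharp(\fQ_2)$.
\item The identity map of $\fQ$ is then a morphism $\ff_\sharp(\fQ_1)\rarrow\ff_\sharp(\fQ_2)$ in $\fR\Separ$.
\item By full faithfulness of $\ff_\sharp$ on separated contramodules, the identity is an $\fS$\+contramodule morphism $\fQ_1\rarrow\fQ_2$.
\item It is therefore also an $\fS$\+module morphism, so the two $\fS$\+module structures coincide.
\end{itemize}
The only point needing care is that the pseudopullback equivalence really produces $\fS$\+contramodule structures whose underlying $\fS$\+module structures are the given ones, not merely isomorphic copies. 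This holds because the construction in the proof of Proposition~\ref{separated-pseudopullback-prop} takes place on the set $\fQ$ itself and is compatible with the given $\fS$\+module structure. Alternatively, one can transport structure along the isomorphisms supplied by the equivalence.
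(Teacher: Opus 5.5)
Your proposal is correct and follows essentially the same route as the paper. The vertical arrow is fully faithful by Proposition~\ref{full-and-faithfulness-equivalent-conditions-prop-II}, and the horizontal arrows by Proposition~\ref{separated-pseudopullback-prop} (or Corollary~\ref{S-contra-to-S-mod-fully-faithful-cor}); the ``if'' direction comes from the pseudopullback property, and uniqueness of the $\fS$\+module structure is deduced from uniqueness of the $\fS$\+contramodule structure via full faithfulness of $\ff_\sharp$. Your identity-map argument simply spells out the paper's one-line uniqueness remark.
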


\begin{proof}
 The upper horizontal functor
in~\eqref{proepimorphism-separated-pseudopullback-diagram} is fully
faithful by Corollary~\ref{S-contra-to-S-mod-fully-faithful-cor},
as well as by Proposition~\ref{separated-pseudopullback-prop}.
 The leftmost vertical functor~$\ff_\sharp$ is fully faithful by
Proposition~\ref{full-and-faithfulness-equivalent-conditions-prop-II}.
 The ``only if'' assertion of the corollary is obvious.
 The ``if'' assertion follows immediately from
Proposition~\ref{separated-pseudopullback-prop}.
 Finally, the uniqueness of the $\fS$\+module structure on $\fQ$
follows from the uniqueness of the $\fS$\+contramodule structure
(which holds since the functor~$\ff_\sharp$ is fully faithful).
\end{proof}

\begin{lem} \label{separated-quotient-S-module-structure}
 Let\/ $\fR$ and\/ $\fS$ be complete, separated two-sided linear
topological rings with countable bases of neighborhoods of zero, and
let\/ $\ff\:\fR\rarrow\fS$ be a strongly right taut continuous ring map.
 Assume that the forgetful functor\/ $\fR\Contra\rarrow\fR\Modl$ is
fully faithful.
 Let\/ $\fQ$ be a left\/ $\fR$\+contramodule whose underlying left\/
$\fR$\+module structures arises from a left\/ $\fS$\+module structure.
 Then the maximal separated quotient\/ $\fR$\+contramodule of\/~$\fQ$,
$$
 \Lambda_\fR(\fQ)\,=\,
 \varprojlim\nolimits_{\fI\subset\fR}\fQ/(\fI\tim\fQ)
 \,\simeq\,\fQ\Big/\bigcap\nolimits_{\fI\subset\fR}(\fI\tim\fQ),
$$
admits an\/ $\fS$\+module structure such that\/
$\fQ\rarrow\Lambda_\fR(\fQ)$ is an\/ $\fS$\+module map.
 In other words, the nonseparatedness kernel\/
$\bigcap_{\fI\subset\fR}(\fI\tim\fQ)\subset\fQ$ is an\/ $\fS$\+submodule
of\/~$\fQ$.
 Here\/ $\fI$ ranges over the open right (or equivalently, two-sided)
ideals in\/~$\fR$.
\end{lem}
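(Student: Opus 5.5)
The plan is to show directly that the nonseparatedness kernel $\Omega_\fR(\fQ)=\bigcap_{\fI}(\fI\tim\fQ)$ is an $\fS$\+submodule of $\fQ$. Then the quotient $\fQ/\Omega_\fR(\fQ)$ inherits an $\fS$\+module structure. By Lemma~\ref{complete-separated-flat-contramodules-lemma}(a) this quotient is isomorphic to $\Lambda_\fR(\fQ)$, so the projection $\fQ\rarrow\Lambda_\fR(\fQ)$ becomes an $\fS$\+module map, as claimed.

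First I fix chains $(\fI_n)$ and $(\fJ_n)$ of open two-sided ideals as in Lemma~\ref{continuous-strongly-taut-ring-map-countable-base}(2). As in the proof of Proposition~\ref{separated-pseudopullback-prop}, $\fJ_n$ is the closure of $\ff(\fI_n)\fS$, and by Lemma~\ref{discrete-quotients-tensor-lemma} in fact $\fJ_n=\ff(\fI_n)\fS$. Since the $\fI_n$ form a base of neighborhoods of zero, we have $\Omega_\fR(\fQ)=\bigcap_n(\fI_n\tim\fQ)$; intersecting over all open right ideals gives the same subgroup.

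Next I apply Corollary~\ref{reduction-S-module-cor} to each $\fI_n$. It gives an $S_n$\+module structure on $\fQ/(\fI_n\tim\fQ)$ for which the projection $\fQ\rarrow\fQ/(\fI_n\tim\fQ)$ is an $\fS$\+module map. Consequently the kernel $\fI_n\tim\fQ$ of this projection is an $\fS$\+submodule of $\fQ$. Concretely, the identification $\fQ/(\fI_n\tim\fQ)\simeq\fR/\fI_n\ot_\fR\fQ\simeq\fS/\fJ_n\ot_\fS\fQ$ shows $\fI_n\tim\fQ=\fI_n\cdot\fQ=\ff(\fI_n)\fS\cdot\fQ=\fJ_n\cdot\fQ$, which is visibly an $\fS$\+submodule. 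An intersection of $\fS$\+submodules is an $\fS$\+submodule, so $\Omega_\fR(\fQ)$ is one.

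The main obstacle is making sure Corollary~\ref{reduction-S-module-cor} really yields the equality $\fI_n\tim\fQ=\fJ_n\cdot\fQ$ as subsets of $\fQ$, not just an abstract isomorphism. This requires the composite isomorphism $\fQ/(\fI_n\tim\fQ)\simeq S_n\ot_\fS\fQ$ to be compatible with the two projections from $\fQ$. I would check this by tracing the maps: $\fQ\rarrow R_n\ot_\fR\fQ$ is $q\mapsto 1\ot q$, which corresponds to $q\mapsto 1\ot q$ in $S_n\ot_\fS\fQ$, whose kernel is $\fJ_n\fQ$. The two kernels therefore coincide.
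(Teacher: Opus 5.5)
Your proposal is correct and follows the paper's proof essentially step for step: choose the chains from Lemma~\ref{continuous-strongly-taut-ring-map-countable-base}(2), upgrade the closure to $\fJ_n=\ff(\fI_n)\fS$ via Lemma~\ref{discrete-quotients-tensor-lemma}, use Corollary~\ref{reduction-S-module-cor} to see that each $\fI_n\tim\fQ$ is an $\fS$\+submodule, then intersect over $n$ and use Lemma~\ref{complete-separated-flat-contramodules-lemma}(a) for the quotient. Your extra check that $\fI_n\tim\fQ=\fJ_n\cdot\fQ$ as subsets of $\fQ$ is sound but not needed, since Corollary~\ref{reduction-S-module-cor} already asserts that the projection $\fQ\rarrow\fQ/(\fI_n\tim\fQ)$ is an $\fS$\+module map.
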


\begin{proof}
 The isomorphism $\varprojlim_{\fI\subset\fR}\fQ/(\fI\tim\fQ)
\simeq\fQ\big/\bigcap_{\fI\subset\fR}(\fI\tim\fQ)$ holds by
Lemma~\ref{complete-separated-flat-contramodules-lemma}(a).

 Let $(\fI_n)_{n\ge1}$ and $(\fJ_n)_{n\ge1}$ be two descending chains
of open two-sided ideals in $\fR$ and $\fS$ provided by
Lemma~\ref{continuous-strongly-taut-ring-map-countable-base}(2).
 According to the proof of
Lemma~\ref{continuous-strongly-taut-ring-map-countable-base},
\,$\fJ_n$ is the closure of the right ideal $\ff(\fI_n)\fS\subset\fS$,
and by Lemma~\ref{discrete-quotients-tensor-lemma} it actually follows
that $\fJ_n=\ff(\fI_n)\fS$.

 Then, by Corollary~\ref{reduction-S-module-cor}, \,$\fQ\rarrow
\fQ/(\fI_n\tim\fQ)$ is an $\fS$\+module map for every $n\ge1$.
 Hence $\fI_n\tim\fQ$ is an $\fS$\+submodule in~$\fQ$.
 Thus $\bigcap_{\fI\subset\fR}(\fI\tim\fQ)=
\bigcap_{n\ge1}(\fI_n\tim\fQ)$ is also an $\fS$\+submodule of~$\fQ$.
\end{proof}

 The following theorem is the most nontrivial result of this paper.

\begin{thm} \label{pseudopullback-thm}
 Let\/ $\fR$ and\/ $\fS$ be complete, separated two-sided linear
topological rings with countable bases of neighborhoods of zero, and
let\/ $\ff\:\fR\rarrow\fS$ be a left proflat continuous ring map.
 Assume that the forgetful functor\/ $\fR\Contra\rarrow\fR\Modl$ is
fully faithful.
 Then the commutative square diagram of forgetful functors
\begin{equation} \label{pseudopullback-diagram}
\begin{gathered}
 \xymatrix{
  \fS\Contra \ar@{>->}[r] \ar[d]_-{\ff_\sharp}
  & \fS\Modl \ar[d]^-{\ff_*} \\
  \fR\Contra \ar@{>->}[r] & \fR\Modl
 }
\end{gathered}
\end{equation}
is a pseudopullback diagram.
 Here the two horizontal arrows with tails show fully faithful functors.
\end{thm}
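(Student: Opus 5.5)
The plan has two parts: full faithfulness of the upper horizontal functor, and essential surjectivity of the comparison functor $\Theta$.

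Full faithfulness follows at once. By Corollary~\ref{proflat-contramodule-direct-image-cor} and Theorem~\ref{full-and-faithfulness-equivalent-conditions-thm}, we may restrict to separated contramodules. Proposition~\ref{separated-pseudopullback-prop} then shows that $\fS\Contra\rarrow\fS\Modl$ is fully faithful. Given this and the full faithfulness of $\fR\Contra\rarrow\fR\Modl$, the comparison functor $\Theta$ is automatically fully faithful. Indeed, a morphism in the pseudopullback is a pair of maps agreeing on underlying sets, hence it is just an $\fS$\+module map between objects that are already $\fS$\+contramodules. So the real content is essential surjectivity.

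Let $\fQ$ be a left $\fR$\+contramodule whose underlying $\fR$\+module structure extends to an $\fS$\+module structure. We must produce an $\fS$\+contramodule structure on $\fQ$ inducing both given structures. The plan is to use a flat cotorsion presentation, as in Lemma~\ref{flat-cotorsion-presentations-Hom-computation}, but built so that it carries an $\fS$\+structure. Take a surjection of $\fS$\+contramodules $\fS[[X]]\rarrow{}$? This does not work directly, since $\fQ$ is not yet an $\fS$\+contramodule. Instead, take $X=\fQ$ and consider the map of $\fS$\+modules $\fS[\fQ]\rarrow\fQ$. We would like to extend it to $\fS[[\fQ]]\rarrow\fQ$, and that extension is exactly the desired structure.

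Instead, argue via the nonseparatedness kernel. By Lemma~\ref{separated-quotient-S-module-structure}, $\fK=\Omega_\fR(\fQ)$ is an $\fS$\+submodule and $\Lambda_\fR(\fQ)$ is an $\fS$\+module. Proposition~\ref{separated-pseudopullback-prop} then gives $\Lambda_\fR(\fQ)$ a separated $\fS$\+contramodule structure. Next, present $\fQ$ as a quotient of a flat $\fR$\+contramodule with cotorsion kernel, and compare with $\ff^\sharp$. The key claim is that the unit $\fQ\rarrow\ff_\sharp\ff^\sharp\fQ$, or alternatively the counit of the right adjoint $\Hom^\fR(\fS,{-})$, is an isomorphism.

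The cleanest route is to consider $\fS\ocn$-type constructions: define $\fQ'=\ff^\sharp(\fQ)$. There is a natural $\fR$\+contramodule map $\fQ'\rarrow\fQ$, adjoint to nothing, since we lack an $\fS$\+contra structure on $\fQ$. Instead, construct it as the composition $\ff^\sharp\fQ\rarrow\fS\ot_\fR\fQ\rarrow\fQ$, using the $\fS$\+module action. This map exists because, by Lemma~\ref{contramodule-to-module-full-and-faithful-lemma}, contratensor and tensor agree on reductions modulo $\fI_n$, and $\ff^\sharp\fQ$ is complete. We would then show that it is bijective, reducing to flat $\fQ$ via a flat presentation $\fG\rarrow\fF\rarrow\fQ\rarrow0$ with cotorsion image and kernel, together with Lemma~\ref{flat-contramodules-contraextension-of-scalars}. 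For flat $\fF$ carrying an $\fS$\+module structure, $\fF$ is separated, and Proposition~\ref{separated-pseudopullback-prop} already makes it an $\fS$\+contramodule. So the hard part is arranging the presentation to consist of objects carrying compatible $\fS$\+module structures.

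The main obstacle is exactly this. An arbitrary flat cover of $\fQ$ need not carry an $\fS$\+module structure. I would try to apply $\ff^\sharp$ to a flat $\fR$\+presentation of $\fQ$, using that $\ff_\sharp\ff^\sharp\fF$ is flat by left proflatness. I would then show that $\ff_\sharp\ff^\sharp$ applied to $\fQ$ is an idempotent-like operation on objects that are already $\fS$\+modules, via exactness from Lemma~\ref{flat-contramodules-contraextension-of-scalars} and a five-lemma comparison of cokernels computed in $\fR\Modl$. The forgetful functor to modules preserves cokernels, so the $\fS$\+module cokernel of $\ff^\sharp\fG\rarrow\ff^\sharp\fF$ can be compared with $\fQ$.
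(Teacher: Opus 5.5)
Your argument for full faithfulness is fine in substance. The last assertion of Proposition~\ref{separated-pseudopullback-prop} (which rests on Lemma~\ref{contramodule-to-module-full-and-faithful-lemma}\,(2)$\Rightarrow$(1) applied over~$\fS$) gives full faithfulness of $\fS\Contra\rarrow\fS\Modl$, and hence of the comparison functor. Theorem~\ref{full-and-faithfulness-equivalent-conditions-thm} concerns $\ff_\sharp$ and is not the relevant reference.

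The essential surjectivity part, however, has a genuine gap, for two reasons.

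\emph{The map $\ff^\sharp\fQ\rarrow\fQ$ is never constructed.} The arrow $\ff^\sharp\fQ\rarrow\fS\ot_\fR\fQ$ points the wrong way: the natural comparison map is $\fS\ot_\fR\fQ\rarrow\ff^\sharp\fQ$. What your reduction argument actually gives is a compatible family of maps $\ff^\sharp\fQ\rarrow\fQ/(\fI_n\tim\fQ)$, via $S_n\ocn_\fS\ff^\sharp\fQ\simeq S_n\ocn_\fR\fQ\simeq S_n\ot_\fR\fQ\rarrow S_n\ot_\fS\fQ$ with $S_n=\fS/\fJ_n$. In other words, you get a map $\ff^\sharp\fQ\rarrow\Lambda_\fR(\fQ)$, not a map to $\fQ$. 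Lifting it to $\fQ$ would require separatedness of the \emph{target}, not completeness of the source. Separated $\fQ$ is precisely the case already settled by Proposition~\ref{separated-pseudopullback-prop}. Moreover, composed with the surjection $\fS[[\fQ]]=\ff^\sharp(\fR[[\fQ]])\rarrow\ff^\sharp\fQ$, such a map would essentially be the $\fS$\+contraaction you are trying to construct.

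\emph{Your key claim is false under the theorem's hypotheses,} which do not include proepimorphism. Take discrete $\fR=k$, $\fS=k\times k$, $\ff$ the diagonal, and $\fQ=\fS$. Then $\ff^\sharp\fQ=\fS\ot_k\fS\simeq k^4$. So none of the unit $\fQ\rarrow\ff_\sharp\ff^\sharp\fQ$, the map $\ff^\sharp\fQ\rarrow\fQ$, or the counit $\Hom^\fR(\fS,\fQ)\rarrow\fQ$ is bijective. The closing idempotence/five-lemma sketch therefore cannot work. The obstacle you name yourself, finding a presentation by objects with compatible $\fS$\+module structures, is left unresolved.

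The missing idea is that no flat/cotorsion presentation is needed. It suffices to find an $\fS$\+linear $\fR$\+contramodule surjection onto $\fQ$ from a \emph{separated} object with compatible $\fS$\+module structure. Its kernel is then separated too. By Proposition~\ref{separated-pseudopullback-prop} both acquire $\fS$\+contramodule structures, the inclusion is an $\fS$\+contramodule map, and $\fQ$ gets the cokernel structure.

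The paper builds such an object by transfinite iteration:
\begin{itemize}
\item By Lemma~\ref{separated-quotient-S-module-structure} and Proposition~\ref{separated-pseudopullback-prop}, $\Lambda_\fR(\fQ)$ is an $\fS$\+contramodule.
\item Choose a flat (e.g., free) $\fS$\+contramodule $\fG$ surjecting onto $\Lambda_\fR(\fQ)$, and form the pullback $\fQ_1=\fQ\times_{\Lambda_\fR(\fQ)}\fG$. This pullback is computed the same way in $\fR\Contra$ and in $\fS\Modl$, so it carries both structures compatibly.
\item By left proflatness (Corollary~\ref{proflat-contramodule-direct-image-cor}), $\fG$ is flat, hence separated, as an $\fR$\+contramodule. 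So $\Omega_\fR(\fQ_1)$ embeds into $\Omega_\fR(\fQ)=\ker(\fQ_1\rarrow\fG)$, properly unless $\Omega_\fR(\fQ)=0$, by Corollary~\ref{separated-quotient-contratensor-flatness-cor}(b). In fact Lemma~\ref{flat-contramodules-well-behaved-lemma}(b) shows $\Omega_\fR(\fQ_1)=\Omega_\fR(\Omega_\fR(\fQ))$, so one step only shrinks the nonseparatedness kernel.
\item Iterate, taking projective limits at limit ordinals. These are again computed identically in both categories, and the maps on $\Omega_\fR$ stay injective along the system.
\item A cardinality argument then yields a separated $\fQ_\gamma$ surjecting onto $\fQ$.
\end{itemize}
This is exactly where left proflatness enters. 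Examples~\ref{nonflat-pseudopullback-not-true-counterex} show the statement fails under strong right tautness alone.
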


\begin{proof}
 Let $\fQ$ be a set/abelian group endowed with a left
$\fR$\+contramodule and a left $\fS$\+module structures whose
underlying $\fR$\+module structures agree.
 We need to construct a left $\fS$\+contramodule structure on $\fQ$
with the given underlying $\fR$\+con\-tra\-mod\-ule and $\fS$\+module
structures.

 Recall the shorthand notation $\Omega_\fR(\fP)=
\ker(\fP\to\Lambda_\fR(\fP))=\bigcap_{\fI\subset\fR}(\fI\tim\fP)$
for a left $\fR$\+contramodule~$\fP$.
 The $\fR$\+contramodule $\Lambda_\fR(\fP)$ is always separated
(see the beginning of Section~\ref{separated-and-flat-secn}).
 When the underlying $\fR$\+module structure on $\fP$ arises from
an $\fS$\+module structure, there are compatible $\fS$\+modules
structure on $\Lambda_\fR(\fP)$ and $\Omega_\fR(\fP)$ by
Lemma~\ref{separated-quotient-S-module-structure}.
 By Proposition~\ref{separated-pseudopullback-prop}, the structures of
a separated left $\fR$\+contramodule and a left $\fS$\+module on
$\Lambda_\fR(\fP)$ arise from a naturally defined left
$\fS$\+contramodule structure.

 Let $\aleph$~be a cardinal greater than the cardinality (of
the underlying set) of~$\fQ$.
 We consider $\aleph$ as an ordinal, and proceed by transfinite
induction on ordinals $0\le\alpha<\aleph$, constructing
an $\aleph$\+indexed projective system of left $\fR$\+contramodules
$\fQ_\alpha$ and surjective $\fR$\+contramodule maps $\fQ_\alpha\rarrow
\fQ_\beta$ for all $0\le\beta\le\alpha<\aleph$.
 For every~$\alpha$, the underlying left $\fR$\+module structure on
$\fQ_\alpha$ will arise from a left $\fS$\+module structure,
and the transition maps $\fQ_\alpha\rarrow\fQ_\beta$ will be
$\fS$\+linear.

 Moreover, applying the functor $\Omega_\fR$ to the surjective map
$\fQ_\alpha\rarrow\fQ_\beta$ will produce an \emph{injective}
$\fR$\+contramodule map $\Omega_\fR(\fQ_\alpha)\rarrow
\Omega_\fR(\fQ_\beta)$.
 The latter map will be a proper embedding (\emph{not} an isomorphism)
whenever $\beta<\alpha$ and $\Omega_\fR(\fQ_\beta)\ne0$.
 It will follow that $\Omega_\fR(\fQ_\alpha)=0$ for large enough
ordinals $\alpha<\aleph$.

 For $\alpha=0$, put $\fQ_0=\fQ$.
 For a successor ordinal $\alpha=\beta+1$, pick a flat left
$\fS$\+contramodule $\fG_\beta$ together with a surjective
$\fS$\+contramodule morphism $\fG_\beta\rarrow\Lambda_\fR(\fP_\beta)$.
 In particular, $\fG_\beta\rarrow\Lambda_\fR(\fP_\beta)$ is a morphism
in $\fR\Contra$ and $\fS\Modl$.
 Let $\fQ_{\beta+1}$ be the pullback of the pair of epimorphisms
$\fQ_\beta\rarrow\Lambda_\fR(\fQ_\beta)$ and $\fG_\beta\rarrow
\Lambda_\fR(\fQ_\beta)$ in the abelian category $\fR\Contra$,
as well as in the abelian category $\fS\Modl$,
\begin{equation} \label{separated-flat-contra-pullback-diagram}
\begin{gathered}
 \xymatrix{
  \fQ_\beta \ar@{->>}[r] & \Lambda_\fR(\fQ_\beta) \\
  \fQ_{\beta+1} \ar@{->>}[u] \ar@{->>}[r] & \fG_\beta \ar@{->>}[u]
 }
\end{gathered}
\end{equation}
 Obviously, the pullbacks in $\fR\Contra$ and in $\fS\Modl$ agree
(as the forgetful functors are exact).
 So the underlying left $\fR$\+module structure of the left
$\fR$\+contramodule $\fQ_{\beta+1}$ arises from a left $\fS$\+module
structure.

 By Corollary~\ref{proflat-contramodule-direct-image-cor},
the underlying left $\fR$\+contramodule of the flat left
$\fS$\+con\-tra\-mod\-ule $\fG_\beta$ is also flat.
 In particular, by
Lemma~\ref{complete-separated-flat-contramodules-lemma}(c),
the $\fR$\+contramodule $\fG_\beta$ is separated.
 Applying the functor $\Lambda_\fR$ to the surjective
$\fR$\+contramodule map $\fQ_{\beta+1}\rarrow\fG_\beta$, we obtain
a factorization of the latter map into a composition of surjective
$\fR$\+con\-tra\-mod\-ule maps
$$
 \xymatrix{
  \fQ_{\beta+1} \ar@{->>}[r] & \Lambda_\fR(\fQ_{\beta+1})
  \ar@{->>}[r] & \fG_\beta.
 }
$$
 It follows that $\Omega_\fR(\fQ_{\beta+1})$ is
an $\fR$\+subcontramodule of the kernel of the map
$\fQ_{\beta+1}\allowbreak\rarrow\fG_\beta$.
 Since \eqref{separated-flat-contra-pullback-diagram}~is a pullback
diagram, the morphism $\fQ_{\beta+1}\rarrow\fQ_\beta$ induces
an isomorphism $\ker(\fQ_{\beta+1}\to\fG_\beta)\simeq\ker(\fQ_\beta\to
\Lambda_\fR(\fQ_\beta))=\Omega_\fR(\fQ_\beta)$.
 Thus the $\fR$\+con\-tra\-mod\-ule epimorphism $\fQ_{\beta+1}\rarrow
\fQ_\beta$ induces an $\fR$\+contramodule monomorphism
\begin{equation} \label{monomorphism-on-Omega}
 \xymatrix{
  \Omega_\fR(\fQ_{\beta+1}) \ar@{>->}[r] & \Omega_\fR(\fQ_\beta),
 }
\end{equation}
as promised above.

 Now we have to show that \eqref{monomorphism-on-Omega}~is \emph{not}
an isomorphism whenever $\Omega_\fR(\fQ_\beta)\ne0$.
 Indeed, if \eqref{monomorphism-on-Omega}~were an isomorphism, it would
mean that $\Lambda_\fR(\fQ_{\beta+1})\simeq\fG_\beta$.
 So $\Lambda_\fR(\fQ_{\beta+1})$ would be a flat $\fR$\+contramodule.
 By Corollary~\ref{separated-quotient-contratensor-flatness-cor}(b),
it would follow that $\Omega_\fR(\fQ_{\beta+1})=0$, hence also
$\Omega_\fR(\fQ_\beta)=0$.

 For a limit ordinal~$\alpha$, we simply put $\fQ_\alpha=
\varprojlim_{\beta<\alpha}\fQ_\beta$.
 The projective limits in $\fR\Contra$ and in $\fS\Modl$ agree (as
the forgetful functors preserve all limits).
 So the underlying left $\fR$\+module structure of the left
$\fR$\+contramodule $\fQ_\alpha$ arises from a left $\fS$\+module
structure.

 Let us check that the map $\Omega_\fR(\fQ_\alpha)\rarrow
\varprojlim_{\beta<\alpha}\Omega_\fR(\fQ_\beta)$ is injective.
 The point is that the functor $\Omega_\fR$ comes together with
an injective natural transformation $\Omega_\fR\rarrow\Id$, i.~e.,
an injective $\fR$\+contramodule morphism $\Omega_\fR(\fP)\rarrow\fP$
for every left $\fR$\+contramodule~$\fP$.
 So we have a commutative diagram of $\fR$\+contramodule maps
$$
 \xymatrix{
  \varprojlim\nolimits_{\beta<\alpha}\Omega_\fR(\fQ_\beta)
  \ar@{>->}[r] & \varprojlim\nolimits_{\beta<\alpha}\fQ_\beta \\
  \Omega_\fR(\varprojlim\nolimits_{\beta<\alpha}(\fQ_\beta))
  \ar[u] \ar@{>->}[r] & \varprojlim\nolimits_{\beta<\alpha}\fQ_\beta
  \ar@{=}[u]
 }
$$
where arrows with tails show monomorphisms.
 Notice that the functor of projective limit takes injective maps
to injective maps.
 Now injectivity of the lower horizontal arrow implies the desired
injectivity of the leftmost vertical arrow.

 It follows that the map $\Omega_\fR(\fQ_\alpha)\rarrow
\Omega_\fR(\fQ_\beta)$ is injective for all $\beta\le\alpha$.
 Let us spell out the straightforward argument proving existence
of an ordinal $\gamma<\aleph$ for which $\Omega_\fR(\fQ_\gamma)=0$.
 Assume for the sake of contradiction that such an ordinal~$\gamma$
does not exist.
 Proceeding by transfinite induction, we construct an injective map
of sets $\psi\:\aleph\rarrow\fQ_0=\fQ$.
 For every ordinal $\beta<\aleph$, we have a proper inclusion of
subsets/subgroups $\im(\Omega_\fR(\fQ_{\beta+1})\to\Omega_\fR(\fQ))
\varsubsetneq\im(\Omega_\fR(\fQ_\beta)\to\Omega_\fR(\fQ))$.
 Let $\psi(\beta)\in\Omega_\fR(\fQ))$ be any element belonging to
$\im(\Omega_\fR(\fQ_\beta)\to\Omega_\fR(\fQ))$ but not to
$\im(\Omega_\fR(\fQ_{\beta+1})\to\Omega_\fR(\fQ))$.
 As the cardinality of $\aleph$ is greater than the cardinality
of $\fQ$, an injective map $\psi\:\aleph\rarrow\fQ$ cannot exist.
 The contradiction proves existence of the desired ordinal~$\gamma$.
 So the $\fR$\+contramodule $\fQ_\gamma$ is separated.

 Thus we have constructed a separated left $\fR$\+contramodule
$\fQ_\gamma$ whose underlying left $\fR$\+module structure arises
from a left $\fS$\+module structure.
 We have a surjective map $\fQ_\gamma\rarrow\fQ_0=\fQ$ that is both
an $\fR$\+contramodule map and an $\fS$\+module map.
 Put $\fK=\ker(\fQ_\gamma\to\fQ)$.
 So $\fK$ is also a separated left $\fR$\+contramodule whose underlying
left $\fR$\+module structure arises from a left $\fS$\+module structure,
and the inclusion $\fK\rarrow\fQ_\gamma$ is both an $\fR$\+contramodule
map and an $\fS$\+module map.
 By Proposition~\ref{separated-pseudopullback-prop}, both $\fK$ and
$\fQ_\gamma$ have natural left $\fS$\+contramodule structures
compatible with the given $\fR$\+contramodule and $\fS$\+module
structures, and the inclusion $\fK\rarrow\fQ_\gamma$ is
an $\fS$\+contramodule map.
 Finally, $\fQ=\coker(\fK\to\fQ_\gamma)$ acquires the left
$\fS$\+contramodule structure of the cokernel of a left
$\fS$\+contramodule morphism.

 This proves the assertion of the theorem on the level of objects.
 We still have to check it on the level of morphisms.
 Let $\fP$ and $\fQ$ be two left $\fS$\+contramodules, and let
$g\:\fP\rarrow\fQ$ be an $\fS$\+module map.
 Then $g$~is also an $\fR$\+module map, and by assumption it follows
that $g$~is an $\fR$\+contramodule map.
 We need to show that $g$~is an $\fS$\+contramodule map.
 For this purpose, it suffices to refer to the last assertion of
Proposition~\ref{separated-pseudopullback-prop}.
 This argument is based on
Lemma~\ref{contramodule-to-module-full-and-faithful-lemma}%
\,(2)\,$\Rightarrow$\,(1).
 Here is an alternative argument based on the transfinite construction
above in this proof.

 Pick a set $X$ together with a surjective morphism $\fS[[X]]\rarrow
\fP$ from the free left $\fS$\+contramodule $\fS[[X]]$ onto~$\fP$.
 Then the composition $\fS[[X]]\rarrow\fP\rarrow\fQ$ is
an $\fS$\+module map between two $\fS$\+contramodules.
 In order to show that $\fP\rarrow\fQ$ is an $\fS$\+contramodule
map, it suffices to check that the composition $\fS[[X]]\rarrow\fQ$
is an $\fS$\+contramodule map.
 Replacing $\fP$ by $\fS[[X]]$, we will assume without loss of
generality that $\fP$ is a projective $\fS$\+contramodule.

 The construction of the projective system
$(\fQ_\alpha)_{0\le\alpha<\aleph}$ from the preceding part of this
proof was applicable to any left $\fR$\+contramodule $\fQ$ whose
underlying left $\fR$\+module structure arose from a left
$\fS$\+module structure.
 Now we apply this construction to the left $\fS$\+contramodule~$\fQ$.
 Then it is important for our argument that the whole construction
is performed within the realm of left $\fS$\+contramodules.
 Specifically, if $\fQ_\beta$ is a left $\fS$\+contramodule, then
$\Lambda_\fR(\fQ_\beta)\simeq\Lambda_\fS(\fQ_\beta)$ by
Corollary~\ref{tim-Omega-and-Lambda-over-R-and-S-compatible};
hence $\fQ_\beta\rarrow\Lambda_\fR(\fQ_\beta)$ is
an $\fS$\+contramodule morphism.
 It follows that all the transition maps $\fQ_\alpha\rarrow\fQ_\beta$
are $\fS$\+contramodule maps.

 Proceeding by transfinite induction on $0\le\alpha<\aleph$, we will
construct a compatible cone of left $\fR$\+contramodule and
left $\fS$\+module maps $\fP\rarrow\fQ_\alpha$.
 For $\alpha=0$, we take our map $g\:\fP\rarrow\fQ=\fQ_0$.
 On the successor steps $\alpha=\beta+1$, we have the composition
$\fP\rarrow\fQ_\beta\rarrow\Lambda_\fR(\fQ_\beta)$.
 As both $\fP$ and $\Lambda_\fR(\fQ_\beta)$ are separated
$\fR$\+contramodules, the map $\fP\rarrow\Lambda_\fR(\fQ_\beta)$
is an $\fS$\+contramodule morphism by
Proposition~\ref{separated-pseudopullback-prop}.
 The map $\fG_\beta\rarrow\Lambda_\fR(\fQ_\beta)$ is
an $\fS$\+contramodule epimorphism; so the $\fS$\+contramodule
morphism $\fP\rarrow\Lambda_\fR(\fQ_\beta)$ (from a projective
left $\fS$\+contramodule~$\fP$) can be lifted to an $\fS$\+contramodule
morphism $\fP\rarrow\fG_\beta$.
 Now the pullback diagram~\eqref{separated-flat-contra-pullback-diagram}
allows us to lift the map $\fP\rarrow\fQ_\beta$ to
an $\fR$\+contramodule and $\fS$\+module map $\fP\rarrow\fQ_{\beta+1}$.
 On the limit steps~$\alpha$, we simply use the universal property
of the projective limit in order to obtain the desired map
$\fP\rarrow\fQ_\alpha$.

 Eventually we arrive at an $\fR$\+contramodule and $\fS$\+module map
$\fP\rarrow\fQ_\gamma$.
 As the $\fS$\+contramodule $\fQ_\gamma$ is separated, and all
projective $\fS$\+contramodules are separated, the map
$\fP\rarrow\fQ_\gamma$ is an $\fS$\+contramodule morphism by
Proposition~\ref{separated-pseudopullback-prop}
(the full-and-faithfulness assertion for separated contramodules).
 The transition map $\fQ_\gamma\rarrow\fQ$ is an $\fS$\+contramodule
morphism as explained above.
 Thus the composition $\fP\rarrow\fQ_\gamma\rarrow\fQ$ is
an $\fS$\+contramodule morphism, too, as desired.
\end{proof}

\begin{cor} \label{essential-image-characterized-cor}
 Let\/ $\fR$ and\/ $\fS$ be complete, separated two-sided linear
topological rings with countable bases of neighborhoods of zero, and
let\/ $\ff\:\fR\rarrow\fS$ be a left proflat continuous ring map.
 Assume further that\/ $\ff$~is a topological ring proepimorphism and
the forgetful functor\/ $\fR\Contra\rarrow\fR\Modl$ is fully faithful.
 Then a left\/ $\fR$\+contramodule\/ $\fQ$ belongs to the essential
image of the fully faithful functor\/ $\ff_\sharp\:\fS\Contra\rarrow
\fR\Contra$ from
Corollary~\ref{full-and-faithfulness-equivalent-conditions-cor}
if and only if the underlying left\/ $\fR$\+module structure of\/ $\fQ$
arises from some (necessarily unique) left\/ $\fS$\+module structure.
 The pseudopullback diagram of forgetful
functors~\eqref{pseudopullback-diagram} takes the form
\begin{equation} \label{proepimorphism-pseudopullback-diagram}
\begin{gathered}
 \xymatrix{
  \fS\Contra \ar@{>->}[r] \ar@{>->}[d]_-{\ff_\sharp}
  & \fS\Modl \ar[d]^-{\ff_*} \\
  \fR\Contra \ar@{>->}[r] & \fR\Modl
 }
\end{gathered}
\end{equation}
where the three arrows with tails show fully faithful functors.
\end{cor}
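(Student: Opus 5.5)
This is the contramodule analogue of Corollary~\ref{separated-essential-image-characterized-cor}. I would prove it by combining Theorem~\ref{pseudopullback-thm} with Corollary~\ref{full-and-faithfulness-equivalent-conditions-cor}.

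First, the three arrows with tails in~\eqref{proepimorphism-pseudopullback-diagram} are fully faithful, by the following results.
\begin{enumerate}
\item The lower horizontal functor $\fR\Contra\rarrow\fR\Modl$ is fully faithful by assumption.
\item The upper horizontal functor $\fS\Contra\rarrow\fS\Modl$ is fully faithful by Theorem~\ref{pseudopullback-thm}. Alternatively, use Corollary~\ref{S-contra-to-S-mod-fully-faithful-cor}, since $\ff_\sharp\:\fS\Separ\rarrow\fR\Separ$ is fully faithful by Proposition~\ref{full-and-faithfulness-equivalent-conditions-prop-II}.
\item The left vertical functor $\ff_\sharp\:\fS\Contra\rarrow\fR\Contra$ is fully faithful by Corollary~\ref{full-and-faithfulness-equivalent-conditions-cor}\,(5)\,$\Rightarrow$\,(4). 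This applies because $\ff$ is left proflat and a topological ring proepimorphism.
\end{enumerate}

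Next, the characterization of the essential image. The ``only if'' direction is immediate: if $\fQ\simeq\ff_\sharp(\fQ')$, then the $\fS$\+module underlying $\fQ'$ gives the required structure. For ``if'', suppose the underlying $\fR$\+module of $\fQ$ carries an $\fS$\+module structure. Then the triple consisting of $\fQ\in\fR\Contra$, that $\fS$\+module, and the identity isomorphism of underlying $\fR$\+modules is an object of the pseudopullback. By Theorem~\ref{pseudopullback-thm} the comparison functor is an equivalence. Hence this object comes from an $\fS$\+contramodule $\fQ'$ with $\ff_\sharp(\fQ')\simeq\fQ$ in $\fR\Contra$. Transporting along this isomorphism, we may take $\fQ'$ to have underlying set $\fQ$ and the given structures.

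Finally, uniqueness of the $\fS$\+module structure. Suppose two $\fS$\+module structures on $\fQ$ extend its $\fR$\+module structure. Each yields, as above, an $\fS$\+contramodule structure on $\fQ$ with underlying $\fR$\+contramodule $\fQ$. The identity map of $\fQ$ is then an $\fR$\+contramodule isomorphism between two objects in the essential image of $\ff_\sharp$. Since $\ff_\sharp$ is fully faithful, the identity is an $\fS$\+contramodule morphism between the two $\fS$\+contramodule structures. In particular it is $\fS$\+linear, so the two $\fS$\+module structures coincide.

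There is no real obstacle: all the work is already done in Theorem~\ref{pseudopullback-thm}. The only point needing care is the bookkeeping in the last step, namely that the $\fS$\+contramodule structure produced from each $\fS$\+module structure restricts exactly to that module structure and to the given $\fR$\+contramodule structure. This is part of the statement of Theorem~\ref{pseudopullback-thm}.
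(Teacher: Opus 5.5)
Your proposal is correct and follows the paper's proof essentially step for step. The three tailed arrows are fully faithful by the hypothesis, by Theorem~\ref{pseudopullback-thm} (or Corollary~\ref{S-contra-to-S-mod-fully-faithful-cor}), and by Corollary~\ref{full-and-faithfulness-equivalent-conditions-cor}; the ``if'' direction comes from Theorem~\ref{pseudopullback-thm}; and uniqueness of the $\fS$\+module structure follows from full faithfulness of $\ff_\sharp$, where your argument that the identity map lifts to an $\fS$\+contramodule morphism is exactly what the paper's one-line justification means.
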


\begin{proof}
 The upper horizontal functor
in~\eqref{proepimorphism-pseudopullback-diagram} is fully
faithful by Corollary~\ref{S-contra-to-S-mod-fully-faithful-cor},
as well as by Theorem~\ref{pseudopullback-thm}.
 The leftmost vertical functor~$\ff_\sharp$ is fully faithful by
Corollary~\ref{full-and-faithfulness-equivalent-conditions-cor}.
 The ``only if'' assertion of the corollary is obvious.
 The ``if'' assertion follows immediately from
Theorem~\ref{pseudopullback-thm}.
 Finally, the uniqueness of the $\fS$\+module structure on $\fQ$
follows from the uniqueness of the $\fS$\+contramodule structure
(which holds since the functor~$\ff_\sharp$ is fully faithful).
\end{proof}

\begin{rem} \label{non-R-contramodule-S-module-structure-not-unique}
 Notice that the assertion of uniqueness of the $\fS$\+module structure
extending the given $\fR$\+module structure in
Corollaries~\ref{separated-essential-image-characterized-cor}
and~\ref{essential-image-characterized-cor} applies to
the underlying $\fR$\+modules of (separated or nonseparated)
$\fR$\+contramodules \emph{only}.
 Generally speaking, even under the more restrictive assumptions of
Corollary~\ref{essential-image-characterized-cor}, an $\fS$\+module
structure with the given underlying $\fR$\+module structure is
\emph{not} unique; not even for Noetherian commutative rings $\fR$
and $\fS$ with adic topologies.

 For a counterexample, take the rings $\fR=R$ and $\fS=S$ from
Example~\ref{proepimorphism-not-epimorphism-example}, together
with their natural (inclusion) morphism $\ff=f$.
 Then, following Example~\ref{proepimorphism-not-epimorphism-example},
\,$\ff\:\fR\rarrow\fS$ is a proflat proepimorphism of complete,
separated commutative linear topological rings with countable bases
of neighborhoods of zero.
 Nevertheless, $\ff$~is \emph{not} an epimorphism of the underlying 
abstract (nontopological) rings of $\fR$ and~$\fS$.
 So the multiplication map $\fS\ot_\fR\fS\rarrow\fS\ot_\fS\fS=\fS$
is not injective.
 It follows that the left and right $\fS$\+module structures on
the tensor product $\fS\ot_\fR\fS$ are \emph{not} the same, while
the underlying $\fR$\+module structures of these two $\fS$\+module
structures on $\fS\ot_\fR\fS$ obviously agree.
\end{rem}

\begin{exs} \label{nonflat-pseudopullback-not-true-counterex}
 The following counterexample~(1), based on~\cite[Remark~2.11.3]{Pform},
shows that the assertion of Theorem~\ref{pseudopullback-thm} with
the left proflatness assumption on the map~$\ff$ replaced by the strong
right tautness assumption is \emph{not} true.
 Moreover, we will see in~(2) that the assertion of
Corollary~\ref{essential-image-characterized-cor} with the strong
right tautness assumption in place of the left proflatness assumption
is \emph{not} true, either.

\smallskip
 (1)~Let $\fS$ be a commutative ring, separated and complete in the adic
topology of a finitely generated ideal $\fJ\subset\fS$.
 Following the discussion in
Section~\ref{introd-adic-Noetherian-toy-example-subsecn} of
the Introduction, based on~\cite[Proposition~1.5]{Pdc}, the forgetful
functor $\fS\Contra\rarrow\fS\Modl$ is fully faithful, and its
essential image consists of all the \emph{quotseparated\/
$\fJ$\+contramodule\/ $\fS$\+modules}.
 According to~\cite[Examples~1.8]{Pdc}, there exists a commutative
ring $\fS$ with a finitely generated ideal $\fJ\subset\fS$ such that
$\fS$ is separated and complete in the $\fJ$\+adic topology but
\emph{not all\/ $\fJ$\+contramodule\/ $\fS$\+modules are quotseparated}.
 
 Let $s_1$,~\dots, $s_m\in\fS$ be a finite set of generators of
the ideal $\fJ\subset\fS$.
 Consider the ring of polynomials $R=\boZ[x_1,\dotsc,x_m]$ in
$m$~variables with integer coefficients, and let $I\subset R$ be
the ideal spanned by the elements $x_1$,~\dots, $x_m\in R$.
 Let $\fR=\boZ[[x_1,\dotsc,x_m]]$ be the ring of formal power series
in the variables $x_1$,~\dots, $x_m$ with integer coefficients; so
$\fR$ is the $I$\+adic completetion of~$R$.
 Let $\fI=\fR I\subset\fR$ be the ideal spanned by the elements
$x_1$,~\dots, $x_m\in\fR$.
 Then the ring homomorphism $f\:R\rarrow\fS$ taking $x_j$ to~$s_j$ for
all $1\le j\le m$ is continuous with respect to the $I$\+adic topology
on $R$ and the $\fJ$\+adic topology on~$\fS$.
 So the ring homomorphism~$f$ can be extended to a continuous ring
homomorphism of the adic completions $\ff\:\fR\rarrow\fS$ in
a unique way.

 Now we have $\fJ^n=\fS\ff(\fI^n)\subset\fS$ for every $n\ge1$.
 The open ideals $\fI^n$ form a base of neighborhoods of zero in $\fR$,
while the open ideals $\fJ^n$ form a base of neiborhoods of zero
in~$\fS$; so $\ff$ is a strongly right taut continuous ring map.
 The forgetful functor $\fR\Contra\rarrow\fR\Modl$ is fully faithful
by~\cite[Theorem~B.1.1]{Pweak} or~\cite[Proposition~1.5]{Pdc}.
 In fact, the ring $\fR$ is Noetherian, all ideals in Noetherian
commutative rings are weakly proregular~\cite[Theorem~4.34]{PSY},
\cite[Section~1]{Pmgm}, and it follows that all $\fI$\+contramodule
$\fR$\+modules are quotseparated~\cite[Corollary~3.7]{Pdc} (see
also~\cite[Remark~3.8]{Pdc}).
 So the essential image of the forgetful functor $\fR\Contra\rarrow
\fR\Modl$ consists of all the $\fI$\+contramodule $\fR$\+modules.

 It follows from~\cite[Theorem~5.1 and Remark~5.5]{Pcta} that
an $\fS$\+module $\fQ$ is a $\fJ$\+contramodule if and only if its
underlying $\fR$\+module is an $\fI$\+contramodule (cf.\ the discussion
in Section~\ref{introd-adic-Noetherian-toy-example-subsecn} above).
 Now let $\fQ$ be a \emph{nonquotseparated} $\fJ$\+contramodule
$\fS$\+module.
 Then the underlying $\fR$\+module of the $\fS$\+module $\fQ$ is
an $\fI$\+contramodule, so the $\fR$\+module structure of $\fQ$
arises from an $\fR$\+contramodule structure.
 But the $\fS$\+module structure on $\fQ$ does \emph{not} arise from
an $\fS$\+contramodule structure, since the $\fQ$ is \emph{not}
a quotsepated contramodule $\fS$\+module.
 Thus the left proflatness assumption on~$\ff$ \emph{cannot} be
replaced with the strong right tautness assumption in
Theorem~\ref{pseudopullback-thm}.

\smallskip
 (2)~The topological ring $\fS$ in this example is the same as in
the previous item~(1), but the ring $\fR$ is different.

 Following the details of the construction of the counterexample
in~\cite[Example~2.6]{Pmgm} and~\cite[Examples~1.8]{Pdc}, let
$R=k[x_1,x_2,x_3,\dotsc;s]$ be the ring of polynomials in
a countable family of variables $x_i$, \,$i\ge1$, and an additional
variable~$s$ over a field~$k$.
 Let $S$ be the quotient ring of $R$ by the ideal spanned by
all the elements $x_ix_j$ and $s^ix_i$, where $i$, $j\ge1$.
 We will denote the coset represented by the element~$s$ in the
ring $S$ also by~$s$.
 So we have a natural surjective homomorphism of commutative rings
$f\:R\rarrow S$.
 Endow both $R$ and $S$ with the $s$\+adic topology, and let
$\ff\:\fR\rarrow\fS$ be the induced homomorphism of the completions.
 Put $\fI=\fR s\subset\fR$ and $\fJ=\fS s\subset\fS$.

 It is clear from the construction that $\ff$~is a strongly right taut
continuous ring map.
 By~\cite[Proposition~1.2]{Yek0}, the $s$\+adic completion functor
takes surjective maps to surjective maps; so the map~$\ff$
is surjective.
 It follows that~$\ff$ is a topological ring proepimorphism.
 According to~\cite[Examples~1.8]{Pdc}, there exists a nonquotseparated
$\fJ$\+contramodule $\fS$\+module~$\fQ$.
 On the other hand, the element~$s$ is regular (i.~e.,
a nonzero-divisor) in $\fR$, hence the ideal $\fI\subset\fR$ is
weakly proregular and all $\fI$\+contramodule $\fR$\+modules are
quotseparated by~\cite[Corollary~3.7]{Pdc}.
 Similarly to~(1), the forgetful functor $\fR\Contra\rarrow\fR\Modl$
is fully faithful, and its essential image consists of all
the $\fI$\+contramodule $\fR$\+modules~\cite[Proposition~1.5]{Pdc}.

 Once again, it is clear from~\cite[Remark~5.5]{Pcta} that
an $\fS$\+module is a $\fJ$\+contramodule if and only if its
underlying $\fR$\+module is an $\fI$\+contramodule.
 Now the underlying $\fR$\+module of the $\fS$\+module $\fQ$ is
an $\fI$\+contramodule, so $\fQ$ is a contramodule over the topological
ring~$\fR$.
 But the $\fS$\+module structure on $\fQ$ does \emph{not} arise from
an $\fS$\+contramodule structure.
 Thus the left proflatness assumption on~$\ff$ \emph{cannot} be
replaced with the strong right tautness assumption in
Corollary~\ref{essential-image-characterized-cor}, either.
\end{exs}

\Section{Change-of-Scalar Functors~II} \label{change-of-scalars-II-secn}

 Let $f\:R\rarrow S$ be a continuous homomorphism of right linear
topological rings.
 Then, following the discussion in
Section~\ref{change-of-scalars-I-secn}, the exact, faithful functor
of restriction of scalars $f_\diamond\:\Discr S\rarrow\Discr R$ has
a right adjoint functor of coextension of scalars
$f^\diamond\:\Discr R\rarrow\Discr S$.

 Let $\ff\:\fR\rarrow\fS$ be a continuous homomorphism of complete,
separated right linear topological rings.
 Then, according to Section~\ref{change-of-scalars-I-secn},
the exact, faithful functor of restriction of scalars
$\ff_\sharp\:\fS\Contra\rarrow\fR\Contra$ has a left adjoint functor
of contraextension of scalars $\ff^\sharp\:\fR\Contra\rarrow\fS\Contra$.

 We start with presenting simple counterexamples showing that
the adjoints on the other sides \emph{need not}
exist~\cite[Section~2.9]{Pproperf}.

\begin{ex} \label{nontaut-adjoints-need-not-exist}
 Let $k$~be a field, $\fR=k[x]$ be the ring of polynomials in one
variable~$x$ over~$k$ endowed with the discrete topology, and
$\fS=k[[x]]$ be the ring of formal power series in~$x$ over~$k$ endowed
with the $x$\+adic topology.
 Let $\ff\:\fR\rarrow\fS$ be the natural inclusion map.
 So $\ff$~is a continuous homomorphism of complete, separated
commutative linear topological rings.

 The category of discrete $\fS$\+modules is simply the category of
$x$\+torsion (i.~e., $x$\+power torsion) $k[x]$\+modules.
 The category of $\fS$\+contramodules is the category of
$k[x]$\+modules with $x$\+power infinite summation operation,
which is also a full subcategory in $k[x]\Modl$
(see~\cite[Theorem~B.1.1 or Lemma~B.5.1]{Pweak},
\cite[Section~1.6]{Prev}, or~\cite[Theorem~3.3]{Pcta}).

 Then the exact, fully faithful functor of restriction of scalars
$\ff_\diamond\:\Discr\fS\rarrow\Discr\fR=\Modr\fR$ does \emph{not}
have a left adjoint.
 The point is that the functor~$\ff_\diamond$ does \emph{not} preserve
countable products.
 Indeed, the $k[x]$\+modules $k[x]/(x^n)$, \,$n\ge1$, are $x$\+torsion,
but their countable product $\prod_{n=1}^\infty k[x]/(x^n)$, computed
in $\Modr k[x]$, is \emph{not} $x$\+torsion.
 The product of the $x$\+torsion $k[x]$\+modules $k[x]/(x^n)$ computed
in the category of $x$\+torsion $k[x]$\+modules $\Discr\fS$, is
the submodule of all $x$\+power torsion elements in
$\prod_{n=1}^\infty k[x]/(x^n)$.

 Dual-analogously, the exact, fully faithful functor of restriction of
scalars $\ff_\sharp\:\fS\Contra\rarrow\fR\Contra=\fR\Modl$ does
\emph{not} have a right adjoint.
 The point is that the functor~$\ff_\sharp$ does \emph{not} preserve
countable direct sums.
 Once again, the $k[x]$\+modules $k[x]/(x^n)$, \,$n\ge1$, have
$x$\+power infinite summation operations, but their countable direct
sum $\bigoplus_{n=1}^\infty k[x]/(x^n)$, computed in $k[x]\Modl$,
does \emph{not} have such an operation.
 For a construction and discussion of the direct sum of the objects
$k[x]/(x^n)$ computed in the category $\fS\Contra$ of $k[x]$\+modules
with $x$\+power infinite summation operation,
see~\cite[Section~1.5]{Prev}.
 The direct sum $\bigoplus_{n=1}^\infty k[x]/(x^n)$ computed in
$\fS\Contra$ is a nonseparated $\fS$\+contramodule.  \hbadness=1550

 The countable direct sum $\bigoplus_{n=1}^\infty k[x]/(x^n)$ also
exists in the category of separated $\fS$\+contramodules $\fS\Separ$,
and it is still \emph{not} preserved by the fully faithful forgetful
functor $\ff_\sharp\:\fS\Separ\rarrow\fR\Separ=\fR\Modl$.
 Consequently, the latter functor does not have a right adjoint
functor, either.

 One illuminating, relevant observation in connection with this series
of examples is that the continuous ring map~$\ff$ considered here
is \emph{not} taut.
\end{ex}

 One aim of this section is to show that under certain assumptions
a left adjoint functor to~$\ff_\diamond$ and a right adjoint functor
to~$\ff_\sharp$ do exist, nevertheless.
 Another aim is to construct such adjoint functors explicitly.

 Notice that, for any complete, separated right linear topological
ring $\fS$, the endomorphism ring of the free left $\fS$\+contramodule
$\fS$ is naturally isomorphic to the opposite ring $\fS^\rop$ to
the ring~$\fS$.
 In other words, the right action of $\fS$ on itself is an action by
left $\fS$\+contramodule endomorphisms.

\begin{prop} \label{discrete-module-extension-of-scalars-prop}
 Let\/ $\fR$ and\/ $\fS$ be complete, separated right linear topological
rings with \emph{countable} bases of neighborhoods of zero, and let\/
$\ff\:\fR\rarrow\fS$ be a taut continuous ring map.
 Let\/ $\cM$ be a discrete right\/ $\fR$\+module.
 Then the right\/ $\fS$\+module\/ $\cM\ocn_\fR\fS$ is discrete.
 The functor\/ $\ff^\bullet\:\cM\longmapsto\cM\ocn_\fR\fS$ is left
adjoint to the functor of restriction of scalars\/
$\ff_\diamond\:\Discr\fS \rarrow\Discr\fR$.
\end{prop}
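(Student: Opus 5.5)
First I need to make the right $\fS$\+module structure on $\cM\ocn_\fR\fS$ precise. The right action of $\fS$ on itself is by left $\fS$\+contramodule endomorphisms, hence by left $\fR$\+contramodule endomorphisms of $\ff_\sharp(\fS)$. Since $\cM\ocn_\fR{-}$ is a functor, $\cM\ocn_\fR\fS$ becomes a right $\fS$\+module. Its elements are finite sums of images of $y\ot s$, with $y\in\cM$, $s\in\fS$, and the action is $(y\ot s)t=y\ot st$.

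\textbf{Discreteness.} It suffices to show that each generator $y\ot s$ has open annihilator in $\fS$, since finite intersections of open right ideals are open. Pick an open right ideal $\fI\subset\fR$ with $y\fI=0$. By tautness (Lemma~\ref{taut-ring-maps-lemma}(1)), the closure $\fJ$ of $\ff(\fI)\fS$ is an open right ideal in $\fS$. By Lemma~\ref{continuous-and-taut-contramodule-tim-lemma} (this is where the countable base of $\fR$ is used), $\fJ=\fI\tim\fS$. The map $\fS\rarrow\cM\ocn_\fR\fS$, $u\mapsto y\ot u$, factors through $(\fR/\fI)\ocn_\fR\fS\simeq\fS/(\fI\tim\fS)$: send $\fR/\fI\rarrow\cM$ by $1\mapsto y$, and use the isomorphism $(\fR/\fI)\ocn_\fR\fP\simeq\fP/(\fI\tim\fP)$ from Section~\ref{preliminaries-secn}. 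This factorization is compatible with the right $\fS$\+actions, so $y\ot u=0$ for every $u\in\fJ$. Since $\fJ$ is a right ideal, for $t\in\fS$ with $st\in\fJ$ we get $(y\ot s)t=0$. Hence the annihilator of $y\ot s$ contains the preimage $\{t: st\in\fJ\}$. This preimage is open because left multiplication by $s$ is continuous and $\fJ$ is open. So $\cM\ocn_\fR\fS$ is discrete.

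\textbf{Adjunction.} For a discrete right $\fS$\+module $\cN$, I want a natural isomorphism
$$\Hom_{\fS^\rop}(\cM\ocn_\fR\fS,\cN)\simeq\Hom_{\fR^\rop}(\cM,\ff_\diamond\cN).$$
\begin{enumerate}
\item The unit $\cM\rarrow\cM\ocn_\fR\fS$ is $y\mapsto y\ot 1$. Precomposing an $\fS$\+linear map with the unit gives the map from left to right.
\item For the inverse, start from $\fR$\+linear $h\:\cM\rarrow\cN$. The multiplication map $\cN\ocn_\fS\fS\simeq\cN$ restricts to a map $\ff_\diamond(\cN)\ocn_\fR\fS\rarrow\cN$, $z\ot s\mapsto zs$. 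This is well defined, e.g.\ via formula~\eqref{extension-of-scalars-contratensor-formula} together with $\ff^\sharp$ applied to the free contramodule $\fR[[\{*\}]]=\fR$. The composite $\cM\ocn_\fR\fS\xrightarrow{h\ocn\fS}\ff_\diamond(\cN)\ocn_\fR\fS\rarrow\cN$, $y\ot s\mapsto h(y)s$, is then $\fS$\+linear.
\item The two constructions are mutually inverse. This reduces to the identity $y\ot s=(y\ot1)s$, which holds by definition of the action.
\end{enumerate}

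I expect the main obstacle to be the discreteness step. There one must identify $\fI\tim\fS$ with an \emph{open} ideal, which needs tautness together with the countable-base lemma, and check that the factorization through $\fS/(\fI\tim\fS)$ respects the right $\fS$\+action. The adjunction part is then formal once the multiplication map $\ff_\diamond(\cN)\ocn_\fR\fS\rarrow\cN$ is in place.
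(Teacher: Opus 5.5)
Your proof is correct. The discreteness half is the paper's argument in elementwise form. The paper reduces to cyclic modules $\fR/\fI$: every discrete $\cM$ is a quotient of a direct sum of them, ${-}\ocn_\fR\fS$ is right exact and preserves direct sums, and $\Discr\fS$ is closed under quotients and direct sums. You instead push forward along $\fR/\fI\to\cM$, $1\mapsto y$. The key input is the same in both: tautness plus $\fI\tim\fS=\fJ$ from Lemma~\ref{continuous-and-taut-contramodule-tim-lemma}.

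The adjunction half takes a genuinely different route. The paper starts from the module adjunction $\Hom_{\fS^\rop}(M\ot_\fR\fS,N)\simeq\Hom_{\fR^\rop}(M,N)$ and precomposes with the surjection $\cM\ot_\fR\fS\to\cM\ocn_\fR\fS$, so injectivity of the comparison map is automatic. Surjectivity is reduced, since both sides turn colimits in $\cM$ into limits, to $\cM=\fR/\fI$. There $\cM\ocn_\fR\fS=\fS/\fJ$, and the claim is exactly Lemma~\ref{R-S-annihilator-easy-lemma}. You build the inverse explicitly from the multiplication map $\ff_\diamond(\cN)\ocn_\fR\fS\to\cN$.

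What each route buys:
\begin{itemize}
\item Your version shows that the bijection holds for any continuous $\ff$ and any discrete $\cN$. Tautness and the countable base are then needed only to land in $\Discr\fS$.
\item The paper's version reuses the single cyclic computation for both halves. It never has to check that a map out of a contratensor product is well defined.
\end{itemize}

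On that check, your suggested justification does not work as stated. Formula~\eqref{extension-of-scalars-contratensor-formula} with $\fP=\fR$ only yields $\ff_\diamond(\cN)\ocn_\fR\fR\simeq\cN$. You should take $\fP=\ff_\sharp(\fS)$ and compose with the counit $\ff^\sharp\ff_\sharp(\fS)\to\fS$. Alternatively, verify directly that $z\ot s\mapsto zs$ respects the defining relations. The two images $z(\sum_s\ff(r_s)s)$ and $\sum_s(z\ff(r_s))s$ agree because the annihilator of $z$ in $\fS$ is open. Hence $u\mapsto zu$ is continuous into the discrete group $\cN$ and commutes with the convergent sum.
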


\begin{proof}
 Every discrete right $\fR$\+module is a quotient $\fR$\+module of
a direct sum of cyclic discrete right $\fR$\+modules $\fR/\fI$,
where $\fI$ ranges over the open right ideals in~$\fR$.
 The full subcategory $\Discr\fS\subset\Modr\fS$ is closed under
quotients and infinite direct sums, and the functor ${-}\ocn_\fR\fS\:
\Discr\fR\rarrow\Modr\fS$ is right exact and preserves infinite
direct sums.
 Thus, in order to show that the essential image of the latter functor
is contained in $\Discr\fS$, it suffices to check that
the right $\fS$\+module $\fR/\fI\ocn_\fR\fS=\fS/(\fI\tim\fS)$ is
discrete for every open right ideal $\fI\subset\fR$.

 Indeed, by Lemma~\ref{taut-ring-maps-lemma}(1)
or~\ref{continuous-taut-ring-maps-lemma}(2), the closure $\fJ$ of
the right ideal $\ff(\fI)\fS\subset\fS$ is an open right ideal
in~$\fS$.
 By Lemma~\ref{continuous-and-taut-contramodule-tim-lemma}, we have
$\fI\tim\fS=\fJ\subset\fS$.
 Thus $\fS/(\fI\tim\fS)=\fS/\fJ$ is a discrete right $\fS$\+module.
 This proves the first assertion of the proposition.

 Furthermore, for every right $\fR$\+module $M$ and every right
$\fS$\+module $N$, we have an adjunction isomorphism of abelian groups
$\Hom_{\fS^\rop}(M\ot_\fR\fS,\>N)\simeq\Hom_{\fR^\rop}(M,N)$, which
induces a natural injective map $\Hom_{\fS^\rop}(\cM\ocn_\fR\fS,\>N)
\rarrow\Hom_{\fR^\rop}(\cM,N)$ for every discrete right
$\fR$\+module $\cM$, by precomposing with the natural surjective
right $\fS$\+module map $\cM\ot_\fR\fS\rarrow\cM\ocn_\fR\fS$.
 We need to show that, for a discrete right $\fS$\+module $\cN$,
the resulting map of abelian groups
\begin{equation} \label{discrete-modules-adjunction-map}
 \Hom_{\fS^\rop}(\cM\ocn_\fR\fS,\>\cN)\lrarrow
 \Hom_{\fR^\rop}(\cM,\cN)
\end{equation}
is an isomorphism.

 Indeed, every discrete right $\fR$\+module is the cokernel of
a morphism of infinite direct sums of cyclic discrete right
$\fR$\+modules.
 As both the left-hand side and the right-hand side
of~\eqref{discrete-modules-adjunction-map} take colimits in
the discrete $\fR$\+module argument $\cM$ to limits in $\Ab$,
it suffices to check that~\eqref{discrete-modules-adjunction-map} is
an isomorphism for cyclic discrete right $\fR$\+modules $\cM=\fR/\fI$.
 In this case, in the notation above, we have $\cM\ocn_\fR\fS=\fS/\fJ$,
and the assertion that~\eqref{discrete-modules-adjunction-map} is
an isomorphism simply means that an element $y\in\cN$ is annihilated
by $\fJ$ whenever it is annihilated by~$\fI$.
 This is the result of Lemma~\ref{R-S-annihilator-easy-lemma}.
\end{proof}

 We will call the functor $\ff^\bullet\:\cM\longmapsto\cM\ocn_\fR\fS$
the functor of \emph{discrete module extension of scalars}.
 The functor of discrete module extension of scalars
$\ff^\bullet\:\Discr\fR\rarrow\Discr\fS$ is well defined for any taut
continuous map of complete, separated right linear topological rings
with countable bases of neighborhoods of zero, as per
Proposition~\ref{discrete-module-extension-of-scalars-prop}.
 The functor~$\ff^\bullet$ agrees with the functor of extension
of scalars $\ff^*\:\Modr\fR\rarrow\Modr\fS$ whenever the forgetful
functor $\fR\Contra\rarrow\fR\Modl$ is fully faithful (as per
Lemma~\ref{contramodule-to-module-full-and-faithful-lemma}%
\,(1)\,$\Rightarrow$\,(3)).

\begin{prop} \label{separated-contramodule-coextension-of-scalars-prop}
 Let\/ $\fR$ and\/ $\fS$ be complete, separated two-sided linear
topological rings with countable bases of neighborhoods of zero, and
let\/ $\ff\:\fR\rarrow\fS$ be a strongly right taut continuous ring map.
 Let\/ $\fP$ be a separated left\/ $\fR$\+contramodule.
 Then the left\/ $\fS$\+module\/ $\Hom^\fR(\fS,\fP)$ carries a naturally
defined separated left\/ $\fS$\+contramodule structure from which its
left\/ $\fS$\+module structure arises.
 The functor\/ $\ff^\uds\:\fP\longmapsto\Hom^\fR(\fS,\fP)$ is right
adjoint to the functor of restriction of scalars\/ $\ff_\sharp\:
\fS\Separ\rarrow\fR\Separ$ from
Lemma~\ref{separated-restriction-of-scalars}.
 The adjunction counit\/ $\ff_\sharp\ff^\uds\rarrow\Id$ is the map\/
$\Hom^\fR(\fS,\fP)\rarrow\Hom^\fR(\fR,\fP)\simeq\fP$ induced by
the left\/ $\fR$\+contramodule morphism\/ $\ff\:\fR\rarrow\fS$.
\end{prop}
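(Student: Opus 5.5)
The plan is to reduce everything to the projective systems of discrete rings, as the Introduction suggests. Using Lemma~\ref{continuous-strongly-taut-ring-map-countable-base}, I will choose descending chains of open two-sided ideals $(\fI_n)$ and $(\fJ_n)$, with $R_n=\fR/\fI_n$ and $S_n=\fS/\fJ_n$, and with $\fJ_n$ the closure of $\ff(\fI_n)\fS$. This gives a strongly right taut morphism of projective systems $(f_n\:R_n\to S_n)$. Let $(P_n)$ be the left $(R_n)$\+system attached to $\fP$ by Lemma~\ref{left-right-systems-discr-contra-category-equivs}(b), so that $\fP=\varprojlim P_n$.

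The first step is to compute $\Hom^\fR(\fS,\fP)$. The category $\fR\Separ\simeq(R_n)\Sys$ is a full subcategory of $\fR\Contra$, and $\fS$ is separated by Lemma~\ref{separated-restriction-of-scalars}. Hence $\Hom^\fR(\fS,\fP)=\varprojlim_n\Hom_{R_n}(\fS/(\fI_n\tim\fS),P_n)$, where the limit is formed from compatible families of reductions. By Lemma~\ref{continuous-and-taut-contramodule-tim-lemma}, $\fI_n\tim\fS=\fJ_n$, so $\fS/(\fI_n\tim\fS)=S_n$. This gives
$$
 \Hom^\fR(\fS,\fP)\simeq\varprojlim\nolimits_n\Hom_{R_n}(S_n,P_n).
$$
The transition maps are induced by $S_{n+1}\to S_n$ and $P_{n+1}\to P_n$.

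The main step, and the one I expect to be the obstacle, is showing that $H_n=\Hom_{R_n}(S_n,P_n)$ forms a left $(S_n)$\+system. This means checking that $S_n\ot_{S_{n+1}}H_{n+1}\to H_n$ is an isomorphism. Using $S_n\simeq R_n\ot_{R_{n+1}}S_{n+1}$, we get $H_n\simeq\Hom_{R_{n+1}}(S_{n+1},P_n)$. The question is then whether tensoring with $S_n$ over $S_{n+1}$ commutes with $\Hom_{R_{n+1}}(S_{n+1},{-})$ applied to $P_{n+1}\to P_n=R_n\ot_{R_{n+1}}P_{n+1}$, and without flatness this is not automatic. If it fails, I will fall back on a weaker plan. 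I would show that $(H_n)$ is a projective system of $S_n$\+modules with $\fJ_n$ acting by zero on $H_n$, so that $\varprojlim H_n$ is the limit of discrete $S_n$\+modules. Such a limit of left modules over the discrete quotient rings carries a natural separated $\fS$\+contramodule structure: it is the limit in $\fS\Contra$, and $\fS\Separ$ is closed under limits. The contraaction is given levelwise by the $S_n$\+module structures, so it extends the $\fS$\+module structure coming from the right action of $\fS$ on itself. Naturality in $\fP$ is clear from the construction.

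For the adjunction, I will take a separated $\fS$\+contramodule $\fQ$ with $(S_n)$\+system $(Q_n)$. By Lemma~\ref{right-left-systems-discrete-contra-direct-image}(b), the underlying $(R_n)$\+system of $\ff_\sharp\fQ$ is $(Q_n)$, so
$$
 \Hom^\fR(\fQ,\fP)=\varprojlim\nolimits_n\Hom_{R_n}(Q_n,P_n).
$$
Using the levelwise adjunction $\Hom_{R_n}(Q_n,P_n)\simeq\Hom_{S_n}(Q_n,\Hom_{R_n}(S_n,P_n))$ and passing to the limit gives compatible families $Q_n\to H_n$. These families are exactly the $\fS$\+contramodule maps $\fQ\to\varprojlim H_n$, since a morphism into a limit of discrete modules is determined by its compatible reductions, and $Q_n=\fQ/(\fJ_n\tim\fQ)$ is the universal $S_n$\+quotient. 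This identifies $\Hom^\fS(\fQ,\Hom^\fR(\fS,\fP))\simeq\Hom^\fR(\ff_\sharp\fQ,\fP)$. Finally, I will trace $\Id$ through this bijection: it corresponds levelwise to evaluation at $1\in S_n$, which is the counit map $\Hom^\fR(\fS,\fP)\to\Hom^\fR(\fR,\fP)\simeq\fP$ induced by $\ff$.
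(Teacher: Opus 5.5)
Your fallback plan is exactly the paper's proof: it identifies $\Hom^\fR(\fS,\fP)\simeq\varprojlim_n\Hom_{R_n}(S_n,P_n)$ using $\fI_n\tim\fS=\fJ_n$, gives this group the $\fS$\+contramodule structure of a limit of $S_n$\+modules in $\fS\Contra$, and proves the adjunction levelwise via $\Hom_{S_n}(\fQ/(\fJ_n\tim\fQ),\Hom_{R_n}(S_n,P_n))\simeq\Hom_{R_n}(\fQ/(\fI_n\tim\fQ),P_n)$ together with $\fJ_n\tim\fQ=\fI_n\tim\fQ$. The paper never tries to make $(\Hom_{R_n}(S_n,P_n))_{n\ge1}$ an $(S_n)$\+system, and your suspicion is justified: for the strongly right taut map $\boZ_p\rarrow\mathbb F_p$ (where $S_n=\mathbb F_p$) and $\fP=\boZ_p$, every term is $\mathbb F_p$ and all transition maps are zero, so you should simply drop that step.
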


\begin{proof}
 Let $(\fI_n)_{n\ge1}$ and $(\fJ_n)_{n\ge1}$ be two descending chains of
open two-sided ideals in $\fR$ and $\fS$ provided by
Lemma~\ref{continuous-strongly-taut-ring-map-countable-base}(2).
 Denote by $R_n=\fR/\fI_n$ and $S_n=\fS/\fJ_n$ the related discrete
quotient rings.
 Then we have a strongly right taut morphism of projective systems
of rings $(f_n\:R_n\to S_n)$ such that $\ff=\varprojlim_{n\ge1}f_n$,
as in Section~\ref{change-of-scalars-I-secn}.

 Let $(P_n)_{n\ge1}$ be the left $(R_n)$\+system of modules
corresponding to the separated $\fR$\+contramodule~$\fP$.
 So we have $\fP\simeq\varprojlim_{n\ge1}P_n$, where the projective
limit is taken in the category $\fR\Contra$, or equivalently, in
its full subcategory $\fR\Separ$.
 Hence $\Hom^\fR(\fS,\fP)\simeq\varprojlim_{n\ge1}\Hom^\fR(\fS,P_n)$.
 We compute
$$
 \Hom^\fR(\fS,P_n)\simeq\Hom_{R_n}(\fS/(\fI_n\tim\fS),\>P_n)
 \simeq\Hom_{R_n}(S_n,P_n),
$$
as $\fI_n\tim\fS=\fJ_n$ by
Lemma~\ref{continuous-and-taut-contramodule-tim-lemma}.
 Now $\Hom^\fR(\fS,\fP)\simeq\varprojlim_{n\ge1}\Hom_{R_n}(S_n,P_n)$
is a projective limit of left $S_n$\+modules.
 All left $S_n$\+modules are separated left $\fS$\+contramodules, and
the inclusion/forgetful functors $\fS\Separ\rarrow\fS\Contra\rarrow
\fS\Modl$ preserve all limits (all limits exist in all the three
categories).
 Thus $\Hom^\fR(\fS,\fP)$ is naturally a left $\fS$\+contramodule.

 We have constructed the functor~$\ff^\uds$; it remains to establish
the adjunction.
 For any left $\fS$\+contramodule $\fQ$, we have
$$
 \Hom^\fS(\fQ,\Hom^\fR(\fS,\fP))\simeq
 \varprojlim\nolimits_{n\ge1}\Hom^\fS(\fQ,\Hom_{R_n}(S_n,P_n)).
$$
 We compute
\begin{multline*}
 \Hom^\fS(\fQ,\Hom_{R_n}(S_n,P_n))\simeq
 \Hom_{S_n}(\fQ/(\fJ_n\tim\fQ),\>\Hom_{R_n}(S_n,P_n)) \\
 \simeq\Hom_{R_n}(\fQ/(\fJ_n\tim\fQ),\>P_n)
 \simeq\Hom_{R_n}(\fQ/(\fI_n\tim\fQ),\>P_n)
 \simeq\Hom^\fR(\fQ,P_n),
\end{multline*}
as $\fJ_n\tim\fQ=\fI_n\tim\fQ$ by
Corollary~\ref{tim-Omega-and-Lambda-over-R-and-S-compatible}.
 Passing to the projective limit, we finally obtain the isomorphisms
$$
 \varprojlim\nolimits_{n\ge1}\Hom^\fS(\fQ,\Hom_{R_n}(S_n,P_n))\simeq
 \varprojlim\nolimits_{n\ge1}\Hom^\fR(\fQ,P_n)\simeq\Hom^\fR(\fQ,\fP),
$$
so $\Hom^\fS(\fQ,\ff^\uds(\fP))\simeq\Hom^\fR(\fQ,\fP)$, as desired.
 The description of the adjunction counit follows straightforwardly
from the constructions.
\end{proof}

 We will call the functor $\ff^\uds\:\fP\longmapsto\Hom^\fR(\fS,\fP)$
constructed in the proof above the functor of
\emph{separated contramodule coextension of scalars}.
 The functor of separated contramodule coextension of scalars
$\ff^\uds\:\fR\Separ\rarrow\fS\Separ$ is well defined for any strongly
right taut continuous map of complete, separated two-sided linear
topological rings with countable bases of neighborhoods of zero, as per
Proposition~\ref{separated-contramodule-coextension-of-scalars-prop}.
 The functor~$\ff^\uds$ agrees with the functor of coextension
of scalars $\ff^!\:\fR\Modl\rarrow\fS\Modl$ whenever the forgetful
functor $\fR\Contra\rarrow\fR\Modl$ is fully faithful.

\begin{thm} \label{contramodule-coextension-of-scalars-thm}
 Let\/ $\fR$ and\/ $\fS$ be complete, separated two-sided linear
topological rings with countable bases of neighborhoods of zero, and
let\/ $\ff\:\fR\rarrow\fS$ be a left proflat continuous ring map.
 Let\/ $\fP$ be a left\/ $\fR$\+contramodule.
 Then the left\/ $\fS$\+module\/ $\Hom^\fR(\fS,\fP)$ carries a naturally
defined left\/ $\fS$\+contramodule structure from which its left\/
$\fS$\+module structure arises.
 The functor\/ $\ff^\uds\:\fP\longmapsto\Hom^\fR(\fS,\fP)$ is right
adjoint to the functor of restriction of scalars\/ $\ff_\sharp\:
\fS\Contra\rarrow\fR\Contra$.
 The adjunction counit\/ $\ff_\sharp\ff^\uds\rarrow\Id$ is the map\/
$\Hom^\fR(\fS,\fP)\rarrow\Hom^\fR(\fR,\fP)\simeq\fP$ induced by
the left\/ $\fR$\+contramodule morphism\/ $\ff\:\fR\rarrow\fS$.
\end{thm}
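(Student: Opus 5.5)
We follow the strategy announced in the Introduction and reduce to the separated case of Proposition~\ref{separated-contramodule-coextension-of-scalars-prop} by means of a flat special precover. Given a left $\fR$\+contramodule $\fP$, we use Proposition~\ref{flat-cotorsion-pair-complete-prop}(a) to choose a short exact sequence $0\rarrow\fC\rarrow\fF\rarrow\fP\rarrow0$ with $\fF$ flat and $\fC$ cotorsion. Both $\fF$ and $\fC\subset\fF$ are separated, by Lemma~\ref{complete-separated-flat-contramodules-lemma}(c) and because separated contramodules are closed under subobjects. Proposition~\ref{separated-contramodule-coextension-of-scalars-prop} therefore gives separated $\fS$\+contramodule structures on $\Hom^\fR(\fS,\fC)$ and $\Hom^\fR(\fS,\fF)$. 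The induced map $\Hom^\fR(\fS,\fC)\rarrow\Hom^\fR(\fS,\fF)$ is an $\fS$\+contramodule morphism, by functoriality of $\ff^\uds$ on $\fR\Separ$. Since $\ff$ is left proflat, $\fS$ is a flat $\fR$\+contramodule by Corollary~\ref{left-flat-and-contramodule-flat-cor}, so $\Ext^{\fR,1}(\fS,\fC)=0$. Hence
$$
 0\rarrow\Hom^\fR(\fS,\fC)\rarrow\Hom^\fR(\fS,\fF)\rarrow\Hom^\fR(\fS,\fP)\rarrow0
$$
is exact as a sequence of abelian groups, and in fact of $\fS$\+modules, since the maps are $\fS$\+linear for the obvious $\fS$\+module structures. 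We then define the $\fS$\+contramodule structure on $\Hom^\fR(\fS,\fP)$ as that of the cokernel in $\fS\Contra$. Exactness of the forgetful functor $\fS\Contra\rarrow\fS\Modl$ shows that this structure extends the given $\fS$\+module structure.

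The next step is to show that this structure does not depend on the choice of precover and is functorial in $\fP$. For a morphism $p\:\fP'\rarrow\fP''$ and chosen precovers, we lift $\fF'\rarrow\fP'\rarrow\fP''$ to a map $\fF'\rarrow\fF''$, which is possible because $\Ext^{\fR,1}(\fF',\fC'')=0$, and restrict it to a map $\fC'\rarrow\fC''$. Applying $\ff^\uds$ gives a morphism of the two short exact sequences in which the left two components are $\fS$\+contramodule maps, so the induced map on cokernels is an $\fS$\+contramodule map. Two different lifts differ by a map $\fF'\rarrow\fC''$, and this difference induces zero on the cokernels. Taking $p=\mathrm{id}$ with two different precovers then gives independence of the choice. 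This is where we expect the main care to be needed: the $\fS$\+contramodule structure on a cokernel is determined by the underlying map of groups, so we must keep checking that every map obtained is the obvious map of $\Hom$ groups. We will use that the cokernel identification $\Hom^\fR(\fS,\fF)\rarrow\Hom^\fR(\fS,\fP)$ is the natural map of groups, so any two structures built this way coincide on the nose.

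For the adjunction, we take a left $\fS$\+contramodule $\fQ$ and an $\fR$\+morphism $\fQ\rarrow\fP$, and must show that the corresponding $\fS$\+module map $\fQ\rarrow\Hom^\fR(\fS,\fP)$, $q\mapsto(s\mapsto\phi(sq))$, is an $\fS$\+contramodule morphism. We may precompose with a surjection $\fS[[X]]\rarrow\fQ$, so it suffices to treat $\fQ=\fS[[X]]$ projective. Then $\ff_\sharp(\fQ)$ is flat, hence $\Hom^\fR(\fQ,\fF)\rarrow\Hom^\fR(\fQ,\fP)$ is surjective, again since $\Ext^{\fR,1}(\fQ,\fC)=0$. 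We lift $\phi$ to $\fQ\rarrow\fF$, with $\fF$ separated; by the separated adjunction of Proposition~\ref{separated-contramodule-coextension-of-scalars-prop}, the corresponding map $\fQ\rarrow\Hom^\fR(\fS,\fF)$ is an $\fS$\+contramodule morphism, and composing it with the cokernel map gives the result. Finally, injectivity of the adjunction map $\Hom^\fS(\fQ,\ff^\uds\fP)\rarrow\Hom^\fR(\fQ,\fP)$ is the same as in the module case: it is composition with the counit $\Hom^\fR(\fS,\fP)\rarrow\fP$, evaluation at $1$, which is the stated description of the counit.
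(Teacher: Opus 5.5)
Your construction of the $\fS$\+contramodule structure on $\Hom^\fR(\fS,\fP)$ and your functoriality argument match the paper's proof. The construction goes through the flat cotorsion precover, the separated case for $\fC$ and $\fF$, and the cokernel in $\fS\Contra$. Your explicit independence-of-choice check via $p=\mathrm{id}$ is a fine addition to what the paper leaves implicit.

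The adjunction is where you diverge. The paper reduces to projective $\fQ$. It then compares the short exact sequences obtained by applying $\Hom^\fS(\fQ,\ff^\uds(-))$ and $\Hom^\fR(\fQ,-)$ to $0\rarrow\fC\rarrow\fF\rarrow\fP\rarrow0$, and concludes by the five lemma from the separated case on the first two terms. You instead realize the bijection as the restriction of the module-level coextension adjunction. You show directly that the module adjunct $q\mapsto(s\mapsto\phi(sq))$ of an $\fR$\+contramodule morphism $\phi$ is an $\fS$\+contramodule morphism, using one surjection $\fS[[X]]\rarrow\fQ$, one lift $\fQ\rarrow\fF$, and then the separated case. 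Injectivity is inherited from modules.

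This route is slightly more economical, since you never need exactness of the $\Hom^\fS$ sequence. It also makes the unit and counit formulas transparent. It does rely on the separated adjunction of Proposition~\ref{separated-contramodule-coextension-of-scalars-prop} being given by that same formula, which you should state. It follows from the counit description there together with $\fS$\+linearity: $\psi(q)(s)=\psi(sq)(1)$.

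One verification is missing, and the paper does make it. You never check that evaluation at~$1$, $\epsilon_\fP\:\Hom^\fR(\fS,\fP)\rarrow\fP$, is an $\fR$\+contramodule morphism for the \emph{newly constructed} structure on $\Hom^\fR(\fS,\fP)$.

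Without this, $\psi\mapsto\epsilon_\fP\circ\psi$ is not known to land in $\Hom^\fR(\fQ,\fP)$. So what you have actually proved is only an injection $\Hom^\fR(\fQ,\fP)\rarrow\Hom^\fS(\fQ,\ff^\uds\fP)$. Surjectivity requires $\epsilon_\fP\circ\psi$ to be an $\fR$\+contramodule morphism for every $\fS$\+contramodule morphism $\psi$, since that is the only candidate preimage.

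The fix is one line, using the surjectivity trick you already employ:
\begin{itemize}
\item $\epsilon_\fF$ is an $\fR$\+contramodule morphism by Proposition~\ref{separated-contramodule-coextension-of-scalars-prop};
\item one has $\epsilon_\fP\circ\ff^\uds(\fF\to\fP)=(\fF\to\fP)\circ\epsilon_\fF$;
\item $\ff^\uds(\fF)\rarrow\ff^\uds(\fP)$ is a surjective $\fS$\+contramodule (hence $\fR$\+contramodule) morphism.
\end{itemize}
Hence $\epsilon_\fP$ is an $\fR$\+contramodule morphism.
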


\begin{proof}
 Let $0\rarrow\fC\rarrow\fF\rarrow\fP\rarrow0$ be a short exact
sequence of left $\fR$\+contramodules with a cotorsion
$\fR$\+contramodule $\fC$ and a flat $\fR$\+contramodule $\fF$,
as in Proposition~\ref{flat-cotorsion-pair-complete-prop}(a).
 The left $\fR$\+contramodule $\fS$ is flat by
Corollary~\ref{left-flat-and-contramodule-flat-cor}, so
the short sequence of left $\fS$\+modules
$0\rarrow\Hom^\fR(\fS,\fC)\rarrow\Hom^\fR(\fS,\fF)\rarrow
\Hom^\fR(\fS,\fP)\rarrow0$ is exact (by the definition of
a cotorsion left $\fR$\+contramodule).

 The flat $\fR$\+contramodule $\fF$ is separated by
Lemma~\ref{complete-separated-flat-contramodules-lemma}(c), hence
the $\fR$\+con\-tra\-mod\-ule $\fC$ is separated, too.
 By
Proposition~\ref{separated-contramodule-coextension-of-scalars-prop},
it follows that both the $\fS$\+modules
$\Hom^\fR(\fS,\fC)$ and $\Hom^\fR(\fS,\fF)$ carry natural separated left
$\fS$\+contramodule structures, and the map $\Hom^\fR(\fS,\fC)\rarrow
\Hom^\fR(\fS,\fF)$ is an $\fS$\+contramodule morphism.
 The forgetful functor $\fS\Contra\rarrow\fS\Modl$ preserves cokernels.
 We endow $\Hom^\fR(\fS,\fP)$ with the left $\fS$\+contramodule
structure of the cokernel of the $\fS$\+contramodule morphism
$\Hom^\fR(\fS,\fC)\rarrow\Hom^\fR(\fS,\fF)$.

 Now let $\fP'\rarrow\fP''$ be a morphism of left $\fR$\+contramodules.
 We need to check that the induced map $\Hom^\fR(\fS,\fP')\rarrow
\Hom^\fR(\fS,\fP'')$ is an $\fS$\+contramodule morphism.
 Let $0\rarrow\fC'\rarrow\fF'\rarrow\fP'\rarrow0$ and
$0\rarrow\fC''\rarrow\fF''\rarrow\fP''\rarrow0$ be two short exact
sequences of left $\fR$\+contramodules with cotorsion
$\fR$\+contramodules $\fC'$, $\fC''$ and flat $\fR$\+contramodules
$\fF'$, $\fF''$.

 Consider the composition $\fF'\rarrow\fP'\rarrow\fP''$.
 Since $\Ext^{\fR,1}(\fF',\fC'')=0$, the morphism $\fF'\rarrow\fP''$
lifts to an $\fR$\+contramodule morphism $\fF'\rarrow\fF''$.
 Hence we obtain a morphism of short exact sequences of left
$\fR$\+contramodules $(0\to\fC'\to\fF'\to\fP'\to0)\rarrow
(0\to\fC''\to\fF''\to\fP''\to0)$.
 The assertion that the map $\Hom^\fR(\fS,\fP')\rarrow
\Hom^\fR(\fS,\fP'')$ is an $\fS$\+contramodule morphism now follows from
the construction above of the $\fS$\+contramodule structures involved.

 We have constructed the promised functor~$\ff^\uds$; it remains to
establish the adjunction.
 It follows from
Proposition~\ref{separated-contramodule-coextension-of-scalars-prop}
and the construction above that the desired adjunction counit
$\Hom^\fR(\fS,\fP)\rarrow\Hom^\fR(\fR,\fP)\simeq\fP$ is a left
$\fR$\+contramodule morphism.
 For any left $\fS$\+contramodule $\fQ$, we have the induced map of
abelian groups
\begin{equation} \label{uds-adjunction-map}
 \Hom^\fS(\fQ,\Hom^\fR(\fS,\fP))\lrarrow\Hom^\fR(\fQ,\fP).
\end{equation}

 Viewed as functors of the $\fS$\+contramodule argument $\fQ$, both
the left-hand side and the right-hand side of~\eqref{uds-adjunction-map}
take cokernels of morphisms in $\fS\Contra$ to kernels of morphisms
of abelian groups.
 Hence it suffices to consider the case of a projective left
$\fS$\+contramodule~$\fQ$.
 Then the underlying left $\fR$\+contramodule of $\fQ$ is flat by
Corollary~\ref{proflat-contramodule-direct-image-cor}.

 Hence, in the notation above, we have short exact sequences of abelian
groups
\begin{multline} \label{left-hand-side-short-exact-sequence}
 0\lrarrow\Hom^\fS(\fQ,\Hom^\fR(\fS,\fC))\lrarrow
 \Hom^\fS(\fQ,\Hom^\fR(\fS,\fF)) \\
 \lrarrow\Hom^\fS(\fQ,\Hom^\fR(\fS,\fP))\lrarrow0
\end{multline}
and
\begin{equation} \label{right-hand-side-short-exact-sequence}
 0\lrarrow\Hom^\fR(\fQ,\fC)\lrarrow\Hom^\fR(\fQ,\fF)\lrarrow
 \Hom^\fR(\fQ,\fP)\lrarrow0.
\end{equation}
 The natural transformation~\eqref{uds-adjunction-map} provides
a morphism from the short exact
sequence~\eqref{left-hand-side-short-exact-sequence} to the short
exact sequence~\eqref{right-hand-side-short-exact-sequence},
which is an isomorphism on the leftmost and middle terms by
Proposition~\ref{separated-contramodule-coextension-of-scalars-prop}.
 Hence this map is also an isomorphism on the rightmost terms of
\eqref{left-hand-side-short-exact-sequence}
and~\eqref{right-hand-side-short-exact-sequence}, as desired.
\end{proof}

 We will call the functor $\ff^\uds\:\fP\longmapsto\Hom^\fR(\fS,\fP)$
constructed in the proof above the functor of
\emph{contramodule coextension of scalars}.
 The functor of contramodule coextension of scalars
$\ff^\uds\:\fR\Contra\rarrow\fS\Contra$ is well defined for any left
proflat map of complete, separated two-sided linear topological rings
with countable bases of neighborhoods of zero, as per
Theorem~\ref{contramodule-coextension-of-scalars-thm}.
 The functor~$\ff^\uds$ agrees with the functor of coextension
of scalars $\ff^!\:\fR\Modl\rarrow\fS\Modl$ whenever the forgetful
functor $\fR\Contra\rarrow\fR\Modl$ is fully faithful.

\begin{lem} \label{commutative-contramodule-structure-on-Hom}
 Let\/ $\fR$ be a complete, separated commutative linear topological
ring, and let\/ $\fP$ and\/ $\fQ$ be\/ $\fR$\+contramodules.
 Then the abelian group of\/ $\fR$\+contramodule morphisms\/
$\Hom^\fR(\fQ,\fP)$ has a natural\/ $\fR$\+contramodule structure.
\end{lem}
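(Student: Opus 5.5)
The plan is to give $\Hom^\fR(\fQ,\fP)$ the pointwise structure inherited from the product $\fP^\fQ$, and to check that this subset is closed under the contraaction there. The infinite product $\fP^\fQ=\Hom_\Sets(\fQ,\fP)$ is a left $\fR$\+contramodule, with contraaction computed pointwise: for a zero\+convergent family $(r_x)_{x\in X}$ in $\fR$ and maps $h_x\:\fQ\rarrow\fP$ we set $\bigl(\sum_x r_xh_x\bigr)(q)=\pi_\fP\bigl(\sum_x r_x\,h_x(q)\bigr)$. Then $\Hom^\fR(\fQ,\fP)\subset\fP^\fQ$ is a subgroup, and it remains to show that it is closed under the contraaction. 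Closure would make it a subcontramodule, so the contramodule axioms are inherited, and naturality in $\fP$ and $\fQ$ is clear from the pointwise formula.

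The key step is the closure check: if each $h_x$ is a contramodule morphism, then so is $h=\sum_x r_xh_x$. I would phrase this in monad language. For a formal combination $\sum_{q}s_q q\in\fR[[\fQ]]$ one needs
$$
 h\Bigl(\pi_\fQ\bigl(\textstyle\sum_q s_qq\bigr)\Bigr)=\pi_\fP\Bigl(\textstyle\sum_q s_q\,h(q)\Bigr).
$$
The left-hand side equals $\pi_\fP\bigl(\sum_x r_x\,\pi_\fP(\sum_q s_q h_x(q))\bigr)$, using that each $h_x$ is a morphism. By contraassociativity this is $\pi_\fP$ applied to the ``opened parentheses'' element $\sum_{(x,q)} r_xs_q\cdot h_x(q)$ of $\fR[[\fP]]$, indexed by the pairs $(x,q)$. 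The right-hand side is $\pi_\fP\bigl(\sum_q s_q\,\pi_\fP(\sum_x r_x h_x(q))\bigr)$, which by contraassociativity is $\pi_\fP$ of $\sum_{(x,q)} s_qr_x\cdot h_x(q)$. Commutativity gives $r_xs_q=s_qr_x$, so the two elements agree and the identity holds.

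The one delicate point, and the step I expect to be the main obstacle, is to justify these double expansions inside $\fR[[\fR[[\fP]]]]$. First, the family $(r_xs_q)_{(x,q)}$ is zero\+convergent in $\fR$: for an open ideal $\fI$, only finitely many $r_x$ and finitely many $s_q$ lie outside $\fI$, and the product lies in $\fI$ as soon as one factor does, since $\fI$ is an ideal. This is where commutativity, i.e.\ two\+sided linearity, enters. Second, both sides must be realized as $\pi_\fP\circ\fR[[\pi_\fP]]$ applied to a suitable element of $\fR[[\fR[[\fP]]]]$, so that the axiom $\pi_\fP\circ\fR[[\pi_\fP]]=\pi_\fP\circ\phi_\fP$ applies. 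If distinct pairs $(x,q)$ give the same $h_x(q)$, I would avoid summation problems by working with the free contramodule on the index set $X\times\fQ$ and pushing forward along $(x,q)\mapsto h_x(q)$, using naturality of $\phi$.
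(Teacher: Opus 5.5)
Your proposal is correct and follows the paper's route. The paper defines exactly this pointwise contraaction on $\Hom^\fR(\fQ,\fP)$ (as $\pi_\fP$ applied to the pushforward of $\sum_h r_hh$ along $h\mapsto h(q)$) and leaves the closure and axiom checks implicit; you carry them out correctly via contraassociativity and the identity $r_xs_q=s_qr_x$. One small correction: zero-convergence of the family $(r_xs_q)$ holds for any right linear topology, since it is part of why $\phi$ is well defined, so commutativity is actually used only in the identity $r_xs_q=s_qr_x$.
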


\begin{proof}
 The $\fR$\+contramodule structure on $\fH=\Hom^\fR(\fQ,\fP)$ is simply
induced by the $\fR$\+contramodule structure on~$\fP$.
 The contramodule infinite summation operations in\/ $\Hom^\fR(\fQ,\fP)$
are constructed as a kind of pointwise summation of $\fP$\+valued
functions.
 More precisely,
$$
 \pi_\fH\left(\sum\nolimits_{h\in\fH}r_hh\right)(q)=
 \pi_\fP\left(\sum\nolimits_{p\in\fP}
 \left(\sum\nolimits_{h\in\fH}^{h(q)=p}r_h\right)p\right).
$$
 Here $(r_h\in\fR)_{h\in\fH}$ is a zero-convergent family of elements
of $\fR$ indexed by elements of~$\fH$.
 The second (inner) summation sign in the right-hand side denotes
the topological limit of finite partial sums in the topology of~$\fR$,
while the first (outer) summation sign is the notation for an element
of $\fR[[\fP]]$ to which the contraction map~$\pi_\fP$ is applied.
 That element of $\fR[[\fP]]$ is obtained by applying the map
$\fR[[g_q]]\:\fR[[\fH]]\rarrow\fR[[\fP]]$ to the element
$\sum_{h\in\fH}r_hh\in\fR[[\fH]]$, where $g_q\:\fH\rarrow\fP$ is
the map of sets given by the rule $g_q(h)=h(q)$.
\end{proof}

\begin{rem}
 Let $\fR$ be a complete, separated commutative linear topological ring,
and let $\fS$ be a complete, separated two-sided linear topological
ring.
 Assume that both $\fR$ and $\fS$ have countable bases of neighborhoods
of zero.
 Let $\ff\:\fR\rarrow\fS$ be a continuous ring map whose image is
contained in the center of~$\fS$.
 Then the left and right $\fR$\+contramodule structures on $\fS$
coincide (as both are given by topological limits of finite partial
sums converging in the topology of~$\fS$).
 Assume that the ring map~$\ff$ is taut (or equivalently, strongly
right taut).
 Then, in the context of
Proposition~\ref{separated-contramodule-coextension-of-scalars-prop},
the underlying $\fR$\+contramodule structure of the left
$\fS$\+contramodule structure on $\Hom^\fR(\fS,\fP)$ provided by
the proposition coincides with the $\fR$\+contramodule structure
provided by Lemma~\ref{commutative-contramodule-structure-on-Hom}.
 Assuming further that the ring map~$\ff$ is (left) proflat, the same
assertion applies to the $\fS$\+contramodule and $\fR$\+contramodule
structures on $\Hom^\fR(\fS,\fP)$ appearing in the context of
Theorem~\ref{contramodule-coextension-of-scalars-thm} and
Lemma~\ref{commutative-contramodule-structure-on-Hom}.
 Both the statements follow straightforwardly from the constructions.
\end{rem}

\begin{rem} \label{exten-coexten-contraexten-exactness-properties-rem}
 Given a homomorphism of abstract (nontopological) rings
$f\:R\rarrow S$, the functor of extension of scalars $f^*\:R\Modl\rarrow
S\Modl$ is exact whenever $S$ is a flat right $R$\+module.
 For an arbitrary ring homomorphism~$f$, the functor $f^*$ is exact
on the full subcategory of flat $R$\+modules in $R\Modl$.
 The functor of coextension of scalars $f^!\:R\Modl\rarrow S\Modl$
is exact on the full exact subcategory of cotorsion $R$\+modules
in $R\Modl$ provided that $S$ is flat left $R$\+module.

 What exactness properties do the discrete module and contramodule
functors of extension/coextension/contraextension of scalars have,
for a morphism of complete, separated right linear topological rings
$\ff\:\fR\rarrow\fS$\,?

 The functor of discrete module extension of scalars
$\ff^\bullet\:\Discr\fR\rarrow\Discr\fS$ from
Proposition~\ref{discrete-module-extension-of-scalars-prop} is exact
whenever $\ff$~is a taut continuous ring map and $\fS$ is a flat left
$\fR$\+contramodule (by construction).
 By Corollary~\ref{left-flat-and-contramodule-flat-cor}, these
assumptions hold, in particular, whenever $\ff$~is a left proflat
continuous ring map.

 The functor of contramodule coextension of scalars
$\ff^\uds\:\fR\Contra\rarrow\fS\Contra$ from
Theorem~\ref{contramodule-coextension-of-scalars-thm} is exact on
the exact category of cotorsion left $\fR$\+contramodules (by
construction) whenever $\ff$~is a left proflat continuous ring map.
 The functor of contraextension of scalars
$\ff^\sharp\:\fR\Contra\rarrow\fS\Contra$ is always exact
on the exact category of flat left $\fR$\+contramodules, by
Lemma~\ref{flat-contramodules-contraextension-of-scalars}.

 The analogy with the module case may suggest that
the functor~$\ff^\sharp$ should be exact on the whole abelian category
of left $\fR$\+contramodules under suitable flatness assumptions
on~$\ff$.
 The following counterexample appears to refute such na\"\i ve
expectations decisively.
\end{rem}

\begin{ex} \label{contraextension-not-exact-counterex}
 Let $\ff\:\fR\rarrow\fS$ be the proflat proepimorphism of Noetherian
commutative rings with adic topologies discussed in
Example~\ref{proepimorphism-not-epimorphism-example}
and Remark~\ref{non-R-contramodule-S-module-structure-not-unique}.
 So we have $\fR=k[x][[y]]$ and $\fS=k[x,x^{-1}][[y]]$.
 Since $\fS$ is a flat $\fR$\+contramodule and $\fR$ is a Noetherian
commutative ring with adic topology, $\fS$ is actually a flat
$\fR$\+module by~\cite[Lemma~B.9.2]{Pweak},
\cite[Corollary~10.3(a)]{Pcta}, or~\cite[Theorem~1.6(2) or~6.11]{Yek2}.
 Nevertheless, the functor $\ff^\sharp\:\fR\Contra\rarrow
\fS\Contra$ is \emph{not} exact.

 For a counterexample, consider the following module $\fQ$ over the ring
of formal power series $k[[x,y]]$.
 Put $\fQ=\prod_{i,j\in\boZ}^{i+j\ge0}kx^iy^j$.
 Here $kx^iy^j$ is a one-dimensional $k$\+vector space with the basis
vector denoted formally by $x^iy^j$, and the infinite direct product
is taken over the integer half-plane $\{i,j\in\boZ\mid i+j\ge0\}
\subset\boZ^2$.
 The ring of formal power series $k[[x,y]]$ acts in $\fQ$ in the obvious
way.
 In fact, the $[x,y]$\+power infinite summation operations, in
the sense of~\cite[Section~4]{Pcta}, are easily defined on $\fQ$,
making $\fQ$ a contramodule over the topological ring $k[[x,y]]$
(endowed with the adic topology of the ideal $(x,y)\subset k[[x,y]]$),
cf.~\cite[Theorem~B.1.1 and Section~B.6]{Pweak}.
 The restriction of scalars with respect to the continuous identity
inclusion map of rings $\fR\rarrow k[[x,y]]$ endows $\fQ$ with
an $\fR$\+contramodule structure.

 Put $R=k[x,y]$ and $S=k[x,x^{-1},y]$; so $\fR$ is the $y$\+adic
completion of $R$ and $\fS$ is the $y$\+adic completion of~$S$.
 Then the identity inclusions of rings $R\rarrow\fR$ and
$S\rarrow\fS$ induce fully faithful forgetful functors
$\fR\Contra\rarrow R\Modl$ and $\fS\Contra\rarrow R\Modl$,
by~\cite[Theorem~B.1.1]{Pweak} (cf.~\cite[Section~1.6]{Prev}).
 Denote by $\Delta_y=\boL_0\Lambda_y\:S\Modl\rarrow\fS\Contra$
the left adjoint functor to the forgetful/inclusion functor
$\fS\Contra\rarrow S\Modl$.
 A construction of the functor $\Delta_y$ is spelled out
in~\cite[Proposition~2.1]{Pmgm} or~\cite[Section~6]{Pcta}, while
a discussion of the functor $\boL_0\Lambda_y$ and its comparison
with $\Delta_y$ can be found in~\cite[Section~1 and Remark~3.8]{Pdc}.

 The functor $\ff^\sharp\:\fR\Contra\rarrow\fS\Contra$ can be
computed by the rule $\ff^\sharp(\fP)=\Delta_y(S\ot_R\fP)$ for
any $\fR$\+contramodule~$\fP$.
 In particular, for the $\fR$\+contramodule $\fQ$, we observe
that a pair of mutually inverse automorphisms $\frac xy$ and $\frac yx$
is defined on~$\fQ$.
 Hence the element $y\in S$ acts by an invertible map on
the $S$\+module $S\ot_R\fQ=\fQ[x^{-1}]$.
 In other words, inverting the action of~$x$ on $\fQ$ also makes
the action of~$y$ invertible.
 One has $\Hom_S(M,\fP)=0$ for any $y$\+divisible $S$\+module $M$ and
any $\fS$\+contramodule~$\fQ$ (see~\cite[Theorem~B.1.1]{Pweak}
or~\cite[Lemma~3.2]{Pcta}); so $\Delta_y(M)=0$.
 Thus, in particular, $\ff^\sharp(\fQ)=\Delta_y(S\ot_R\fQ)=0$.

 On the other hand, there is an obvious injective $\fR$\+contramodule
map $g\:\fR\rarrow\fQ$ from the free $\fR$\+contramodule $\fR$ to
the $\fR$\+contramodule $\fQ$, given by the rule $g(1)=x^0y^0$.
 Furthermore, we have $\ff^\sharp(\fR)=\fS\ne0$.
 So $\ff^\sharp(g)\:\fS\rarrow0$ is \emph{not} an injective map,
and the functor~$\ff^\sharp$ is not exact.
\end{ex}


\bigskip
\begin{thebibliography}{99}
\smallskip

\bibitem{AR}
 J.~Ad\'amek, J.~Rosick\'y.
   Locally presentable  and accessible categories.
London Math.\ Society Lecture Note Series~189,
Cambridge University Press, 1994.

\bibitem{AJL}
 L.~Alonso Tarr\'\i o, A.~Jerem\'\i as L\'opez, J.~Lipman.
   Duality and flat base change on formal schemes.
In: Studies in duality on Noetherian formal schemes and non-Noetherian
ordinary schemes, p.~3--90, \emph{Contemporary Math.}\ \textbf{244},
American Math.\ Society, Providence, 1999.
\texttt{arXiv:alg-geom/9708006}
 Correction, \emph{Proceedings of the American Math.\ Society}
\textbf{131}, \#2, p.~351--357, 2003.  \texttt{arXiv:math.AG/0106239}

\bibitem{AJPV}
 L.~Alonso Tarr\'\i o, A.~Jerem\'\i as L\'opez,
M.~P\'erez Rodr\'\i guez, M.~J.~Vale Gonsalves.
   On the existence of a compact generator on the derived category
of a Noetherian formal scheme.
\textit{Appl.\ Categorical Struct.}\ \textbf{19}, \#6, p.~865--877,
2011.  \texttt{arXiv:0905.2063 [math.AG]}

\bibitem{BD2}
 A.~Beilinson, V.~Drinfeld.
   Quantization of Hitchin's integrable system and Hecke
eigensheaves.  February 2000.  Available from
\texttt{http://www.math.utexas.edu/\textasciitilde
benzvi/Langlands.html} or
\texttt{http://math.uchicago.edu/\textasciitilde
drinfeld/langlands.html}

\bibitem{BS}
 B.~Bhatt, P.~Scholze.
   The pro-\'etale topology for schemes.
\textit{Ast\'erisque} \textbf{369}, p.~99--201, 2015.
\texttt{arXiv:1309.1198 [math.AG]}

\bibitem{CR}
 B.~Chorny, J.~Rosick\'y.
   Class-locally presentable and class-accessible categories.
\textit{Journ.\ of Pure and Appl.\ Algebra} \textbf{216}, \#10,
p.~2113--2125, 2012.  \texttt{arXiv:1110.0605 [math.CT]}

\bibitem{ET}
 P.~C.~Eklof, J.~Trlifaj.
   How to make Ext vanish.
\textit{Bull.\ of the London Math.\ Soc.}\ \textbf{33}, \#1,
p.~41--51, 2001.

\bibitem{GT}
 R.~G\"obel, J.~Trlifaj.
   Approximations and endomorphism algebras of modules.
Second Revised and Extended Edition.
De Gruyter Expositions in Mathematics 41,
De Gruyter, Berlin--Boston, 2012.

\bibitem{EGAI}
 A.~Grothendieck, J.~A.~Dieudonn\'e.
   \'El\'ements de g\'eom\'etrie alg\'ebrique.~I.
Grundlehren der mathematischen Wissenschaften, 166.
Springer-Verlag, Berlin--Heidelberg--New York, 1971.

\bibitem{Har}
 R.~Hartshorne.
   Algebraic geometry.
Graduate Texts in Math., 52, Springer-Verlag,
New York--Heidelberg, 1977.

\bibitem{Hov}
 M.~Hovey.
   Cotorsion pairs, model category structures, and representation
theory.
\textit{Math.\ Zeitschrift} \textbf{241}, \#2, p.~553--592, 2002.

\bibitem{SP}
 A.~J.~de~Jong et al.
   The Stacks Project.
Available from \texttt{https://stacks.math.columbia.edu/}

\bibitem{JS}
 A.~Joyal, R.~Street.
   Pullbacks equivalent to pseudopullbacks.
\textit{Cahiers de topol.\ et g\'eom.\ diff\'er.\ cat\'egoriques}
\textbf{XXXIV}, \#2, p.~153--156, 1993.

\bibitem{PSY}
 M.~Porta, L.~Shaul, A.~Yekutieli.
   On the homology of completion and torsion.
\textit{Algebras and Represent.\ Theory} \textbf{17}, \#1, p.~31--67,
2014.  \texttt{arXiv:1010.4386 [math.AC]}.
Erratum in \textit{Algebras and Represent.\ Theory} \textbf{18},
\#5, p.~1401--1405, 2015.  \texttt{arXiv:1506.07765 [math.AC]}

\bibitem{Psemi}
 L.~Positselski.
   Homological algebra of semimodules and semicontramodules:
Semi-infinite homological algebra of associative algebraic structures.
 Appendix~C in collaboration with D.~Rumynin; Appendix~D in
collaboration with S.~Arkhipov.
 Monografie Matematyczne vol.~70, Birkh\"auser/Springer Basel, 2010. 
xxiv+349~pp. \texttt{arXiv:0708.3398 [math.CT]}

\bibitem{Pweak}
 L.~Positselski.
   Weakly curved A${}_\infty$-algebras over a topological local ring.
\textit{M\'emoires de la Soci\'et\'e Math\'ematique de France}
\textbf{159}, 2018.  vi+206~pp.  \texttt{arXiv:1202.2697 [math.CT]}

\bibitem{Pcosh}
 L.~Positselski.
   Contraherent cosheaves on schemes.
Electronic preprint \texttt{arXiv:1209.2995v24 [math.CT]}.

\bibitem{Prev}
 L.~Positselski.
   Contramodules.
\textit{Confluentes Math.}\ \textbf{13}, \#2, p.~93--182, 2021.
\texttt{arXiv:1503.00991 [math.CT]}

\bibitem{Pmgm}
 L.~Positselski.
   Dedualizing complexes and MGM duality.
\textit{Journ.\ of Pure and Appl.\ Algebra} \textbf{220}, \#12,
p.~3866--3909, 2016.  \texttt{arXiv:1503.05523 [math.CT]}

\bibitem{Pcta}
 L.~Positselski.
   Contraadjusted modules, contramodules, and reduced cotorsion modules.
\textit{Moscow Math.\ Journ.}\ \textbf{17}, \#3, p.~385--455, 2017.
\texttt{arXiv:1605.03934 [math.CT]}

\bibitem{Psm}
 L.~Positselski.
   Smooth duality and co-contra correspondence.
\textit{Journ.\ of Lie Theory} \textbf{30}, \#1, p.~85--144, 2020.
\texttt{arXiv:1609.04597 [math.CT]}

\bibitem{Pper}
 L.~Positselski.
   Abelian right perpendicular subcategories in module categories.
Electronic preprint \texttt{arXiv:1705.04960 [math.CT]}. 

\bibitem{Pcoun}
 L.~Positselski.
   Flat ring epimorphisms of countable type.
\textit{Glasgow Math.\ Journ.} \textbf{62}, \#2, p.~383--439, 2020.
\texttt{arXiv:1808.00937 [math.RA]}

\bibitem{Pproperf}
 L.~Positselski.
   Contramodules over pro-perfect topological rings.
\textit{Forum Mathematicum} \textbf{34}, \#1, p.~1--39, 2022.
\texttt{arXiv:1807.10671 [math.CT]}

\bibitem{Pdc}
 L.~Positselski.
   Remarks on derived complete modules and complexes.
\textit{Math.\ Nachrichten} \textbf{296}, \#2, p.~811--839, 2023.
\texttt{arXiv:2002.12331 [math.AC]}

\bibitem{Psemten}
 L.~Positselski.
   Semi-infinite algebraic geometry of quasi-coherent sheaves
on ind-schemes: Quasi-coherent torsion sheaves, the semiderived
category, and the semitensor product.
Birkh\"auser/Springer Nature, Cham, Switzerland, 2023.  xix+216~pp.
\texttt{arXiv:2104.05517 [math.AG]}

\bibitem{Pal}
 L.~Positselski.
   Local, colocal, and antilocal properties of modules and complexes
over commutative rings.
\textit{Journ.\ of Algebra} \textbf{646}, p.~100--155, 2024.
\texttt{arXiv:2212.10163 [math.AC]}

\bibitem{Pflcc}
 L.~Positselski.
   Flat comodules and contramodules as directed colimits, and cotorsion
periodicity.
\textit{Journ.\ of Homotopy and Related Struct.}\ \textbf{19}, \#4,
p.~635--678, 2024.  \texttt{arXiv:2306.02734 [math.RA]}

\bibitem{Pacc}
 L.~Positselski.
   Notes on limits of accessible categories.
\textit{Cahiers de topol.\ et g\'eom.\ diff\'er.\ cat\'egoriques}
\textbf{LXV}, \#4, p.~390--437, 2024.
\texttt{arXiv:2310.16773 [math.CT]}

\bibitem{Pphil}
 L.~Positselski.
   Philosophy of contraherent cosheaves.
Electronic preprint \texttt{arXiv:2311.14179 [math.AG]}.

\bibitem{Pform}
 L.~Positselski.
   Contraherent cosheaves of contramodules on Noetherian formal schemes.
Electronic preprint \texttt{arXiv:2603.27732v1 [math.AG]}.

\bibitem{PR}
 L.~Positselski, J.~Rosick\'y.
   Covers, envelopes, and cotorsion theories in locally presentable
abelian categories and contramodule categories.
\textit{Journ.\ of Algebra} \textbf{483}, p.~83--128, 2017.
\texttt{arXiv:1512.08119 [math.CT]}

\bibitem{PS1}
 L.~Positselski, J.~\v St\!'ov\'\i\v cek.
   The tilting-cotilting correspondence.
\textit{Internat.\ Math.\ Research Notices} \textbf{2021}, \#1,
p.~189--274, 2021.  \texttt{arXiv:1710.02230 [math.CT]}

\bibitem{RR}
 G.~Raptis, J.~Rosick\'y.
   The accessibility rank of weak equivalences.
\textit{Theory and Appl.\ of Categories} \textbf{30}, no.~19,
p.~687--703, 2015.  \texttt{arXiv:1403.3042 [math.AT]}

\bibitem{Sal}
 L.~Salce.
   Cotorsion theories for abelian groups.
\textit{Symposia Math.}\ \textbf{XXIII},
Academic Press, London--New York, 1979, p.~11--32.

\bibitem{Sch}
 P.~Schenzel.
   Proregular sequences, local cohomology, and completion.
\textit{Math.\ Scand.}\ \textbf{92}, \#2, p.~161--180, 2003.

\bibitem{Sim}
 A.-M.~Simon.
   Approximations of complete modules by complete big
Cohen--Macaulay modules over a Cohen--Macaulay local ring.
\textit{Algebras and Representation Theory} \textbf{12}, \#2--5,
p.~385--400, 2009.

\bibitem{Sten}
 B.~Stenstr\"{o}m.
   Rings of quotients. An Introduction to Methods of Ring Theory.
Die Grundlehren der Mathematischen Wissenschaften, Band~217.
Springer-Verlag, New York, 1975.

\bibitem{Yek0}
 A.~Yekutieli.
   On flatness and completion for infinitely generated modules over
noetherian rings.
\textit{Communicat.\ in Algebra} \textbf{39}, \#11,
p.~4221--4245, 2011.  \texttt{arXiv:0902.4378 [math.AC]}

\bibitem{Yek2}
 A.~Yekutieli.
   Flatness and completion revisited.
\textit{Algebras and Represent.\ Theory} \textbf{21}, \#4, p.~717--736,
2018.  \texttt{arXiv:1606.01832 [math.AC]}

\end{thebibliography}
\end{document}